\documentclass[11pt]{amsart}
\usepackage{amsmath}
\usepackage{amsfonts}
\usepackage{amssymb}
\usepackage{amsthm}
\usepackage{url}
\usepackage[all]{xy}
\usepackage{dsfont}
\usepackage{graphicx}
\usepackage{caption}
\usepackage{subcaption}
\usepackage{comment} 
\usepackage{stmaryrd}
\usepackage{hyperref}
\usepackage{todonotes}
\usepackage{color}
\usepackage{enumerate,tikz-cd}
\usepackage[backend=biber,style=alphabetic,url=false,doi=false,isbn=false,sorting=nyt,maxbibnames=9,giveninits=true]{biblatex}
\usepackage{MnSymbol}
\usepackage{comment}
\usepackage{mathrsfs}
\allowdisplaybreaks
\usepackage{geometry}
\numberwithin{equation}{section}

\newtheorem{proposition}{Proposition}[section]

\newtheorem{lemma}[proposition]{Lemma}
\newtheorem{theorem}[proposition]{Theorem}
\newtheorem{corollary}[proposition]{Corollary}

\theoremstyle{definition}
\newtheorem{remark}[proposition]{Remark}
\newtheorem{definition}[proposition]{Definition}
\newtheorem{example}[proposition]{Example}

\newtheorem{question}{Question}[section]

\DeclareMathOperator{\Proj}{Proj}

\DeclareMathOperator{\Supp}{Supp}
\DeclareMathOperator{\Sing}{Sing}

\DeclareMathOperator{\codim}{codim}

\DeclareMathOperator{\rk}{rk}

\DeclareMathOperator{\DF}{DF}

\DeclareMathOperator{\lct}{lct}
\DeclareMathOperator{\Ding}{Ding}

\DeclareMathOperator{\ord}{ord}

\DeclareMathOperator{\Val}{Val}

\newcommand{\R}{\mathbb{R}}
\newcommand{\C}{\mathbb{C}}
\newcommand{\A}{\mathbb{A}}
\newcommand{\Z}{\mathbb{Z}}

\newcommand{\Q}{\mathbb{Q}}
\newcommand{\G}{\mathbb{G}}

\newcommand{\pr}{\mathbb{P}}
\newcommand{\PP}{\mathbb{P}}
\newcommand{\QQ}{\mathbb{Q}}
\newcommand{\OO}{\mathcal{O}}

\renewcommand{\epsilon}{\varepsilon}

\newcommand{\M}{\mathcal{M}}

\newcommand{\F}{\mathcal{F}}

\renewcommand{\L}{\mathcal{L}}
\newcommand{\X}{\mathcal{X}}
\newcommand{\Y}{\mathcal{Y}}

\renewcommand{\phi}{\varphi}

\newcommand\vol{\mathrm{vol}}

\newcommand\cA{\mathcal{A}}

\newcommand\cF{\mathcal{F}}
\newcommand\cG{\mathcal{G}}

\newcommand\cO{\mathcal{O}}

\newcommand\fa{\mathfrak{a}}

\newcommand\NA{^{\mathrm{NA}}}

\newcommand\red{{\mathrm{red}}}

\title{K-stability of adjoint foliated structures}

\author[T. S. Papazachariou]{Theodoros Stylianos Papazachariou}
\address{Yau Mathematical Sciences Center, Jingzhai, Tsinghua University, Haidian District, Beijing, China.}
\email{tpapazachariou@mail.tsinghua.edu.cn}

\begin{document}

\begin{abstract}
We introduce a notion of K-stability for adjoint foliated structures via test configurations and the foliated Donaldson--Futaki invariant. We prove reduction to special test configurations for adjoint Fano foliated structures by showing that the mixed Donaldson-–Futaki invariant is non-increasing along the birational procedure. We also introduce a notion of Ding stability for adjoint Fano foliated structures which we show is equivalent to our notion of K-stability. We then introduce mixed $\alpha$, $\beta$ and $\delta$-invariants and use the reduction theorem to establish valuative criteria for the K-stability of adjoint Fano foliated structures. To conclude, as an application, we show that K-semistable adjoint Fano foliated structures with bounded volume form a bounded family.
\end{abstract}

\maketitle

\setcounter{tocdepth}{1}
{\hypersetup{hidelinks}
\tableofcontents}

\section{Introduction}
The theory of K-stability of Fano varieties has found remarkable success in recent years, both in determining the existence of K{\"a}hler-Einstein metrics \cite{CDS2013, LXZ22} but also in constructing a full, complete moduli theory for klt Fano varieties (see \cite{Xu2025} and citations therein for an excellent survey). Both the former, in the singular case, and especially the latter, have relied on recent key developments and results in birational geometry, and especially the Minimal Model Programme (MMP) for projective varieties \cite{BCHM10,HMX13,HMX14,Birkar21,Birkar19}.

In parallel, in recent years, the focus of many birational geometers has been the extension of these MMP techniques to (algebraically integrable) foliations, i.e. saturated subsheaves $T_\F\subset T_{X}$ closed under Lie bracket. In the specific case of adjoint foliated structures, i.e. triples $(X,\F,t)$ with ``canonical'' divisor $(1-t)K_{\X}+tK_{\F}$, recent results have shown that key results of the MMP, such as finite generation, existence and termination of flips, and even an analogue of the BAB conjecture, hold assuming mild singularity conditions \cite{CHLMSSX24, CHLMSX25}. The existence of these MMP techniques, as well as the massive developments in the theory of K-stability for Fano varieties, prompts the following, very natural question:

\begin{question}\label{quest: quest intro}
    Does there exist an analogue of K-stability for adjoint Fano foliation structures, or Fano foliations?
\end{question}

In this note, we answer Question \ref{quest: quest intro} affirmatively, by introducing a K-stability notion for adjoint foliated structures via test configurations. We also prove some key results in the case of adjoint Fano foliated structures such as reduction to special test configurations, existence of a valuative criterion via prime divisors, and boundedness. These results parallel the key results of \cite{LX14, Fujita16,BJ20, Jiang2020} for K-stability of Fano varieties.

More precisely, we introduce the notion of $\F$-compatible test configurations. For a foliated polarised variety $(X,\F,L)$ these are usual (normal) test configurations $(\X,\L)\rightarrow\A^1$ on $(X,L)$ with the added condition that there exists a $\G_m$-equivariant (algebraically integrable) foliation $T_{\F_\X}\subset T_{\X/\A^1}$, that restricts to the foliation $\F$ in the general fibre of the test configuration (see more details in Definition \ref{def: foliated t.c.} and Section \ref{sec: K-stability def}). This notion allows us to define a foliated Donaldson--Futaki invariant, intersection-theoretically
\[
\DF^{[t]}(\X,\F_{\X},\L)
:=
\frac{1}{V}\left(
\frac{n}{n+1}\,\mu(X,\F,L)\bar{\L}^{n+1}
+
K^{[t]}_{\bar{\X}/\mathbb P^1}\cdot \bar{\L}^n
\right).
\]

The positivity of this invariant allows us to define the notion of $t$-K-stability which is the key notion of this paper (see also Definition \ref{def: k-stability definition} for more details).

\begin{definition}\label{def: k-stability definition intro}
Let $(X,\F,L)$ be a polarised foliated variety.

\begin{enumerate}
\item $(X,\F,L)$ is \emph{$t$-K-semistable} if $\DF^{[t]}(\X,\F_{\X},\L)\ge0$ for all normal $\F$-compatible test configurations.
\item $(X,\F,L)$ is \emph{$t$-K-stable}
if the above inequality is strict for all non-trivial $\F$-compatible test configurations.
\item $(X,\F,L)$ is \emph{$t$-K-polystable} if it is $t$-K-semistable and \[
\DF^{[t]}(\X,\F_{\X},\L)= 0 \text{ if and only if }(\X,\F_{\X}, \L) \text{ is $\F$-compatible of product type.}
\] 
\end{enumerate}
\end{definition}

Our next step is to investigate some of the properties of this stability notion. We focus on adjoint Fano foliated structures, i.e. triples $(X,\F,t)$ where the divisor $-K_{X,\F}^{[t]}: = -tK_X-(1-t)K_\F$ is ample. The study of these structures in the context of our notion of $t$-K-semistability is natural for two reasons. First, by fixing $t$ and setting $L\sim_{\Q} -K_{X,\F}^{[t]}$ we land naturally in the regime of (polarised) adjoint Fano foliated structures. The second is that due to recent achievements \cite{CHLMSSX24,CHLMSX25}, we know that the MMP for such structures exists and terminates \cite[Theorem A]{CHLMSX25}, and that adjoint Fano foliated structures with mild singularity conditions are bounded \cite[Theorem B]{CHLMSX25}. As such, the expectation is that one can replicate techniques developed for the study of K-stability of Fano varieties depending on the MMP to the setting of $t$-K-stability of adjoint Fano foliated structures.

We demonstrate this by adapting the road map developed in \cite{LX14} to show that when studying $t$-K-stability of adjoint Fano foliated structures we can reduce to  $\F$-compatible special test configurations. In particular, we prove the following.

\begin{theorem}[See Theorem \ref{thm:reduction-special-mixed-DF}]
\label{thm:reduction-special-mixed-DF intro}
Let $(X,\F)$ be an adjoint Fano foliated structure, and $(\pi\colon \X\to \A^1,\L,\F_{\X})$ be a normal $\F$-compatible test configuration for $(X,\F,L)$.
Then there exists a normal $\F$-compatible special test configuration $(\X^{\mathrm{sp}},\F_{\X^{\mathrm{sp}}}, \L^{\mathrm{sp}})$ for $(X,\F,L)$, such that
\[
\,\DF^{[t]}(\X,\F_{\X},\L_{\X})\ge
\DF^{[t]}(\X^{\mathrm{sp}},\F_{\X^{\mathrm{sp}}}, \L^{\mathrm{sp}}).
\]
\end{theorem}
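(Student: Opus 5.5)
We follow the three-step strategy of \cite{LX14}, run within the category of algebraically integrable adjoint foliated structures, and use the MMP for adjoint foliated structures of \cite{CHLMSX25} in place of the classical MMP. Throughout we fix $t$ and $L\sim_{\Q}-K^{[t]}_{X,\F}$, and we compactify the test configuration to $\bar\X\to\pr^1$. Each modification of the test configuration is $\G_m$-equivariant, and the foliation is carried along: for birational maps by taking the pushforward or pullback of the foliation, and for base change by pulling back. Consequently every intermediate object is again an $\F$-compatible test configuration, since the generic fibre is unchanged.

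\textbf{Step 1 (semistable reduction and log canonical modification).} After a base change $z\mapsto z^d$ and normalisation, we may assume the central fibre is reduced. The foliated Donaldson--Futaki invariant scales by $d$ under this operation, exactly as in the classical case, because $K^{[t]}_{\bar\X/\pr^1}$ pulls back correctly once the fibre is reduced. We then take a $\G_m$-equivariant log canonical modification $\X^{\mathrm{lc}}\to\X$ of the pair $(\X,\F_\X,\X_0)$ with respect to $K^{[t]}$, obtained by running the adjoint foliated MMP over $\X$ on a foliated log resolution. We compare invariants via the standard computation. Writing $\L^{\mathrm{lc}}$ for the pullback, perturbed by a small multiple of $-E$, we have
\[
\frac{d}{d\epsilon}\DF^{[t]}\big|_{\epsilon}\le 0 .
\]
This holds because, for a $K^{[t]}$-negative extraction, the discrepancy divisor of $K^{[t]}_{\X^{\mathrm{lc}}}+\X^{\mathrm{lc}}_0$ relative to $\X$ is anti-effective on the exceptional part.

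\textbf{Step 2 (interpolation via the MMP with scaling).} On $\X^{\mathrm{lc}}$ we run a $\G_m$-equivariant $K^{[t]}_{\X^{\mathrm{lc}}/\A^1}$-MMP with scaling of $\L$ over $\A^1$. This produces an adjoint foliated model $\X^{\mathrm{an}}$ on which $\L^{\mathrm{an}}\sim_{\Q}-K^{[t]}$. Along the scaling, the line bundles $\L_\lambda=\L+\lambda(K^{[t]}+L)$ (suitably normalised) interpolate between the successive models. The key identity to prove is the foliated analogue of the LX derivative formula:
\[
\frac{d}{d\lambda}\DF^{[t]}(\X_\lambda,\L_\lambda)= -\frac{c}{V}\,\big(K^{[t]}_{\bar\X/\pr^1}+\bar\L_\lambda\big)^2\cdot\bar\L_\lambda^{n-1}\le 0 .
\]
Negativity follows from the Hodge index theorem, since $K^{[t]}+\L_\lambda$ is supported on the central fibre. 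This is a purely intersection-theoretic computation, so it transfers verbatim once $K^{[t]}$ is $\Q$-Cartier on each model. Continuity across flips and divisorial contractions holds because $\DF^{[t]}$ is computed on a common resolution.

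\textbf{Step 3 (making the central fibre irreducible).} If $\X^{\mathrm{an}}_0$ is reducible, we follow LX: we choose a component, run a further MMP over $\A^1$ (after base change) that contracts the other components, and again show via the same derivative computation that $\DF^{[t]}$ does not increase. The output has an irreducible central fibre, with the induced foliated triple being lc and adjoint Fano. By adjunction for foliations along the central fibre, this output is a special test configuration in the sense of Definition~\ref{def: foliated t.c.}.

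\textbf{Main obstacle.} The hardest step is the existence of the equivariant MMP with scaling for $(\X,\F_\X,t)$ over $\A^1$ with the central fibre as boundary. This requires that the singularity class (lc, or the appropriate ``adjoint lc'' class of \cite{CHLMSSX24}) be preserved by the MMP and by adjunction to $\X_0$. We would first try to reduce to the cases covered by \cite[Theorem A]{CHLMSX25}, passing to a $\Q$-factorial foliated dlt modification where their cone, contraction and flip theorems apply, and using $\G_m$-equivariance via the standard argument that extremal rays are $\G_m$-invariant for a connected group.
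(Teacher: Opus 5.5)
Your three steps are the paper's (and \cite{LX14}'s): base change plus an lc/qdlt modification, a $\G_m$-equivariant $K^{[t]}$-MMP with scaling down to the anti-adjoint model controlled by the Zariski-lemma sign of $\bar{\L}_\lambda^{n-1}\cdot(K^{[t]}+\bar{\L}_\lambda)^2$, and then extraction of a single vertical component followed by the relative anti-adjoint model. The genuine gap is at the end of Step 3.

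Specialness requires $\X^{\mathrm{sp}}_0$ to be reduced and irreducible and $(\X^{\mathrm{sp}},\X^{\mathrm{sp}}_0)$ to be plt. An lc output with irreducible central fibre is not enough, and ``adjunction for foliations along the central fibre'' cannot supply plt. Adjunction goes from plt of the pair to klt of the fibre. Inversion of adjunction would need klt of the fibre as input, which you do not know. Moreover $\X^{\mathrm{sp}}_0$ is $\F$-invariant (Lemma~\ref{lem:vertical-divisors-invariant}), so foliated adjunction along it says nothing about the ambient pair. The paper gets plt from the construction itself. The MMP on the semistable resolution $Y$ is also a $(K_{\mathcal A_Y}+\Gamma+\delta G+Y_0+L_Y)$-MMP with $L_Y$ general in $|L_Y|_{\Q}$, which keeps $(\X',\X'_0+L')$ dlt. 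A dlt pair on a $\Q$-factorial variety with irreducible boundary $\X'_0$ is then plt.

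Equally essential, and hidden in your ``we choose a component'', is which divisor survives. It must be an lc place $E_1$ of $(\widetilde{\X}^{ac},\widetilde{\X}^{ac}_0)$, made by the perturbation the unique vertical component of minimal coefficient, normalised to $a_1=0$. Then the remaining vertical part $\sum_{j\ge 2}a_jE_j$ is effective, contains no whole fibre, and is exactly what Proposition~\ref{prop:foliated-prop1} contracts. The same choice drives the $\DF^{[t]}$ comparison. On a common resolution put $N:=p^*K^{[t]}_{\widetilde{\X}^{ac}}-q^*K^{[t]}_{\X^{\mathrm{sp}}}$; then $-N$ is $q$-nef. The negativity lemma gives $N\ge 0$ precisely because $q_*N=0$, i.e.\ because $\X^{\mathrm{sp}}_0$ has discrepancy zero (Proposition~\ref{prop:512C-discrepancy-zero-special-fibre}). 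For a divisor that is not an lc place, $q_*N$ is a negative multiple of $\X^{\mathrm{sp}}_0$ and this step of the argument fails.

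Two smaller points. First, the base change does not simply multiply $\DF^{[t]}$ by $d$. The relative adjoint canonical class of the normalised base change is the pullback of $K^{[t]}_{\bar{\X}/\PP^1}$ plus an anti-effective divisor supported on the central fibre, so you only get $d\,\DF^{[t]}(\X,\F_{\X},\L)\ge \DF^{[t]}$ of the base change. Hence your argument, like Theorem~\ref{thm:reduction-special-mixed-DF}, yields $m\,\DF^{[t]}(\X,\F_{\X},\L)\ge\DF^{[t]}(\X^{\mathrm{sp}},\F_{\X^{\mathrm{sp}}},\L^{\mathrm{sp}})$, not the displayed inequality without $m$; this is harmless for the stability applications. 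Second, the sign in Step 1 does not come from anti-effectivity of a discrepancy divisor. Since $K^{[t]}_{\X}$ need not be $\Q$-Cartier on $\X$, that divisor is not even defined. The sign comes from the same Zariski-lemma computation as in Step 2, perturbing $\pi^*\L$ in the direction $K^{[t]}_{\X^{\mathrm{lc}}}+\pi^*\L$. This direction is $\Q$-linearly equivalent over the base to a divisor supported on the central fibre (Proposition~\ref{prop: df decreases in first step}).
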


The method of proof for Theorem \ref{thm:reduction-special-mixed-DF intro} is as follows. Starting from a normal $\F$-compatible test configuration, after a finite base change (where $m$ is the degree of the extension) we construct a compactification over $\PP^1$ and produce a log canonical model $\X^{0}$ of the total space. We then run a $\G_m$-equivariant MMP with scaling for the adjoint divisor $K^{[t]}_{\X/\PP^1}$, using the techniques of \cite{CHLMSSX24, CHLMSX25}. The end-point of this MMP is an anti-adjoint model $\X^{ac}$ on which the polarisation is relatively
anti-canonical. From this model, we extract a special fibre of discrepancy zero and pass to the relative anti-adjoint model, producing the special $\F$-compatible test configuration $(\X^{\mathrm{sp}},\F_{\X^{\mathrm{sp}}}, \L^{\mathrm{sp}})$. For more details on this construction see the beginning of Section \ref{sec: special t.c.} for a schematic description. This proves the first part of the Theorem. For the latter part, we show that the mixed
Donaldson--Futaki invariant is non-increasing along each birational step performed in the MMP procedure (see Section \ref{sec: reduction to special t.c.} for the technical details). This implies that the resulting special test configuration has no larger mixed Donaldson--Futaki invariant than the original one, proving the second part of the Theorem.

Our next step is to define a $t$-Ding invariant and the notion of $t$-Ding stability for adjoint Fano foliated structures. Although we will not go into specifics about the definition here (see Section \ref{sec: def ding inv}), following the above procedure we also show that the $t$-Ding invariant decreases along the birational steps detailed above, giving us an identical theorem to Theorem \ref{thm:reduction-special-mixed-DF intro} for the $t$-Ding invariant (see Theorem \ref{thm:t-Ding-special-reduction}). Combining this with the earlier reduction theorem we obtain the following equivalence.

\begin{theorem}[See Corollary \ref{cor:t-K-vs-t-Ding}]\label{thm: intro ding vs K}
    Let $(X,\F,t)$ be an adjoint Fano foliated structure and set $L\sim_{\Q} -K^{[t]}_{X,\F}$. Then the following are equivalent 
\begin{enumerate}
\item
$(X,\F,t)$ is $t$-K-semistable (respectively uniformly $t$-K-stable);
\item
$(X,\F,t)$ is $t$-Ding-semistable (respectively uniformly $t$-Ding-stable).
\end{enumerate}
\end{theorem}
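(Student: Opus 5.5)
The plan is to follow the Fano-variety argument of Berman and of Fujita: compare the two invariants directly, then use the two reduction theorems to pass to special test configurations, where they agree.

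\textbf{Step 1: pointwise comparison.} For every normal $\F$-compatible test configuration $(\X,\F_\X,\L)$ of $(X,\F,L)$ with $L\sim_\Q -K^{[t]}_{X,\F}$, I will prove
\[
\DF^{[t]}(\X,\F_\X,\L)\ \ge\ \Ding^{[t]}(\X,\F_\X,\L).
\]
The $t$-Ding invariant should be the energy term $-\frac{\bar\L^{n+1}}{(n+1)V}$ plus a term $1-\lct$ of a pair built from the mixed canonical divisor.

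Set $D:=-K^{[t]}_{\bar\X/\PP^1}-\bar\L$. This is a $\G_m$-invariant $\Q$-divisor supported on $\X_0$, since it is trivial away from the central fibre. Write $D=\sum_i d_iE_i$ over the components $E_i$ of $\X_0=\sum_i b_iE_i$. Then:
\begin{itemize}
\item Since $\mu=1$ in the anti-adjoint normalisation, $\DF^{[t]}$ splits as the energy term plus $-\frac{1}{V}D\cdot\bar\L^n$.
\item The Ding term is $1-\lct$ of the pair $(\X,-D)$ with respect to $\X_0$, computed with the mixed discrepancies $(1-t)a(E,\X)+t\,a(E,\F_\X)$. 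Here I use that the mixed log discrepancy along a vertical divisor is linear in $t$.
\item Restricting to the $E_i$ themselves gives $\lct\le\min_i (d_i+\text{mixed log discrepancy of }E_i)/b_i$.
\item Since $\L^n\cdot E_i\ge0$, the quantity $\frac{1}{V}D\cdot\bar\L^n$ is bounded above by $\lct-1$. This is the desired inequality.
\end{itemize}

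The key point is identifying the foliated log discrepancy of each $E_i$. Each $E_i$ is an invariant divisor for the $\G_m$-action and is not $\F_\X$-invariant in the vertical sense, because $\F_\X\subset T_{\X/\A^1}$. So its log discrepancy with respect to $K_{\F_\X}$ should be $0$, while with respect to $K_\X$ it is $1$ up to multiplicity. I expect this to be the main obstacle: getting the $\epsilon(E)$ conventions of the adjoint structure correct, and checking that the resulting coefficients make $\lct$ at most the expected value. I would first test this on a product test configuration.

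\textbf{Step 2: equality on special test configurations.} For an $\F$-compatible special test configuration, $\X_0$ is irreducible and reduced and $\L\sim_\Q -K^{[t]}_{\X/\A^1}$. Hence $D=0$, the pair $(\X,\F_\X,\X_0)$ is adjoint log canonical by inversion of adjunction from \cite{CHLMSX25}, and $\lct=1$. Both invariants then reduce to the same intersection number, so $\DF^{[t]}=\Ding^{[t]}$.

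\textbf{Step 3: combining.} Suppose $(X,\F,t)$ is $t$-Ding-semistable. Step 1 gives $\DF^{[t]}\ge\Ding^{[t]}\ge0$, so it is $t$-K-semistable.

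Conversely, suppose it is $t$-K-semistable, and take any test configuration. Theorem~\ref{thm:t-Ding-special-reduction} produces a special $\X^{\mathrm{sp}}$ with
\[
\Ding^{[t]}(\X)\ \ge\ \Ding^{[t]}(\X^{\mathrm{sp}})\ =\ \DF^{[t]}(\X^{\mathrm{sp}})\ \ge\ 0.
\]

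For the uniform versions, the same chain works with $\DF^{[t]}\ge\epsilon J^{\NA}$. Two further facts are needed:
\begin{itemize}
\item The reduction procedure controls $J^{\NA}$ up to the base-change factor $m$, as in \cite{LX14,Fujita16}. I would verify this along each MMP step using the monotonicity estimates already proved for $\DF^{[t]}$, with the normalisation $\DF-\epsilon J$.
\item $J^{\NA}$ depends only on $\L$, not on the foliation.
\end{itemize}
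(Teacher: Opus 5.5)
Your architecture (the pointwise inequality $\DF^{[t]}\ge\Ding^{[t]}$, equality on the special model, and Theorem~\ref{thm:t-Ding-special-reduction} for the converse) is exactly the paper's proof of Corollary~\ref{cor:t-K-vs-t-Ding}, and your Step~3 is fine. The uniform case also needs nothing extra. Theorem~\ref{thm:t-Ding-special-reduction} is already stated for $\Ding^{[t]}-\delta J^{\mathrm{NA}}$, with the factor $m$ on both terms. By contrast, the $\DF^{[t]}$-monotonicity lemmas of Section~\ref{sec: reduction to special t.c.} never involve $J^{\mathrm{NA}}$, so they could not supply that control.

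The problem is Steps~1--2, which the paper simply imports from Fujita and which you try to carry out; as written they fail. First, your invariance claim is backwards. Precisely because $\F_{\X}\subset T_{\X/\A^1}$, every component $E_i$ of $\X_0$ \emph{is} $\F_{\X}$-invariant (Lemma~\ref{lem:vertical-divisors-invariant}). So $\varepsilon(E_i)=0$ and $A^{[t]}_{\bar{\X},\bar{\F}}(E_i)=(1-t)\cdot 1+t\cdot 0=1-t$. The threshold must be computed with these log discrepancies, not with $(1-t)a(E,\X)+t\,a(E,\F_{\X})$. Consequently the threshold along the central fibre is at most $1-t$, never $1$. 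The trivial test configuration already shows this: there $D=0$ and $\DF^{[t]}=0$, while testing on $\ord_{\X_0}$ gives $\lct^{[t]}\le 1-t$. So no invariant of the form ``energy $\pm(1-\lct)$'' can agree with $\DF^{[t]}$ on special test configurations when $t>0$, and your Step~2 assertion $\lct=1$ is false. The normalising constant must be $1-t$, as in Definition~\ref{def:t-Ding}.

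Second, the sign in Step~1 is reversed. Testing the threshold on the $E_i$ only ever gives \emph{upper} bounds on $\lct$. So the Ding invariant must be increasing in $\lct$, and the pair must be $(\bar{\X},D)$, i.e.\ the numerator is $A^{[t]}(w)-w(D)$. With your pair $(\X,-D)$, testing on $E_i$ gives $c\,b_i\le d_i+A^{[t]}(E_i)$, which bounds $D\cdot\bar{\L}^n$ from \emph{below}. So your conclusion $\frac{1}{V}D\cdot\bar{\L}^n\le\lct-1$ does not follow. The correct version sets $c:=\lct^{[t]}(\bar{\X},\bar{\F};D;\bar{\X}_0)$; testing on $\ord_{E_i}$ gives $d_i+c\,b_i\le 1-t$, hence
\[
D\cdot\bar{\L}^n+cV=\sum_i(d_i+c\,b_i)\,E_i\cdot\bar{\L}^n\le(1-t)\sum_i E_i\cdot\bar{\L}^n\le(1-t)V,
\]
using $E_i\cdot\bar{\L}^n\ge 0$, $b_i\ge 1$ and $\bar{\X}_0\cdot\bar{\L}^n=V$. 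This is exactly
\[
\DF^{[t]}=-\frac{\bar{\L}^{n+1}}{(n+1)V}-\frac{1}{V}D\cdot\bar{\L}^n\ge-\frac{\bar{\L}^{n+1}}{(n+1)V}-(1-t)+c=\Ding^{[t]}.
\]

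Finally, Step~2 as stated for \emph{arbitrary} special test configurations is not justified. We have
\[
A^{[t]}(w)-(1-t)w(\X_0)=(1-t)\bigl(A_{\X}(w)-w(\X_0)\bigr)+t\,A_{\X,\F_{\X}}(w).
\]
Plt of $(\X,\X_0)$ only handles the first summand. Mixed log canonicity also needs $A_{\X,\F_{\X}}(w)\ge 0$, which is not part of the definition and for which you give no argument. You do not need it: Step~3 only uses the equality on the particular $\X^{\mathrm{sp}}$, and Theorem~\ref{thm:t-Ding-special-reduction} already asserts it there.
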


Using this result together with an adaptation of the argument in \cite[Theorem 4.1]{Fujita16} we are also able to obtain the following key Theorem, which provides a valuative criterion for $t$-K-stability of adjoint Fano foliated structures (with $L\sim_{\Q} -K_{X,\F}^{[t]}$).

\begin{theorem}[See Theorem \ref{thm:main-prime-divisor-criterion}]
\label{thm:dreamy-valuative-criterion intro 1}
$(X,\F,L)$ is $t$-K-semistable (respectively uniformly $t$-K-stable) if and only if $\beta^{[t]}_{X,\F,L}(v)\ge 0$ (respectively $>\epsilon\cdot j(v)$ for $0<\epsilon$) for every divisorial valuation $v$.
\end{theorem}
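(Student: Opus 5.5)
We follow Fujita's argument \cite[Theorem 4.1]{Fujita16}, as adapted by Li and Blum--Jonsson, with the Ding reduction (Theorem \ref{thm:t-Ding-special-reduction}) and the equivalence of Corollary \ref{cor:t-K-vs-t-Ding} doing most of the work. We work with $L\sim_{\Q}-K^{[t]}_{X,\F}$ and, for a divisorial valuation $v=c\cdot\ord_E$ over $X$, take $\beta^{[t]}_{X,\F,L}(v)=A^{[t]}_{X,\F}(v)-S_L(v)$. Here $A^{[t]}=tA_X+(1-t)A_\F$ is the mixed log discrepancy and $S_L(v)=\frac1V\int_0^\infty\vol(L-xv)\,dx$. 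Both directions go through the correspondence between $\F$-compatible test configurations and filtrations of $R=\bigoplus_m H^0(X,mL)$.

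\textbf{Step 1 (valuations give test configurations).} Fix a divisorial $v$ and consider the filtration $\cF_v^\lambda R_m=\{s: v(s)\ge\lambda\}$. We truncate it and take the associated (generally non-finitely generated) sequence of test configurations $\X_k=\Proj_{\A^1}\bigoplus_{m,\lambda}\cF_v^{\lambda}R_{km}t^{-\lambda}$, normalised. The key observation is that the degeneration is induced by the $\G_m$-action from a valuation. Because of this, the relative foliation $T_{\X_k/\A^1}\supset T_{\F_{\X_k}}$ can be taken to be the saturation of the pullback of $\F\times\A^1$ on $X\times\A^1$ under the birational map, and $\G_m$-equivariance is automatic. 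So each $\X_k$ is $\F$-compatible in the sense of Definition \ref{def: foliated t.c.}.

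We then compute the $t$-Ding invariant of these configurations. On the Ding side this is an lct computation: $\Ding^{[t]}(\X_k)=-\frac1V\bar\L^{n+1}/(n+1)-1+\lct$ of the boundary. The mixed log discrepancy enters through the coefficient of the special fibre in $K^{[t]}_{\X/\A^1}$, which equals $A^{[t]}(v)-1$ for the divisor on $\X$ corresponding to $v$. Following Fujita, this gives $\Ding^{[t]}(\X_k)\le \beta^{[t]}(v)+o(1)$ as $k\to\infty$. Using $\DF^{[t]}\ge\Ding^{[t]}$ (equivalently Corollary \ref{cor:t-K-vs-t-Ding}), $t$-K-semistability then forces $\beta^{[t]}(v)\ge0$. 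In the uniform case the same computation gives $\beta^{[t]}(v)\ge\epsilon\, j(v)$, since the $J$-norm of $\X_k$ converges to $j(v)=T_L(v)-S_L(v)$ type quantities.

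\textbf{Step 2 (converse).} Assume $\beta^{[t]}\ge0$ (resp. $\ge\epsilon j$) on all divisorial valuations. By Theorem \ref{thm:reduction-special-mixed-DF intro} it suffices to test special $\F$-compatible test configurations. For such $\X$, the central fibre $\X_0$ is integral, and its restriction gives a divisorial valuation $v_{\X_0}=r(\ord_{\X_0})$ on $X$. In this situation one has the identity $\DF^{[t]}(\X,\F_\X,\L)=\Ding^{[t]}(\X)=\beta^{[t]}(v_{\X_0})$, up to the normalising factor from the base change. This follows from the usual intersection computation on special test configurations: $\bar\L^{n+1}$ is expressed as $-(n+1)VS(v)$, and $K^{[t]}_{\X/\A^1}$ restricted along $\X_0$ contributes $A^{[t]}(v)$. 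The sign then gives the result. For uniform stability we also use $J(\X)=j(v_{\X_0})$.

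\textbf{Main obstacle.} The delicate point is the identification of the foliated discrepancy. We need the coefficient of $\X_0$ in $K_{\F_{\X}}-\pi^*K_{\F\times\A^1}$ to be exactly $A_\F(v)-1$, or $A_\F(v)$ depending on whether $E$ is $\F$-invariant. This requires controlling the $\epsilon(E)$ term in foliated discrepancies, using the algebraic integrability of $\F$ and the fact that $\X_0$ is vertical over $\A^1$ for the relative foliation. We would handle this by defining $A_\F$ via the induced foliation on $X\times\A^1$ and checking compatibility of the two conventions on the $\G_m$-invariant divisor. We would also check that the Fujita approximation argument respects $\F$-compatibility of the approximating configurations $\X_k$.
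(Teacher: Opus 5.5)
Your skeleton is the paper's. For $(1)\Rightarrow(2)$ you pass to $\Ding^{[t]}$ and test against Fujita's filtration (flag-ideal) configurations of $X\times\A^1$, carrying the saturated transform of $p_1^*\F$; this is Lemma~\ref{lem:birational-transform-product-foliation} and Proposition~\ref{prop:Fujita-blowup-F-compatible}, which also check algebraic integrability. You then bound the mixed threshold by evaluating it at one valuation. For the converse you reduce to special configurations, where $\DF^{[t]}=\Ding^{[t]}=\beta^{[t]}(v_{\X_0})$.

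However, the inference you write in Step 1 runs the wrong way. $\DF^{[t]}\ge\Ding^{[t]}$ together with $\DF^{[t]}\ge 0$ says nothing about the sign of $\Ding^{[t]}(\X_k)$. What you need is the nontrivial implication ``$t$-K-semistable $\Rightarrow$ $t$-Ding semistable'' and its $\delta J^{\mathrm{NA}}$-version. That is $(1)\Rightarrow(2)$ of Corollary~\ref{cor:t-K-vs-t-Ding}, which rests on the Ding reduction of Theorem~\ref{thm:t-Ding-special-reduction}. The uniform case of your Step 2 also needs that theorem, not Theorem~\ref{thm:reduction-special-mixed-DF}, which does not track $J^{\mathrm{NA}}$.

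The real gap is the one computation that is new in the foliated setting: you defer it, and the values you anticipate are wrong.

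\emph{The discrepancy computation.} Let $\widetilde E$ be the exceptional divisor of the blow-up of $E\times\{0\}\subset Y\times\A^1$, i.e. the Gauss extension of $\ord_E$ with $\ord(s)=1$. For a special configuration, $\ord_{\X_0}$ is the analogous extension of $v_{\X_0}$. The paper shows (Proposition~\ref{prop: auxil ord computation}, and Claim 3 in the proof of Theorem~\ref{thm:t-Ding-prime-divisor}) that $A_{X\times\A^1,p_1^*\F}(\widetilde E)=A_{X,\F}(E)$ \emph{whether or not} $E$ is invariant:
\begin{itemize}
\item If $E=\{z=0\}$ is transverse, then in the chart $s=zu$ the lift $\partial_z-\frac{u}{z}\partial_u$ saturates to $z\partial_z-u\partial_u$. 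This raises the foliated discrepancy along $\widetilde E$ by one, exactly compensating $\varepsilon(E)=1$ versus $\varepsilon(\widetilde E)=0$.
\item If $E$ is invariant, the generators lift regularly and nothing changes.
\end{itemize}
Combined with $A_{X\times\A^1}(\widetilde E)=A_X(E)+1$, this gives $A^{[t]}(\widetilde E)=A^{[t]}_{X,\F}(E)+(1-t)$. Likewise, the coefficient of $\X_0$ in $K^{[t]}_{\X/\A^1}$, relative to the pullback of $p_1^*K^{[t]}_{X,\F}$, is $A^{[t]}_{X,\F}(v_{\X_0})$. It is not $A^{[t]}(v)-1$, and it does not depend on $\varepsilon(E)$. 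An $\varepsilon(E)$-dependent coefficient, as you propose, would replace $\beta^{[t]}$ by a different invariant on transverse or on invariant divisors.

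\emph{The Ding normalisation.} The constant in $\Ding^{[t]}$ is $1-t$, not $1$. Every component of a central fibre is vertical, hence $\F$-invariant (Lemma~\ref{lem:vertical-divisors-invariant}), hence has mixed log discrepancy $1-t$. So $\lct^{[t]}(\bar{\X},\bar{\F};0;\bar{\X}_0)=1-t$.

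This $1-t$ cancels against the shift in $A^{[t]}(\widetilde E)$ when Fujita's limit evaluates $\lct^{[t]}(X\times\A^1,p_1^*\F;\mathcal I_r^{1/(rr_0)};(s))$ at $\ord_{\widetilde E}$, using $\ord_{\widetilde E}(\mathcal I_r)\ge re_+$. That cancellation is what produces exactly $\beta^{[t]}_{X,\F,L}(E)\ge\delta\, j_L(E)$. With your constant $1$ you would get $\Ding^{[t]}=\DF^{[t]}-t$ on $t$-weakly special configurations, which breaks Step 2.

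Finally, the weights must be $A^{[t]}=(1-t)A_X+tA_{\F}$ to match $K^{[t]}=(1-t)K_X+tK_{\F}$; you have them swapped.
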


Here $\beta^{[t]}$ is the mixed $\beta$-invariant, 
\[
\beta^{[t]}_{X,\F,L}(v)
:=
A^{[t]}_{X,\F}(v)-S_L(v).
\]

The existence of a $\beta$-invariant for $t$-K-stability of adjoint Fano foliated structures prompts us to define mixed $\delta$ and $\alpha$-invariants, via the mixed log canonical threshold and basis type divisors, as well as a mixed normalised volume. We show (see Theorem \ref{thm:mixed-theorem-4-4}) that these can be described as follows
\[\delta^{[t]} := \inf_{v\in \Val^{*}_X}\frac{A^{[t]}_{X,\F}(v)}{S(v)},\quad \alpha^{[t]}:=
\inf_{v\in \Val_X^*}\frac{A^{[t]}_{X,\F}(v)}{T_L(v)}, \quad \widehat{\mathrm{vol}}^{[t]}_{X,\F}(v):=
A^{[t]}_{X,\F}(v)^n\,\mathrm{vol}(v)\]
following the approach in \cite{BJ20}. We are then able to show the following alternative valuative description of $t$-K-stability of adjoint Fano foliated structures.

\begin{theorem}[See Theorem \ref{thm:dreamy-valuative-criterion} and Corollary \ref{cor: k-ss to alpha}]\label{thm: delta alpha thm intro}
    The adjoint Fano foliated structure $(X,\F,t)$ is $t$-K-semistable (uniformly $t$-K-stable) if and only if $\delta^{[t]}(X,\F;L)\ge 1$ (respectively $>1$). If $(X,\F,t)$ is $t$-K-semistable then $\alpha^{[t]}(X,\F;L)\ge \frac{1}{n+1}$.
\end{theorem}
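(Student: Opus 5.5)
The plan is to reduce the statement to the divisorial valuative criterion of Theorem \ref{thm:dreamy-valuative-criterion intro 1}, and then pass from divisorial to arbitrary valuations. The alpha bound will follow from a comparison between $S_L$ and $T_L$. Write $\delta^{[t]}=\delta^{[t]}(X,\F;L)$.

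\textbf{The $\delta$-criterion.} By Theorem \ref{thm:mixed-theorem-4-4}, $\delta^{[t]}$ is the infimum of $A^{[t]}_{X,\F}(v)/S_L(v)$ over $v\in\Val_X^*$.

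First I reduce this infimum to divisorial valuations, following \cite{BJ20}. Any $v$ with $A^{[t]}_{X,\F}(v)<\infty$ can be approximated by quasi-monomial valuations on log resolutions, and these in turn by divisorial valuations $v_k$. Along such an approximation $A^{[t]}_{X,\F}(v_k)\to A^{[t]}_{X,\F}(v)$ and $S_L(v_k)\to S_L(v)$. The convergence of $A^{[t]}$ should hold because $A^{[t]}=tA_X+(1-t)A_\F$ is a convex combination of log discrepancies; each term is linear on quasi-monomial cones and lower semicontinuous under retraction. The convergence of $S_L$ is its continuity on quasi-monomial cones. Hence $\delta^{[t]}$ equals the infimum over divisorial $v$.

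Now $\delta^{[t]}\ge1$ if and only if $\beta^{[t]}(v)=A^{[t]}(v)-S_L(v)\ge0$ for all divisorial $v$. By Theorem \ref{thm:dreamy-valuative-criterion intro 1} this is equivalent to $t$-K-semistability.

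For the uniform case, suppose $\delta^{[t]}>1$, say $\delta^{[t]}\ge1+\eta$. Then $\beta^{[t]}(v)\ge \frac{\eta}{1+\eta}A^{[t]}(v)$. To pass to $j$ I need two bounds, both adapting \cite{BJ20,Fujita16}. The first is $A^{[t]}(v)\ge c\,T_L(v)$ for some $c>0$, which is the positivity $\alpha^{[t]}>0$ coming from the mild singularities of the adjoint Fano structure. The second is $T_L(v)\ge j_L(v)=T_L(v)-S_L(v)$. Together these give $\beta^{[t]}(v)>\epsilon\, j(v)$.

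Conversely, suppose $\beta^{[t]}\ge\epsilon j$. Using $S_L\le \frac{n}{n+1}T_L$, we get $j=T_L-S_L\ge \frac{1}{n}S_L$. Therefore $A^{[t]}\ge(1+\epsilon/n)S_L$, so $\delta^{[t]}\ge 1+\epsilon/n>1$.

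\textbf{The $\alpha$-bound.} For every $v$ I will use the Fujita/BJ inequality $S_L(v)\ge \frac{1}{n+1}T_L(v)$. It follows from the concavity of $x\mapsto\vol(L-xv)^{1/n}$ on $[0,T_L(v)]$, by comparing with the cone volume. Then semistability gives $\delta^{[t]}\ge1$, hence $A^{[t]}(v)\ge S_L(v)\ge \frac{1}{n+1}T_L(v)$. Taking the infimum gives $\alpha^{[t]}\ge\frac{1}{n+1}$. The one check here is that the definition of $T_L$ via the mixed log canonical threshold agrees with the pseudo-effective threshold. This should be part of Theorem \ref{thm:mixed-theorem-4-4}.

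\textbf{Main obstacle.} I expect the hardest step to be the approximation step, namely that $A^{[t]}_{X,\F}$ behaves well (is lower semicontinuous and linear on quasi-monomial simplices) in the foliated setting. The foliated log discrepancy $A_\F$ involves the invariance of divisors: it is $0$ on invariant divisors and $1$ on non-invariant ones, up to the discrepancy. So linearity has to be checked on resolutions adapted to $\F$, using the foliated log resolutions of \cite{CHLMSSX24}. If that fails, I would instead restrict attention throughout to divisorial valuations. This is permissible if the definition of $\delta^{[t]}$ via basis type divisors already yields the infimum over divisorial $v$, since the mixed lct is computed by divisors.
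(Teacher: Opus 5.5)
Your proposal is essentially the paper's argument. It combines the divisorial $\beta^{[t]}$-criterion (Corollary \ref{cor:t-K-semistable-prime-divisor}) with the valuative formula for $\delta^{[t]}$ (Theorem \ref{thm:mixed-theorem-4-4}, which already gives the divisorial infimum via Proposition \ref{prop:mixed-delta-m-valuation}, i.e.\ your fallback), uses the comparisons $\frac1n S_L\le j_L\le nS_L$ from \cite[Proposition 3.11]{BJ20}, and gets the $\alpha$-bound from $S_L\ge\frac{1}{n+1}T_L$ together with the trivial inequality $v(D)\le T_L(v)$ underlying Remark \ref{rem: val formula alpha}. One fix is needed: in the direction $\delta^{[t]}>1\Rightarrow$ uniform stability, the bound $A^{[t]}_{X,\F}\ge c\,T_L$ cannot come from singularity assumptions (none are made), but it follows from $A^{[t]}_{X,\F}\ge(1+\eta)S_L\ge\frac{1+\eta}{n+1}T_L$, or you can bypass $T_L$ as the paper does via $\beta^{[t]}\ge\eta S_L\ge\frac{\eta}{n}j_L$.
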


The boundedness of the $\alpha^{[t]}(X,\F;L)$ invariant from below for $t$-K-semistable adjoint Fano foliated structures, along with \cite[Theorem B]{CHLMSX25} allows us to prove that $t$-K-semistable adjoint Fano foliated structures are bounded.

\begin{theorem}[See Corollary \ref{cor: boundedness for K-semi}]\label{thm: boundedness for K-semi intro}
    Fix a positive integer $d$ and a real number $\delta > 0$. Then the set of $d$-dimensional $t$-K-semistable adjoint Fano foliated structures with $(-K_{X,\F}^{[t]})^d > \delta$ forms a bounded family.
\end{theorem}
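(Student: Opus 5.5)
The plan is to deduce boundedness from the lower bound $\alpha^{[t]}(X,\F;L)\ge \frac{1}{n+1}$ in Theorem \ref{thm: delta alpha thm intro} (Corollary \ref{cor: k-ss to alpha}). This bound will then feed into the boundedness theorem \cite[Theorem B]{CHLMSX25} for adjoint Fano foliated structures with controlled singularities. Throughout, $n=d$ and $L\sim_{\Q}-K^{[t]}_{X,\F}$. The main point is to turn $\alpha^{[t]}\ge \frac1{d+1}$ into a uniform singularity condition, namely $\epsilon$-lc type control for the adjoint structure $(X,\F,t)$ with $\epsilon$ depending only on $d$ and $\delta$. That is the hypothesis \cite[Theorem B]{CHLMSX25} requires, in the same way the BAB theorem requires $\epsilon$-lc.

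First, for any prime divisor $E$ over $X$, the valuation $v=\ord_E$ satisfies
\[
A^{[t]}_{X,\F}(E)\ \ge\ \alpha^{[t]}\, T_L(E)\ \ge\ \tfrac{1}{d+1}\,T_L(E),
\]
using the valuative description of $\alpha^{[t]}$ from Theorem \ref{thm:mixed-theorem-4-4}. Second, I need a uniform lower bound on $T_L(E)$ in terms of the volume. The standard Fano-variety argument (as in Jiang's proof of boundedness) applies here. For $E$ over a closed point, comparing $\vol(L-xE)$ with $\vol(L)-x^d\vol(\ord_E)$-type estimates gives $\vol(L-xE)>0$ for small $x$. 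More concretely, one uses that $T_L(E)\ge S_L(E)$ and $S_L(E)\ge \frac{1}{d+1}T_L(E)$. Alternatively, one bounds $T_L(E)$ below via $\vol(L)\le T_L(E)^d\,\vol(\ord_E)$-style inequalities coming from the filtration by $E$. This yields $A^{[t]}_{X,\F}(E)\ge c(d,\delta)\,\widehat{\mathrm{vol}}$-type bounds. A cleaner route is to use the mixed normalised volume: $\widehat{\mathrm{vol}}^{[t]}_{X,\F}(v)=A^{[t]}(v)^d\vol(v)$. The adapted Liu/Fujita inequality then gives $\widehat{\mathrm{vol}}^{[t]}_{x}\ge \left(\frac{d}{d+1}\right)^d\delta^{[t]\,d}\,(-K^{[t]}_{X,\F})^d\ge \left(\frac{d}{d+1}\right)^d\delta$, via $\delta^{[t]}\ge1$ from Theorem \ref{thm: delta alpha thm intro}. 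From a positive lower bound on the local mixed normalised volume I then deduce that the mixed log discrepancies of all divisors over $X$ are bounded below by some $\epsilon(d,\delta)>0$. The argument is that a divisor $E$ with tiny $A^{[t]}(E)$ centred at $x$ gives, via $\vol(\ord_E)\le$ (a bound from the multiplicity/volume of $\OO_{X,x}$ controlled after passing to the mixed setting), a small normalised volume.

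Third, with $(X,\F,t)$ shown to be $\epsilon$-lc in the adjoint sense, $-K^{[t]}_{X,\F}$ ample, and volume $>\delta$, I will invoke \cite[Theorem B]{CHLMSX25} to conclude that the family is bounded. I must also check that the singularity hypotheses built into "adjoint Fano foliated structure" match theirs, and handle the volume upper bound. The volume upper bound follows from $\alpha^{[t]}\ge\frac1{d+1}$ by the usual estimate $\vol\le (d+1)^d\,\widehat{\mathrm{vol}}(\text{smooth point})/\alpha^{d}$-type reasoning at a general smooth point, where foliation discrepancies are explicit.

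I expect the second step, converting the $\alpha$ or $\delta$ bound into a uniform $\epsilon$-lc bound, to be the main obstacle. The difficulty is that the Fano-variety argument uses local volume bounds at possibly singular points, and the mixed log discrepancy $A^{[t]}$ mixes $A_X$ and $A_\F$, where $A_\F$ can be $0$ on invariant divisors. I would first try the direct estimate $T_L(E)\ge c(d)\,\vol(L)^{1/d}/\vol(\ord_E)^{1/d}$ combined with a lower bound on $T_L(E)$ via $S_L(E)$ and $\delta^{[t]}\ge1$. Only if that fails would I pass through the mixed normalised volume.
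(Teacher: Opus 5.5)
Your overall architecture matches the paper's. You deduce $\alpha^{[t]}\ge\frac1{d+1}$ from $t$-K-semistability, then prove a uniform lower bound on mixed log discrepancies, then apply \cite[Theorem B]{CHLMSX25}. But you never actually supply the decisive middle step, and the mechanisms you suggest for it do not work.

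The inequality $T_L(v)^d\,\vol(v)\ge L^d$ (for $v$ centred at a closed point), or your Liu-type variant with $\delta^{[t]}$, only bounds $A^{[t]}_{X,\F}(v)^d\vol(v)$ from below. Applied to $v=\ord_E$, it says nothing about $A^{[t]}_{X,\F}(E)$ alone. Your proposed conversion, bounding $\vol(\ord_E)$ above by local data at $x$, also fails. It is true that $\vol(\ord_E)\le \mathrm{mult}_xX$ when $\ord_E$ is centred at a closed point $x$. However, $\mathrm{mult}_xX$ is not uniformly bounded over the family (that is essentially what is being proved), and $\ord_E$ need not be centred at a closed point at all.

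The missing idea is Jiang's trick (\cite[\S 7.2]{Xu2025}), which the paper uses in Theorem \ref{thm:conditional-xu-type-boundedness}: apply the volume estimate not to $E$ but to an auxiliary valuation.

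\begin{itemize}
\item \emph{Construction.} Set $a:=A^{[t]}_{X,\F}(E)$ and take a model $\mu\colon Y\to X$ on which $E$ appears. Choose a very general closed point $x\in E$ at which $Y$, $E$ and $\F_Y$ are smooth, with coordinates such that $E=\{x_1=0\}$. Let $F$ be the weighted blow-up at $x$ with weights $(k/a,k,\dots,k)$.
\item \emph{Volume.} Then $\vol(\ord_F)\le a/k^d$, since the volume over $X$ at $\mu(x)$ is at most the one computed on the smooth germ $(Y,x)$.
\item \emph{Log discrepancy.} Also $A^{[t]}_{X,\F}(F)\le kd$, and this is exactly where your worry about $A_\F$ vanishing on invariant divisors is resolved. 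At a point where $\F_Y$ is smooth, the foliated log discrepancy of a monomial valuation is the sum of the weights of the coordinate directions spanning $\F_Y$. This holds whether $E$ is invariant or transverse (Lemma \ref{lem:weighted-blowup-smooth-foliation-rank-r}).
\item \emph{Cancellation.} The coefficient $a-(1-t)-t\varepsilon(E)$ of $E$ in $K^{[t]}_{Y,\F_Y}-\mu^*K^{[t]}_{X,\F}$ is multiplied by $\ord_F(E)=k/a$. It cancels the contribution of the $x_1$-direction, leaving $A^{[t]}_{X,\F}(F)=k+\bigl((1-t)(d-1)+t\,m\bigr)k\le kd$, where $m\le d-1$ is the number of remaining directions spanning $\F_Y$ (Lemma \ref{lem:weighted-blowup-cancellation}).
\end{itemize}

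Hence
\[
\alpha^{[t]}(X,\F;L)\le \frac{A^{[t]}_{X,\F}(F)}{T_L(F)}\le \frac{kd\,(a/k^d)^{1/d}}{(L^d)^{1/d}}=d\Bigl(\frac{a}{L^d}\Bigr)^{1/d},
\]
so $a\ge \alpha^{[t]}(X,\F;L)^d\,L^d/d^d>\delta/(d(d+1))^d$. This is the uniform $\epsilon$-lc bound you need. The paper phrases the same estimate via \cite[Lemma 7.24]{Xu2025}, producing $D\sim_\Q t_0L$ with $\ord_F(D)>kd$.

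In your third step, adjoint $\epsilon$-lc alone is not what the paper feeds into \cite[Theorem B]{CHLMSX25}. The paper also needs the ambient $X$ to be $\epsilon$-lc and $\epsilon\le t\le 1-\epsilon$. It obtains the former from \cite[Proposition 9.1]{CHLMSX25}, using that $\F$ is lc and $X$ is potentially klt, after a small $\Q$-factorialisation. This is why Corollary \ref{cor: boundedness for K-semi} carries those extra hypotheses, which the introductory statement suppresses. Your "check that the hypotheses match" has to be resolved in exactly this way. The separate volume upper bound you mention is not needed.
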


Here, boundedness refers to the birational geometric definition that requires that after fixing the numerical data, the relevant adjoint Fano foliated structures are contained in finitely many algebraic families (see Definition \ref{def: boundedness}).

In order to prove this Theorem we follow the strategy in \cite{Jiang2020}, refined in \cite[\S 7.2]{Xu2025}. We first show that bounding the mixed $\alpha$-invariant from below shows that the adjoint Fano foliated structure $(X,\F,t)$ is $\epsilon$-lc for a suitably chosen $\epsilon$. This, combined with \cite[Proposition 9.1]{CHLMSX25} allows us to verify that all of the conditions in \cite[Theorem B]{CHLMSX25} are satisfied. Combined with the bound of the mixed $\alpha$-invariant for $t$-K-semistable adjoint Fano foliated structures we have in Theorem \ref{thm: delta alpha thm intro}, we obtain the desired boundedness statement. We note that in \cite[Theorem E]{CLSV26} the authors prove a similar statement. The difference, however, is that they although they prove the result in terms of bounding the $\alpha$ invariant, they do not associate a K-stability notion to their theorem. In addition, the method of proof is different, as it uses estimates on minimal log discrepancies, while we use estimates using divisorial valuations. We also note that \cite[Theorem E]{CLSV26} can also be deduced from Theorem \ref{thm: boundedness for K-semi intro}.

The existence of these ``nice'' properties for $t$-K-stability of adjoint Fano foliated structures (reduction to special test configurations, existence of MMP techniques, boundedness) prompts us to ask the following natural question regarding the existence of a moduli theory for such structures.

\begin{question}\label{conj: moduli stack intro}
    Does there exist a reduced Artin stack of finite type over $\C$ $\M^{K}_{n,V,t}$ whose $\C$-points parametrise $t$-K-semistable adjoint Fano foliation structures of dimension $n$ and volume $V$?  Does this stack admit a good moduli space $\bar{M}^{K}_{n,V,t}$, which is a reduced projective scheme of finite type over $\C$, whose $\C$-points parameterize $t$-K-polystable adjoint Fano foliation structures?
\end{question}

The road map in proving such a statement is now well established, with Theorem \ref{thm: boundedness for K-semi intro} being one of the key elements needed for the proof. The other is openness of the $t$-K-semistable locus, which seems out of reach with the current MMP methods. These two will provide the existence of the quotient moduli stack $\M^{K}_{n,V,t}$. The existence of the moduli space $\bar{M}^{K}_{n,V,t}$ will then follow by verifying the standard properties of $S$-completeness and $\Theta$-reductivity (see \cite{AlperHalpernLeistnerHeinloth2023}), for which the formulation in terms of test configurations seems essential. 

To conclude, in Section \ref{sec: examples} we use the valuative criteria described above, to prove some general results on how to detect $t$-K-semistability, and we use these to provide some examples of $t$-K-semistable, $t$-K-stable and $t$-K-unstable adjoint Fano foliated structures. In particular we obtain an example of a $t$-K-stable Fano foliated structure where both $X$ and $\F$ are Fano (see Example \ref{ex:cubic-fourfold-pencil-foliation}). Since the theory would benefit from more examples, a natural question is whether the Abban--Zhuang theory \cite{AZ22}, which is one of the most effective tools at hand to determine K-stability of Fano varieties in explicit examples, also extends to $t$-Kstability.
\begin{question}\label{quest: az method}
    Is there an analogue of the Abban--Zhuang theory for $t$-K-stability?
\end{question}
Since currently, we have a limited toolkit in order to explicitly verify $t$-K-stability in explicit examples a positive answer to Question \ref{quest: az method} is very desirable.

\subsection*{Structure of the paper}
In Section \ref{sec:preliminaries} we recall the basic notions of test configurations, as well as some background on valuations and birational geometric notions for varieties and adjoint foliated structures that will be used throughout the paper. In Section \ref{sec: K-stability def} we introduce the notion of $\F$-compatible test configurations and the central notion of this paper, $t$-K-semistability. In Section \ref{sec: valuative invariants} we introduce the mixed beta invariant and we prove that that the mixed Donaldson--Futaki invariant for special $\F$-compatible test configurations is given as the mixed beta invariant of the central fibre, which is one of the key components of the valuative criterion of Theorem \ref{thm:dreamy-valuative-criterion intro 1}. 

In Section \ref{sec: special t.c.} we apply the MMP techniques to show that starting from a normal $\F$-compatible test configuration we can construct a special $\F$-compatible test configuration, proving the first part of Theorem \ref{thm:reduction-special-mixed-DF intro}. Then, in Section \ref{sec: reduction to special t.c.} we show that the mixed Donaldson--Futaki invariant decreases along each step of this birational procedure, proving the second part of Theorem \ref{thm:reduction-special-mixed-DF intro}. 

We then define the notion of the mixed Ding invariant and $t$-Ding stability in Section \ref{sec: ding stability} and we show that the mixed Ding invariant also decreases along each birational step of Section \ref{sec: special t.c.}. This allows us to prove the equivalence between $t$-Ding stability and $t$-K-stability detailed in Theorem \ref{thm: intro ding vs K}. Furthermore, by studying the mixed Ding invariant along divisorial valuations, we are able to prove Theorem \ref{thm:dreamy-valuative-criterion intro 1}. 

Then, in Section \ref{sec: birational inv for t ks} we introduce the mixed alpha and delta invariants and prove the valuative description of $t$-K-semistability described in Theorem \ref{thm: delta alpha thm intro}. We use these to provide specific examples of $t$-K-stable, $t$-K-semistable and $t$-K-unstable adjoint Fano foliated structures in Section \ref{sec: examples}.

We conclude by using this valuative description along with deriving some bounds on mixed log discrepancies to prove Theorem \ref{thm: boundedness for K-semi intro} in Section \ref{sec: boundedness}.

\subsection*{Acknowledgments}
I would like to thank Caucher Birkar, Federico Bongiorno, Paolo Cascini, Ruadhai Dervan, Jihao Liu and Calum Spicer for many helpful conversations and valuable comments. I am supported by  Beijing Natural Science Foundation Project IS25037 and a Shuimu Scholar Programme Scholarship at Tsinghua University.

\section{Preliminaries}\label{sec:preliminaries}
We will first collect some results that will be useful throughout the paper. 

\subsection{Test configurations and K-stability}
We first recall the notion of test configurations. For further information we prompt the reader to excellent surveys in \cite{Xu2025} and \cite{calabi_book} and the references therein.

\begin{definition}\label{def: test configurations}
    Let $X$ be an $n$-dimensional normal variety, and $L$ be an ample line bundle on $X$. Then a \textup{normal test configuration} $(\X;\L)$ of $(X;L)$ consists of 
\begin{itemize}
    \item a normal projective variety $\X$ with a flat projective morphism $\pi:\X\rightarrow \A^1$;
    \item a line bundle $\L$ ample over $\A^1$;
    \item a $\G_m$-action on the polarised variety $(\X,\L)$ such that $\pi$ is $\G_m$-equivariant, where $\A^1$ is equipped with the standard $\G_m$-action;
    \item a $\G_m$-equivariant isomorphism between the restriction $(\X\setminus \X_0;\L|_{\X\setminus \X_0})$ and $(X;L)\times (\A^1\setminus\{0\})$;
\end{itemize}
A test configuration is called
\begin{itemize}
    \item a \textup{product test configuration} if 
    \begin{equation}
    (\X;\L)\simeq (X\times \A^1;p_1^{*}L\otimes \OO_{\X}(l\X_0))
            \label{eq:product-TC}
    \end{equation}
    for some $l\in \Z$;
    \item a \textup{trivial test configuration} if it is a product test configuration and the isomorphism \eqref{eq:product-TC} is $\G_m$-equivariant, where the $\mathbb G_m$-action on $X$ is trivial;
    \item a \textup{special test configuration} if $\X_0$ is irreducible, reduced and  $(\X, \X_0)$ is purely log terminal (plt). 
\end{itemize}

The \textup{Donaldson Futaki invariant} of a normal test configuration $(\X,\L)/\A^1$ is $$\DF(\X,\L):=\frac{1}{L^n}\left(\frac{n}{n+1}\cdot\mu(X,L)\cdot{\bar{\L}^{n+1}}+{\bar{\L}^n.K_{\bar{\X}/\PP^1}}\right),$$ where $(\bar{\X};\bar{\L})$ is the natural compactification of $(\X;\L)$ over $\PP^1$. Here $\mu(X,L) = \frac{-K_X\cdot L^{n-1}}{L^n}$ is the \emph{slope} of the test configuration. 
\end{definition}

We also recall the definition of the non-Archimedean J-functional for a test configuration.

\begin{definition}
Let $(\X,\L)$ be a normal test configuration for $(X,L)$.
The non-Archimedean $J$-functional is defined by
\[
J^{NA}(\X,\L)
:=
\frac{1}{V}
\left(
\L\cdot (\mu^*L)^n
-
\frac{1}{n+1}\L^{n+1}
\right),
\]
where $\mu:\X\to X\times\mathbb P^1$ is the natural birational map.
\end{definition}

The following is the usual definition of K-stability via test configurations. 

\begin{definition}
Let $(X,L)$ be a polarised variety.

\begin{enumerate}
\item $(X,L)$ is \emph{-K-semistable}
if
\[
\DF^{[t]}(\X,\L)\ge0
\]
for all normal test configurations.

\item $(X,L)$ is \emph{K-stable}
if
\[
\DF^{[t]}(\X,\L)>0
\]
for all non-trivial test configurations.

\item $(X,L)$ is \emph{uniformly K-stable}
if there exists $\delta>0$ such that
\[
\DF^{[t]}(\X,\L)
\ge
\delta\,J^{NA}(\X,\L)
\]
for all normal test configurations.
\item $(X,L)$ is \emph{K-polystable} if it is K-semistable and \[
\DF^{[t]}(\X,\L)= 0 \text{ if and only if }(\X, \L) \text{ is of product type.}
\] 
\end{enumerate}
\end{definition}

\subsection{Valuations and divisorial valuations}

We briefly recall some standard terminology; see for instance
\cite[Section~2.1]{Kollar13},
\cite[Chapter~9]{Laz04},
and \cite[Section~3]{JM12}.

Let $X$ be an irreducible normal variety with function field $K(X)$. A \emph{valuation} on $X$ is a valuation
\[
v \colon K(X)^{*}\to \mathbb{R}
\]
which is trivial on the ground field and has a center on $X$. If $f\in K(X)^{*}$, we write $v(f)$ for the value of $f$ under $v$.

A valuation $v$ is called \emph{divisorial} if there exist a birational morphism $\mu \colon Y \to X$ from a normal variety $Y$, a prime divisor $E\subset Y$, and a positive real number $c\in \mathbb{R}_{>0}$ such that $v=c\,\ord_E$. When $c=1$, we often identify the divisor $E$ with the valuation $\ord_E$. We say that $E$ is a divisor \emph{over} $X$.

If $D$ is a $\Q$-Cartier $\Q$-divisor on $X$, then for a divisorial valuation $v=c\,\ord_E$ we define
\[
v(D):=c\,\ord_E(\mu^*D),
\]
where $\mu\colon Y\to X$ is a model on which $E$ appears. Likewise, if $\fa\subset \cO_X$ is a coherent ideal sheaf, we set
\[
v(\fa):=\min\{\,v(f)\mid f\in \fa_{c_X(v)}\,\},
\]
where $c_X(v)$ denotes the center of $v$ on $X$.

We denote by $\Val^{*}_X$ the space of all non-trivial valuations on $X$ and by $\Val^{\mathrm{div}, *}_X\subset \Val^*_X$ the space of all non-trivial divisorial valuations on $X$. 

For $v\in \Val_X$ and $m\in \R_{\ge 0}$, we define the \emph{valuation ideal}
\[
\fa_m(v):=\{\,f\in \cO_{X,c_X(v)} \mid v(f)\ge m\,\}.
\]
Assume that $x:=c_X(v)$ is a closed point. The \emph{volume} of $v$ is
\[
\vol_{X,x}(v)
:=
\limsup_{m\to +\infty}
\frac{\ell\bigl(\cO_{X,x}/\fa_m(v)\bigr)}{m^n/n!},
\]
where $n=\dim X$ and $\ell(-)$ denotes the length of an $\cO_{X,x}$-module (c.f.
\cite[Section~3]{ELS03} and \cite[Section~2]{Li18}).

\subsection{Log discrepancy and log canonical thresholds}\label{sec: lg disc prelim}

We briefly recall the standard definitions; see for instance
\cite[Definition 2.25]{KM98},
\cite[Section~2.3]{Kollar13},
and \cite[Section~3.1]{Laz04}.

Let $X$ be a normal variety over an algebraically closed field of characteristic $0$, and let $\Delta\geq 0$ be an effective $\Q$-divisor such that $K_X+\Delta$ is $\Q$-Cartier. For a prime divisor $E$ over $X$, namely a prime divisor on some birational model $\mu \colon Y \to X$, the \emph{log discrepancy} of $E$ with respect to the pair $(X,\Delta)$ is
\[
A_{X,\Delta}(E)
\,=\,
1+\ord_E\!\bigl(K_Y-\mu^*(K_X+\Delta)\bigr);
\]
equivalently, if
\[
K_Y=\mu^*(K_X+\Delta)+\sum_E a(E,X,\Delta)\,E,
\]
then
\[
A_{X,\Delta}(E)=1+a(E,X,\Delta).
\]

\begin{definition}\label{def: MMP sings}
    We say that the pair $(X,\Delta)$ is
\begin{itemize}
    \item \emph{terminal} if $A_{X,\Delta}(E)>1$ for every exceptional prime divisor $E$ over $X$;
    \item \emph{canonical} if $A_{X,\Delta}(E)\geq 1$ for every exceptional prime divisor $E$ over $X$;
    \item \emph{klt} if $A_{X,\Delta}(E)>0$ for every prime divisor $E$ over $X$;
    \item \emph{log canonical} if $A_{X,\Delta}(E)\geq 0$ for every prime divisor $E$ over $X$;
    \item  \emph{$\epsilon$-log canonical} if $A_{X,\Delta}(E)\geq \epsilon$ for every prime divisor $E$ over $X$;
    \item \emph{$\epsilon$-klt} if $A_{X,\Delta}(E)> \epsilon$ for every prime divisor $E$ over $X$.
\end{itemize}
When $\Delta=0$, we will say that $X$ has terminal, canonical, klt, or log canonical singularities respectively.
\end{definition}

Let $D\geq 0$ be an effective $\Q$-Cartier $\Q$-divisor on $X$. The \emph{log canonical threshold} of $D$ with respect to $(X,\Delta)$ is
\[
\lct(X,\Delta;D)
:=
\sup \bigl\{\, c\in \R_{\geq 0}\mid (X,\Delta+cD)\text{ is log canonical}\,\bigr\},
\]
see \cite[Definition 9.3.1]{Laz04} or \cite[Section~2.4]{Kollar13}. Equivalently,
\[
\lct(X,\Delta;D)
=
\inf_E \frac{A_{X,\Delta}(E)}{\ord_E(D)} = \inf_{v}\frac{A_{X,\Delta}(v)}{v(D)},
\]
where the infimum is taken over all prime divisors $E$ over $X$ with $\ord_E(D)>0$, or over all valuations $v$ with $v(D)>0$ (c.f. \cite[Proposition 9.5.13]{Laz04}, \cite{JM12}). When $\Delta=0$, we simply write $A_X(E)$ and $\lct(X;D)$. 

For later use, we also recall the definition of the $S$-invariant. Let $L$ be a big $\Q$-Cartier $\Q$-divisor on $X$, and let $E$ be a prime divisor over $X$. The \emph{pseudo-effective threshold} of $E$ with respect to $L$ is
\[
T_L(E):=\sup\{\, t\in \R_{\ge 0}\mid \vol(\mu^*L-tE)>0\,\},
\]
where $\mu\colon Y\to X$ is a birational model on which $E$ appears as a divisor. The \emph{$S$-invariant} of $E$ with respect to $L$ is
\[
S_L(E)
:=
\frac{1}{\vol(L)}
\int_0^{T_L(E)} \vol(\mu^*L-tE)\,dt.
\]
This quantity is independent of the choice of model $Y$; see for instance \cite[Section~2.1]{BoJ20} and \cite[Section~4]{Fujita16}.

When $L=-(K_X+\Delta)$ is big, we write
\[
S_{X,\Delta}(E):=S_{-(K_X+\Delta)}(E).
\]
In particular, if $(X,\Delta)$ is log Fano, then
\[
S_{X,\Delta}(E)
=
\frac{1}{\vol\bigl(-(K_X+\Delta)\bigr)}
\int_0^{T_{X,\Delta}(E)}
\vol\bigl(-\mu^*(K_X+\Delta)-tE\bigr)\,dt.
\]
The \emph{$\beta$-invariant} of a prime divisor $E$ over $X$ is then 
$$\beta_{(X,\Delta)}(E) = A_{(X,\Delta)}(E) - S_{L}(E).$$

Note that these notions extend naturally for arbitrary valuations (see for instance \cite{BJ20} or \cite{Xu2025}) although we will not explicitly define them. Recall also the following definition.

\begin{definition}\label{def:j-invariant}
Let $v$ be a divisorial valuation over $X$. We define
\[
j_L(v):=T_L(v)-S_L(v).
\]
If $v=\ord_E$ for a prime divisor $E$ over $X$, we also write $j_L(E)$.
\end{definition}

Now let $(X,\Delta)$ be a klt pair. For $v\in \Val_{X,x}$, the \emph{normalised volume} of $v$ is
\[
\widehat{\vol}_{(X,\Delta),x}(v)
:=
A_{X,\Delta}(v)^n \cdot \vol_{X,x}(v),
\]
where $A_{X,\Delta}(v)$ is the log discrepancy of $v$. The normalised volume of the singularity $x\in (X,\Delta)$ is
\[
\widehat{\vol}(x,X,\Delta)
:=
\inf_{v\in \Val_{X,x}} \widehat{\vol}_{(X,\Delta),x}(v).
\]
We refer the reader to \cite{Li18,BLX22} for further background.

\subsection{Foliations and adjoint foliated structures}

In this subsection we recall the basic definitions concerning foliations and adjoint foliated structures. We follow the standard conventions in the birational geometry of foliations; for general background on foliations we refer for instance to \cite{Brunella15,AD13, ACSS21, CS25}, and for adjoint foliated structures to \cite{CHLMSX25,CHLMSSX24}.

\begin{definition}\label{def: foliation}
Let $X$ be a normal variety. A \emph{foliation} on $X$ is a saturated coherent subsheaf $T_\F\subseteq T_X$ which is closed under the Lie bracket. Its rank is the generic rank of $T_\F$, and its codimension is $\codim(\F):=\dim X-\rk({\F})$. Equivalently, one may regard $\F$ as an integrable distribution on $X$.
\end{definition}

Since $T_{\F}$ is saturated in the reflexive sheaf $T_X$, it is reflexive. In particular, $\det(T_{\F})$ is a rank one reflexive sheaf, hence corresponds to a Weil divisor class on $X$.

\begin{definition}
Let $X$ be a normal variety and let $\F$ be a foliation on $X$. We say that $\F$ is \emph{algebraically integrable} if the leaf of $\F$ through a very general point of $X$ is Zariski open in an algebraic subvariety of $X$. Equivalently, the closure of the leaf through a very general point is an algebraic subvariety.
\end{definition}

Equivalently, $\F$ is algebraically integrable if its general leaves are algebraic. In this case, after possibly replacing $X$ by a birational model, one may often view $\F$ as induced by a rational map
\[
\begin{tikzcd}
f\colon X \arrow[r, dashed] & Y
\end{tikzcd}
\]
whose general fibres are tangent to $\F$. Throughout when we refer to foliations, we will always assume they are algebraically integrable, unless stated otherwise.

\begin{definition}\label{def: canonical class of foliation}
Let $X$ be a normal variety and let $\F$ be a foliation on $X$. The \emph{canonical class} of $\F$ is the Weil divisor class $K_{\F}$ determined by
\[
\OO_X(-K_{\F})\simeq \det(T_{\F}),
\]
or equivalently
\[
\OO_X(K_{\F})\simeq \det(T_{\F})^{\vee}.
\]
When $X$ is smooth, if $N_{\F}^{\vee}:=(T_X/T_{\F})^{\vee}$ denotes the conormal sheaf of $\F$, then
\[
K_X = K_{\F} + \det(N_{\F}^{\vee}).
\]
\end{definition}

We will usually also impose as an extra condition that $K_{\F}$ is $\Q$-Cartier.

\begin{definition}
A \emph{foliated pair} consists of a normal variety $X$ together with a foliation $\F$ such that $K_{\F}$ is $\QQ$-Cartier. Furthermore, we call the triple $(X,\F,L)$ a \emph{foliated polarised variety} if $(X,\F)$ is a foliated pair, and $L$ is an ample divisor on $X$.
\end{definition}

We now recall the main object of interest in this paper.

\begin{definition}\label{def: adjoint foliated structure}
An \emph{adjoint foliated structure} is a triple $(X,\F,t)$, where $X$ is a normal variety, $\F$ is a foliation on $X$, and $t\in[0,1]$. To such a triple one associates the $\QQ$-divisor
\[
K_{X,\F}^{[t]}:= tK_{\F}+(1-t)K_X.
\]
We say that $(X,\F,t)$ is $\QQ$-Gorenstein if $K_{X,\F}^{[t]}$ is $\QQ$-Cartier.
\end{definition}

\begin{definition}
    We say that the adjoint foliated structure $(X,\F,t)$ is an \emph{adjoint Fano foliated structure} if $-K_{X,\F}^{[t]}$ is ample.
\end{definition}

Thus adjoint foliated structures interpolate between the birational geometry of the ambient variety and that of the foliation: when $t=0$ one recovers the usual canonical class $K_X$, while when $t=1$ one recovers the foliated canonical class $K_{\F}$. This point of view is central in the recent development of the minimal model program and boundedness theory for foliated structures.

\begin{remark}\label{rem: adjoint fol structures in our notation}
In the recent literature, adjoint foliated structures are treated as the natural analogue of log pairs in the foliated setting, with the divisor $tK_{\F}+(1-t)K_X$ playing the role of the adjoint canonical class. We should note that in \cite{CHLMSSX24, CHLMSX25} these structures can be defined over some $U$ and with additional boundary divisor data and are usually denoted by $(X,\F,B,\mathbf{M},t)/U$. But, for our purposes, $B = \textbf{M}=0$, and $U$ will either be trivial, or implicit (usually a smooth curve as in Section \ref{sec: special t.c.}) so will usually be omitted.
\end{remark}

\subsection{Foliated log discrepancy}

Let $\mu:Y\to X$ be a birational morphism extracting a prime divisor $E$; denote by $\cF_Y$ the induced foliation on $Y$. We say that $E$ is \emph{$\cF$-invariant} if the foliation is tangent to $E$ generically (equivalently, $\cF_Y\subset T_Y(-\log E)$ generically); otherwise $E$ is \emph{transverse}. The foliated canonical class satisfies
$$
K_{\cF_Y}\ =\ \mu^*K_{\cF}+\sum_E a(E,\cF)\,E.
$$
We define
$$
\varepsilon(E)\ :=\
\begin{cases}
1, & E \text{ transverse},\\
0, & E \text{ $\cF$-invariant},
\end{cases}
\qquad
{\ A_{X,\cF}(E)\ :=\ \varepsilon(E)+a(E,\cF)\ }.
$$
For a divisorial valuation $v=c\,\mathrm{ord}_E$, we set $A_{X,\cF}(v):=c\,A_{X,\cF}(E)$. The log discrepancy extends naturally to quasi-monomial valuations (and valuations in general) by continuity. We also obtain similar definitions for foliated log pairs.

For an adjoint foliated structure $(X,\F,t)$ we define the \emph{mixed log discrepancy},
\[
A^{[t]}_{X,\F}(E)
=
t\bigl(a(E,\F)+\varepsilon(E)\bigr)+(1-t)\bigl(a(E,X)+1\bigr).
\]
This obtains a natural extension when we consider adjoint foliated structures $(X,\F,D,t)$ with a divisor $D$. In fact, the following is a useful observation.

\begin{lemma}
\label{lem:mixed-discrepancy-subtract-D}
Let $(X,\cF)$ be a foliated variety with $K_X$ and $K_{\cF}$ $\QQ$-Cartier, and let
$D\geq 0$ be an effective $\QQ$-divisor such that $K_X+D$ and $K_{\cF}+D$ are
$\QQ$-Cartier. Let $E$ be a prime divisor over $X$. Then
\[
A^{[t]}_{X,\cF;D}(E):=tA_{X,D}(E)+(1-t)A_{X,\cF;D}(E)
= A^{[t]}_{X,\cF}(E)-\ord_E(D).
\]
\end{lemma}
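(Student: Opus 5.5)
The identity is linear in the two log discrepancies, so I will prove the analogous identity separately for the variety and for the foliation, and then take the convex combination. Fix a birational morphism $\mu\colon Y\to X$ from a normal variety on which $E$ appears as a prime divisor, and write $\F_Y$ for the induced foliation on $Y$. All pullbacks below make sense because $K_X$, $K_\F$ and $D$ are $\QQ$-Cartier by hypothesis. (Here $D$ is $\QQ$-Cartier as the difference $(K_X+D)-K_X$.)

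\emph{Step 1 (the variety).} Using the convention $K_Y+\mu^{-1}_*D=\mu^*(K_X+D)+\sum a(E,X,D)E$, I will compare it with $K_Y=\mu^*K_X+\sum a(E,X)E$. Subtracting the two gives
\[
a(E,X,D)=a(E,X)-\ord_E(\mu^*D)
\]
for $\mu$-exceptional $E$. For $E$ a component of the strict transform, both sides acquire the coefficient of $E$ in $D$, which is exactly $\ord_E(\mu^*D)$ since $E$ is not exceptional. Adding $1$ yields $A_{X,D}(E)=A_X(E)-\ord_E(D)$.

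\emph{Step 2 (the foliation).} The same computation with $K_{\F_Y}=\mu^*K_\F+\sum a(E,\F)E$ and $K_{\F_Y}+\mu^{-1}_*D=\mu^*(K_\F+D)+\sum a(E,\F,D)E$ gives $a(E,\F,D)=a(E,\F)-\ord_E(D)$. The term $\varepsilon(E)$ depends only on whether $E$ is $\F$-invariant, not on $D$, so it is unchanged. Hence $A_{X,\F;D}(E)=A_{X,\F}(E)-\ord_E(D)$.

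\emph{Step 3 (combination).} Take the convex combination of Steps 1 and 2 with the weights $t$ and $1-t$ used to define $A^{[t]}$. Since the weights sum to $1$, the $\ord_E(D)$ terms combine to exactly $-\ord_E(D)$, and the remaining terms reassemble into $A^{[t]}_{X,\F}(E)$.

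One point needs attention: the convention for which weight multiplies which discrepancy. The displayed definition of $A^{[t]}_{X,\F}(E)$ puts $t$ on the foliated term, whereas the formula in the lemma puts $t$ on $A_{X,D}$. I will read $A^{[t]}_{X,\F;D}$ with the same weighting as $A^{[t]}_{X,\F}$, so that the two sides are comparable. The argument itself is insensitive to this choice, because only the fact that the weights sum to $1$ is used.

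Apart from this convention issue, the only subtlety is the non-exceptional case in Steps 1 and 2. I would handle it as above, by noting that the strict-transform term $\mu^{-1}_*D$ contributes precisely $\ord_E(D)$ at $E$.
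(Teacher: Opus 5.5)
Your proposal is correct and follows the paper's proof essentially verbatim. The paper likewise takes the ambient identity $A_{X,D}(E)=A_X(E)-\ord_E(D)$ as standard. It obtains the foliated identity by reading off the coefficient of $E$ in $(K_{\cF_Y}-\mu^*K_{\cF})-\mu^*D$, notes that $\varepsilon(E)$ is unaffected, and then combines the two using weight $(1-t)$ on $A_{X,D}$ and $t$ on $A_{X,\cF;D}$. In other words, it silently makes the same choice of weighting that you made explicit.

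One small caveat concerns your remark that the argument is insensitive to the convention. That is true only in the sense that the correction term is always $-\ord_E(D)$. With the statement's literal weights, the remaining terms would reassemble to $tA_X(E)+(1-t)A_{X,\cF}(E)$ rather than $A^{[t]}_{X,\cF}(E)$. So matching the weights on both sides, as you do, is genuinely needed.
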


\begin{proof}
The usual discrepancy identity
\[
A_{X,D}(E)=A_X(E)-\ord_E(D)
\]
is standard.

For the foliated discrepancy, let $\mu:Y\to X$ be a birational morphism
extracting $E$, and let $\cF_Y$ be the induced foliation on $Y$. By definition,
\[
K_{\cF_Y}=\mu^*K_{\cF}+\sum_F a(F,\cF)\,F.
\]
Since
\[
\mu^*(K_{\cF}+D)=\mu^*K_{\cF}+\mu^*D,
\]
we have
\[
K_{\cF_Y}-\mu^*(K_{\cF}+D)
=
\bigl(K_{\cF_Y}-\mu^*K_{\cF}\bigr)-\mu^*D.
\]
Taking the coefficient of $E$ gives
\[
\ord_E\!\bigl(K_{\cF_Y}-\mu^*(K_{\cF}+D)\bigr)
=
a(E,\cF)-\ord_E(D).
\]
Moreover, $\varepsilon(E)$ is unchanged, since it depends only on whether $E$ is
$\cF$-invariant or transverse, and adding the divisor $D$ does not alter the
foliation. Therefore
\[
A_{X,\cF;D}(E)
=
\varepsilon(E)+a(E,\cF)-\ord_E(D)
=
A_{X,\cF}(E)-\ord_E(D).
\]

Finally,
\[
\begin{aligned}
A^{[t]}_{X,\cF;D}(E)
&=(1-t)A_{X,D}(E)+tA_{X,\cF;D}(E)\\
&=(1-t)\bigl(A_X(E)-\ord_E(D)\bigr)
 +t\bigl(A_{X,\cF}(E)-\ord_E(D)\bigr)\\
&=A^{[t]}_{X,\cF}(E)-\ord_E(D),
\end{aligned}
\]
as required.
\end{proof}

\begin{definition}
Let $(X,\F,t)$ be an adjoint foliated structure with $t\in[0,1]$.  We say that $(X,\F,t)$ is
\begin{enumerate}
    \item \emph{log canonical} if $A^{[t]}_{X,\F}(E)\ge 0$ for every prime divisor $E$ over $X$;
\item \emph{klt} if $A^{[t]}_{X,\F}(E)> 0$ for every prime divisor $E$ over $X$;
\item \emph{$\epsilon$-log canonical} if $A^{[t]}_{X,\F}(E)\ge \epsilon$ for every prime divisor $E$ over $X$;
\item \emph{$\epsilon$-klt} if $A^{[t]}_{X,\F}(E)> \epsilon$ for every prime divisor $E$ over $X$.
\end{enumerate}
\end{definition}

\begin{remark}
\label{rem:comparison-Cas25-singularities}
Our definition of $\epsilon$-lc (resp. lc, klt) for an adjoint foliated structure $(X,\F,t)$ is identical to the definition in \cite{CHLMSX25}. More precisely, in \cite{CHLMSX25} singularities are defined for adjoint foliated structures of the form $\cA=(X,\F,B,M,t)$, via the condition
\[
a(E,A)+t\,\epsilon_{\F}(E)+(1-t)\ge \epsilon
\]
for all divisorial valuations $E$ over $X$, where $a(E,A)$ is the adjoint discrepancy.

In the special case $B=M=0$, which as in Remark \ref{rem: adjoint fol structures in our notation} this paper focuses on, we have
\[
a(E,A)=t\,a(E,\F)+(1-t)\,a(E,X),
\]
and therefore
\[
a(E,A)+t\,\epsilon_{\F}(E)+(1-t)
=
t\bigl(a(E,\F)+\epsilon_{\F}(E)\bigr)
+(1-t)\bigl(a(E,X)+1\bigr).
\]
This coincides with our definition
\[
A^{[t]}_{X,\F}(E)
=
t\bigl(a(E,\F)+\epsilon_{\F}(E)\bigr)
+(1-t)\bigl(a(E,X)+1\bigr).
\]

In particular, the notions of $\epsilon$-lc, lc, and klt used in this paper agree with those of \cite{CHLMSX25} in the boundary-free case.
\end{remark}

We will now define a mixed log canonical threshold for adjoint foliated structures.

\begin{definition}
\label{def:mixed-lct}
Let $D\ge 0$ be an effective $\QQ$-divisor on $X$. We define the \emph{mixed log canonical threshold} of $D$ by
\[
\lct^{[t]}(X,\F;D)
:=
\inf_{v\in \Val_X^*}\frac{A^{[t]}_{X,\F}(v)}{v(D)},
\]
where $\Val_X^*$ denotes the set of nontrivial valuations of finite mixed
discrepancy. Equivalently,
\[
\lct^{[t]}(X,\F;D)
=
\inf_{v \in \Val_X^{\mathrm{div}, *}}
\frac{A^{[t]}_{X,\F}(v)}{v(D)}.
\]
\end{definition}

\begin{remark}
The second equality follows by the same argument as for the usual valuation formula for log canonical thresholds: the discrepancy functional is replaced by $A^{[t]}_{X,\F}$, while the proof depends only on lower semicontinuity,
homogeneity, and approximation by divisorial valuations \cite{JM12}. Equivalently, 
\[\lct(X,\F;D):=\sup\{s\geq 0\mid (X,\F,sD)\text{ is lc}\}.\]
\end{remark}

We end this section with a definition of boundedness in the setting of foliations.

\begin{definition}\label{def: boundedness}
We say that a set $\mathcal P$ of projective foliated pairs $(X,\mathcal F)$ is \emph{bounded} if there exist finitely many flat projective morphisms $f_i \colon X_i \to T_i$ of normal varieties with normal fibers and foliations $\cG_i$ of rank $r_i$ on $X_i$ with $i=1,\dots,N$, such that
\begin{enumerate}
    \item for any closed point $t\in T_i$, if $(X_i)_t$ denotes the fiber of $f_i$ over $t$, then $T_{X_i/T_i}\big|_{(X_i)_t} \simeq T_{(X_i)_t}$;
    \item $T_{\mathcal G_i}\subset T_{X_i/T_i}$ and for any closed point $t\in T_i$, we have that $T_{\mathcal G_i}\big|_{(X_i)_t} \subset T_{(X_i)_t}$ defines a foliation $(\mathcal F_i)_t$ of rank $r_i$ on $(X_i)_t$; and
    \item for any $(X,\mathcal F)\in \mathcal P$, there exist $i=1,\dots,N$, a closed point $t\in T_i$, and an isomorphism $\phi\colon X \to (X_i)_t$ such that $\phi_*\mathcal F \simeq (\mathcal F_i)_t$.
\end{enumerate}
\end{definition}

\section{Foliated test configurations and K-stability for foliations}\label{sec: K-stability def}

\subsection{\texorpdfstring{$\F$-compatible test configurations}{F-compatible test configurations}}

In this section we introduce the notion of foliated test configurations which will allow us to define K-stability for foliations.

Let $X$ be a normal projective variety of dimension $n$ over $\mathbb{C}$ and
$L$ an ample $\mathbb{Q}$-line bundle on $X$.
Let $\mathcal{F}\subset T_X$ be a saturated integrable subsheaf
(i.e.\ an algebraic foliation) such that the canonical divisor
$K_{\mathcal{F}}$ is $\mathbb{Q}$-Cartier. For $t\in[0,1]$ we define the \emph{mixed canonical divisor}
\[
K^{[t]}_{X,\F}:=(1-t)K_X+tK_{\F}.
\]

\begin{definition}\label{def: foliated t.c.}
A \emph{foliated test configuration} for the polarised foliated variety
$(X,\F,L)$ consists of a triple
\[
(\pi:\X\to \mathbb A^1,\ \F_{\X},\ \L)
\]
such that:

\begin{enumerate}
\item $(\X,\L)$ is a normal test configuration for $(X,L)$,
i.e.\ $\pi:\X\to\mathbb A^1$ is a flat projective morphism endowed
with a $\mathbb G_m$-action lifting the standard action on $\mathbb A^1$,
$\L$ is a relatively ample $\mathbb Q$-line bundle, and
\[
(\X,\L)|_{\pi^{-1}(\mathbb A^1\setminus\{0\})}
\simeq (X,L)\times (\mathbb A^1\setminus\{0\})
\]
$\G_m$-equivariantly;

\item $\F_{\X}\subset T_{\X/\mathbb A^1}$ is a
$\mathbb G_m$-equivariant saturated integrable subsheaf;

\item over $\mathbb A^1\setminus\{0\}$ the foliation coincides with the
product foliation:
$$(\X,\F_{\X})|_{\pi^{-1}(\mathbb A^1\setminus\{0\})}
\simeq (X,\F)\times (\mathbb A^1\setminus\{0\});$$

\item $\F_{\X}$ is algebraically integrable.
\end{enumerate}

Such a test configuration will be called \emph{$\F$-compatible}.
\end{definition}

\begin{remark}\label{rem: alg integrable condition}
    Note, that we are imposing condition (4) in order to be able to apply the MMP and base-point-freeness results for adjoint foliated structures from \cite{CHLMSSX24,CHLMSX25} in the Sections that are to follow. It is important to note that we do not claim here that algebraic integrability is preserved under arbitrary deformations; rather, it is imposed as part of the definition of the class of test configurations considered in this paper. It is natural to ask whether condition (4) can be removed in the future. 
\end{remark}

\subsection{The mixed Donaldson--Futaki invariant}

We will now define the \emph{$t$-foliated Donaldson--Futaki invariant} via intersection numbers following the description of Odaka--Wang \cite{Odaka11,Odaka12, Wang2012}. In order to do this, given an arbitrary $\F$-compatible test configuration $\pi:(\X,\F_{\X},\L)\rightarrow \A^1$, we take $\bar{\pi}:(\bar{\X},\F_{\bar{\X}},\bar{\L})\rightarrow \PP^1$ for the natural compactification and we set $V:=L^n$ and 
\[\mu(X,\F,L) = \frac{-K^{[t]}_{X,\F}\cdot L^{n-1}}{L^n}\]
for the \emph{foliated slope}. Furthermore, we define the \emph{relative mixed canonical divisor} by
\[
K^{[t]}_{\bar{\X}/\PP^1}
:=
(1-t)K_{\bar{\X}/\PP^1}+tK_{\F_{\bar{\X}}}.
\]
Here the intersection numbers are interpreted via the intersection formula, viewing $K^{[t]}_{\X,\F_{\X}/\A^1}$ as an $\R$-Weil divisor intersected linearly with powers of the $\Q$-Cartier polarization $\L$.

\begin{definition}\label{def:DFwt-corrected}
Let $(\X,\ \F_{\X},\ \L)$
be a normal $\F$-compatible test configuration for $(X,\F,L)$.
We fix $t\in (0,1)$. The \emph{$t$-foliated Donaldson--Futaki invariant} is defined by
\[
\DF^{[t]}(\X,\F_{\X},\L)
:=
\frac{1}{V}\left(
\frac{n}{n+1}\,\mu(X,\F,L)\bar{\L}^{n+1}
+
K^{[t]}_{\bar{\X}/\mathbb P^1}\cdot \bar{\L}^n
\right).
\]
\end{definition}

We will also call this invariant the ``mixed Donaldson--Futaki'' invariant for brevity.

\begin{remark}\label{rem: sanity checks}
If $\F=T_X$, then $K_{\F}=K_X$ and
\[
\DF^{[t]}(\X,\F_{\X},\L)
=
\frac{1}{V}\left(
\frac{n}{n+1}\mu(X,L)\bar{\L}^{n+1}
+
K_{\bar{\X}/\mathbb P^1}\cdot \bar{\L}^n
\right),
\]

which coincides with the usual Donaldson--Futaki invariant. Similarly, if $t=0$ we recover the usual notion for K-stability, noting that we do not have a foliation contribution, hence we reduce to the usual test configurations of Definition \ref{def: test configurations}. Similarly, if $\F = 0$, then $K_{\F} = 0$ and we recover the usual K-stability notions for $X$.
\end{remark}

Note that the notions of product, trivial and special test configurations of Definition \ref{def: test configurations} extend to those of $\F$-compatible product, trivial and special test configurations in the natural way. Here we introduce the notion of $t$-weakly special $\F$-compatible test configurations.

\begin{definition}\label{def:t-weakly-special}
A normal ample $\F$-compatible test configuration $(\X,\F_{\X},\L)$ for $(X,\F,L)$ is called \emph{$t$-weakly special} if
\begin{enumerate}
\item
\[
\L \sim_{\Q,\A^1} -K^{[t]}_{\X/\A^1,\F_{\X}};
\]
\item the pair $(\X,\F_{\X};\X_0)$ is mixed log canonical,
i.e.
\[
A^{[t]}_{\X,\F_{\X}}(w)-(1-t)w(\X_0)\ge 0
\]
for every divisorial valuation $w$ over $\X$.
\end{enumerate}
If moreover $\X_0$ is irreducible and reduced and
$(\X,\F_{\X};\X_0)$ is mixed plt, then we call it
\emph{$t$-special}.
\end{definition}

We first record the following Lemma which is an easy consequence of Definition \ref{def:DFwt-corrected}.
\begin{lemma}
\label{lem:anti-adjoint-mixed-DF}
Let $(\pi\colon \X\to \PP^1,\F_{\X},\L)$ be a normal $\F$-compatible test configuration. Assume that $\L\sim_{\QQ,\PP^1}-K^{[t]}_{\X/\PP^1}$. Then
\[
\DF^{[t]}(\X,\F, \L)
=
-\frac{1}{(n+1)V}\L^{n+1}.
\]
\end{lemma}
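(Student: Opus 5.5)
The plan is to substitute the hypothesis $\L\sim_{\QQ,\PP^1}-K^{[t]}_{\X/\PP^1}$ into Definition \ref{def:DFwt-corrected} and compute the slope directly. Restricting to a general fibre $\X_1\simeq X$ gives $L\sim_{\Q}-K^{[t]}_{X,\F}$. For this we use condition (3) of Definition \ref{def: foliated t.c.}: it identifies $K_{\F_{\X}}|_{\X_1}$ with $K_{\F}$ and $K_{\X/\PP^1}|_{\X_1}$ with $K_X$, so $K^{[t]}_{\X/\PP^1}|_{\X_1}=K^{[t]}_{X,\F}$. Consequently
\[
\mu(X,\F,L)=\frac{-K^{[t]}_{X,\F}\cdot L^{n-1}}{L^n}=\frac{L^n}{L^n}=1 .
\]

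Next we write $K^{[t]}_{\X/\PP^1}\sim_{\Q}-\L+\pi^*D$ for some $\Q$-divisor $D$ on $\PP^1$ of degree $d$; this is what $\sim_{\QQ,\PP^1}$ means. Intersecting with $\L^n$ gives
\[
K^{[t]}_{\X/\PP^1}\cdot\L^n=-\L^{n+1}+d\,(\X_1\cdot\L^n)=-\L^{n+1}+d\,V,
\]
using that $\pi^*D\equiv d\,\X_1$ and $\X_1\cdot\L^n=L^n=V$ by flatness. The DF invariant then becomes
\[
\frac{1}{V}\left(\frac{n}{n+1}\L^{n+1}-\L^{n+1}+dV\right)=-\frac{1}{(n+1)V}\L^{n+1}+d .
\]

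The only real issue is the term $d$, which records the ambiguity of $\L$ up to pulling back divisors from $\PP^1$. I would resolve it by interpreting ``$\sim_{\QQ,\PP^1}$'' as a $\Q$-linear equivalence $\L\sim_\Q -K^{[t]}_{\X/\PP^1}$ on the total space, which forces $d=0$. This is the normalisation used implicitly in the statement. It is compatible with the standard convention that $\DF$ is invariant under twisting $\L$ by $\OO(c\X_0)$: such a twist changes both $\frac{n}{n+1}\L^{n+1}$ and $K^{[t]}\cdot\L^n$ in a way that cancels when $\mu=1$. Explicitly, $\L\mapsto\L+c\X_0$ changes $\frac{n}{n+1}\L^{n+1}$ by $ncV$ and $K^{[t]}\cdot\L^n$ by $ncK^{[t]}|_{\X_1}\cdot L^{n-1}=-ncV$.

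So with the natural normalisation the identity holds exactly. I would also remark that, because of the twist invariance above, one may always choose $\L$ so that $d=0$. Equivalently, the formula holds for the representative satisfying $\L\sim_\Q-K^{[t]}_{\X/\PP^1}$. The main step to state carefully is therefore this normalisation together with the fibre restriction of $K_{\F_{\X}}$; the rest is direct computation.
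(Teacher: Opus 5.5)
Your proposal is correct and follows the same route as the paper: you obtain $\mu(X,\F,L)=1$ by restricting to a general fibre, then substitute $K^{[t]}_{\X/\PP^1}\cdot\L^n=-\L^{n+1}$ into Definition~\ref{def:DFwt-corrected}. Your extra observation is also accurate: if $\L\sim_{\QQ}-K^{[t]}_{\X/\PP^1}+\pi^*D$ with $\deg D=d$, the right-hand side shifts by $d$. So the identity genuinely needs the normalisation $d=0$ (i.e.\ $\L\sim_{\QQ}-K^{[t]}_{\X/\PP^1}$ on the total space), which is exactly what the paper's one-line step $K^{[t]}_{\X/\PP^1}\cdot\L^n=-\L^{n+1}$ silently assumes.
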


\begin{proof}
We have $\mu(X,\F,L)=1$, so by Definition \ref{def:DFwt-corrected},
\[
\DF^{[t]}(\X,\F, \L)
=
\frac{1}{V}\left(
\frac{n}{n+1}\L^{n+1}
+
K^{[t]}_{\X/\PP^1}\cdot \L^n
\right).
\]
Using $K^{[t]}_{\X/\PP^1}\sim_{\QQ,\PP^1}-\L$ we get $K^{[t]}_{\X/\PP^1}\cdot \L^n=-\L^{n+1}$, hence
\[
\DF^{[t]}(\X,\F, \L)
=
\frac{1}{V}\left(\frac{n}{n+1}-1\right)\L^{n+1}
=
-\frac{1}{(n+1)V}\L^{n+1}.
\qedhere
\]
\end{proof}

\subsection{K-stability for foliations and adjoint foliated structures}

The following is the key definition of this Section.

\begin{definition}\label{def: k-stability definition}
Let $(X,\F,L)$ be a polarised foliated variety.

\begin{enumerate}
\item $(X,\F,L)$ is \emph{$t$-K-semistable}
if
\[
\DF^{[t]}(\X,\F_{\X},\L)\ge0
\]
for all normal $\F$-compatible test configurations.

\item $(X,\F,L)$ is \emph{$t$-K-stable}
if
\[
\DF^{[t]}(\X,\F_{\X},\L)>0
\]
for all non-trivial $\F$-compatible test configurations.

\item $(X,\F,L)$ is \emph{uniformly $t$-K-stable}
if there exists $\delta>0$ such that
\[
\DF^{[t]}(\X,\F_{\X},\L)
\ge
\delta\,J^{NA}(\X,\L)
\]
for all normal $\F$-compatible test configurations.
\item $(X,\F,L)$ is \emph{$t$-K-polystable} if it is $t$-K-semistable and \[
\DF^{[t]}(\X,\F_{\X},\L)= 0 \text{ if and only if }(\X,\F_{\X}, \L) \text{ is $\F$-compatible of product type.}
\] 
\end{enumerate}
\end{definition}

\begin{remark}\label{rem: K-stab for foliations}
    Notice that if $t=0$ we recover the usual notion of K-stability. When $0<t<1$ we are in the ``mixed'' regime, where the most natural objects to study are adjoint foliated structures. Furthermore, when $t=1$ we obtain a purely foliation-geometric version of K-stability, which we will refer to as ``K-stability for foliations''.
\end{remark}

\section{Valuative invariants and special test configurations}\label{sec: valuative invariants}

In this section we introduce the valuative invariant corresponding to the
mixed Donaldson--Futaki invariant and explain how the two are expected to
coincide for special test configurations.

\subsection{\texorpdfstring{The $\beta$ invariant}{The beta invariant}}
Recall that for a divisorial valuation $v$ over $X$ the mixed log discrepancy is $A^{[t]}_{X,\F}(v)
:=
(1-t)A_X(v)+tA_{X,\F}(v)$.

\begin{definition}\label{def: mixed beta}
Let $v$ be a divisorial valuation over $X$. We define the \emph{mixed $\beta$ invariant}
\[
\beta^{[t]}_{X,\F,L}(v)
:=
A^{[t]}_{X,\F}(v)-S_L(v),
\]
where $S_L(v)$ is the S-invariant defined in Section \ref{sec: lg disc prelim}.
\end{definition}

\subsection{Foliated divisorial valuations}

In this subsection we introduce the class of valuations that naturally
appear in the study of $t$-K-stability for foliated varieties.

\begin{definition}\label{def: F dreamy}
Let $(X,\F,L)$ be a polarised foliated variety. A divisorial valuation $v$ is called
\emph{$\F$-dreamy} if

\begin{enumerate}
\item $v$ is dreamy with respect to $L$, i.e.
\[
\bigoplus_{m,k\ge0}H^0(X,mL-kv)
\]
is finitely generated;

\item the associated Rees degeneration produces a test configuration $(\X,\L)\to \A^1$ that admits a relative foliation $\F_{\X}\subset T_{\X/\A^1}$ extending the product foliation on
$X\times(\A^1\setminus\{0\})$.
\end{enumerate}
\end{definition}

Note that by Lemma \ref{lem:anti-adjoint-mixed-DF}, when $L = -K^{[t]}_{\X/\F}$, we have
\[\DF^{[t]}(\X,\F_{\X}, \L) = -\frac{\L^{n+1}}{V(n+1)}, \quad \mu(X,\F,L)=1,\] where $V = (-K_{X,\F}^{[t]})^n$.

We will first record an auxiliary proposition extending \cite[Proposition 2.10]{Fujita16} and \cite[Proposition 4.11]{BHJ19} in the foliated setting. Let us fix some notation. Let $(\X,\F_{\X})\to \mathbb A^1$ be a normal $\F$-compatible test configuration, and let $\bar{\X}\to \mathbb P^1$ be its natural compactification. Assume that the central fibre $\X_0$ is irreducible and reduced. Let
\[
\xymatrix{
& \bar{\Y} \ar[ld]_{\Pi} \ar[rd]^{\Theta} & \\
X\times \mathbb P^1 \ar@{-->}[rr] && \bar{\X}
}
\]
be the normalization of the graph of the induced birational map, $\Y_0=\sum_{i\in I}m_iE_i+\hat{X}_0+\hat{\X}_0$ be the irreducible decomposition,  where $\hat{X}_0$ is the strict transform of $X\times\{0\}$ and 
$\hat{\X}_0$ is the strict transform of $\X_0$. We define
\[
B^{[t]}
:=
\Theta^*\!\left(-K^{[t]}_{\bar{\X}/\mathbb P^1,
\F_{\bar{\X}}}\right)
-
\Pi^*p_1^*\!\left(-K^{[t]}_{X,\F}\right).
\]

\begin{proposition}\label{prop: auxil ord computation}
We have
\[
-\ord_{\hat{\X}_0}(B^{[t]})
=
A^{[t]}_{X,\F}(v).
\]
\end{proposition}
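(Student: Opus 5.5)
Here $v$ denotes the divisorial valuation on $X$ obtained by restricting $\ord_{\hat{\X}_0}$ to $K(X)\subset K(X\times\A^1)$; since $\X_0$ is reduced, $\ord_{\hat{\X}_0}(t)=1$. The plan follows \cite[Proposition 4.11]{BHJ19} and \cite[Proposition 2.10]{Fujita16}. We compare the two relative adjoint canonical divisors on the common resolution $\bar{\Y}$. The $\G_m$-equivariant foliation $\F_{\bar\Y}$ pulls back from $\F_{\bar\X}$, and it also pulls back from the product foliation $p_1^{-1}\F$ on $X\times\PP^1$. This is because the two agree away from the central fibre, and a foliation on a normal variety is determined by its restriction to a dense open set. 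We then write
\[
K^{[t]}_{\bar\Y/\PP^1}=\Theta^*K^{[t]}_{\bar\X/\PP^1}+\sum_F a^{[t]}(F,\bar\X)F=\Pi^*K^{[t]}_{X\times\PP^1/\PP^1}+\sum_F a^{[t]}(F,X\times\PP^1)F,
\]
where the coefficients are the mixed discrepancies $(1-t)a(F,\cdot)+ta(F,\cdot)$. Subtracting gives
\[
B^{[t]}=\sum_F\bigl(a^{[t]}(F,\bar\X)-a^{[t]}(F,X\times\PP^1)\bigr)F,
\]
using $K^{[t]}_{X\times\PP^1/\PP^1}=p_1^*K^{[t]}_{X,\F}$. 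The relative versus absolute distinction only adds $\bar\pi^*K_{\PP^1}$ to the $(1-t)$-part on both sides, so it cancels.

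Next we read off the coefficient of $\hat\X_0$. Since $\hat\X_0$ is the strict transform of $\X_0$, which is a divisor on $\bar\X$, we have $a^{[t]}(\hat\X_0,\bar\X)=0$. Thus $-\ord_{\hat\X_0}B^{[t]}=a^{[t]}(\hat\X_0,X\times\PP^1)$. It remains to identify this with $A^{[t]}_{X,\F}(v)$. To do this we compute log discrepancies over $X\times\A^1$ with respect to the central fibre, following Fujita. For the variety part, the standard identity is
\[
A_{X\times\A^1}(G)-\ord_G(X\times\{0\})=A_{X\times \A^1, X_0}(G)=A_X(v)
\]
for $G=\hat\X_0$ with $\ord_G(t)=1$. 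This holds because $\G_m$-invariant valuations on $X\times\A^1$ with $t$-value $1$ are quasi-monomial combinations of $v$ and $\ord_{X_0}$ \cite{BHJ19}. Hence $a(G,X\times\PP^1)=A_X(v)+1-1=A_X(v)$.

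For the foliation part, the product foliation $p_1^{-1}\F$ is tangent to the fibres, so $X\times\{0\}$ is $p_1^{-1}\F$-invariant. The central-fibre correction is then already built into the foliated discrepancy through $\varepsilon=0$. We will show $a(G,p_1^{-1}\F)=A_{X,\F}(v)$. Concretely, $G$ is invariant for the relative foliation, since $\F_{\bar\X}\subset T_{\bar\X/\PP^1}$ is tangent to the fibre component $\X_0$, so $\varepsilon(G)=0$ and $A_{X\times\A^1,p_1^{-1}\F}(G)=a(G,p_1^{-1}\F)$. Now compute on a $\G_m$-equivariant log resolution of the form $Y\times\A^1$ blown up along $E_v\times\{0\}$, where $E_v$ realises $v$ on $Y\to X$. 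In local coordinates $K_{p_1^{-1}\F}$ pulls back as $K_{\F_Y}$ in the $X$-directions, with no $dt$ term. This yields $a(G,p_1^{-1}\F)=a(E_v,\F)+\varepsilon(E_v)=A_{X,\F}(v)$: the weighted blowup contributes $\varepsilon(E_v)$ exactly when $E_v$ is transverse. Combining the two parts with weights $1-t$ and $t$ gives the claim.

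The main obstacle is this last foliated computation. The issue is checking that the discrepancy of the product foliation along a $\G_m$-invariant divisor $G$ over $X\times\{0\}$ equals the foliated log discrepancy of its restriction $v$, including the transverse/invariant bookkeeping. I would first handle the case where $v=\ord_E$ with $\hat\X_0$ the exceptional divisor of blowing up $E\times\{0\}$ in $Y\times\A^1$. This is a toric, local-coordinate computation in which $\varepsilon(E)$ appears from the term $dx_1/x_1$. I would then reduce the general case to it, because $\G_m$-invariant divisorial valuations with $\ord(t)=1$ are exactly the Gauss extensions of $v$.
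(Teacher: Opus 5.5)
Your proposal is correct and follows essentially the same route as the paper. Both arguments compare the two relative adjoint canonical divisors on $\bar{\Y}$ and use that $\hat{\X}_0$ is not $\Theta$-exceptional. Both take the ambient part from Fujita/BHJ and compute the foliated coefficient on the blow-up of $E_v\times\{0\}\subset Y\times\A^1$, where the invariant case contributes $0$ and the transverse case contributes $\varepsilon(E_v)=1$. That coefficient is then transported to $\hat{\X}_0$ because both divisors define the same Gauss extension of $v$. The only cosmetic differences are that the paper first splits $B^{[t]}=(1-t)B_X+tB_{\F}$ rather than working with mixed discrepancies directly, and that since $\ord_{\hat{\X}_0}(t)=1$ the blow-up is an ordinary one, not a weighted one.
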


\begin{proof}
By the above expression, we have $B^{[t]}=(1-t)B_X+tB_{\F}$ where
\[
B_X
:=
\Theta^*(-K_{\bar{\X}/\mathbb P^1})
-
\Pi^*p_1^*(-K_X),
\]
and
\[
B_{\F}
:=
\Theta^*(-K_{\F_{\bar{\X}}})
-
\Pi^*p_1^*(-K_{\F}).
\]
It is therefore enough to prove that $-\ord_{\hat{\X}_0}(B_X)=A_X(v)$ and $-\ord_{\hat{\X}_0}(B_{\F})=A_{X,\F}(v)$. The first inequality follows directly from \cite[Proposition 2.10]{Fujita16} and \cite[Proposition 4.11]{BHJ19}.

It remains to prove the foliated equality. Let $\F_{\bar{\Y}}$ be the
birational transform of $\F_{\bar{\X}}$ on $Y$.
Since the test configuration is $\F$-compatible, we have $\F_{\X}\subset T_{\X/\mathbb A^1}$, therefore its birational transform satisfies $\F_{\bar{\Y}}\subset T_{\bar{\Y}/\mathbb P^1}$  at the generic point of every vertical divisor. In particular, $\hat{\X}_0$ is invariant for the relative foliation on $\Y$, because relative vector fields annihilate the pullback of a local parameter on the base.

We compare the two foliated canonical divisors. By the standard expressions, we have $K_{\F_{\bar{\Y}}} = \Theta^*K_{\F_{\bar{\X}}} + R_\Theta$ and $K_{\F_{\bar{\Y}}} = \Pi^*p_1^*K_{\F} + R_\Pi$. In particular,
\[
B_{\F}
=
\Theta^*(-K_{\F_{\bar{\X}}})
-
\Pi^*p_1^*(-K_{\F})
=
R_\Theta-R_\Pi.
\]
Note that, $\hat{\X}_0$ is not $\Theta$-exceptional, and $\Theta$ is an isomorphism
at the generic point of $\hat{\X}_0$. Hence $\ord_{\hat{\X}_0}(R_\Theta)=0$ which implies that $-\ord_{\hat{\X}_0}(B_{\F})
=
\ord_{\hat{\X}_0}(R_\Pi)$. We now show that $\ord_{\hat{\X}_0}(R_\Pi)=A_{X,\F}(v)$.

This is local at the generic point of $\hat{\X}_0$. Since $\hat{\X}_0$ is
$\mathbb G_m$-invariant and $\ord_{\hat{\X}_0}(t)=1$, the valuation of
$K(X)(t)$ defined by $\hat{\X}_0$ is the extension of $v$:
\[
\ord_{\hat{\X}_0}\!\left(\sum_i f_i t^i\right)
=
\min_i\{v(f_i)+i\}.
\]
We choose a birational model $\mu:Z\to X$ on which the divisor $F$ computing $v$ appears, and work at the generic point of $F$. We may assume that $Z$ is smooth at this point and that $F=\{z=0\}$ is a smooth divisor.
Let $\F_Z$ be the birational transform of $\F$. We have
\[
K_{\F_Z}
=
\mu^*K_{\F}
+
a(F,\F)F+\cdots.
\]
The extension of $v$ is realised, locally, by the exceptional divisor
of the blow-up of $F\times\{0\}\subset Z\times \mathbb A^1$. Let $\sigma:W\to Z\times \mathbb A^1$ be this blow-up and let $G\subset W$ be the exceptional divisor. Since the coefficient of $F\times \mathbb A^1$ in $K_{p_1^{-1}\F_Z}-p_1^*K_{\F}$ is $a(F,\F)$, its pullback contributes $a(F,\F)$ to the coefficient of $G$.

It remains to compute the additional contribution coming from the blow-up of $F\times\{0\}$. We will show that this contribution is exactly $\varepsilon(F)$. Near the generic point of $F$, there are two cases we need to study.

If $F$ is $\F_Z$-invariant, then local generators of
$\F_Z$ preserve the ideal $(z)$. After blowing up $(z,t)$, their
relative lifts remain regular generators of the transformed foliation without an additional vanishing factor along $G$. Hence the blow-up contributes $0$, which is $\varepsilon(F)$ in the invariant case.

If $F$ is transverse to $\F_Z$, then one local generator has a
non-zero normal component to $F$. On the blow-up chart $t=zs$ the rational lift of this generator which
annihilates \(t\) is, up to tangent terms, $\partial_z-\frac{s}{z}\partial_s$. The saturated transform of the foliation is therefore generated, up to tangent terms, by $z\partial_z-s\partial_s$, thus the saturated generator differs from the rational pullback of the
original generator by one factor of \(z\), i.e. by a local equation of the exceptional divisor \(G\). Consequently
\[
\det T_{\mathcal F_W}
=
\sigma^*p_1^*\det T_{\mathcal F_Z}\otimes\cO_W(-G)
\]
along the generic point of \(G\). Equivalently,
\[
K_{\mathcal F_W}
=
\sigma^*p_1^*K_{\mathcal F_Z}+G
\]
along \(G\). Hence the blow-up contributes \(1=\varepsilon(F)\).

Thus in both cases the coefficient of $G$ in the relative foliated canonical
divisor over $X\times\mathbb A^1$ is $a(F,\F)+\varepsilon(F)
=
A_{X,\F}(v)$. Since $G$ and $\hat{\X}_0$ define the same divisorial valuation of $K(X)(t)$, the same coefficient is obtained for $\hat{\X}_0$. Therefore
\[
\ord_{\hat{\X}_0}(R_\Pi)=A_{X,\F}(v).
\]
Consequently,
\[
-\ord_{\hat{\X}_0}(B_{\F})
=
A_{X,\F}(v).
\]

The proof of the proposition is completed by combining the ambient and foliated parts.
\end{proof}
We are now in a position to express the mixed Donaldson--Futaki invariant for $\F$-dreamy divisorial valuations in terms of the $\beta$-invariant.

\begin{theorem}\label{thm: DF description for special t.c.}
Let $v$ be an $\F$-dreamy divisorial valuation, and let $(\mathcal X,\mathcal F_{\mathcal X},\mathcal L)\to \mathbb A^1$ be the associated normal special \(\F\)-compatible Rees test configuration, with $v = v_{\X_0}$. Then the mixed
Donaldson--Futaki invariant satisfies
\[
\DF^{[t]}(\X,\F_{\X},\L)
=
\beta^{[t]}_{X,\F,L}(v_{\X_0}).
\]
\end{theorem}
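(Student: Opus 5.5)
We follow the proof of \cite[Theorem 5.1]{Fujita16} and \cite[Proposition 4.11]{BHJ19}, splitting the invariant into a volume part, which is insensitive to the foliation, and a canonical part, which is handled by Proposition \ref{prop: auxil ord computation}. Here $L\sim_\Q -K^{[t]}_{X,\F}$, so $\mu(X,\F,L)=1$. Work on the common resolution $\bar{\Y}$ with maps $\Pi,\Theta$ as above. Since $\X_0$ is irreducible and reduced, $\bar{\L}$ differs from $-K^{[t]}_{\bar\X/\PP^1}$ by a multiple of the central fibre. We normalise $\bar{\L}$ (twisting by $\OO(c\X_0)$, which does not change $\DF^{[t]}$ because $\mu=1$) so that
\[
\Theta^*\bar{\L}=\Pi^*p_1^*L - D,
\]
where $D$ is supported on $\Y_0$ and $\ord_{\hat\X_0}(D)=0$. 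This is the Rees normalisation, in which $\hat{\X}_0$ induces the filtration $\F_v^\lambda H^0(mL)=\{s: v(s)\ge\lambda\}$.

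\emph{Volume part.} The standard computation for Rees test configurations of a dreamy valuation gives
\[
\frac{\bar{\L}^{n+1}}{(n+1)V}=-S_L(v).
\]
Concretely, $\bar{\L}^{n+1}$ is the leading coefficient of the total weight, namely $-(n+1)\int_0^{T}\vol(L-xv)\,dx$ up to sign conventions; see \cite[Lemma 5.17]{Fujita16} or \cite[\S 5]{BHJ19}. This uses only the underlying test configuration $(\X,\L)$ and not the foliation, so we quote it verbatim.

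\emph{Canonical part.} Write $K^{[t]}_{\bar\X/\PP^1}\cdot\bar{\L}^n=\Theta^*K^{[t]}\cdot\Theta^*\bar\L^n$. We then substitute
\[
\Theta^*K^{[t]}_{\bar{\X}/\PP^1}=\Pi^*p_1^*K^{[t]}_{X,\F}-B^{[t]}.
\]
Using $\mu=1$, the first term combines with $\frac{n}{n+1}\bar\L^{n+1}$ into an expression $\frac{1}{V}\bigl(\frac{n}{n+1}\bar\L^{n+1}-\Pi^*p_1^*L\cdot\Theta^*\bar\L^n\bigr)$. Expanding $\Pi^*p_1^*L=\Theta^*\bar\L+D$, this becomes $-\frac{1}{(n+1)V}\bar\L^{n+1}-\frac1V D\cdot\Theta^*\bar\L^n$.

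We now compute the intersection with vertical divisors. Since $\Theta^*\bar\L$ is pulled back from $\bar\X$, a vertical divisor $E$ pairs nontrivially with $\Theta^*\bar{\L}^n$ only if it is not $\Theta$-exceptional. For a reduced irreducible central fibre, the only such component of $\Y_0$ is $\hat\X_0$. Hence
\[
E\cdot\Theta^*\bar\L^n=\ord_{\hat\X_0}(E)\,\bar\L|_{\X_0}^n=\ord_{\hat\X_0}(E)\,V.
\]
This kills $D\cdot\Theta^*\bar\L^n$ by the normalisation $\ord_{\hat\X_0}(D)=0$. It also gives
\[
-\frac1V B^{[t]}\cdot\Theta^*\bar\L^n=-\ord_{\hat\X_0}(B^{[t]})=A^{[t]}_{X,\F}(v)
\]
by Proposition \ref{prop: auxil ord computation}, since $B^{[t]}$ is vertical (both sides agree off $0,\infty$ by $\F$-compatibility).

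Summing the pieces, $\DF^{[t]}=A^{[t]}_{X,\F}(v)-S_L(v)=\beta^{[t]}_{X,\F,L}(v)$.

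The main obstacle is keeping the normalisation of $\bar\L$ consistent. First, $\bar\L$ should be the Rees line bundle, not the relatively anticanonical one, so we must not invoke Lemma \ref{lem:anti-adjoint-mixed-DF} with a shifted $\L$. Second, we must check that the $\Theta$-exceptional components carrying $R_\Theta$ truly contribute nothing. The vanishing argument above treats these exceptional components the same way as the components of $D$, so it needs only that $\Theta^*\bar{\L}$ is trivial on $\Theta$-exceptional components. It does not require that the foliation data is unaffected there, since the foliation enters only through the coefficient along $\hat{\X}_0$, which Proposition \ref{prop: auxil ord computation} already computes.
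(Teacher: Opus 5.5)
Your decomposition works, but the energy identity has the wrong sign. With the sign you wrote, the pieces do not sum to $\beta^{[t]}$. Your canonical-part computation correctly yields
\[
V\,\DF^{[t]}(\X,\F_{\X},\L)=-\frac{1}{n+1}\bar\L^{n+1}-D\cdot\Theta^*\bar\L^{n}-B^{[t]}\cdot\Theta^*\bar\L^{n}=-\frac{1}{n+1}\bar\L^{n+1}+V\,A^{[t]}_{X,\F}(v).
\]
Inserting your claimed $\bar\L^{n+1}=-(n+1)V S_L(v)$ would therefore give $A^{[t]}_{X,\F}(v)+S_L(v)$, not $\beta^{[t]}$. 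The correct identity is $\bar\L^{n+1}=+(n+1)V\,S_L(v)$. Hedging with ``up to sign conventions'' is not available here, since $\bar\L^{n+1}$ is an intersection number fixed by the geometry.

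In your normalisation the induced filtration is $\mathcal F_v$ with $\lambda_{\min}=0$. So Fujita's weight formula \cite[Proposition 2.12]{Fujita16} gives $\bar\L^{n+1}=(n+1)\int_0^{T_L(v)}\vol(L-xv)\,dx$. Equivalently, $\bar\L^{n+1}/((n+1)V)$ is the barycentre of the limiting weight measure; it equals $c$ for $p_1^*L+c\,(X\times\{0\})$, and equals $S_L(v)\ge 0$ for $\mathcal F_v$. As a concrete check, take $X=\PP^1$, $L=\OO(1)$, $v=\ord_p$. On the blow-up of $(p,0)$ one gets $\Theta^*\bar\L=\Pi^*p_1^*L+\hat X_0$ and $\bar\L^2=1=2\cdot 1\cdot\tfrac12$.

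With the corrected sign your argument closes. One minor point: you need $B^{[t]}$ supported on $\Y_0$, not merely vertical, because a component over $\infty$ would not be $\Theta$-exceptional. This holds because the compactification is trivial near $\infty$.

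Once corrected, your route genuinely differs from the paper's. The paper takes the polarisation to be exactly $-K^{[t]}_{\bar\X/\PP^1}$, so Lemma~\ref{lem:anti-adjoint-mixed-DF} gives $\DF^{[t]}=-\L^{n+1}/((n+1)V)$. The foliation then enters only because the filtration induced by $\Theta^*(-r_0K^{[t]})$ is $\mathcal F_v$ shifted by $A^{[t]}_{X,\F}(v)=-\ord_{\hat\X_0}(B^{[t]})$ (Proposition~\ref{prop: auxil ord computation}). The weight formula then yields $\L^{n+1}=(n+1)V\bigl(S_L(v)-A^{[t]}_{X,\F}(v)\bigr)$ in a single step.

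You instead normalise so that the filtration is exactly $\mathcal F_v$ and use the twist-invariance of $\DF^{[t]}$. You then split off a foliation-free energy term from a canonical term evaluated by the projection formula. Your version makes transparent that only the coefficient of $B^{[t]}$ along $\hat\X_0$ matters. It also avoids identifying the shifted filtration and avoids pinning the polarisation down exactly rather than modulo fibres. The paper's version is shorter once Lemma~\ref{lem:anti-adjoint-mixed-DF} and Fujita's formula are granted.
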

\begin{proof}
    We argue as in \cite[Proof of Theorem 5.1]{Fujita16}. We pick $r_0 \in \Z_{>0}$ such that $-r_0K_{\X,\F/\A^1}^{[t]}$ is Cartier. Let 
    \[\xymatrix{
& \bar{\Y}  \ar[dl]_\Pi \ar[dr]^\Theta & \\
X\times\PP^1 & & \bar{{\X}}
}\]
be the normalization of the graph as before. Let $\Y_0=\sum_{i\in I}m_iE_i+\hat{X}_0+\hat{\X}_0$ be the irreducible decomposition, 
where $\hat{X}_0$ is the strict transform of $X\times\{0\}$ and 
$\hat{\X}_0$ is the strict transform of $\X_0$. As before, we set 
$B^{[t]}:=\Theta^*(-K_{\bar{\X}, \F_{\bar{\X}}^{[t]}/\pr^1})-\Pi^*p_1^*(-K_{X,\F}^{[t]})$ supported on $\Y_0$. Note that 
$-\ord_{\hat{\X}_0}(B^{[t]})=\ord_{\hat{\X}_0}(K_{\Y/X\times\A^1})$. 
Let $V_\bullet$ be the complete graded linear series of $-r_0K^{[t]}_{X,\F}$ and let us consider 
the filtration $\mathcal{G}:=\mathcal{G}_{(\Y, \Theta^*(-r_0K_{\X,\F_{\X}/\A^1}^{[t]}))}$ of $V_\bullet$. Note, that by Proposition \ref{prop: auxil ord computation} we have $-\ord_{\hat{\X}_0}(B^{[t]})=A^{[t]}_{X,\F}(v_{\X_0})$, and by an identical argument as in \cite[Proof of Theorem 5.1]{Fujita16} $v_{\X_0}$ is dreamy. 

Let 
\begin{eqnarray*}
\lambda_{\max}^{(k)} & := & \sup\{x\in\R\,|\, \mathcal{G}^x_{(\X, r_0\L)}V_k\neq 0\}, \\
\lambda_{\min}^{(k)} & := & \inf\{x\in\R\,|\, \mathcal{G}^x_{(\X, r_0\L)}V_k\neq V_k\}, \\
w(k) & := & \int_{\lambda_{\min}^{(k)}}^{\lambda_{\max}^{(k)}}\dim\mathcal{G}^x_{(\X, r_0\L)}V_k
dx+\lambda_{\min}^{(k)}\cdot\dim V_k.
\end{eqnarray*}
Then, by \cite[Proposition 2.12]{Fujita16} we have that 
\[
w(k) = \int_0^{kr_0\tau(v_{\X_0})}\dim H^0(X, -kr_0K_{X,\F}^{[t]}-xv_{\X_0})dx
-kr_0A_{X,\F}^{[t]}(v_{\X_0})\dim V_k. 
\]

In particular, by \cite[Proposition 2.12.(ii)]{Fujita16} or \cite[\S5, Lemma 7.7, Proposition 3.12]{BHJ19} we have that $((-K_{\bar{\X}/\pr^1})^{\cdot n+1}) = \L^{n+1}$ is equal to
\begin{eqnarray*}
\L^{n+1}&=&\lim_{k\to\infty}\frac{w(k)}
{(kr_0)^{n+1}/(n+1)!}\\
&=&(n+1)\left(\int_0^{\tau(v_{\X_0})}\vol_X(-K_{X,\F}^{[t]}-xv_{\X_0})dx
-A_{X,\F}^{[t]}(v_{\X_0})((-K_{X,\F}^{[t]})^{n})\right)\\
&=&-(n+1)V\beta^{[t]}(v_{\X_0}).
\end{eqnarray*}
Since by Lemma \ref{lem:anti-adjoint-mixed-DF} we have \[\DF^{[t]}(\X,\F_{\X}, \L) = -\frac{\L^{n+1}}{V(n+1)},\] 
by the above we obtain 
\[
\DF^{[t]}(\X,\F_{\X},\L)
=
\beta^{[t]}_{X,\F,L}(v_{\X_0}),
\]
as required.
\end{proof}

\begin{remark}
    Note that although at first glance it seems that the $S$-invariant does not inherit any foliation type geometric data, since the polarisation usually contains a foliated part based on $K_{\F}$, the volume form defining the S-invariant does inherit geometric data from the filtration.
\end{remark}

\begin{lemma}
\label{lem:vertical-divisors-invariant}
Let $\pi \colon (\X,\F_{\X}) \to C$ be a normal $\F$-compatible family over a smooth curve $C$, in the sense that $\F_{\X}\subset T_{\X/C}$ is a saturated integrable subsheaf. Let $E\subset \X$ be a prime divisor
contained in a fibre of $\pi$. Then $E$ is $\F_{\X}$-invariant.
\end{lemma}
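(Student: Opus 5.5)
The argument is local at the generic point $\eta$ of $E$, so we may shrink $\X$ to a neighbourhood of $\eta$. Since $\X$ is normal, it is smooth in codimension one. Hence near $\eta$ we may assume $\X$ is smooth, $E=\{z=0\}$ is a smooth divisor, and $\F_{\X}$ is a subbundle of $T_{\X/C}$ near $\eta$ (it is saturated, so locally free in codimension one). Let $s$ be a local parameter on $C$ at the point $c=\pi(E)$, and write $f:=\pi^*s$. Because $E$ lies in the fibre $\pi^{-1}(c)$ and $\X$ is regular at $\eta$, we have $f=u z^{m}$ near $\eta$, where $m=\ord_E(\pi^{-1}(c))\ge 1$ and $u$ is a unit.

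The key step is that every local section $\partial$ of $\F_{\X}$ is a relative vector field, so $\partial(f)=0$, exactly as used in the proof of Proposition \ref{prop: auxil ord computation}. Expanding gives
\[
0=\partial(uz^m)=z^{m-1}\bigl(z\,\partial(u)+m\,u\,\partial(z)\bigr).
\]
We may cancel $z^{m-1}$ because $\OO_{\X,\eta}$ is a domain, which leaves $m\,u\,\partial(z)=-z\,\partial(u)\in (z)$. Since $m\neq 0$ in characteristic zero and $u$ is a unit, we conclude $\partial(z)\in(z)$. Therefore every local generator of $\F_{\X}$ preserves the ideal of $E$ at $\eta$, that is, $\F_{\X}\subset T_{\X}(-\log E)$ generically along $E$. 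By the definition of invariance given before the foliated log discrepancy, this says precisely that $E$ is $\F_{\X}$-invariant.

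The only point needing care is multiplicity $m>1$, that is, a non-reduced fibre component. There one cannot argue that the vector field kills $z$ itself; the factorisation above handles this, and it is where characteristic zero enters. Apart from that, one should note that invariance is a condition at the generic point of $E$. It is therefore enough to check it on the codimension-one localisation, where normality gives regularity, and no global smoothness of $\X$ is needed.
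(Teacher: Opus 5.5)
Your proof is correct and follows the same idea as the paper's: local sections of $\F_{\X}\subset T_{\X/C}$ annihilate $\pi^*s$ and therefore preserve the ideal of $E$ at its generic point. Your factorisation $\pi^*s=uz^m$ also fixes a gap in the paper's argument. The paper asserts that $\pi^*s$ generates the ideal of $E$ at $\eta_E$ up to a unit, which fails when $E$ has multiplicity $m>1$ in the fibre (e.g.\ a non-reduced central fibre of a test configuration), and your characteristic-zero cancellation $m\,u\,\partial(z)=-z\,\partial(u)$ is exactly what covers that case.
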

\begin{proof}
Let $\eta_E$ be the generic point of $E$, and let $t$ be a local parameter on $C$ at $\pi(\eta_E)$. Since $E$ is contained in the fibre $\pi^{-1}(\pi(\eta_E))$, the ideal sheaf of $E$ at $\eta_E$ is generated by $\pi^*t$ up to a unit.

Let $\partial$ be a local section of $\F_{\X}$ near $\eta_E$. Since $\F_{\X}\subset T_{\X/C}$, we have $\partial(\pi^*t)=0$. Hence $\partial$ preserves the ideal of $E$ at $\eta_E$, i.e.
\[
\partial(\mathcal I_E)\subset \mathcal I_E.
\]
This shows that $\F_{\X}$ is tangent to $E$ at the generic point, and therefore $(\F_{\X})_x \subset T_{E,x}$ for general $x\in E$, i.e. $E$ is invariant under $\F_{\X}$.
\end{proof}

\begin{proposition}
\label{prop:special-tc-gives-F-invariant-valuation}
Let $(\pi\colon \X\to \A^1,\L,\F_{\X})$ be a nontrivial normal $\F$-compatible special test configuration of the polarised
foliated variety $(X,\F,L)$. Let $v_{\X_0}$ be the divisorial valuation over $X$ induced by $\X_0$. Then $v_{\X_0}$ is $\F$-dreamy.
\end{proposition}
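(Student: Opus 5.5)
We must show two things: that $v_{\X_0}$ is dreamy with respect to $L$, and that the Rees degeneration of $v_{\X_0}$ carries a relative foliation $\F_{\X}\subset T_{\X/\A^1}$ extending the product foliation on $X\times(\A^1\setminus\{0\})$. The plan is to reduce the first claim to the classical case and to obtain the second by identifying the Rees test configuration with $(\X,\L)$ itself, so that $\F_{\X}$ serves as the required relative foliation.

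\emph{Step 1: dreaminess.} Forgetting $\F_{\X}$, the pair $(\X,\L)$ is a normal special test configuration in the usual sense. Since $\X_0$ is irreducible and reduced, the restriction of $v_{\X_0}$ to $K(X)$ is a divisorial valuation $v=c\,\ord_F$. Following \cite[Proof of Theorem 5.1]{Fujita16} (equivalently \cite[Lemma 4.1, Proposition 2.15]{BHJ19}), $\X$ is recovered as
\[
\X\simeq \Proj_{\A^1}\bigoplus_{m\ge 0}\bigoplus_{k\in\Z}\cF^{k}_{v}H^0(X,mrL)\,s^{-k},
\]
after replacing $L$ by $rL$ with $r\L$ Cartier and twisting $\L$ by a multiple of $\X_0$. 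Here $\cF^k_v H^0(X,mrL)=\{s : v(s)\ge k\}$. Indeed, the graded pieces $H^0(\X,m r\L)$ are $\G_m$-equivariant sub-$\C[s]$-modules of $H^0(X,mrL)\otimes\C[s,s^{-1}]$, and a section $f s^{-k}$ extends across $\X_0$ precisely when $\ord_{\X_0}(f s^{-k})\ge 0$. Because $\X_0$ is reduced and irreducible, this condition is exactly $v(f)\ge k$ up to the shift. Finite generation of the left-hand side is automatic, since $\L$ is relatively ample. Hence the Rees algebra $\bigoplus_{m,k}H^0(X,mL-kv)$ is finitely generated, which is the first condition in Definition~\ref{def: F dreamy}.

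\emph{Step 2: the foliation.} The isomorphism of Step 1 identifies the Rees degeneration of $v$ with $(\X,\L)$, and this identification is $\G_m$-equivariant and compatible with the trivialisation over $\A^1\setminus\{0\}$. Transporting $\F_{\X}$ along it therefore gives a saturated, integrable, algebraically integrable subsheaf of $T_{\X/\A^1}$ that agrees with $p_1^{-1}\F$ away from $0$, by Definition~\ref{def: foliated t.c.}(2)--(4). This establishes the second condition. By Lemma~\ref{lem:vertical-divisors-invariant}, $\X_0$ is then $\F_{\X}$-invariant, which is consistent with the computation of Proposition~\ref{prop: auxil ord computation}. Non-triviality of the test configuration ensures that $v$ is not the trivial valuation, so it is a genuine divisorial valuation with $c>0$.

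\emph{Main obstacle.} The delicate point is Step 1's identification, specifically the normalisation of the shift. The twist $\L\mapsto\L+d\X_0$ changes the filtration by a translation $k\mapsto k+md$, and this must be matched so that the filtration is exactly the one induced by $v$ rather than by a shifted version of it. One must also check that no normalisation of the Rees algebra is required, which holds because $\X$ is normal and the algebra equals $\bigoplus_m H^0(\X,mr\L)$ on the nose. I would handle both points by invoking \cite[Proposition 2.15]{BHJ19} directly: for an integral central fibre, the filtration associated to the test configuration is $\cF^\lambda = \{s : v_{\X_0}(s)\ge\lambda - md\}$.
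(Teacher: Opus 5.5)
Your proof is correct, but it takes a more direct route than the paper's. You verify Definition~\ref{def: F dreamy} literally. Using the description of the filtration of a test configuration with integral central fibre (as in \cite[Proof of Theorem 5.1]{Fujita16} and \cite{BHJ19}), you identify the Rees degeneration of the restriction $v$ of $\ord_{\X_0}$ to $K(X)$ with $\X$ itself. This gives finite generation from the relative ampleness of $\L$, and at the same time supplies $\F_{\X}$ as the relative foliation required in clause (2).

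The paper proceeds differently. It passes to a $\G_m$-equivariant model $\mathcal Y$ dominating $X\times\A^1$ and $\X$, and applies Lemma~\ref{lem:vertical-divisors-invariant} to show that the extracted divisor is $\F_{\mathcal Y}$-invariant. From this it infers an $\F$-invariance property of $v_{\X_0}$, quotes \cite[\S 4]{BHJ19} for dreaminess, and leaves clause (2) implicit.

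Your approach gains an actual verification of clause (2) and avoids the invariance detour, which is not part of the definition. Invariance of a vertical divisor on the total space is automatic. It also does not by itself control invariance of the restricted valuation on $X$: in the transverse case of Proposition~\ref{prop: auxil ord computation}, $G$ is invariant on the total space while $F$ is transverse on $X$.

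Two minor points. First, the ``main obstacle'' you flag is harmless. Twisting $\L$ by $d\X_0$ changes neither the underlying $\G_m$-scheme $\X$ (only the linearisation, by a character) nor finite generation (only a shift of grading). Second, passing from the $\Z$-indexed Rees algebra to $\bigoplus_{m,k\ge 0}H^0(X,mL-kv)$ uses the standard fact that restricting a finitely generated bigraded algebra to the rational cone $\{k\ge 0\}$ preserves finite generation.
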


\begin{proof}
Since $(\X,\L)$ is a special test configuration of $(X,L)$ on the underlying variety, the irreducible central fibre $\X_0$ defines a divisorial valuation $v_{\X_0}$ over $X$ (c.f. \cite[\S 4]{BHJ19}).

It remains to prove that this valuation is $\F$-dreamy. As before, we choose a normal
projective $\G_m$-equivariant birational model
\[
\xymatrix{
& \mathcal Y \ar[dl]_{\mu} \ar[dr]^{\rho} & \\
X\times \A^1 \ar@{-->}[rr] && \X
}
\]
which extracts a prime divisor $E\subset \mathcal Y$ corresponding to the valuation
$v_{\X_0}$. Let $\F_{\mathcal Y}$ be the birational transform of
$\F_{\X}$ on $\mathcal Y$.

By construction, $E$ lies over the central fibre $\{0\}\subset \A^1$, so $E$ is a
prime divisor contained in the fibre of the morphism $\pi\circ \rho \colon \mathcal Y \to \A^1$. Since $\F_{\X}\subset T_{\X/\A^1}$ by Definition \ref{def: foliated t.c.}, the birational transform $\F_{\mathcal Y}$ is still a saturated integrable subsheaf of
$T_{\mathcal Y/\A^1}$ on a big open set, hence $\F_{\mathcal Y}\subset T_{\mathcal Y/\A^1}$ generically along $E$. Applying Lemma~\ref{lem:vertical-divisors-invariant}, we
conclude that $E$ is $\F_{\mathcal Y}$-invariant.

Now restrict to the generic fibre over $\A^1\setminus \{0\}$. Over that punctured
curve, the test configuration is isomorphic to the product family $(X,\F)\times (\A^1\setminus\{0\})$, so the induced foliation on the generic fibre of $\mathcal Y$ is exactly the
birational transform of $\F$ over $X$. Therefore the divisor $E$, viewed as a
divisor over $X$, is invariant for the induced foliation over $X$. Hence, $v_{\X_0}=\ord_E$ is $\F$-invariant.

Finally, dreaminess of $v_{\X_0}$ with respect to $L$ is classical for special test configurations on the underlying variety \cite[\S 4]{BHJ19}. Hence $v_{\X_0}$ is $\F$-dreamy by Definition \ref{def: F dreamy}.
\end{proof}

\begin{corollary}
\label{cor:special-F-compatible-implies-F-dreamy}
Every nontrivial normal $\F$-compatible special test configuration of
$(X,\F,L)$ determines an $\F$-dreamy divisorial valuation over $X$.
\end{corollary}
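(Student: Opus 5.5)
This corollary is essentially a restatement of Proposition \ref{prop:special-tc-gives-F-invariant-valuation}, so the plan is to reduce to it directly. Start with a nontrivial normal $\F$-compatible special test configuration $(\pi\colon\X\to\A^1,\L,\F_{\X})$. Since $\X_0$ is irreducible and reduced, the restriction of $\ord_{\X_0}$ to $K(X)\subset K(X)(t)=K(\X)$ is a valuation $v_{\X_0}$ over $X$. By \cite[\S 4]{BHJ19}, nontriviality of the test configuration ensures that this valuation is a nontrivial divisorial valuation, of the form $c\,\ord_F$ for some prime divisor $F$ over $X$. This is the valuation ``determined'' by the test configuration.

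The second step is to check the two conditions of Definition \ref{def: F dreamy}, which is exactly the content of Proposition \ref{prop:special-tc-gives-F-invariant-valuation}.

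\begin{itemize}
\item Dreaminess with respect to $L$ comes from the identification of the filtration induced by $\X_0$ on $\bigoplus_m H^0(X,mL)$ with the $v_{\X_0}$-filtration, up to a shift. Its Rees algebra is the finitely generated algebra $\bigoplus_m H^0(\X,m\L)$ (\cite[\S 4]{BHJ19}).
\item The Rees degeneration attached to $v_{\X_0}$ then recovers $(\X,\L)$ itself, again up to normalisation and a shift of $\L$ by a multiple of $\X_0$, neither of which affects the total space.
\item The relative foliation required in condition (2) can therefore be taken to be $\F_{\X}$. It is saturated, $\G_m$-equivariant and contained in $T_{\X/\A^1}$, and it restricts to the product foliation away from $0$ by Definition \ref{def: foliated t.c.}.
\end{itemize}

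Lemma \ref{lem:vertical-divisors-invariant} adds the observation that $F$ is moreover $\F$-invariant. This is not required for dreaminess, but it is recorded in the proposition.

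The only point needing care is the identification, in the second step, of the Rees test configuration of $v_{\X_0}$ with the original $\X$. This matters because the foliation in Definition \ref{def: F dreamy} must live on the Rees degeneration specifically. I would handle it with the standard correspondence between $\G_m$-equivariant test configurations with integral central fibre and finitely generated filtrations. Both $\X$ and the Rees model are $\operatorname{Proj}$ of the same graded algebra over $\C[t]$, so they are $\G_m$-equivariantly isomorphic over $\A^1$. That isomorphism transports $\F_{\X}$, and transporting preserves every defining property, which completes the argument.
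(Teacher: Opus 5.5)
Your argument is correct. Like the paper, you reduce to Proposition~\ref{prop:special-tc-gives-F-invariant-valuation} and take dreaminess with respect to $L$ from \cite{BHJ19}, using the shifted $v_{\X_0}$-filtration whose Rees algebra is $\bigoplus_m H^0(\X,m\L)$. The difference is in how you handle condition (2) of Definition~\ref{def: F dreamy}.

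The paper's proof of the proposition extracts $\X_0$ as a vertical divisor $E$ on a model $\mathcal Y$ and uses Lemma~\ref{lem:vertical-divisors-invariant} to see that $E$ is invariant for the relative foliation. It then concludes, without ever exhibiting a foliation on the Rees degeneration. You instead identify the Rees degeneration of $v_{\X_0}$ with $(\X,\L)$ itself, up to twisting $\L$ by a multiple of $\X_0$: both are $\Proj_{\A^1}$ of the same finitely generated, integrally closed algebra. You then transport $\F_{\X}$ across this isomorphism. This verifies condition (2) as it is actually stated, which the invariance argument does not do. The one point to make explicit is that the isomorphism comes from the common embedding of both algebras in $\bigoplus_m H^0(X,mL)\otimes\C[s,s^{-1}]$. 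It is therefore compatible with the trivialisations over $\A^1\setminus\{0\}$, so the transported foliation is still the product foliation there.

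You should delete the remark that Lemma~\ref{lem:vertical-divisors-invariant} shows $F$ is $\F$-invariant. That lemma concerns divisors contained in fibres of the total space. It shows that $\X_0$ (or $E\subset\mathcal Y$) is invariant for the relative foliation, which holds for every vertical divisor and says nothing about $F$ over $X$.

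In fact the claim is false. Take $X=\PP^1\times\PP^1$, with $\F$ tangent to the fibres of the first projection. Take the product test configuration $X\times\A^1$ with $\G_m$ acting on the second factor of $X$, and set $\F_{\X}=p_1^*\F$. This action preserves $\F$, so this is a nontrivial normal $\F$-compatible special test configuration. However, $v_{\X_0}=\ord_{\PP^1\times\{p\}}$ for a fixed point $p$ of the action, and this divisor is transverse to $\F$.

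The same objection applies to the step in the paper's proof of Proposition~\ref{prop:special-tc-gives-F-invariant-valuation} that passes from invariance of $E$ to invariance of $v_{\X_0}$ over $X$. Neither the corollary nor your argument needs it.
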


\begin{proof}
This is immediate from Proposition~\ref{prop:special-tc-gives-F-invariant-valuation}.
\end{proof}

\section{\texorpdfstring{Construction of $\F$-compatible special test configurations from arbitrary $\F$-compatible test configurations}{Construction of F-compatible special test configurations from arbitrary F-compatible test configurations}}\label{sec: special t.c.}

In this section we adapt the construction developed by Li--Xu  \cite{LX14} to families of polarised adjoint Fano foliated structures
over a curve. This will allow us to show how one can construct $\F$-compatible special test configurations from arbitrary $\F$-compatible test configurations.  

In particular, let $C$ be a smooth curve, let $0\in C$ be a closed point, and let
$C^\circ:=C\setminus\{0\}$. Let $f^\circ\colon
(\X^\circ,\F^\circ,B^\circ,M^\circ,t)\to C^\circ$ be a family of lc algebraically integrable adjoint Fano foliated structures,
where $0<t<1$, such that $\F^\circ$ is compatible with the map to
$C^\circ$. We write
\[
K_{\mathcal A^\circ}
:=
tK_{\F^\circ}+(1-t)K_{\X^\circ}+B^\circ+M_{\X^\circ},
\]
and assume that the family is polarised by a relatively ample $\mathbb Q$-Cartier divisor $\L^\circ$ such that $\L^\circ\sim_{\mathbb Q,C^\circ}-K_{\mathcal A^\circ}$.

We now choose a projective completion over $C$, $f\colon
(\X,\F_{\X},B_{\X},M,t)\to C$ whose restriction over $C^\circ$ recovers the given polarised family, and we further impose that $\F_{\X}$ is an algebraically integrable foliation on $\X$. We assume throughout that $\X$ is potentially klt and that
\[
K_{\mathcal A}
:=
tK_{\F_{\X}}+(1-t)K_{\X}+B_{\X}+M_{\X}
\]
is not pseudo-effective over $C$. These conditions are essential in order to run the MMP techniques developed in \cite{CHLMSSX24, CHLMSX25}. This choice may seem arbitrary, but in future work \cite{Pap26b} we will show that $t$-K-semistable adjoint Fano foliated structures are klt, and their ambient variety is potentially klt, which justifies that choice.

We summarise the steps required to produce an $\F$-compatible special test configuration here for the convenience of the reader. The argument follows the same overall structure as the proof of
\cite[Theorem~6]{LX14}, but with the caveat that we have to run adjoint foliated MMP with scaling instead of the regular MMP.

The first step is to choose a projective compactification over $C$, pass to a $\mathbb Q$-factorial qdlt model (which gives us the analogue of $\X^{\rm lc}$ in \cite{LX14}), and then run the  $K_{\mathcal A}$-MMP with scaling over $C$. At the threshold stage we reach a model $(\X^j,\F^j,B^j,M,t)\to C$ such that $-K_{\mathcal A^j}\sim_{\mathbb Q,C}L^j$, with $L^j$ nef and big over $C$. This allows one to define the relative anti-adjoint model
\[
\X^{ac}:=
\operatorname{Proj}_C R(\X^j/C,-K_{\mathcal A^j}).
\]
which is the foliated analogue of the similar variety defined in \cite{LX14}.

The second step is to start from $f^{ac}\colon (\X^{ac},\F^{ac},B^{ac},M,t)\to C$ where $-K_{\mathcal A^{ac}}$ is relatively ample over $C$, and after a finite base change $\phi\colon C'\to C$ take a semistable log resolution $\pi\colon Y\to \widetilde{\X}^{ac}:=\X^{ac}\times_C C'$ of the pair $(\widetilde{\X}^{ac},\widetilde{\X}^{ac}_0)$. We can then pull back the polarisation
$-K_{\mathcal A^{ac}}$ to $Y$, perturb it slightly by a
$\pi$-ample exceptional divisor, and consider the birational transform of the foliation together with the corresponding adjoint divisor on $Y$.

The third step is to replace this lc adjoint foliated structure by a klt
rational one using the perturbation theorem for algebraically integrable
adjoint foliated structures, namely \cite[Lemma~3.29]{CHLMSX25}. Once this is done, the $K_{\mathcal A}$-MMP over $\widetilde{\X}^{ac}$ with scaling is available by the klt MMP theorems in \cite{CHLMSX25}.

The fourth step is the birational heart of the argument. Ordering carefully the coefficients of the vertical components we run the adoint foliated MMP so that all unwanted vertical divisors are contracted, while one distinguished component survives. The output is a model
\[
(\X',\F',B',M,t)\to C'
\]
such that $K_{\mathcal A'}+\L'\sim_{\mathbb Q,C'}0$ and $\L'$ is nef and big over $C'$. Here the coefficient-ordering and support argument are the same as in \cite[Theorem~6]{LX14}, while the existence and termination of the required MMP come from \cite{CHLMSX25}.

The final step is to define
\[
\X^s:=
\operatorname{Proj}_{C'}R(\X'/C',\L')
=
\operatorname{Proj}_{C'}R(\X'/C',-K_{\mathcal A'}).
\]
and notice that we also obtain an induced foliation $\F^s$. Over the punctured curve, this construction recovers the original polarised family (and foliation), and the surviving vertical divisor is identified with the special fibre of the extension withdiscrepancy over
$\widetilde{\X}^{ac}$ equal to $0$.

\subsection{qdlt model and ambient klt property}\label{sec: qdlt model}

The first step is to replace the given family by a $\mathbb Q$-factorial qdlt
model.

\begin{proposition}
\label{prop:prepared-qdlt-model}
After replacing $(\X,\F_{\X},B_{\X},M,t)\to C$ by a birational model over $C$, there exists a projective
birational morphism
\[
h\colon
(\X^0,\F^0,B^0,M,t)\to
(\X,\F_{\X},B_{\X},M,t)
\]
such that
\begin{enumerate}
    \item $(\X^0,\F^0,B^0,M,t)$ is $\mathbb Q$-factorial and qdlt;
    \item $K_{\mathcal A^0}=h^*K_{\mathcal A}$;
    \item the transformed foliation $\F^0$ remains algebraically
    integrable and compatible with the map to $C$;
    \item the pair \((\X^0,\X^0_0)\) is log canonical.
\end{enumerate}
\end{proposition}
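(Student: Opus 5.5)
The plan is to imitate the construction of $\X^{\mathrm{lc}}$ in \cite[Theorem~6]{LX14}, with the dlt modification replaced by the foliated qdlt modification for algebraically integrable adjoint foliated structures from \cite{CHLMSSX24,CHLMSX25}.

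First, I would replace $(\X,\F_{\X},B_{\X},M,t)\to C$ by a preliminary birational model over $C$ so that the reduced central fibre is well behaved. Concretely, take a $\G_m$-equivariant (when relevant) log resolution of $(\X,\X_{0,\red})$ that is an isomorphism over $C^\circ$. After a finite base change $C'\to C$, totally ramified at $0$, the semistable reduction theorem lets us assume that $\X_0$ is reduced with snc support. On this model the pair $(\X,\X_0)$ is log smooth, hence log canonical. The foliation is simply transported by birational transform.

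Property (3) follows from two facts. Birational transforms of algebraically integrable foliations are algebraically integrable, since the leaves are the strict transforms of the general fibres of the same rational map. The inclusion $\F\subset T_{\X/C}$ is a condition on a big open set: the generic fibres over $C^\circ$ are unchanged, and saturation then gives the inclusion everywhere. Vertical divisors are invariant by Lemma~\ref{lem:vertical-divisors-invariant}.

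Second, I would apply the existence of $\Q$-factorial qdlt modifications for lc algebraically integrable adjoint foliated structures \cite{CHLMSX25}. This produces
\[
h\colon(\X^0,\F^0,B^0,M,t)\to(\X,\F_{\X},B_{\X},M,t),
\]
where $B^0$ is the strict transform of $B_{\X}$ plus the $h$-exceptional divisors with coefficient determined by $A^{[t]}=0$. The output is $\Q$-factorial, qdlt and crepant, so $K_{\mathcal A^0}=h^*K_{\mathcal A}$; this gives (1) and (2). It only requires $(\X,\F_{\X},B_{\X},M,t)$ to be lc. Lc-ness holds over $C^\circ$ by hypothesis. Along the central fibre it is arranged by adding the vertical part: all vertical divisors are $\F$-invariant, so their foliated log discrepancy already vanishes, and the ambient part is lc because $(\X,\X_0)$ is log smooth. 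If needed, I would include $(1-t)\X_{0,\red}$ in the boundary, as in the definition of $t$-weakly special, and then remove it at the end.

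Third, for (4) I would note that the qdlt modification is built by running an MMP over the lc model. Every $h$-exceptional divisor $E$ over $\X_0$ has $A^{[t]}_{\X,\F_{\X},B_{\X}+(1-t)\X_0}(E)=0$. Since $E$ is invariant, its foliated log discrepancy contribution is only $a(E,\F)$. I would then compare with the ambient discrepancy using the inequality $A_{\X,\F}\le A_{\X}$ for invariant divisors of algebraically integrable foliations (from \cite{CHLMSX25}). This shows that $E$ is an lc place of $(\X,\X_0)$. Since $h$ extracts only lc places of the log canonical pair $(\X,\X_0)$, the crepant pullback shows that $(\X^0,\X^0_0)$ is lc.

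\textbf{Main obstacle.} I expect the hardest step to be this last comparison of the adjoint lc places with the ambient lc places of $(\X,\X_0)$. If the inequality $A_{\X,\F}\le A_{\X}$ is not directly available, I would instead extract, via \cite[Lemma~3.29]{CHLMSX25}, only divisors that are simultaneously lc places of $(\X,\X_0)$. These are the toroidal strata divisors of the snc central fibre, and I would then run the qdlt MMP relative to that toroidal model.
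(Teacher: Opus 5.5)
Your treatment of (1)--(3) is essentially the paper's: invoke the $\Q$-factorial qdlt modification of \cite{CHLMSSX24} (crepant by construction), and note that algebraic integrability and $\F\subset T_{\X/C}$ survive birational transform. The gap is in (4). Let $E$ be an invariant $h$-exceptional divisor with $A^{[t]}_{\X,\F_{\X},B_{\X}+(1-t)\X_0}(E)=0$. Since $\varepsilon(E)=0$,
\[
A_{\X,B_{\X}+\X_0}(E)=\frac{t}{1-t}\Bigl(\ord_E(B_{\X})-A_{\X,\F_{\X}}(E)\Bigr).
\]
So what you need is the \emph{lower} bound $A_{\X,\F_{\X}}(E)\ge \ord_E(B_{\X})$, i.e.\ that the foliated pair $(\X,\F_{\X},B_{\X})$ is lc. Granting that, and that $(\X,B_{\X}+\X_0)$ is lc, both terms vanish and $E$ is an lc place of $(\X,B_{\X}+\X_0)$. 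The inequality $A_{\X,\F}\le A_{\X}$ instead bounds the foliated term from above. Substituting it only gives $A_{\X}(E)\ge \ord_E(B_{\X})+(1-t)\ord_E(\X_0)$, a lower bound on $A_{\X}(E)$, whereas you need $A_{\X}(E)\le \ord_E(B_{\X})+\ord_E(\X_0)$.

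The same missing input undermines your Step 2. Log smoothness of $(\X,\X_0)$ and invariance of the components of $\X_0$ control only divisors \emph{on} $\X$, not $A_{\X,\F_{\X}}(E)$ for divisors $E$ over $\X$. An ordinary log resolution does not resolve the singularities of $\F_{\X}$, and adding $(1-t)\X_0$ to the boundary can only lower discrepancies. Lc-ness near $\X_0$ requires a foliated, ACSS-type toroidal model on which $\F_{\X}$ itself is lc. Your fallback does not repair this: \cite[Lemma~3.29]{CHLMSX25} is the klt perturbation lemma, not an extraction result, and extracting only toroidal divisors over strata of $\X_0$ need not give a qdlt model of the adjoint structure.

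The paper avoids the divisor-by-divisor comparison and reads (4) off the qdlt structure itself. For a $\Q$-factorial qdlt algebraically integrable adjoint foliated structure, the underlying generalised pair with the reduced invariant (vertical) divisors added is qdlt. Every component of $\X^0_0$ is invariant because $\F^0\subset T_{\X^0/C}$, so $(\X^0,B^0+\red(\X^0_0),M)$ is qdlt, hence lc, and one then drops $B^0\ge 0$.

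Be aware that any such argument, yours included, only yields lc-ness of $(\X^0,\red(\X^0_0))$; the paper tacitly identifies $\X^0_0$ with its reduction. On a semistable model, an exceptional lc place $E$ of $(\X,\X_0)$ is centred on a stratum of codimension $\ge 2$, so $\ord_E(\X_0)\ge 2$ and $E$ enters $h^*\X_0$ with multiplicity $\ge 2$. For the same reason, removing $(1-t)h^*\X_0$ at the end leaves the negative coefficient $(1-t)\bigl(1-\ord_E(\X_0)\bigr)$ on $E$. To keep the scheme-theoretic fibre reduced, you should produce $\X^0$ by contracting from the semistable model, as for $\X^{\mathrm{lc}}$ in \cite{LX14}, rather than by extracting further divisors.
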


\begin{proof}
This is the qdlt modification theorem for algebraically integrable adjoint
foliated structures, namely
\cite[Theorem~3.6]{CHLMSSX24}, together with
\cite[Definition~3.7]{CHLMSSX24}. Applying
\cite[Theorem~3.6]{CHLMSSX24} over the base curve $C$ yields
the required $\mathbb Q$-factorial qdlt model and the crepant pullback
identity for the adjoint divisor. Algebraic integrability and compatibility
with the map to $C$ are preserved under birational transform, since they are
determined on the common big open subset where the birational map is an
isomorphism. In particular, after forgetting the foliation, the underlying generalised pair $(\X^0,B^0+\red(\X^0_0),M_{X^0})$ is qdlt, hence log canonical. Since \(B^0\geq 0\), decreasing the boundary preserves log canonicity, and therefore \((\X^0,\X^0_0)\) is log canonical.
\end{proof}

\begin{lemma}
\label{lem:prepared-model-klt}
With the notation of Proposition~\ref{prop:prepared-qdlt-model}, the ambient
variety $\X^0$ is klt.
\end{lemma}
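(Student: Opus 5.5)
The plan is to deduce klt-ness of $\X^0$ from the qdlt structure produced in Proposition~\ref{prop:prepared-qdlt-model}, combined with the standing assumption that $\X$ is potentially klt. The basic input is the definition of qdlt for algebraically integrable adjoint foliated structures, \cite[Definition~3.7]{CHLMSSX24}. It gives the following: $\X^0$ is $\Q$-factorial, and there is a boundary $\Delta^0\ge 0$ such that the underlying (generalised) pair $(\X^0,\Delta^0)$ is qdlt, with $\Delta^0$ built from $B^0$ and the non-$\F^0$-invariant exceptional or vertical components. Moreover, locally around every stratum, $(\X^0,\Delta^0)$ is a toroidal quotient; in particular it is a finite abelian quotient of a toric pair near the generic point of each lc centre.

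I would proceed as follows.

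First, I would recall that a qdlt pair $(Y,\Delta)$ has the property that $(Y,(1-\epsilon)\Delta)$ is klt for $0<\epsilon\ll 1$. The reason is that qdlt pairs are lc, and every lc centre is a stratum of $\lfloor\Delta\rfloor$ where the pair is formally toric modulo a finite group. Scaling the boundary down therefore removes all lc centres: log discrepancies increase by $\epsilon\,\ord_E\Delta>0$ for every divisor $E$ with centre in $\Supp\lfloor\Delta\rfloor$, while the remaining divisors already have positive log discrepancy.

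Second, I would use that $\X^0$ is $\Q$-factorial, so $K_{\X^0}$ is $\Q$-Cartier. Since $(1-\epsilon)\Delta^0\ge0$, decreasing the boundary to $0$ only increases log discrepancies. This gives $A_{\X^0}(E)\ge A_{\X^0,(1-\epsilon)\Delta^0}(E)>0$ for every $E$, hence $\X^0$ is klt. The moduli part $M$ is irrelevant to this argument: it is nef over $\X^0$ on high models, so for generalised pairs one can drop it, or absorb it into the b-divisor, without decreasing discrepancies, as in the standard fact that generalised klt with $\Q$-factorial base implies that the base is potentially klt. Since $\X^0$ is already $\Q$-factorial, potentially klt upgrades to klt.

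The hard part is the first step in the foliated setting. The qdlt notion in \cite{CHLMSSX24} is stated for the adjoint foliated structure, and I need to make sure it actually endows the ambient variety with a qdlt pair structure rather than only a foliated one. I would first try to read this directly off \cite[Definition~3.7]{CHLMSSX24} and the construction in \cite[Theorem~3.6]{CHLMSSX24}, where the model is obtained from a foliated log resolution followed by an MMP. There, $\X^0$ is locally a toroidal quotient by construction, and toroidal quotients are klt because they are quotient singularities of toric varieties.

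If that reading fails, the fallback is to use the assumption that $\X$ is potentially klt. Since $h$ only extracts divisors of nonpositive discrepancy for a suitable boundary on $\X$, one could hope to realise $\X^0$ as a $\Q$-factorial model of a klt pair on $\X$ extracting only divisors with log discrepancy in $(0,1]$. Such models are klt by negativity, which would conclude the proof.
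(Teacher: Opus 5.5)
Your proposal is correct and takes the paper's route. The paper just cites the observation after \cite[Theorem~3.6]{CHLMSSX24} that the underlying generalised pair of a $\mathbb{Q}$-factorial qdlt adjoint foliated structure is qdlt, hence $\X^0$ is klt; this is exactly the input you flag as the key step, and you additionally spell out the standard implication (shrink the boundary, then drop it together with the nef moduli part using $\mathbb{Q}$-factoriality), so your fallback via potential klt-ness of $\X$ is not needed.
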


\begin{proof}
This is recorded immediately after the proof of
\cite[Theorem~3.6]{CHLMSSX24}: for a $\mathbb Q$-factorial qdlt
adjoint foliated structure, the underlying generalised pair is qdlt, hence the
ambient variety is klt.
\end{proof}

\begin{corollary}
\label{cor:prepared-model-klt-regime}
The model $(\X^0,\F^0,B^0,M,t)/C$ is an lc algebraically integrable adjoint foliated structure on a
$\mathbb Q$-factorial klt variety.
\end{corollary}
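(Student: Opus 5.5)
The corollary is essentially a repackaging of Proposition~\ref{prop:prepared-qdlt-model} and Lemma~\ref{lem:prepared-model-klt}, so I would check its three assertions one at a time, each against one item already established.

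\emph{Lc adjoint foliated structure.} By Proposition~\ref{prop:prepared-qdlt-model}(1) the model $(\X^0,\F^0,B^0,M,t)$ is qdlt. By the definition of qdlt adjoint foliated structures in \cite[Definition~3.7]{CHLMSSX24}, qdlt implies lc. Alternatively, I would use the crepant identity $K_{\mathcal A^0}=h^*K_{\mathcal A}$ of Proposition~\ref{prop:prepared-qdlt-model}(2). It shows that every divisor over $\X^0$ has the same mixed log discrepancy computed on $\X^0$ as on $\X$. Since the family $(\X,\F_{\X},B_{\X},M,t)$ was assumed lc, the model $\X^0$ is lc as well.

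\emph{Algebraic integrability and relative structure.} Proposition~\ref{prop:prepared-qdlt-model}(3) gives that $\F^0$ is algebraically integrable and compatible with the map to $C$. Hence $(\X^0,\F^0,B^0,M,t)/C$ is an algebraically integrable adjoint foliated structure over $C$.

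\emph{$\mathbb Q$-factorial klt ambient variety.} $\mathbb Q$-factoriality is part of Proposition~\ref{prop:prepared-qdlt-model}(1). Klt-ness of $\X^0$ is exactly Lemma~\ref{lem:prepared-model-klt}. The reasoning there is that forgetting the foliation leaves a qdlt generalised pair; the $\mathbb Q$-factoriality of $\X^0$ then lets one drop the boundary and the nef part to conclude that $\X^0$ is klt.

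Combining the three checks proves the corollary. The only point needing care is that the qdlt notion used in \cite{CHLMSSX24} does imply lc for the full adjoint structure, including the $t$-weighted foliated part and not only the generalised pair. I expect this to be immediate from their definitions, or else from the crepant pullback argument above.
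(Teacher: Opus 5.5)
Your proof is correct and follows the paper's own argument: $\mathbb Q$-factoriality, qdlt (hence lc) and algebraic integrability with compatibility over $C$ all come from Proposition~\ref{prop:prepared-qdlt-model}, and klt-ness of $\X^0$ is exactly Lemma~\ref{lem:prepared-model-klt}. Keep the qdlt $\Rightarrow$ lc step as your main argument and drop the crepant-pullback alternative. The paper assumes lc only for the family over $C^\circ$, not for the completion $(\X,\F_{\X},B_{\X},M,t)$ over $C$, so that route rests on a hypothesis you do not have.
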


\begin{proof}
By Proposition~\ref{prop:prepared-qdlt-model}, the model is
$\mathbb Q$-factorial and qdlt, hence lc. By
Lemma~\ref{lem:prepared-model-klt}, the ambient variety is klt.
\end{proof}

Since the family is polarised over $C^\circ$, we denote by $L^0$ the
birational transform of $\L^\circ$ on $\X^0$. By
construction we have $L^0|_{\X^0_\eta}\sim_{\mathbb Q}-K_{\mathcal A^0_\eta}$.

\begin{remark}
    $(\X^0,\F^0,B^0,M,t)/C$ plays the role of $\X^{lc}$ in \cite{LX14}.
\end{remark}

We will use the following relative lemma over the base curve, following \cite[Lemma~1]{LX14}.

\begin{lemma}
\label{lem:vertical-nef}
Let $\pi\colon X\to C$ be a projective dominant morphism from a normal variety to a smooth curve, and
let $D$ be a $\mathbb Q$-Cartier $\mathbb Q$-divisor on $X$. Assume that
\begin{enumerate}
    \item $D$ is nef over $C$;
    \item $D|_{X_\eta}\sim_{\mathbb Q}0$ where $\eta$ is the generic point of $C$.
\end{enumerate}
Then $D\sim_{\mathbb Q,C}0$.
\end{lemma}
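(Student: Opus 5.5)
The plan is to reduce to the classical statement that a relatively nef divisor which is trivial on the generic fibre is a combination of fibre components, and then use Zariski's lemma on the fibres to conclude it is a pullback from $C$.

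\emph{Step 1: move $D$ onto the special fibres.} Since $D|_{X_\eta}\sim_{\QQ}0$, choose $m>0$ with $mD$ Cartier and $mD|_{X_\eta}\sim 0$. A rational function on $X_\eta$ trivialising $mD|_{X_\eta}$ is a rational function $\phi$ on $X$. Then
\[
mD+\operatorname{div}(\phi)=:D'
\]
has support contained in finitely many fibres $X_{c_1},\dots,X_{c_r}$. Replacing $D$ by $D'/m$ (which changes nothing up to $\QQ$-linear equivalence) we may assume $D=\sum_{i} D_i$ with $D_i$ supported on $X_{c_i}$. Since $C$ is a smooth curve, each $c_i$ is a Cartier divisor and $\pi^*c_i=X_{c_i}$ is a Cartier divisor on $X$. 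The problem is now local over each $c_i$, and we may shrink $C$ to a neighbourhood of a single point $c$.

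\emph{Step 2: negativity (Zariski) lemma.} Write $X_c=\sum_j a_j F_j$ with $a_j>0$ and $D=\sum_j d_j F_j$. Set $\lambda:=\max_j d_j/a_j$ and $D'':=\lambda X_c-D$. Then $D''\ge 0$, and $D''$ does not contain some component $F_{j_0}$ of $X_c$ in its support. Since $X_c\sim_{\QQ,C}0$, the divisor $-D''$ is nef over $C$.

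Assume $D''\neq 0$. By connectedness of the fibre $X_c$ (we may Stein-factorise; more carefully, we may replace $C$ by the Stein factorisation, since $\QQ$-linear triviality over the finite cover descends by pushforward/norm), there is a component $F_k\not\subset\Supp D''$ meeting $\Supp D''$. Cutting by general very ample hyperplane sections reduces to a surface $S\to C$. There we pick a curve $\gamma\subset F_k\cap S$ that meets $\Supp D''$ but is not contained in it. For this curve, $D''\cdot\gamma>0$, which contradicts the nefness of $-D''$. Hence $D''=0$, so $D=\lambda X_c=\lambda\pi^*c\sim_{\QQ,C}0$.

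\emph{Expected main obstacle.} The delicate point is Step 2 in the singular, higher-dimensional setting. The $F_j$ need not be $\QQ$-Cartier individually, since only $D$ is assumed $\QQ$-Cartier. Intersection numbers must therefore be taken with $D''=\lambda X_c-D$, which is $\QQ$-Cartier, restricted to curves.

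To handle this, the first thing to try is passing to a resolution $g\colon X'\to X$. The pullback $g^*D$ is still nef over $C$ and trivial on the generic fibre. The hypothesis is now smooth-surface-sliceable, and if $g^*D\sim_{\QQ,C}0$ then $D=g_*g^*D\sim_{\QQ,C}0$.

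On $X'$, the argument goes as follows. Take a general complete intersection surface $S'$ dominating $C$. The restriction $D''|_{S'}$ is a vertical divisor that is anti-nef over $C$ and has a component missing. The Hodge index (Zariski lemma) for fibres of a fibred surface then forces $D''|_{S'}=0$. Since every component of $X'_c$ meets $S'$ in curves, this gives $D''=0$.

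This reduction is exactly the argument of \cite[Lemma~1]{LX14}, which I would cite for the surface step.
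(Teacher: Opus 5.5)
Your argument is correct when the fibres of $\pi$ are connected, but its key step follows a different route from the paper's.

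Both proofs start the same way, twisting $mD$ by a rational function to make it vertical. From there they diverge.
\begin{itemize}
\item The paper uses a quadratic form. It decomposes the vertical divisor as $\sum_p E_p$ by fibres and notes that cross terms vanish. It then invokes $E_p^2\cdot A^{\dim X-2}\le 0$, with equality only for multiples of the fibre, from \cite[Lemma~1]{LX14} (a Hodge-index/Zariski-lemma statement). Relative nefness forces $E_p^2\cdot A^{\dim X-2}\ge 0$, so equality holds.
\item You subtract the largest multiple of the fibre, giving $D''=\lambda X_c-D\ge 0$ with a missing component. Connectedness of $X_c$ then supplies a fibre curve $\gamma$ with $D''\cdot\gamma>0$, contradicting relative nefness of $-D''$.
\end{itemize}
Your route is more elementary: no Hodge index is needed. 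It only ever intersects the $\QQ$-Cartier divisor $D''$ with curves. Since $X$ is normal, $mD''$ is an effective Cartier divisor. Hence any curve $\gamma\subset F_k$ through a point of $F_k\cap\Supp D''$ and not contained in $\Supp D''$ satisfies $D''\cdot\gamma>0$. So the worry about non-$\QQ$-Cartier $F_j$ in your ``main obstacle'' paragraph never arises. The resolution and surface-slicing fallback, and the citation of \cite[Lemma~1]{LX14}, are unnecessary. The paper instead packages the same connectedness input into the equality case of the cited lemma.

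One step is wrong, however. You claim one may pass to the Stein factorisation because $\QQ$-linear triviality over the finite cover descends ``by pushforward/norm''. It does not, and in fact the lemma as stated fails without connected fibres. Here is a counterexample.
\begin{itemize}
\item Let $h\colon C'\to C=\PP^1$ be a double cover by an elliptic curve, with a ramification point $O$.
\item Set $X=C'\times\PP^1$, $\pi=h\circ\mathrm{pr}_1$, and $D=\mathrm{pr}_1^*c'$ with $c'-O$ non-torsion.
\item Then $D$ is nef over $C$ and $D|_{X_\eta}=0$.
\item But $D\sim_{\QQ}\pi^*B$ would force $c'\sim_{\QQ}2\deg(B)\,O$ on $C'$. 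Comparing degrees, $c'-O$ would be torsion, which it is not.
\end{itemize}

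So you should simply assume $\pi_*\mathcal{O}_X=\mathcal{O}_C$, equivalently connected fibres. This holds in the paper's applications, which involve normal total spaces over a curve with geometrically integral generic fibre. The paper's own proof also uses it implicitly, through the equality case of \cite[Lemma~1]{LX14}. With that hypothesis your proof is complete. Since your Step 2 yields an equality of divisors $D=\lambda X_c$ near each special fibre, the localisation in your Step 1 is harmless.
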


\begin{proof}
Choose $m>0$ sufficiently divisible such that $mD$ is Cartier and $(mD)|_{X_\eta}\sim 0$. Then there exists a rational function $\varphi\in K(X)^\times$ such that $E:=mD-\operatorname{div}(\varphi)$ is supported on finitely many fibres of $\pi$. Since $E\sim mD$, the divisor $E$ is also nef over $C$.

Thus we have $E=\sum_{p\in\Sigma}E_p$ where each $E_p$ is supported on the fibre $X_p$, for a finite set $\Sigma\subset C$. Let $A$ be an ample Cartier divisor on $X$. Distinct
fibres are disjoint, so
\[
E_p\cdot E_q\cdot A^{\dim X-2}=0
\qquad (p\neq q),
\]
hence
\[
E^2\cdot A^{\dim X-2}
=
\sum_{p\in\Sigma}E_p^2\cdot A^{\dim X-2}.
\]

By \cite[Lemma~1]{LX14}, for each $p$, $E_p^2\cdot A^{\dim X-2}\le 0$ with equality only if $E_p$ is proportional to the full fibre $X_p$. Since $E$ is nef over $C$, we have
\[
E\cdot E_p\cdot A^{\dim X-2}\ge 0
\]
for every $p$. As cross-terms vanish, this gives
\[
E_p^2\cdot A^{\dim X-2}\ge 0
\]
for all $p$. Hence equality holds for every $p$, and the equality case of
\cite[Lemma~1]{LX14} implies that $E_p=t_pX_p$ for some $t_p\in\mathbb Q$. Therefore
\[
E=\sum_{p\in\Sigma}t_pX_p=\pi^*\Bigl(\sum_{p\in\Sigma}t_pp\Bigr),
\]
so $E\sim_C0$. Since $E\sim mD$, it follows that $D\sim_{\mathbb Q,C}0$.
\end{proof}

\subsection{MMP with scaling}\label{sec: mmp with scaling}

We choose an ample  $\Q$-divisor $H^0$ over $C$ on $\X^0$ such that $(K_{\mathcal A^0}+H^0)|_{\X^0_\eta}\sim_{\mathbb Q}0$. We now apply the MMP techniques from the adjoint foliated theory
\cite{CHLMSSX24,CHLMSX25} in order to pass from the lc setting to the klt
rational setting while preserving the adjoint numerical class up to adding an
ample divisor. Let
\[
\mathcal A^0/C=
(\X^0,\F^0,B^0,M,t)/C
\]
be the model we have already constructed, and let $P$ be an ample $\mathbb R$-divisor over
$C$ on $\X^0$. We now make the definition of the divisor $H^0$ explicit. We define
\[
H^0 := L^0 - \frac{1}{l+1}(K_{\cA^0}+L^0).
\]
Since $L^0$ is ample over $C$, for $l$ sufficiently large the divisor $H^0$ is ample over $C$. Moreover, using $L^0|_{(\X^0)_\eta} \sim_{\Q} -K_{\cA^0}|_{(\X^0)_\eta}$ we obtain
\[
(K_{\cA^0}+H^0)|_{(\X^0)_\eta}
\sim_{\Q}
\frac{l}{l+1}(K_{\cA^0}+L^0)|_{(\X^0)_\eta}
\sim_{\Q} 0.
\]
Also, $K_{\cA^0}+(l+1)H^0 \sim_{\Q,C} lL^0$, so in particular $K_{
\cA^0}+(l+1)H^0$ is ample over $C$.

\begin{proposition}
\label{prop:klt-perturbation}
There exist
\begin{enumerate}
    \item an ample $\mathbb R$-divisor $H$ over $C$, and
    \item a klt algebraically integrable adjoint foliated structure
    \[
    \mathcal A'/C:=
    (\X^0,\F^0,B',M',t')/C
    \]
\end{enumerate}
such that $B'$ has rational coefficients, $t'\in\mathbb Q\cap[0,1)$, and
\[
K_{\mathcal A^0}+P\sim_{\mathbb R,C}K_{\mathcal A'}+H.
\]
\end{proposition}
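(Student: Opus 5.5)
The plan is to deduce the statement from the perturbation lemma for algebraically integrable adjoint foliated structures, \cite[Lemma~3.29]{CHLMSX25}. We apply it to the model $\mathcal A^0/C$. By Corollary~\ref{cor:prepared-model-klt-regime}, $\mathcal A^0/C$ is lc and algebraically integrable, and its ambient variety $\X^0$ is $\mathbb Q$-factorial klt. These are exactly the hypotheses under which the lemma replaces an lc structure together with an ample divisor by a klt one with rational data, up to $\mathbb R$-linear equivalence over the base.

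Concretely, first split $P=\tfrac12P+\tfrac12P$, so that half of the ample divisor is available for absorbing perturbations. Since $\X^0$ is klt, choose a small rational $\delta>0$ and consider the interpolation
\[
(1-\delta)K_{\mathcal A^0}+\delta(K_{\X^0}+\Delta),
\]
where $(\X^0,\Delta)$ is a klt pair with $\Delta\ge0$, chosen so that $K_{\X^0}+\Delta$ differs from $K_{\mathcal A^0}$ by a divisor. Then write
\[
K_{\mathcal A^0}=(1-\delta)K_{\mathcal A^0}+\delta(K_{\X^0}+\Delta)+\delta\bigl(K_{\mathcal A^0}-K_{\X^0}-\Delta\bigr).
\]
For $\delta$ small enough, $\tfrac12P+\delta(K_{\mathcal A^0}-K_{\X^0}-\Delta)$ remains ample over $C$.

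The convex combination on the right has the shape of an adjoint foliated structure $(\X^0,\F^0,B'',M'',t'')$ with $t''=(1-\delta)t<1$. Its log discrepancies are positive convex combinations of lc discrepancies of $\mathcal A^0$ and klt discrepancies of $(\X^0,\Delta)$, so it is klt.

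To make the data rational, perturb the coefficients of $B''$, the b-divisor $M''$ and $t''$ to nearby rational values. Klt-ness is an open condition on finitely many coefficients once we fix a log resolution on which the relevant discrepancies are computed (finitely many divisors matter for klt-ness on a fixed resolution of the foliated structure). The resulting error divisor is small, and we absorb it into the remaining $\tfrac12P$, keeping it ample over $C$. Setting $H$ to be the resulting ample $\mathbb R$-divisor gives $K_{\mathcal A^0}+P\sim_{\mathbb R,C}K_{\mathcal A'}+H$.

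The main obstacle is ensuring that the interpolated structure really is klt in the foliated sense. The foliated part contributes discrepancies $\varepsilon(E)+a(E,\F)$, which may vanish for invariant divisors; on a lc but non-klt structure these are exactly the places where $A^{[t]}=0$. Interpolating with a klt ambient pair strictly increases these discrepancies because $1-t''>1-t$. Verifying this uniformly over all divisors (not just those on one resolution) is the delicate point, and it is where we would cite the precise statement of \cite[Lemma~3.29]{CHLMSX25} rather than redo the argument. We also need to check that $\F^0$ stays algebraically integrable, which holds since the foliation itself is unchanged.
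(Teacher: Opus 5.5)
Your proposal is correct and matches the paper, whose entire proof is an application of \cite[Lemma~3.29]{CHLMSX25} over $U=C$, with the potentially klt hypothesis on $\X^0$ supplied by Corollary~\ref{cor:prepared-model-klt-regime}. Your interpolation sketch is also sound, and the point you call delicate is actually immediate: the log discrepancy of $(1-\delta)K_{\mathcal A^0}+\delta(K_{\X^0}+\Delta)$ at any divisor $E$ over $\X^0$ equals $(1-\delta)A_{\mathcal A^0}(E)+\delta A_{\X^0,\Delta}(E)>0$. You can moreover avoid the rational perturbation step altogether: take $\Delta=0$, choose $\delta$ with $(1-\delta)t\in\mathbb Q$, round the coefficients of $(1-\delta)B$ down (this only raises discrepancies, and the error is $\mathbb R$-Cartier on the $\mathbb Q$-factorial $\X^0$, so it is absorbed into the ample part), and leave $M''=(1-\delta)M$ untouched, since the statement requires no rationality of $M'$ and perturbing the coefficients of a nef b-divisor need not preserve nefness.
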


\begin{proof}
This is exactly \cite[Lemma~3.29]{CHLMSX25}, applied
over $U=C$, where the potentially klt assumption on the ambient variety is
satisfied by Corollary~\ref{cor:prepared-model-klt-regime}.
\end{proof}

Note that, after applying Proposition~\ref{prop:klt-perturbation}, the resulting perturbed klt rational adjoint foliated structure satisfies the hypotheses of \cite[Theorem~7.5]{CHLMSX25} for the choice $P^0:=\varepsilon H^0$, $C^0:=(1-\varepsilon)H^0$ where $0<\varepsilon\ll1$. We then run the $(K_{\mathcal A^0}+P^0)$-MMP over $C$ with scaling of $C^0$, and let $\phi_i\colon \X^{i-1}\dashrightarrow \X^i$ be the induced birational maps, with scaling numbers $\lambda_i$, where 
\[\lambda_i = \min\{\lambda|K_{\cA^i}+P^i+\lambda C^i \text{ is nef over }C \}\]
where $P^i$, $C^i$, $H^i$ denote the pushforwards of $P^0$, $C^0$, $H^0$ to $\X^i$, and define $\mu_i:=\varepsilon+\lambda_i(1-\varepsilon)$. Here, for every $i$, the divisor $K_{\mathcal A^i}+P^i+\lambda_i C^i$ is nef over $C$.

\begin{lemma}
\label{lem:normalisation-scaling-package}
For every $i$, $K_{\mathcal A^i}+\mu_iH^i$ is nef over $C$. Moreover, $\mu_i=1$ if and only if $\lambda_i=1$ and in this case $K_{\mathcal A^i}+H^i$ is nef over $C$.
\end{lemma}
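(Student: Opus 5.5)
The plan is to unwind the definition of $\mu_i$ and use the fact that $C^i=(1-\varepsilon)H^i$ and $P^i=\varepsilon H^i$ are pushforwards of fixed multiples of $H^0$. The key identity is
\[
K_{\mathcal A^i}+P^i+\lambda_i C^i
=K_{\mathcal A^i}+\bigl(\varepsilon+\lambda_i(1-\varepsilon)\bigr)H^i
=K_{\mathcal A^i}+\mu_iH^i .
\]
This is a linear identity of divisors on $\X^i$, valid because pushforward along the birational maps $\phi_1,\dots,\phi_i$ is linear. By the defining property of the scaling numbers in the MMP with scaling \cite[Theorem~7.5]{CHLMSX25}, the left-hand side is nef over $C$ for every $i$. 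This gives the first claim directly.

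For the equivalence, I would note that $\mu_i-1=(1-\varepsilon)(\lambda_i-1)$. Since $0<\varepsilon\ll 1$, the factor $1-\varepsilon$ is nonzero, so $\mu_i=1$ if and only if $\lambda_i=1$. In that case the identity above reads $K_{\mathcal A^i}+H^i$, and the nefness over $C$ follows from the first part.

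The only point needing care is that $\lambda_i$ is well defined as a minimum and that $K_{\mathcal A^i}+P^i+\lambda_iC^i$ is indeed nef at each step.
\begin{itemize}
\item At $i=0$, I would use that $K_{\mathcal A^0}+P^0+C^0=K_{\mathcal A^0}+H^0$. This divisor is $\Q$-linearly trivial on the generic fibre and, after possibly enlarging the initial scaling constant, nef over $C$. Alternatively I would simply start the scaling with $\lambda_0\le 1$ guaranteed by the construction of $H^0$, noting that $K_{\mathcal A^0}+(l+1)H^0$ is ample over $C$.
\item For later steps, the nef threshold is a closed condition, so the minimum is attained.
\item Nefness is preserved because each step of the MMP with scaling is $(K_{\mathcal A^{i-1}}+P^{i-1}+\lambda_iC^{i-1})$-trivial on the contracted ray. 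Hence the pushforward stays nef by the standard argument in \cite[Theorem~7.5]{CHLMSX25}.
\end{itemize}
I expect this bookkeeping, namely checking that the cited scaling theorem supplies nefness at each stage and that the step $i=0$ is consistent with $\lambda_0\le 1$, to be the main, though mild, obstacle. The rest is formal.
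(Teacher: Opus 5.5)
Your argument is the paper's argument: the identity $K_{\mathcal A^i}+P^i+\lambda_iC^i=K_{\mathcal A^i}+\mu_iH^i$ together with the nefness built into the definition of $\lambda_i$ gives the first claim, and $\mu_i-1=(1-\varepsilon)(\lambda_i-1)$ gives the equivalence. One correction concerns your bookkeeping at $i=0$, which the lemma does not actually need: the construction does not give $\lambda_0\le 1$. Ampleness of $K_{\mathcal A^0}+(l+1)H^0$ over $C$ only gives $\lambda_0\le 1+\frac{l}{1-\varepsilon}$, which is what makes $\lambda_0$ finite. Moreover, $K_{\mathcal A^0}+\mu_0H^0$ restricts to $(\mu_0-1)H^0$ on the generic fibre, so nefness forces $\mu_0\ge 1$. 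In general the scaling numbers therefore start above $1$ and decrease to $\lambda_{j+1}=1$.
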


\begin{proof}
The divisor $K_{\mathcal A^i}+P^i+\lambda_iC^i$ is nef over $C$. Since $P^i=\varepsilon H^i$ and $C^i=(1-\varepsilon)H^i$, we have $K_{\mathcal A^i}+P^i+\lambda_iC^i = K_{\mathcal A^i}+\mu_iH^i$ proving the first claim. The equivalence $\mu_i=1$ if and only if $\lambda_i=1$ is immediate from the definition of $\mu_i$.
\end{proof}

With this explicit choice of $H^0$, the standard argument of \cite[Section 4.1]{LX14} applies verbatim in the present setting. In particular, the scaling sequence reaches the value $1$, and we denote by $j+1$ is the first index such $\lambda_{j+1} = 1$. Note that $\lambda_j >1$. In particular, the birational maps $\phi_i$ are isomorphisms over the generic point $\eta$ of $C$ for all $i \leq j+1$, as, for every $i$ we have $K_{\cA^i}+P^i+\lambda_i C^i = K_{\cA^i}+\mu_i H^i$, and on the generic fibre this restricts to
\[
(K_{\cA^i}+\mu_i H^i)|_{(\X^i)_\eta}
\sim_{\Q}
(\mu_i-1)H^i|_{(\X^i)_\eta}.
\]
Hence, as long as $\mu_i>1$, the restriction of $K_{\cA_i}+P^i+\lambda_i C^i$ to the generic fibre is ample, so the corresponding extremal contractions are vertical over $C$. We now record the formal preservation of the generic-fibre normalisation. 
 
\begin{lemma}
\label{lem:generic-triviality-preserved}
 We have $L^i|_{\X^i_\eta}\sim_{\mathbb Q}-K_{\mathcal A^i_\eta}$.
In particular, $(K_{\mathcal A^i}+H^i)|_{\X^i_\eta}\sim_{\mathbb Q}0$.
\end{lemma}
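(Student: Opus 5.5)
The plan is an induction on $i$, for $0\le i\le j+1$, using that every $\phi_i$ is an isomorphism over the generic point $\eta$ of $C$.

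\emph{Base case $i=0$.} This is the normalisation $L^0|_{\X^0_\eta}\sim_{\QQ}-K_{\mathcal A^0_\eta}$ recorded right after Corollary~\ref{cor:prepared-model-klt-regime}. It holds because $L^0$ is the birational transform of $\L^\circ$ and $\L^\circ\sim_{\QQ,C^\circ}-K_{\mathcal A^\circ}$. By Proposition~\ref{prop:prepared-qdlt-model}, $h$ is crepant for $K_{\mathcal A}$ and is an isomorphism over the generic fibre after the model replacement.

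\emph{Inductive step.} Suppose the relation holds on $\X^{i-1}$, and let $i\le j+1$. Then $\mu_{i-1}>1$, since $\lambda_{i-1}>1$ for $i-1\le j$. By the discussion preceding the lemma, the divisor $K_{\mathcal A^{i-1}}+\mu_{i-1}H^{i-1}$ restricts to an ample divisor on the generic fibre. The contracted extremal ray is $K_{\mathcal A^{i-1}}+P^{i-1}$-negative but trivial against $K_{\mathcal A^{i-1}}+P^{i-1}+\lambda_{i-1}C^{i-1}$. It therefore cannot be spanned by curves in the generic fibre, so it is vertical over $C$.

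Hence the divisorial contraction or flip $\phi_i$ restricts to an isomorphism $\X^{i-1}_\eta\cong \X^i_\eta$. This isomorphism identifies the foliations $\F^{i-1}_\eta$ and $\F^i_\eta$, since $\F^i$ is the birational transform of $\F^{i-1}$. It also identifies the boundaries and the b-divisor traces $M$, since $M$ is a b-divisor and its traces correspond under isomorphisms. Consequently $K_{\mathcal A^i}|_{\X^i_\eta}$ is the transform of $K_{\mathcal A^{i-1}}|_{\X^{i-1}_\eta}$, and likewise $L^i|_{\X^i_\eta}$ and $H^i|_{\X^i_\eta}$ are the transforms of $L^{i-1}$ and $H^{i-1}$. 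Here we use that push-forward commutes with restriction to the open set $\X_\eta$ where the map is an isomorphism. Linear equivalence $\sim_{\QQ}$ is preserved under isomorphism, so the relation passes from $\X^{i-1}_\eta$ to $\X^i_\eta$.

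\emph{The second claim.} By definition $H^i=L^i-\frac{1}{l+1}(K_{\mathcal A^i}+L^i)$ on every model, since push-forward is linear. On $\X^i_\eta$ we have $K_{\mathcal A^i}+L^i\sim_{\QQ}0$, so
\[
(K_{\mathcal A^i}+H^i)|_{\X^i_\eta}\sim_{\QQ}\tfrac{l}{l+1}(K_{\mathcal A^i}+L^i)|_{\X^i_\eta}\sim_{\QQ}0.
\]

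The main obstacle is the verticality of each step. Specifically, one must show that the rays contracted by the adjoint foliated MMP of \cite{CHLMSX25} are $(K_{\mathcal A^{i-1}}+\mu_{i-1}H^{i-1})$-trivial. Combined with ampleness of this divisor on $\X^{i-1}_\eta$, this excludes horizontal curves. I would verify it from the cone and contraction theorem in \cite[Theorem 7.5]{CHLMSX25}, using that the scaling is performed with $C^{i-1}=(1-\varepsilon)H^{i-1}$.
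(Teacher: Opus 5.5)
Your proposal is correct and takes essentially the same route as the paper: for $i\le j+1$ each $\phi_i$ is an isomorphism over $\eta$, so $\X^i_\eta$, its foliation, and the restrictions of $L^i$, $K_{\mathcal A^i}$ and $H^i$ are identified with those on $\X^0_\eta$, and the relation is transported. Your explicit induction usefully makes the verticality step non-circular, since the paper's justification that $\phi_i$ is an isomorphism over $\eta$ already uses the generic-fibre relation on $\X^{i-1}$. Deriving the second claim from $H^i=L^i-\frac{1}{l+1}(K_{\mathcal A^i}+L^i)$ is equivalent to the paper's direct transport of $(K_{\mathcal A^0}+H^0)|_{\X^0_\eta}\sim_{\QQ}0$.
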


\begin{proof}
Since $\phi_i$ is an isomorphism over the generic point of $C$, the generic fibres $\X^0_\eta$ and $\X^i_\eta$ are naturally
identified. Under this identification, the restrictions of $L^0$ and
$K_{\mathcal A^0}$ coincide with those of $L^i$ and $K_{\mathcal A^i}$,
respectively. The same argument applies to $H^0$.
\end{proof}

Now, let us denote, for $\lambda >1$,  
\[\L_{\lambda}:= \frac{1}{\lambda-1}(K_{\X^0,C}^{[t]}+\lambda H^0)\]
with corresponding pushforwards $\L^j_\lambda$.

\begin{proposition}
\label{prop:threshold-stage-package}
$-K_{\mathcal A^j}\sim_{\Q,C} \L^j_{\lambda_j}$ is big and nef over $C$.
\end{proposition}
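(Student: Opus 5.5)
The plan is to follow \cite[Section 4.1]{LX14}, using only the algebra of the scaling MMP together with Lemma~\ref{lem:vertical-nef}. Define $\L^j_{\lambda_j}$ as the pushforward of $\frac{1}{\lambda_j-1}(K_{\mathcal A^0}+\lambda_j H^0)$, and note that $\lambda_j>1$, so the normalisation makes sense. Write $\mu_j=\varepsilon+\lambda_j(1-\varepsilon)$ as in Lemma~\ref{lem:normalisation-scaling-package}. I will work with the $\mu$-parametrisation, since $K_{\mathcal A^i}+P^i+\lambda C^i=K_{\mathcal A^i}+(\varepsilon+\lambda(1-\varepsilon))H^i$. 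The relevant divisor on $\X^j$ is then a positive multiple of $K_{\mathcal A^j}+\mu_j H^j$.

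\textbf{Step 1: nefness.} By Lemma~\ref{lem:normalisation-scaling-package}, $K_{\mathcal A^j}+\mu_jH^j$ is nef over $C$, and therefore so is $\L^j:=\frac{1}{\mu_j-1}(K_{\mathcal A^j}+\mu_jH^j)$.

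\textbf{Step 2: $\Q$-linear equivalence to $-K_{\mathcal A^j}$.} Consider
\[
D:=K_{\mathcal A^j}+\L^j=\tfrac{\mu_j}{\mu_j-1}\bigl(K_{\mathcal A^j}+H^j\bigr).
\]
By Lemma~\ref{lem:generic-triviality-preserved}, $D|_{\X^j_\eta}\sim_\Q 0$. To apply Lemma~\ref{lem:vertical-nef}, I also need $D$ to be nef over $C$, which is the main obstacle. I would argue as in \cite{LX14}. Since $\lambda_{j+1}=1$, the divisor $K_{\mathcal A^j}+P^j+\lambda C^j$ is nef over $C$ for every $\lambda\in[\lambda_{j+1},\lambda_j]$ on the model $\X^j$. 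This holds because the scaling MMP step $\X^j\dashrightarrow\X^{j+1}$ is $(K_{\mathcal A^j}+P^j+\lambda_{j+1}C^j)$-trivial, so $K_{\mathcal A^j}+P^j+\lambda_{j+1}C^j$ is already nef on $\X^j$ by the definition of the scaling number and the standard property of MMP with scaling in \cite[Theorem 7.5]{CHLMSX25}. Taking $\lambda=1$ gives that $K_{\mathcal A^j}+H^j$ is nef over $C$. Hence $D$ is nef over $C$, and Lemma~\ref{lem:vertical-nef} gives $D\sim_{\Q,C}0$, that is, $-K_{\mathcal A^j}\sim_{\Q,C}\L^j$.

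If the interval argument fails, my fallback is the following. The contraction at step $j$ is vertical, because the generic fibre restriction $(\mu_j-1)H^j|_\eta$ is ample. Moreover, $K_{\mathcal A^j}+H^j$ is numerically trivial on the generic fibre, so any curve on which it is negative is vertical. Nonnegativity on vertical curves then follows from writing $K_{\mathcal A^j}+H^j$ as a convex combination of $K_{\mathcal A^j}+\mu_jH^j$ and the $\mu=1$ divisor that is nef at the next stage.

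\textbf{Step 3: bigness.} The divisor $\L^j$ restricts on the generic fibre to $\L^\circ|_\eta\sim_\Q -K_{\mathcal A_\eta}$, which is ample by the adjoint Fano hypothesis. A divisor that is nef over a curve and ample on the generic fibre is big over $C$, since relative bigness is tested on the generic fibre. Combining Steps 1–3 proves that $-K_{\mathcal A^j}\sim_{\Q,C}\L^j_{\lambda_j}$ is big and nef over $C$.
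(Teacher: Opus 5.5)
Your proposal is correct and is essentially the paper's argument. Relative nefness of $K_{\mathcal A^j}+H^j$, its triviality on the generic fibre and Lemma~\ref{lem:vertical-nef} give $K_{\mathcal A^j}+\L^j_{\lambda_j}\sim_{\Q,C}0$; nefness then comes from the scaling and bigness from the generic fibre. Your use of $\mu_j$ instead of $\lambda_j$ is harmless, since all $\frac{1}{s-1}(K_{\mathcal A^j}+sH^j)$ with $s>1$ are then $\sim_{\Q,C}-K_{\mathcal A^j}$.

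The paper only says ``recall'' for that nefness, and you justify it via the nef interval $[\lambda_{j+1},\lambda_j]$ on $\X^j$; this is the \cite{LX14} indexing the paper uses elsewhere (e.g.\ Lemma~\ref{lem: decrease on same model}), although its displayed $\min$-definition of $\lambda_i$ is shifted by one. Your fallback paragraph does not work and should be dropped, since nefness of the $\mu=1$ divisor on $\X^{j+1}$ does not transfer back to $\X^j$.
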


\begin{proof}
    Recall that $K_{\mathcal A^j}+ \L^j_{\lambda_j}$ is relatively nef over $C$ its restriction to the generic fiber is trivial by Lemma \ref{lem:generic-triviality-preserved}. Furthermore, by the definition of $\L^j_{\lambda_j}$ and by applying Lemma~\ref{lem:vertical-nef} we have $K_{\mathcal A^j}+ \L^j_{\lambda_j}\sim_{\Q,C} 0$. Furthermore $\L^j_{\lambda_j}$ is proportional to $K_{\cA^j} + \lambda_jH^j$ which is big because $\lambda_j>1$, and nef by construction. 
\end{proof}

By Proposition~\ref{prop:threshold-stage-package}, the divisor $-K_{\mathcal A^j}$ is nef and big over $C$. By the base-point-freeness theorem for klt algebraically integrable adjoint
foliated structures, \cite[Theorem~2.1.2]{CHLMSX25}, it is also semiample over $C$. This allows us to pass to the relative
anti-canonical model
\[
\X^{ac}:=
\operatorname{Proj}_C R\bigl(\X^j/C,-K_{\mathcal A^j}\bigr).
\]
We refer to $\X^{ac}$ as the relative anti-adjoint model.

\begin{lemma}\label{lem:anti-adjoint-model-lc-central-fibre}
With notation as above, the pair $(\X^{ac},\X^{ac}_0)$ is log canonical.
\end{lemma}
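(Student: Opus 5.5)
The plan is to follow the argument of \cite[Section 4.1]{LX14}: log canonicity of the pair with the reduced central fibre is created on the qdlt model, propagated along the MMP, and then transported to the anti-adjoint model by a crepant comparison.

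\emph{Step 1: starting point.} By Proposition~\ref{prop:prepared-qdlt-model}(4), the pair $(\X^0,\X^0_0)$ is log canonical, and $\X^0$ is $\mathbb Q$-factorial klt by Lemma~\ref{lem:prepared-model-klt}. Since all the steps $\phi_i$ for $i\le j$ are vertical, the first thing to settle is which divisor controls the MMP in terms of the ambient pair. I would use the identity
\[
K_{\mathcal A^0}=(1-t)K_{\X^0}+tK_{\F^0}+B^0+M_{\X^0},
\]
together with the qdlt structure. On the underlying generalised pair $(\X^0,B^0+\red(\X^0_0),M_{\X^0})$, the foliated part $K_{\F^0}$ differs from $K_{\X^0}$ by the conormal contribution. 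Every vertical divisor is $\F^0$-invariant by Lemma~\ref{lem:vertical-divisors-invariant}, so the foliated log discrepancy of such divisors carries $\varepsilon=0$. This is exactly the statement that invariant vertical components appear in the foliated boundary with coefficient one.

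\emph{Step 2: propagation along the MMP.} I would argue that $(\X^i,\X^i_0)$ stays log canonical for every $i\le j$. Each $\phi_i$ is a step of a $(K_{\mathcal A^{i-1}}+\mu H^{i-1})$-MMP over $C$ that is an isomorphism over $\eta$. Since $\X^i_0=\pi^*0$ is $\mathbb Q$-Cartier and $\equiv_C 0$, adding it changes nothing numerically, so each step is also a step of the $(K_{\mathcal A}+\X_0+\mu H)$-MMP. The standard negativity lemma then shows that log discrepancies with respect to the adjoint structure with boundary $\X_0$ do not decrease, and lc-ness of $(\X^0,\F^0,B^0+\X^0_0,M,t)$ is preserved. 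The main obstacle is to pass from this adjoint statement back to the purely ambient pair $(\X^i,\X^i_0)$. For that I would invoke \cite[Theorem~3.6]{CHLMSSX24} again, in the form that a $\mathbb Q$-factorial qdlt adjoint structure has a qdlt underlying generalised pair. Alternatively, one can use the comparison built into Lemma~\ref{lem:mixed-discrepancy-subtract-D}: $A^{[t]}$ dominates $(1-t)A_X$ plus $t$ times a nonnegative quantity on vertical valuations. Combined with the qdlt property being preserved by MMP steps in the algebraically integrable setting \cite{CHLMSX25}, this would give the claim.

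\emph{Step 3: passage to $\X^{ac}$.} The map $\X^j\to\X^{ac}$ is the morphism defined by the semiample divisor $-K_{\mathcal A^j}$, so $K_{\mathcal A^j}$ is its pullback, i.e.\ the contraction is $K_{\mathcal A}$-crepant. Likewise $\X^j_0$ is the pullback of $\X^{ac}_0$, since both are fibres over $0\in C$. Hence $K_{\mathcal A^j}+\X^j_0$ is the pullback of $K_{\mathcal A^{ac}}+\X^{ac}_0$, and the log discrepancies of the two structures agree. This gives lc-ness of the adjoint structure with boundary $\X^{ac}_0$. The ambient conclusion for $(\X^{ac},\X^{ac}_0)$ then follows as in Step 2, using that $K_{\X^{ac}}+\X^{ac}_0$ is $\mathbb Q$-Cartier because $\X^{ac}$ is potentially klt. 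I expect the adjoint-to-ambient comparison in Step 2 to be the delicate point; everything else is formal negativity and crepancy.
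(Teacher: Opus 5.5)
The step that carries the content of the lemma, namely passing from information about the adjoint structure to log canonicity of the \emph{ambient} pair $(\X^i,\X^i_0)$, is not supplied, and the tools you propose for it do not work.

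First, the adjoint statement in Step~2 is false as written. Every component $E$ of $\X^0_0$ is $\F^0$-invariant by Lemma~\ref{lem:vertical-divisors-invariant}, so its mixed log discrepancy with respect to $(\X^0,\F^0,B^0,M,t)$ is $1-t-\mathrm{mult}_E B^0\le 1-t$. Adding $\X^0_0$ with coefficient one makes it at most $-t<0$. Precisely because invariant components already behave like coefficient-one components for the foliated part, they cannot be added again. The correct adjoint boundary is $(1-t)\X_0$, as in Definition~\ref{def:t-weakly-special}, and with it the negativity part of Step~2 does propagate mixed log canonicity, provided it holds on $\X^0$.

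But then the transfer to the ambient pair goes the wrong way. In the boundary-free case, Lemma~\ref{lem:mixed-discrepancy-subtract-D} gives
\[
A^{[t]}_{\X,\F_{\X}}(w)-(1-t)\,w(\X_0)=(1-t)\bigl(A_{\X}(w)-w(\X_0)\bigr)+t\,A_{\X,\F_{\X}}(w).
\]
So mixed log canonicity only yields $A_{\X}(w)-w(\X_0)\ge -\tfrac{t}{1-t}A_{\X,\F_{\X}}(w)$, which says nothing when $A_{\X,\F_{\X}}(w)>0$. The fact that $A^{[t]}$ dominates $(1-t)A_{\X}$ lets you go from ambient lc to adjoint lc, not back. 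Your other option, preservation of $\Q$-factorial qdlt-ness of the adjoint structure along the $K_{\mathcal A}$-MMP, is asserted without proof or a precise reference. It also cannot come from a negativity argument for $K_{\X}+\X_0$, since a $K_{\mathcal A}$-negative extremal ray can be $K_{\X}$-positive.

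Step~3 has the same defect in sharper form. The contraction $g\colon \X^j\to\X^{ac}$ is crepant for $K_{\mathcal A}$ but in general not for $K_{\X}$, so ambient log discrepancies are not transported. $\X^{ac}$ is not $\Q$-factorial and $g$ is not an MMP step, so the qdlt route is unavailable there. Moreover, potentially klt only gives \emph{some} $\Delta\ge 0$ with $K_{\X^{ac}}+\Delta$ $\Q$-Cartier, not $\Q$-Cartierness of $K_{\X^{ac}}+\X^{ac}_0$. Indeed, if $g$ is small and contracts a curve $\gamma$ with $K_{\X^j}\cdot\gamma\neq 0$, then $K_{\X^{ac}}$ is not $\Q$-Cartier at all.

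For comparison, the paper's proof is your three steps in compressed form. It notes that the $K_{\cA^0}$-MMP over $C$ is also a $(K_{\cA^0}+\X^0_0)$-MMP because $\X^0_0$ is a pulled-back fibre, concludes that log canonicity of the pair with the central fibre is preserved, and pushes $\X^j_0$ forward to $\X^{ac}_0$. It does not address the adjoint-versus-ambient comparison that you correctly flag.

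Closing the gap requires an actual link between $K_{\mathcal A}$ and a log canonical divisor $K_{\X}+\Delta+\X_0$ on the ambient variety, respected by each MMP step and by the anti-adjoint contraction. On $\X^{ac}$ one would additionally need either a proof that $K_{\X^{ac}}$ is $\Q$-Cartier or a weakening of the conclusion to lc with an auxiliary boundary. Neither your proposal nor a formal negativity-and-crepancy argument provides this.
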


\begin{proof}
By Proposition \ref{prop:prepared-qdlt-model}, the pair $(\X^0,\X^0_0)$ is log canonical. Since $\X^0_0=(f^0)^*(0)$ the \(K_{\cA^0}\)-MMP over \(C\) is also a \((K_{\cA^0}+\X^0_0)\)-MMP. Hence log canonicity of the pair with the central fibre is preserved along the sequence, so $(\X^j,\X^j_0)$ is log canonical. The morphism $\X^j \to \X^{ac}$ is the relative anti-adjoint contraction, and \(\X^j_0\) pushes forward to
\(\X^{ac}_0\). Therefore $(\X^{ac},\X^{ac}_0)$ is log canonical.
\end{proof}

\subsection{Adjoint Fano foliated extension}\label{sec: adjoint fano extension}

We now show a general result of the existence of the log canonical modification for the variety fibered over a curve.

\begin{proposition}
\label{prop:foliated-prop1}
Let $f\colon (Y,\F_Y,\Delta_Y,M,t)\to C$ be a projective klt algebraically integrable adjoint foliated structure over a
smooth curve $C$, and let $L_Y$ be an $f$-ample $\mathbb Q$-divisor.
Assume that $K_{\mathcal A_Y}+L_Y \sim_{\mathbb Q,C} E=E_h+E_v\ge 0$ where:
\begin{enumerate}
    \item $E_h$ is exceptional over a birational contraction $Y\dashrightarrow X$;
    \item the vertical part can be written
    \[
    E_v=\sum_i a_iE_i,
    \]
    where $\Supp(E_v)$ does not contain any whole fibre.
\end{enumerate}
Then the $K_{\mathcal A_Y}$-MMP over $C$ with scaling of $L_Y$ terminates
with a model
\[
f'\colon (\X',\F',\Delta',M,t)\to C
\]
such that
\begin{enumerate}
    \item $K_{\mathcal A'}+\L'\sim_{\mathbb Q,C}0$;
    \item the divisors contracted by $Y\dashrightarrow \X'$ are precisely $\Supp(E)$;
    \item $\L'$ is nef and big over $C$.
\end{enumerate}
\end{proposition}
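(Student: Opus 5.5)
This is the foliated analogue of \cite[Proposition~1]{LX14} (equivalently the argument of \cite[Theorem~6]{LX14}). I will follow it, replacing the klt MMP by the MMP for klt algebraically integrable adjoint foliated structures of \cite{CHLMSX25}.

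\emph{Running the MMP.} Since $(Y,\F_Y,\Delta_Y,M,t)$ is klt and algebraically integrable and $L_Y$ is $f$-ample, \cite[Theorem~7.5]{CHLMSX25} (if needed after the perturbation of Proposition~\ref{prop:klt-perturbation}, which keeps the structure klt with an ample part) lets us run the $K_{\mathcal A_Y}$-MMP over $C$ with scaling of $L_Y$. Let $\lambda_i$ be the scaling numbers and $Y_i$ the models. Because $K_{\mathcal A_Y}+L_Y\sim_{\Q,C}E\ge 0$, each step with $\lambda_i\le 1$ is also a step of an MMP for $E$. More precisely, for $\lambda_i<1$ we have $K_{\mathcal A}+\lambda_i L\sim (1-\lambda_i)K_{\mathcal A}+\lambda_i E$ up to scaling, so every extremal ray $R$ contracted once we reach $\lambda\le 1$ satisfies $E_i\cdot R<0$. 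Hence the contracted loci lie in $\Supp(E_i)$, and every divisorial contraction contracts a component of $E$.

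\emph{Termination and the components of $E$.} I would stop at the first index where $\lambda=1$ is reached, or where $K+L$ becomes nef. At that stage $E'$, the pushforward of $E$, is nef over $C$. I then show $E'=0$ in two parts.
\begin{itemize}
\item For the horizontal part, $E'|_{\eta}$ is nef and exceptional over the birational contraction to $X_\eta$. By the negativity lemma (e.g.\ \cite[Lemma~3.39]{KM98}) applied on the generic fibre over the contraction to $X$, we get $E'_h=0$.
\item For the vertical part, $E'_v$ is nef over $C$ and supported in fibres. By Lemma~\ref{lem:vertical-nef}, or more directly the Zariski-type \cite[Lemma~1]{LX14} used in its proof, $E'_v$ is a combination of full fibres. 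Since $\Supp(E_v)$ contains no whole fibre, and pushforward does not add components, $E'_v=0$.
\end{itemize}
Thus every component of $E$ has been contracted. Conversely, only components of $\Supp E$ are contracted, by the $E$-negativity above. This gives (2).

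The relation $K_{\mathcal A'}+\L'\sim_{\Q,C}E'=0$ gives (1). For (3), $\L'\sim_{\Q,C}-K_{\mathcal A'}$ is nef over $C$ because it is the limiting scaling divisor at $\lambda=1$, where $K+\lambda L$ is nef. It is big over $C$ because it is the pushforward of the big divisor $L_Y$ under a birational contraction.

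\emph{Main obstacles.} The first is termination of this MMP with scaling in the foliated setting, which I will take from \cite{CHLMSX25} in the klt rational case. The second is that steps before $\lambda$ reaches $1$ need not be $E$-negative. To handle this, I will instead run the MMP with scaling of $L_Y$ starting from $K_{\mathcal A_Y}+L_Y$ being $f$-ample up to $E$, i.e.\ consider $\lambda\in[0,1]$ only. If the scaling immediately gives $\lambda_1\le1$ this is automatic; otherwise I would replace $L_Y$ by a small multiple, as in \cite[Section~4.1]{LX14}.
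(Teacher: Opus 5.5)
Your overall strategy is the paper's (the template of \cite[Proposition~1]{LX14} with the adjoint foliated MMP of \cite{CHLMSX25}). However, the key step has a sign error that propagates. Write $L$ for the pushforward of $L_Y$ to the current model. If $R$ is the ray contracted at a step with scaling number $\lambda_i$, then $K_{\mathcal A}\cdot R<0$ and $(K_{\mathcal A}+\lambda_iL)\cdot R=0$, so $L\cdot R>0$ and
\[
E\cdot R=(K_{\mathcal A}+L)\cdot R=(1-\lambda_i)\,L\cdot R.
\]
Hence steps with $\lambda_i<1$ are $E$-\emph{positive}; your own identity $(1-\lambda_i)K_{\mathcal A}\cdot R+\lambda_iE\cdot R=0$ gives $E\cdot R>0$. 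The $E$-negative steps are exactly those with $\lambda_i>1$.

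Your ``main obstacle'' is therefore the reverse of the truth, and the proposed fix fails. Restricting to $\lambda\in[0,1]$ keeps only $E$-positive steps; moreover, once the scaling number is $\le 1$, $K_{\mathcal A}+L$ is already nef, so nothing remains to contract. Replacing $L_Y$ by $cL_Y$ with $c<1$ destroys the hypothesis, since $K_{\mathcal A_Y}+cL_Y\sim_{\mathbb Q,C}E-(1-c)L_Y$ need not be effective.

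As written, then, (2) is not proved, and the problem is not only the inclusion of the contracted divisors in $\Supp E$. $E$-negativity is also what you need in two other places:
\begin{enumerate}
\item It keeps $Y_\eta\dashrightarrow X_\eta$ a birational contraction after the MMP. Otherwise $q_*p^*E'_h$ need not vanish on a common resolution $p,q$, and the negativity lemma gives nothing.
\item It guarantees that the contracted fibre components lie in $\Supp E_v$. ``Pushforward does not add components'' alone gives no contradiction if a fibre component outside $\Supp E$ was contracted and the remaining ones all lie in $\Supp E_v$.
\end{enumerate}

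The repair is just the correct sign. Every step performed before the scaling number first drops to $\le 1$ has $\lambda_i>1$, so it is $E$-negative. At that model $K_{\mathcal A_j}+L_j$ is nef, so $E_j$ is nef, and your negativity/Zariski argument gives $E_j=0$.

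For (3), deduce nefness of $L_j$ from the fact that $K_{\mathcal A_j}+\lambda_{j-1}L_j\sim_{\mathbb Q,C}(\lambda_{j-1}-1)L_j$ is nef with $\lambda_{j-1}>1$. Nefness at $\lambda=1$ says nothing once $K_{\mathcal A_j}+L_j\sim_{\mathbb Q,C}0$.
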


\begin{proof}
The proof is the same as \cite[Proposition~1]{LX14} with \cite[Theorem~5]{LX14} replaced by \cite[Theorem~8.1]{CHLMSX25}. By assumption $K_{\mathcal A_Y}+L_Y$ is pseudo-effective over $C$, and since $L_Y$ is ample over $C$, the $K_{\mathcal A_Y}$-MMP with scaling of $L_Y$ exists and terminates with a good minimal model by \cite[Theorem~2.1.1]{CHLMSX25}. At the pseudo-effective threshold, one obtains a model $\X'$ with $K_{\mathcal A'}+\L'\sim_{\mathbb Q,C}0$.

The support statement is exactly the same as in \cite[Proposition~1]{LX14}: the horizontal part $E_h$ is exceptional by construction, while the vertical part $E_v$ is of insufficient fibre type, hence is contained in the relative stable base locus and is therefore contracted in the minimal model. Finally, since $K_{\mathcal A'}+\L'\sim_{\mathbb Q,C}0$ and $\L'$ appears as the scaling divisor at the threshold model, $\L'$ is nef over $C$; it is big because it is birationally the transform of an ample divisor. 
\end{proof}
\begin{remark}
    The model $\X'$ above plays the same role as the model $\X'$ in \cite{LX14}.
\end{remark}

We are now in a position to show that we obtain nicely behaved adjoint Fano foliated extensions.

\begin{theorem}
\label{thm:512A-special-extension-after-base-change}
Let $f^{ac}\colon (\X^{ac},\F^{ac},B^{ac},M,t)\to C$ be a normal compactification of the polarised family over $C^\circ$ such that
$-K_{\mathcal A^{ac}}$ is relatively ample over $C$ Then, after a finite
base change $\phi\colon C'\to C$ there exists a family
\[
f^s\colon (\X^s,\F^s,B^s,M,t)\to C'
\]
of adjoint Fano foliated structures such that:
\begin{enumerate}
    \item $f^s$ agrees with the pullback of the original family over
    $\phi^{-1}(C^\circ)$;
    \item every fibre of $f^s$ is an adjoint Fano foliated structure.
\end{enumerate}
\end{theorem}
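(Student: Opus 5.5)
We follow the proof of \cite[Theorem~6]{LX14}, as summarised at the beginning of Section~\ref{sec: special t.c.}, with the klt adjoint foliated MMP of \cite{CHLMSX25} replacing the usual klt MMP.

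\emph{Base change and resolution.} By semistable reduction, we choose a finite base change $\phi\colon C'\to C$, totally ramified over $0$ and étale over $C^\circ$ after shrinking. We then choose a $\G_m$-equivariant semistable log resolution $\pi\colon Y\to \widetilde{\X}^{ac}:=\X^{ac}\times_C C'$ (normalised) such that $Y_0=\sum_i E_i$ is reduced snc. Let $\F_Y$ be the birational transform of $\F^{ac}$. It is still algebraically integrable and contained in $T_{Y/C'}$. By Lemma~\ref{lem:vertical-divisors-invariant}, every $E_i$ is $\F_Y$-invariant. Write
\[
K_{\mathcal A_Y}=\pi^*K_{\widetilde{\mathcal A}^{ac}}+\sum_F a_F F,
\]
where the sum runs over exceptional horizontal and vertical divisors $F$. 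Lemma~\ref{lem:anti-adjoint-model-lc-central-fibre} shows that $(\X^{ac},\X^{ac}_0)$ is lc, which is preserved under the base change. Since the $E_i$ are invariant, the ambient part contributes the log discrepancies with respect to $(\widetilde{\X}^{ac},\widetilde{\X}^{ac}_0)$. We therefore expect at least one vertical component, namely the strict transform $E_0$ of a component of $\widetilde{\X}^{ac}_0$, to have mixed discrepancy with respect to $(\widetilde{\X}^{ac},\widetilde{\X}^{ac}_0)$ equal to $0$, i.e.\ to be an lc place.

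\emph{Polarisation and klt perturbation.} Set $L_Y:=\pi^*(-K_{\widetilde{\mathcal A}^{ac}})-\delta G$, where $G\ge0$ is $\pi$-exceptional with $-G$ $\pi$-ample and $0<\delta\ll1$. Then $L_Y$ is $f$-ample over $C'$. Next we add a multiple of the fibre $Y_0$ and choose coefficients so that
\[
K_{\mathcal A_Y}+\Delta_Y+L_Y\sim_{\Q,C'}E=E_h+E_v\ge 0,
\]
with $E_h$ exceptional over $\widetilde{\X}^{ac}$. Here $\Delta_Y$ is a boundary supported on $Y_0$, with coefficients $1-(\text{mixed discrepancy})$ in the style of \cite{LX14}, and $E_v$ contains every vertical component except $E_0$. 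Because $E_0\not\subset\Supp E_v$, the support of $E_v$ contains no whole fibre. The coefficient ordering is the point: we take $E_0$ to be a component that minimises the discrepancy, perturbed by $\delta\ord(G)$, divided by multiplicity, so that after subtracting the appropriate multiple of $Y_0$ every other coefficient stays $\ge0$. Then we apply Proposition~\ref{prop:klt-perturbation}, i.e.\ \cite[Lemma~3.29]{CHLMSX25}, to replace the lc structure by a klt one with rational coefficients up to $\sim_{\R,C'}$ an ample divisor, which can be absorbed into $L_Y$.

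\emph{MMP and anti-adjoint model.} Proposition~\ref{prop:foliated-prop1} now yields a model $\X'\to C'$ with
\[
K_{\mathcal A'}+\L'\sim_{\Q,C'}0,
\]
$\L'$ nef and big over $C'$, and exactly $\Supp E$ contracted. The central fibre of $\X'$ is therefore the irreducible divisor $E_0$, reduced because $Y_0$ is reduced. The base-point-freeness theorem \cite[Theorem~2.1.2]{CHLMSX25} makes $-K_{\mathcal A'}$ semiample over $C'$, and we set
\[
\X^s:=\operatorname{Proj}_{C'}R(\X'/C',-K_{\mathcal A'}),
\]
with the induced foliation $\F^s$ and boundary pushed forward. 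Over $\phi^{-1}(C^\circ)$ all the modifications are isomorphisms, since $\pi$ is an isomorphism there and the MMP is vertical by the analogue of Lemma~\ref{lem:generic-triviality-preserved}. This gives (1). For (2), $-K_{\mathcal A^s}$ is relatively ample, so each fibre has ample anti-adjoint divisor, restricted via adjunction on the irreducible reduced fibre. The general fibres are the original adjoint Fano foliated structures.

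\emph{Main obstacle.} The hardest step will be the coefficient-ordering argument. We must guarantee that the MMP contracts every vertical component except $E_0$ while keeping the klt hypotheses needed for \cite{CHLMSX25}, and that the invariance of the vertical divisors (so that $\varepsilon=0$) gives the correct mixed discrepancy bookkeeping in the transferred argument of \cite{LX14}. A secondary concern is checking that the adjunction to the central fibre genuinely produces an adjoint Fano foliated structure and not merely a Fano-type variety.
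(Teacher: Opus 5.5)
Your outline matches the paper's: base change, semistable log resolution, a small $\pi$-ample exceptional perturbation of $\pi^*(-K_{\mathcal A^{ac}})$, coefficient ordering so that exactly one vertical component survives, \cite[Lemma~3.29]{CHLMSX25}, Proposition~\ref{prop:foliated-prop1}, base-point-freeness and relative $\Proj$. However, the horizontal part of your construction does not work.

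You claim that $\pi$ is an isomorphism over $\phi^{-1}(C^\circ)$ and that the MMP is vertical. This is false as soon as the general fibre is singular. The general fibre is only klt, so any log resolution near the central fibre must resolve the closure of the horizontal singular locus, and $\pi$ then has horizontal exceptional divisors.

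Your $\Delta_Y$ is supported on $Y_0$ and $L_Y=\pi^*(-K_{\widetilde{\mathcal A}^{ac}})-\delta G$. Hence the horizontal part of $K_{\mathcal A_Y}+\Delta_Y+L_Y$ is $\sum_{F\ \mathrm{hor}}a_F F-\delta G_{\mathrm{hor}}$. This has negative coefficients along any horizontal exceptional divisor with $a_F\le 0$, and nothing rules such divisors out, since the general fibre is only assumed klt. So $E_h\ge 0$ fails and Proposition~\ref{prop:foliated-prop1} cannot be invoked.

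Moreover, (1) genuinely requires the MMP to contract \emph{all} horizontal exceptional divisors. Only then is $\X'$ isomorphic in codimension one to $\widetilde{\X}^{ac}$ over $\phi^{-1}(C^\circ)$. That is what lets the relative $\Proj$ of $-K_{\mathcal A'}$ recover the original family, on which the anti-adjoint divisor is relatively ample.

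The paper handles this in three steps:
\begin{enumerate}
\item It writes $\pi^*K_{\mathcal A^{ac}}|_{Y^\circ}+F^\circ=K_{\mathcal A_{Y^\circ}}+\Gamma^\circ$, with $F^\circ,\Gamma^\circ\ge 0$ having no common components.
\item It puts the closure $\Gamma$ into the boundary; its coefficients are $<1$ because the generic fibre is klt.
\item It adds $\delta G'$, where $G'$ is the reduced sum of the horizontal $\pi$-exceptional divisors, with the $\pi$-ample perturbation of size $\varepsilon\ll\delta$. Your $-\delta G$ is the paper's $\varepsilon A$, and the paper calls $G'$ simply $G$.
\end{enumerate}
Then the horizontal part $\delta G'+\varepsilon A_1+F$ is effective, with support containing every horizontal exceptional divisor. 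The MMP is run for $K_{\mathcal A_Y}+\Gamma+\delta G'$ with scaling of $L_Y$.

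You also leave (2) incomplete. An irreducible, reduced central fibre need not be normal, so it need not be an adjoint foliated structure in the sense of Definition~\ref{def: adjoint foliated structure}. The paper observes that the MMP remains an MMP after adding $Y_0$ (which is $\equiv_{C'}0$) and a general member of $|L_Y|_{\Q}$. It concludes that $(\X',\X'_0+\L')$ is dlt, hence that $(\X',\X'_0)$ is plt, since $\X'$ is $\Q$-factorial and $\X'_0$ is irreducible. This passes to $(\X^s,\X^s_0)$, and adjunction gives a normal, klt central fibre.
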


\begin{proof}
After the finite base change $\phi\colon C'\to C$, we choose a semistable log resolution (see Proposition \ref{prop:prepared-qdlt-model} and Corollary \ref{cor:prepared-model-klt-regime})
\[
\pi\colon Y\to \widetilde{\X}^{ac}:=\X^{ac}\times_C C'
\]
of the pair $(\widetilde{\X}^{ac},\widetilde{\X}^{ac}_0)$, (following the idea of proof of \cite[Theorem~6]{LX14}). Let $A$ be a $\pi$-ample exceptional divisor and write $A=A_1+A_2$, where $A_2$ consists precisely of
the vertical components over the special point. Over the punctured curve, let
$\F_Y$ be the birational transform of $\F^{ac}$, and write
\[
\pi^*(K_{\mathcal A^{ac}})|_{Y^\circ}+F^\circ
=
K_{\mathcal A_{Y^\circ}}+\Gamma^\circ,
\]
where $F^\circ,\Gamma^\circ\ge 0$ have no common components. Since the generic
fibres are klt, the coefficients of $\Gamma^\circ$ are $<1$. Let $\Gamma$
and $F$ denote the closures of $\Gamma^\circ$ and $F^\circ$ in $Y$.

We set $L_Y:=\pi^*(-K_{\mathcal A^{ac}})+\varepsilon A$ for $0<\varepsilon\ll 1$. Since $-K_{\mathcal A^{ac}}$ is relatively ample
over $C$, the divisor $L_Y$ is ample over $C'$. After a small perturbation
and reordering the vertical components $E_1,\dots,E_k$ of $Y_0$, we may write
\[
K_{\mathcal A_Y}+L_Y+\Gamma
\sim_{\Q,C'}
(\varepsilon A_1+F)+(\varepsilon A_2+B),
\]
with $B+\varepsilon A_2=\sum_{j=1}^k a_jE_j$ where $a_j>a_1$ for $j>1$. Let $G$ be the sum of the $\pi$-exceptional divisors whose centres lie over the nondegenerate locus. Choosing $0<\varepsilon\ll\delta\ll1$, we may assume that $\delta G+\varepsilon A_1\ge 0$ and that its support is exactly $G$. Adding a suitable multiple of the special
fibre, we may also assume $a_1=0$. Then
\[
K_{\mathcal A_Y}+L_Y+\Gamma+\delta G
\sim_{\Q,C'}
(\delta G+\varepsilon A_1+F)+(\varepsilon A_2+B-a_1Y_0)\ge 0,
\]
and the support of the horizontal part over $C^\circ$ contains all the
exceptional divisors of the birational morphism $Y^\circ\to \widetilde{\X}^{ac,\circ}$.

We now perturb to a klt rational adjoint foliated structure using
\cite[Lemma~3.29]{CHLMSX25}. Since the ambient variety is potentially klt, this produces a klt algebraically integrable adjoint foliated structure on $Y$ with the same numerical class up to an ample perturbation. Applying Proposition~\ref{prop:foliated-prop1}, the $K_{\mathcal A_Y}$-MMP over $\widetilde{\X}^{ac}$ with scaling of $L_Y$ terminates with a model
\[
f'\colon (\X',\F',B',M,t)\to C'
\]
such that $K_{\mathcal A'}+\L'\sim_{\Q,C'}0$ and $\L'$ is nef and big over $C'$.

By the base-point-freeness theorem for klt algebraically integrable adjoint
foliated structures, \cite[Theorem~2.1.2]{CHLMSX25}, the divisor $\L'$ is
semiample over $C'$. Therefore we can define
\[
\X^s:=\Proj_{C'} R(\mathcal \X'/C',\L').
\]
Since $K_{\mathcal A'}+\L'\sim_{\Q,C'}0$, this is equivalently $\X^s=\Proj_{C'} R(\X'/C',-K_{\mathcal A'})$.

Over the punctured curve, the birational map $Y^\circ\dashrightarrow \mathcal \X'^{\circ}$ contracts exactly the same divisors as $Y^\circ\to \widetilde{\X}^{ac,\circ}$, hence $\mathcal \X'^{\circ}$ is isomorphic to the original family in codimension
one. Taking relative $\Proj$ therefore recovers the original polarised family over $\phi^{-1}(C^\circ)$. This proves~(1).

For the next step, we choose a general divisor in $|L_Y|_{\Q}$. The MMP above is also a $(K_{\mathcal A_Y}+\Gamma+\delta G+Y_0+L_Y)$-MMP so the output satisfies that $(\X',\X'_0+'L)$ is dlt. Since $\X'$ is $\Q$-factorial and $\X'_0$ is
irreducible by construction, it follows that $(\X',\X'_0)$ is plt. Therefore $(\X^s,\X^s_0)$ is plt, and by adjunction the special fibre $\X^s_0$ is klt. On the
other hand, $-K_{\mathcal A^s}$ is relatively ample over $C'$ by construction, hence its restriction to every fibre is ample. For the general fibres this recovers the original adjoint Fano foliated structures, while for the special fibre the above plt statement yields the required klt singularities. Therefore every fibre of $f^s$ is an adjoint Fano foliated structure. This proves~(2).
\end{proof}

\begin{lemma}
\label{lem:512B-identification-surviving-component}
With notation as in the proof of
Theorem~\ref{thm:512A-special-extension-after-base-change}, the special fibre $\X^s_0$ is
the birational image of the distinguished divisor $E_1$.
\end{lemma}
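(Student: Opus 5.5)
We argue exactly as in the identification step of \cite[Theorem~6]{LX14}, using the support statement of Proposition~\ref{prop:foliated-prop1}. Recall the setup of the proof of Theorem~\ref{thm:512A-special-extension-after-base-change}: we have
\[
K_{\mathcal A_Y}+L_Y+\Gamma+\delta G\sim_{\Q,C'}E_h+E_v,
\]
with horizontal part $E_h=\delta G+\varepsilon A_1+F$ and vertical part $E_v=\sum_{j=1}^k(a_j-a_1)E_j$. The normalisation $a_1=0$ and the strict ordering $a_j>a_1$ for $j>1$ give two facts:
\begin{itemize}
\item[(a)] $\Supp(E_v)=E_2\cup\dots\cup E_k$, which is a proper subset of the support of the reduced fibre $Y_0$;
\item[(b)] $E_1$ appears in $E_v$ with coefficient $0$.
\end{itemize}
In particular, $\Supp(E_v)$ contains no whole fibre. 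Hence the hypotheses of Proposition~\ref{prop:foliated-prop1} hold for the perturbed klt structure obtained from \cite[Lemma~3.29]{CHLMSX25}. Since that perturbation changes the adjoint divisor only by an ample class, the numerical data and the support of $E$ are unchanged.

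The first step applies part (2) of Proposition~\ref{prop:foliated-prop1}. The divisors contracted by $Y\dashrightarrow\X'$ are exactly $\Supp(E)=\Supp(E_h)\cup E_2\cup\dots\cup E_k$. Since $E_1\not\subset\Supp(E)$, the divisor $E_1$ is not contracted. Its birational transform $E_1'$ is therefore a prime divisor on $\X'$, and it lies over $0\in C'$.

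The second step shows that $\X'_0$ is irreducible, supported on $E_1'$. Every vertical prime divisor of $\X'$ over $0$ is the birational transform of a component of $Y_0$, because the MMP extracts no divisors. The only component of $Y_0$ that survives is $E_1$, so $\X'_0=m E_1'$ for some $m>0$. Because the base change was chosen so that $Y\to C'$ is semistable, $Y_0$ is reduced, and $E_1$ has multiplicity one in $Y_0$. Pushforward preserves the fibre $f'^*(0)$, so $m=1$ and $\X'_0=E_1'$.

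The third step passes to $\X^s$ via the contraction $\X'\to\X^s=\Proj_{C'}R(\X'/C',\L')$. This map is birational, and the central fibre of $\X^s$ is the image of $\X'_0=E_1'$. We must check that $E_1'$ is not contracted to a subvariety of smaller dimension. This holds because $\X^s\to C'$ is flat, having irreducible normal total space dominating a smooth curve. Its central fibre is therefore a divisor, and the only candidate for it is the image of $E_1'$. Hence $\X^s_0$ is the birational image of $E_1$.

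The main obstacle is the first step: the claim that the MMP contracts exactly $\Supp(E)$ while leaving $E_1$ alone. The contraction of $E_2,\dots,E_k$ follows from the insufficient-fibre-type argument, because these divisors lie in the relative diminished base locus. The non-contraction of $E_1$ requires that $E_1$ lie outside that base locus. To verify this, I would use that $E_v+tY_0$ is effective with support equal to all of $Y_0$ for small $t>0$, and that the multiplicity of $E_1$ in $E_v$ is $0$. The relative stable base locus of $E_h+E_v$ then avoids the generic point of $E_1$, so the minimal model cannot contract it. This reduces to the same negativity lemma computation as in \cite[Proposition~1]{LX14}, with \cite[Theorem~8.1]{CHLMSX25} supplying the existence of the relevant good minimal model.
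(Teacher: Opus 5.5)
Your proposal is correct and follows the paper's route: $E_2,\dots,E_k$ lie in the support of the effective vertical part and are contracted by the MMP (via the support statement of Proposition~\ref{prop:foliated-prop1}), while $E_1$, having coefficient zero, is not contracted and becomes the special fibre. You also check that $\X'_0=E_1'$ and that $E_1'$ is not contracted by $\X'\to\X^s$ (since the central fibre of $\X^s$ is a Cartier divisor), which fills in steps the paper only asserts; the $E_v+tY_0$ device in your last paragraph is unnecessary, because $E$ itself is an effective member of the class that does not contain $E_1$.
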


\begin{proof}
By construction, the divisor $K_{\mathcal A_Y}+L_Y+\Gamma+\delta G$ is $\Q$-linearly equivalent over $C'$ to an effective divisor whose vertical
part is
\[
\varepsilon A_2+B-a_1Y_0=\sum_{j=2}^k a_jE_j,
\]
where each coefficient $a_j$ is strictly positive for $j>1$, while the
coefficient of $E_1$ is zero.

The $K_{\mathcal A_Y}$-MMP with scaling of $L_Y$ is run relative to
$\widetilde{\X}^{ac}$, and the induced birational contractions eliminate precisely the vertical divisors appearing in this positive vertical part via the Negativity Lemma. Hence every divisor $E_j$ with $j>1$ is contracted in the passage from $Y$ to
$\X'$. Since $\X^s$ is obtained from $\X'$ by the
semiample contraction associated to $\L'$, none of these contracted divisors
reappears on $\X^s$.

On the other hand, the divisor $E_1$ does not appear in the positive vertical
part, so it is not contracted by this process. It follows that the birational
image of $E_1$ on the final model $\X^s$ is exactly the special fibre
$\X^s_0$. This proves the claim.
\end{proof}

\begin{proposition}
\label{prop:512C-discrepancy-zero-special-fibre}
The special fibre $\X^s_0$, regarded as a divisorial valuation over $\widetilde{\X}^{ac}:=\X^{ac}\times_C C'$ satisfies
\[
a(\X^s_0;\widetilde{\X}^{ac},\F^{ac},B^{ac},M,t)=0.
\]
\end{proposition}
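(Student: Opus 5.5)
This is the foliated analogue of the discrepancy-zero statement in \cite[Theorem~6]{LX14}. The plan is to compute the discrepancy of $E_1$ over $\widetilde{\X}^{ac}$ from the crepant-type relation used in the proof of Theorem~\ref{thm:512A-special-extension-after-base-change}. By Lemma~\ref{lem:512B-identification-surviving-component}, $\X^s_0$ is the birational image of $E_1$. It therefore suffices to show that the coefficient of $E_1$ in $K_{\mathcal A_Y}-\pi^*K_{\mathcal A^{ac}}$ equals zero.

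\textbf{Step 1: the base-changed family.} After the base change $C'\to C$ (semistable reduction) the central fibre $\widetilde{\X}^{ac}_0$ is reduced. Combining Lemma~\ref{lem:anti-adjoint-model-lc-central-fibre} with the fact that finite base change preserves log canonicity of $(\X,\X_0)$ up to the ramification formula, I get that $(\widetilde{\X}^{ac},\widetilde{\X}^{ac}_0)$ is lc.

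\textbf{Step 2: the vertical components of $Y_0$.} By Lemma~\ref{lem:vertical-divisors-invariant}, every vertical component $E_j$ of $Y_0$ is $\F_Y$-invariant, so $\varepsilon(E_j)=0$. Hence
\[
A^{[t]}_{\widetilde{\X}^{ac},\F^{ac}}(E_j)=t\,a(E_j,\F)+(1-t)\bigl(a(E_j,\widetilde{\X}^{ac})+1\bigr).
\]
Since $Y_0$ is reduced, pulling back the fibre gives $\operatorname{ord}_{E_j}(\widetilde{\X}^{ac}_0)=1$. By Lemma~\ref{lem:mixed-discrepancy-subtract-D}, the adjoint discrepancy of $E_j$ with respect to the structure with boundary $\widetilde{\X}^{ac}_0$ is then $a_j':=a(E_j;\widetilde{\X}^{ac},\F^{ac},B^{ac},M,t)$ up to this shift. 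This is the formulation in which the coefficients $a_j$ in the decomposition $B+\varepsilon A_2=\sum a_jE_j$ are computed. Concretely, $B$ is the vertical part of $K_{\mathcal A_Y}+Y_0-\pi^*(K_{\mathcal A^{ac}}+\widetilde{\X}^{ac}_0)$ and has coefficients $a_j'\ge 0$ by log canonicity. The strict transform components of $\widetilde{\X}^{ac}_0$ have coefficient $0$.

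\textbf{Step 3: $a_1=0$ and the ordering.} The normalisation $a_1=0$ is achieved by subtracting $a_1Y_0$. I claim that the minimal coefficient before normalisation is exactly the minimum of the $a_j'$, perturbed by $\varepsilon A_2$. This minimum is $0$ and is attained on a non-exceptional component of $\widetilde{\X}^{ac}_0$, or on a divisor with zero discrepancy. Letting $\varepsilon\to 0$ and using that the ordering is fixed for small $\varepsilon$, I conclude that $E_1$ satisfies $a(E_1;\widetilde{\X}^{ac},\F^{ac},B^{ac},M,t)=0$.

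\textbf{Main obstacle.} The hardest point is controlling the $\varepsilon A_2$ perturbation and showing that the surviving divisor has discrepancy exactly $0$, not merely a minimal one. I would follow \cite[Theorem~6]{LX14} here. First I would show that some lc place of $(\widetilde{\X}^{ac},\widetilde{\X}^{ac}_0)$ among the $E_j$ always exists, for instance a component of the reduced central fibre itself. Then I would choose $A$ so that $\varepsilon A_2$ only breaks ties among the zero-discrepancy components, so that the minimum is attained only at discrepancy-zero divisors. Alternatively, one could argue directly on $\X^s$. By Theorem~\ref{thm:512A-special-extension-after-base-change}, $K_{\mathcal A^s}\sim_{\Q,C'}-\L^s$, and $\X^s$ and $\widetilde{\X}^{ac}$ are isomorphic over $C'^\circ$. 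Applying the negativity lemma on a common resolution to the difference of the two anti-adjoint pullbacks, which is vertical and relatively trivial generically, forces it to be a multiple of the fibre. Lemma~\ref{lem:vertical-nef} then shows that this difference vanishes along $E_1$.
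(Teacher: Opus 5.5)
Your main line is the paper's own argument: read off the coefficient of $E_1$ from the decomposition built in the proof of Theorem~\ref{thm:512A-special-extension-after-base-change}, get nonnegativity from log canonicity, get invariance of $E_1$ from Lemma~\ref{lem:vertical-divisors-invariant}, and transport to $\X^s_0$ via Lemma~\ref{lem:512B-identification-surviving-component}. Your Step~3 is more careful than the paper's proof. The paper only notes that the coefficient of $E_1$ vanishes after the normalisation $a_1=0$. That is automatic, since subtracting $a_1Y_0$ kills whatever coefficient $E_1$ had, and it says nothing about the discrepancy. The real content is what you say. Write $A_2=\sum_j c_jE_j$; the unnormalised coefficients are $a_j'+\varepsilon c_j$, with $a_j'\ge 0$, and $a_j'=0$ on the strict transforms of the components of $\widetilde{\X}^{ac}_0$. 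Hence for $0<\varepsilon\ll 1$ the tie-broken minimiser $E_1$ has $a_1'=0$. This already disposes of your ``main obstacle''.

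\textbf{The gap is the input $a_j'\ge 0$.} For a vertical, hence invariant, $E_j$ you have $a_j'=(1-t)\,a(E_j;\widetilde{\X}^{ac})+t\,A_{\F^{ac}}(E_j)$. Your Step~1, via Lemma~\ref{lem:anti-adjoint-model-lc-central-fibre}, provides only log canonicity of the \emph{ambient} pair $(\widetilde{\X}^{ac},\widetilde{\X}^{ac}_0)$. That gives $a(E_j;\widetilde{\X}^{ac})\ge 0$ but does not control the foliated term, which can be negative.
\begin{itemize}
\item What you actually need is the mixed condition of Definition~\ref{def:t-weakly-special}(2) for $(\widetilde{\X}^{ac},\F^{ac};\widetilde{\X}^{ac}_0)$, namely $A^{[t]}(E_j)\ge (1-t)\ord_{E_j}(\widetilde{\X}^{ac}_0)=1-t$.
\item Your appeal to Lemma~\ref{lem:mixed-discrepancy-subtract-D} with boundary $\widetilde{\X}^{ac}_0$ at coefficient $1$ is mis-normalised. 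It gives log discrepancy $A^{[t]}(E_j)-1=a_j'-t$, which equals $-t<0$ on every component of the fibre, so that structure is never log canonical for $t>0$. The boundary has to be $(1-t)\widetilde{\X}^{ac}_0$.
\item To close the gap you must establish mixed log canonicity of $(\X^0,\F^0;(1-t)\X^0_0)$ at the qdlt stage; Proposition~\ref{prop:prepared-qdlt-model} only records the ambient version. You then need to check it survives the MMP (fine, since $\X^0_0\sim_C 0$), the crepant anti-adjoint contraction, and the base change.
\end{itemize}
The paper's proof makes the same appeal to ambient log canonicity, so this is a shared omission rather than a departure from it. As written, though, the step does not follow.

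Finally, discard the alternative argument at the end. Suppose $p^*K_{\mathcal A^s}-q^*K_{\mathcal A^{ac}}$ were $\Q$-linearly a multiple of the fibre. Both anti-adjoint divisors are relatively ample, so $\X^s$ and $\widetilde{\X}^{ac}$ would be the relative $\operatorname{Proj}$ of the same section ring and hence isomorphic. That is false in general, since producing a different degeneration is the point of the construction. The difference is also not nef over $C'$, so Lemma~\ref{lem:vertical-nef} does not apply to it.
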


\begin{proof}
We retain the notation from the proof of
Theorem~\ref{thm:512A-special-extension-after-base-change}. By construction we
have
\[
K_{\mathcal A_Y}+L_Y+\Gamma
\sim_{\Q,C'}
(\varepsilon A_1+F)+(\varepsilon A_2+B),
\]
and after adding $\delta G$ and subtracting $a_1Y_0$, we obtain
\[
K_{\mathcal A_Y}+L_Y+\Gamma+\delta G
\sim_{\Q,C'}
(\delta G+\varepsilon A_1+F)+(\varepsilon A_2+B-a_1Y_0).
\]
By our normalisation $a_1=0$, the vertical part is
\[
\varepsilon A_2+B=\sum_{j=1}^k a_jE_j,
\qquad
a_j\ge 0,
\]
with equality only for $j=1$. Since
$(\widetilde{\X}^{ac},\widetilde{\X}^{ac}_0)$ is log canonical,
the coefficients of the exceptional divisors in this pullback expression are
nonnegative, and the coefficient of $E_1$ is exactly zero. Furthermore, $E_1$ is invariant, hence,
\[
a(E_1;\widetilde{\X}^{ac},\F^{ac},B^{ac},M,t)=0.
\]

By Lemma~\ref{lem:512B-identification-surviving-component}, the special fibre
$\X^s_0$ is the birational image of $E_1$. Hence $\X^s_0$
defines the same divisorial valuation over $\widetilde{\X}^{ac}$ as
$E_1$. Therefore
\[
a(\X^s_0;\widetilde{\X}^{ac},\F^{ac},B^{ac},M,t)=0,
\]
as claimed.
\end{proof}

\begin{corollary}
\label{cor:adjoint-fano-foliated-extension}
Starting with a family $f^\colon (\X^,\F,B,M,t)\to C$  of adjoint Fano foliated structures, after a finite base change $\phi\colon C'\to C$, there exists a family
\[
f^s\colon (\X^s,\F^s,B^s,M,t)\to C'
\]
of adjoint Fano foliated structures such that:
\begin{enumerate}
    \item it is isomorphic to the pullback of the original family over
    $\phi^{-1}(C^\circ)$;
    \item every fibre is an adjoint Fano foliated structure;
    \item the special fibre $\X^s_0$ satisfies $a(\X^s_0;\widetilde{\X}^{ac},\F^{ac},B^{ac},M,t)=0$.
\end{enumerate}
\end{corollary}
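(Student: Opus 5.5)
The plan is to assemble the corollary from the two-stage construction already carried out in this section, in three steps.

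\emph{Step 1: reduce to the anti-adjoint model.} Starting from the given family $f\colon(\X,\F,B,M,t)\to C$, I first replace it by the $\mathbb Q$-factorial qdlt model of Proposition~\ref{prop:prepared-qdlt-model}. By Corollary~\ref{cor:prepared-model-klt-regime} this is an lc algebraically integrable adjoint foliated structure on a klt variety, so the MMP machinery applies. I then run the $(K_{\mathcal A^0}+P^0)$-MMP over $C$ with scaling of $C^0$, as in Section~\ref{sec: mmp with scaling}. By Lemma~\ref{lem:generic-triviality-preserved}, all steps up to index $j$ are isomorphisms over the generic point of $C$. By Proposition~\ref{prop:threshold-stage-package}, $-K_{\mathcal A^j}$ is nef and big over $C$, and the cited base-point-freeness theorem lets me pass to the relative anti-adjoint model $\X^{ac}$. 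On $\X^{ac}$ the divisor $-K_{\mathcal A^{ac}}$ is relatively ample over $C$. Moreover $\X^{ac}$ agrees with the original family over $C^\circ$: the contraction defined by $-K_{\mathcal A^j}$ is an isomorphism there, because the original family is already adjoint Fano fibrewise. Finally, $(\X^{ac},\X^{ac}_0)$ is log canonical by Lemma~\ref{lem:anti-adjoint-model-lc-central-fibre}.

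\emph{Step 2: apply the extension theorem.} I apply Theorem~\ref{thm:512A-special-extension-after-base-change} to $f^{ac}$. After a finite base change $\phi\colon C'\to C$, this produces $f^s\colon(\X^s,\F^s,B^s,M,t)\to C'$ satisfying (1) and (2). For (1) I compose the isomorphism over $\phi^{-1}(C^\circ)$ given by the theorem with the identification from Step~1. Item (2) is part (2) of the theorem.

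\emph{Step 3: the discrepancy condition.} Item (3) is exactly Proposition~\ref{prop:512C-discrepancy-zero-special-fibre}, which uses Lemma~\ref{lem:512B-identification-surviving-component} to identify $\X^s_0$ with the surviving component $E_1$.

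The main obstacle is bookkeeping rather than new mathematics. One point is to check that the foliation $\F^s$ is the birational transform of $\F^{ac}$, and hence agrees with the pullback foliation over $\phi^{-1}(C^\circ)$. This holds because every map in the construction is an isomorphism in codimension one over $C^\circ$, and saturated foliations are determined on big open sets. The other point is to confirm that the hypotheses of Theorem~\ref{thm:512A-special-extension-after-base-change} hold for $\X^{ac}$: it must be a normal compactification with potentially klt ambient variety. For this I would note that $\X^{ac}$ is the image of the klt variety $\X^j$ under a $K_{\mathcal A^j}$-trivial contraction, so the potentially klt property descends.
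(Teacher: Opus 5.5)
Your proposal follows the paper's proof, which likewise chains the constructions of Sections~\ref{sec: qdlt model}--\ref{sec: mmp with scaling} with Theorem~\ref{thm:512A-special-extension-after-base-change} for (1)--(2), and Lemma~\ref{lem:512B-identification-surviving-component} with Proposition~\ref{prop:512C-discrepancy-zero-special-fibre} for (3). One remark: Theorem~\ref{thm:512A-special-extension-after-base-change} does not require $\X^{ac}$ itself to be potentially klt, since the perturbation is applied on the smooth resolution $Y$, so you can drop that check. This helps, because $K_{\mathcal A^j}$-triviality is not $(K_{\X^j}+\Delta)$-triviality for a klt boundary $\Delta$, so the descent argument you sketch does not go through as stated.
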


\begin{proof}
We apply Theorem~\ref{thm:512A-special-extension-after-base-change} and the constructions of sections \ref{sec: qdlt model} and \ref{sec: mmp with scaling} to obtain the family $f^s\colon (\X^s,\F^s,B^s,M,t)\to C'$. By Lemma~\ref{lem:512B-identification-surviving-component}, the special fibre
$\X^s_0$ is the birational image of the distinguished divisor $E_1$,
and by Proposition~\ref{prop:512C-discrepancy-zero-special-fibre} this divisor has
discrepancy zero over $\widetilde{\X}^{ac}$. This gives~(3), while
(1) and (2) are exactly the conclusions of
Theorem~\ref{thm:512A-special-extension-after-base-change}.
\end{proof}

\subsection{\texorpdfstring{Application to $\F$-compatible test configurations}{Application to F-compatible test configurations}}

We begin with a compatibility Lemma showing that $\F$-compatible test configurations satisfy the assumptions of Sections \ref{sec: qdlt model}-\ref{sec: adjoint fano extension}.

Let $(X,\F,t)$ be an adjoint Fano foliated structure with $0<t<1$, and let $(\pi:\X\to \A^1,\mathcal{L},\F_{\X})$ be a normal $\F$-compatible test configuration for $(X,\F,L)$, where $L\sim_{\Q}-K^{[t]}_{X,\F}$. We keep the notation of Remark \ref{rem: alg integrable condition} and assume that $\F_{\X}$ is algebraically integrable. Now, let $(\bar{\X},\bar{\F},\bar{\mathcal{L}})\to \PP^1$ be an $\infty$-trivial compactification.

\begin{lemma}\label{lem:bridge-test-config-section5}
After a finite base change and a $\mathbb{G}_m$-equivariant semistable resolution, we obtain a projective morphism $f:(Y,\F_Y)\to \PP^1$ such that:
\begin{enumerate}
    \item over $\PP^1\setminus \{0\}$, the family is a family of algebraically integrable adjoint Fano foliated structures polarized by $L$;
    \item $\F_Y$ is algebraically integrable and compatible with the morphism to $\PP^1$;
    \item $Y$ is potentially klt;
    \item $K^{[t]}_{Y,\F_Y}$ is not pseudo-effective over $\PP^1$.
\end{enumerate}
\end{lemma}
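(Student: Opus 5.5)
I will verify the four properties one at a time. Start from the $\infty$-trivial compactification $(\bar{\X},\bar{\F},\bar{\L})\to\PP^1$. After a finite base change $z\mapsto z^m$ (which is $\G_m$-equivariant and only changes the central fibre), take a $\G_m$-equivariant semistable resolution $g\colon Y\to \bar{\X}\times_{\PP^1}\PP^1$. Equivariant resolution exists in characteristic $0$. Because the test configuration is trivial over $\PP^1\setminus\{0\}$, the resolution can be chosen to be an isomorphism there, or at worst a resolution of $X\times(\PP^1\setminus\{0\})$ of product type. Set $\F_Y$ to be the birational transform of $\bar{\F}$. Then $\F_Y$ is the saturation of the pullback of $T_{\F_{\X}}$ on the big open set where $g$ is an isomorphism.

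\emph{Items (1) and (2).} Over $\PP^1\setminus\{0\}$ the family is $(X,\F)\times(\PP^1\setminus\{0\})$ by condition (3) of Definition~\ref{def: foliated t.c.} and the $\infty$-triviality. Each fibre is therefore the adjoint Fano foliated structure $(X,\F,t)$, polarised by $L\sim_\Q -K^{[t]}_{X,\F}$, which gives (1). Algebraic integrability of $\F_Y$ follows from condition (4), since leaves are birational invariants; this is the same argument used in Proposition~\ref{prop:prepared-qdlt-model}(3). For compatibility, $\F_{\X}\subset T_{\X/\A^1}$ is a closed condition on vector fields, namely that they annihilate the pullback of the base coordinate. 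It therefore persists for the saturated birational transform, exactly as in the proof of Proposition~\ref{prop:special-tc-gives-F-invariant-valuation}. Lemma~\ref{lem:vertical-divisors-invariant} then shows that all vertical divisors are invariant.

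\emph{Item (3).} Since $Y$ is a resolution, it is smooth, hence klt. Potential kltness means there exists $\Delta\ge0$ with $(Y,\Delta)$ klt, and here $\Delta=0$ suffices.

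\emph{Item (4).} This is the step I expect to be the main obstacle, because non-pseudo-effectivity is a global relative statement and the resolution adds exceptional divisors. I would argue by restriction to a general fibre. If $K^{[t]}_{Y,\F_Y}$ were pseudo-effective over $\PP^1$, its restriction to the generic fibre $Y_\eta$ would be pseudo-effective, since pseudo-effectivity over the base is tested on the generic fibre. But $Y_\eta$ is a resolution $\tilde X\to X$ of the generic fibre, with the induced foliation. Writing
\[
K^{[t]}_{\tilde X,\tilde\F}=\mu^*K^{[t]}_{X,\F}+E^+-E^-,
\]
I would push forward to $X$, using that pushforward preserves pseudo-effectivity. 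This gives $K^{[t]}_{X,\F}$ pseudo-effective, which contradicts the ampleness of $-K^{[t]}_{X,\F}$ on the projective variety $X$. The point to check carefully is that the birational transform of the canonical class pushes forward to $K^{[t]}_{X,\F}$ itself. This holds because $\mu_*K_{\tilde X}=K_X$ and $\mu_*K_{\tilde\F}=K_\F$ as Weil divisor classes, so no $\Q$-Cartier assumption on the resolution side is needed. If the resolution instead had to be chosen so that it is not an isomorphism over $\PP^1\setminus\{0\}$, the same pushforward argument applies fibrewise. The only remaining subtlety is ensuring that the relative adjoint divisor $K^{[t]}_{Y/\PP^1}$ differs from $K^{[t]}_{Y,\F_Y}$ only by a pullback from $\PP^1$, and this does not affect relative pseudo-effectivity.
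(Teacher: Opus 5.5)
Your treatment of (1), (2) and (4) follows the paper. Items (1) and (2) come from the product structure away from $0$, together with birational invariance of algebraic integrability and of the condition $\F\subset T_{\X/\A^1}$. Item (4) comes from restricting to the generic fibre and contradicting the ampleness of $-K^{[t]}_{X,\F}$.

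You differ from the paper in two places, both improvements.
\begin{enumerate}
\item For (3), the paper cites the qdlt modification theorem \cite[Theorem~3.6]{CHLMSSX24}. You observe instead that a semistable resolution is smooth, hence klt with $\Delta=0$. This is more elementary and sufficient.
\item For (4), the paper asserts that the generic fibre of $Y$ is isomorphic to $X$. That is only true if $Y\to\bar{\X}\times_{\PP^1}\PP^1$ is an isomorphism over $\PP^1\setminus\{0\}$. Your pushforward step also covers the case where the generic fibre is a resolution $\mu\colon\tilde X\to X$, so your (4) is more robust. To justify that pushforward preserves pseudo-effectivity here, take $H$ ample on $X$. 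Since $\mu_*K_{\tilde X}=K_X$ and $\mu_*K_{\tilde\F}=K_\F$, the projection formula gives $D\cdot(\mu^*H)^{n-1}=K^{[t]}_{X,\F}\cdot H^{n-1}<0$ for $D=K^{[t]}_{\tilde X,\tilde\F}$. This is impossible for pseudo-effective $D$ against the nef class $\mu^*H$.
\end{enumerate}

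You should make one point consistent. Your proof of (1) uses that $Y$ is isomorphic to $\bar{\X}\times_{\PP^1}\PP^1$ over $\PP^1\setminus\{0\}$, so that the fibres there are $(X,\F,t)$. Your proof of (3) uses that $Y$ is smooth. When $X$ is singular these cannot hold for the same $Y$. In that case either (1) holds only for the unresolved model away from $0$, or (3) must be obtained another way, for instance from potential kltness of $X$ and the qdlt modification, as the paper intends. The paper's proof has the same tension, since its (4) uses that the generic fibre of $Y$ is $X$. So this is an imprecision in the statement rather than a defect specific to your argument. When $X$ is smooth, a $\G_m$-equivariant semistable resolution can be taken to be an isomorphism away from $0$, and your proof is complete.
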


\begin{proof}
By definition, over $\A^1\setminus\{0\}$ the test configuration is $\mathbb{G}_m$-equivariantly isomorphic to the product family $(X,\F,L)\times (\A^1\setminus\{0\})$. Since $\F_{\X}$ is assumed to be algebraically integrable, it follows that after passing to an $\infty$-trivial compactification, the restriction of $(\bar{\X},\bar{\F},\bar{\mathcal{L}})\to \PP^1$ over $\PP^1\setminus\{0\}$ is a family of algebraically integrable adjoint Fano foliated structures polarized by $L$. This proves~(1).

After a finite base change and a $\mathbb{G}_m$-equivariant semistable resolution, we obtain a projective morphism $f:Y\to \PP^1$ together with the induced foliation $\F_Y$. By construction, $\F_Y$ is algebraically integrable, and it remains compatible with the morphism to the base. This gives~(2).

Now, (3) follows by \cite[Theorem~3.6]{CHLMSSX24}. It remains to prove~(4). Suppose by contradiction that $K^{[t]}_{Y,\F_Y}$ is pseudo-effective over $\PP^1$. Then its restriction to the generic fiber $Y_\eta$ is pseudo-effective. On the other hand, by construction the generic fiber is isomorphic to $X$, and $K^{[t]}_{Y,\F_Y}\big|_{Y_\eta}=K^{[t]}_{X,\F}$. Since $(X,\F,t)$ is adjoint Fano, the divisor $-K^{[t]}_{X,\F}$ is ample. Hence $K^{[t]}_{X,\F}$ is not pseudo-effective, a contradiction. Therefore $K^{[t]}_{Y,\F_Y}$ is not pseudo-effective over $\PP^1$, proving~(4).
\end{proof}

In words, Lemma \ref{lem:bridge-test-config-section5} shows that starting with an $\F$-compatible test configuration, we can obtain naturally the model $f:Y\to \PP^1$ where the birational procedure of Sections \ref{sec: qdlt model}-\ref{sec: adjoint fano extension} applies to $f:Y\to \PP^1$, since all the starting hypotheses of Section \ref{sec: special t.c.} apply.

We now study how $\G_m$-equivariance affects birational transformations.

\begin{lemma}
\label{lem:514A-Gm-equivariant-birational-transport}
Let $(\X,\F_{\X},\L)\to \mathbb P^1$ be an $\F$-compatible compactified test configuration. After equivariant base change and equivariant semistable resolution, the induced foliation on the resolution is $\G_m$-equivariant. Moreover, each divisorial contraction and flip in the adjoint foliated MMP can be performed $\G_m$-equivariantly, and the birational transform foliation remains $\G_m$-equivariant and compatible with the morphism to the base.
\end{lemma}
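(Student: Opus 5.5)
The plan is to establish the three assertions in order: equivariance of the foliation on the resolution, equivariance of each MMP step, and preservation of equivariance and base-compatibility for the transformed foliation. All three rest on one principle. A foliation is determined by its restriction to any dense open set. Birational transforms are defined by saturating the pushforward or pullback of $T_{\F}$ across the locus where the map is an isomorphism. Therefore any construction that is canonical, or can be chosen $\G_m$-invariantly, commutes with the group action.

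First, the base change $z\mapsto z^m$ on $\A^1$ is $\G_m$-equivariant once $\G_m$ acts on the source through the $m$-th power map. The normalised fibre product therefore carries a $\G_m$-action, and $\F_{\X}$ pulls back on the étale locus over $\A^1\setminus\{0\}$ to a $\G_m$-stable subsheaf. Its saturation in $T_{\X'/\A^1}$ is again stable, because saturation is canonical. For the semistable resolution I will use equivariant resolution of singularities, as in Kollár's functorial resolution, together with the equivariant semistable reduction of Kempf--Knudsen--Mumford--Saint-Donat for the one-dimensional torus acting compatibly on the base. The resolution $\rho\colon Y\to\X'$ is then $\G_m$-equivariant. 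For every $\lambda\in\G_m$, the sheaf $\F_Y$, defined as the saturation of $\rho^{-1}\F_{\X'}$ on the isomorphism locus, satisfies $\lambda_*\F_Y=\F_Y$. Compatibility with the base, $\F_Y\subset T_{Y/\A^1}$, holds on a dense open set and is preserved by saturation, since $T_{Y/\A^1}$ is saturated in $T_Y$.

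Second, for the MMP steps, the plan is to observe that a connected group acts trivially on the discrete data the MMP depends on. $\G_m$ is connected, so it acts trivially on $N^1(Y/\PP^1)$ and on $\overline{NE}(Y/\PP^1)$. Every extremal ray $R$ is therefore $\G_m$-stable, and so is the $\G_m$-invariant adjoint divisor $K^{[t]}_{Y,\F_Y}$, together with the scaling divisors, which we choose $\G_m$-invariant. The contraction of $R$, given by the cone/contraction theorem of \cite{CHLMSX25}, is $\operatorname{Proj}$ of a section ring of a $\G_m$-linearised divisor. It therefore inherits a $\G_m$-action making the morphism equivariant. Likewise, the flip is $\operatorname{Proj}$ of the relative canonical ring of the adjoint divisor, which is a finitely generated $\G_m$-graded algebra, so the flip is equivariant. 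The same argument applies to the final anti-adjoint model $\operatorname{Proj}R(-K_{\cA})$. On each new model the foliation is the saturated birational transform, so it is $\G_m$-invariant and contained in $T_{/\PP^1}$ by the argument in the first step. Algebraic integrability is a generic-leaf property and is unchanged.

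The main obstacle I expect is guaranteeing that the invariant choices really exist. These are the $\pi$-ample exceptional divisor $A$, the ordering of vertical components, the perturbation of \cite[Lemma 3.29]{CHLMSX25}, and linearisations of $\Q$-divisors after clearing denominators. I would handle this by noting that $\G_m$-stable prime divisors are plentiful: every exceptional and every vertical divisor of an equivariant resolution is stable under the connected group. Ample classes can be represented by linearised line bundles after passing to a multiple, using Sumihiro-type linearisation on the normal variety. Perturbations can be taken from these invariant classes without changing numerical classes, and numerical classes are all the MMP uses.
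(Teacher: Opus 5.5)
Your proposal is correct and takes the same route as the paper. The paper's proof simply invokes \cite[Proof of Theorem 7]{LX14} and cites three facts: equivariant base change and semistable reduction; the foliation as the saturated birational transform of a $\G_m$-equivariant subsheaf; and the canonical $\G_m$-invariant nature of extremal rays, contractions and flips via relative $\operatorname{Proj}$. Your arguments make this terse justification explicit. These are the connectedness of $\G_m$, giving trivial action on $N^1$ and $\overline{NE}$, and the observation that non-invariant perturbations are harmless because the MMP only sees numerical classes.

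One small slip: for $z\mapsto z^m$ to be equivariant, $\G_m$ must act through $\lambda\mapsto\lambda^m$ on the original family $\X$ and the target $\A^1$, not on the source. Concretely, the action on the fibre product is $\lambda\cdot(x,z)=(\lambda^m\cdot x,\lambda z)$.
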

\begin{proof}
    The proof of this Lemma follows directly by the argument in \cite[Proof of Theorem 7]{LX14}, noting that semistable reduction via base change is taken $\G_m$-equivariantly, that the foliation is the birational transform of a $\G_m$-equivariant saturated subsheaf and that each extremal ray and each contraction/flip is canonical from the graded ring / relative $\Proj$ construction, hence inherits the $\G_m$-action.
\end{proof}

We will now prove the following key theorem.

\begin{theorem}\label{thm: construction of special t.c.}
    Let $(X,\F)$ be an adjoint Fano foliated structure and $(\X,\F_{\X})$ an $\F$-compatible test configuration of $(X,\F, -K_{X,\F}^{[t]})$. Then, after a base change $z\mapsto z^m$, there exists a normal $\F$-compatible special test configuration $(\X^{s},\L^{s},\F_{\X^{s}})$ of $(X,-K^{[t]}_{X,\F},\F)$.
\end{theorem}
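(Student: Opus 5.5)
The plan is to assemble the results of Sections \ref{sec: qdlt model}--\ref{sec: adjoint fano extension} in the $\G_m$-equivariant setting, following \cite[Theorem~7]{LX14}. First, take the $\infty$-trivial compactification $(\bar{\X},\bar{\F},\bar{\L})\to\PP^1$ of the given $\F$-compatible test configuration. Apply Lemma~\ref{lem:bridge-test-config-section5} to obtain, after a base change $z\mapsto z^m$ and a $\G_m$-equivariant semistable resolution, a model $f\colon(Y,\F_Y)\to\PP^1$. This model satisfies the standing hypotheses of Section~\ref{sec: special t.c.} with $C=\PP^1$ and $C^\circ=\PP^1\setminus\{0\}$: it is a family of algebraically integrable adjoint Fano foliated structures over $C^\circ$, $Y$ is potentially klt, and $K^{[t]}_{Y,\F_Y}$ is not pseudo-effective over $\PP^1$. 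Throughout, Lemma~\ref{lem:514A-Gm-equivariant-birational-transport} ensures that every qdlt modification, divisorial contraction, flip and relative $\Proj$ below can be chosen $\G_m$-equivariantly. It also ensures that the transformed foliation stays $\G_m$-equivariant, saturated, algebraically integrable and contained in the relative tangent sheaf.

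Second, run the chain of constructions. Proposition~\ref{prop:prepared-qdlt-model} gives the $\Q$-factorial qdlt model $\X^0$, and by Corollary~\ref{cor:prepared-model-klt-regime} its ambient variety is klt. The $K_{\cA^0}$-MMP with scaling of $H^0$ from Section~\ref{sec: mmp with scaling} then produces, via Proposition~\ref{prop:threshold-stage-package} and base-point-freeness, the relative anti-adjoint model $\X^{ac}$, on which $-K^{[t]}$ is relatively ample. Next apply Theorem~\ref{thm:512A-special-extension-after-base-change} and Corollary~\ref{cor:adjoint-fano-foliated-extension}, absorbing the extra finite base change into $z\mapsto z^m$. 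This yields $f^s\colon(\X^s,\F^s)\to\PP^1$ together with $\L^s:=-K^{[t]}_{\X^s/\PP^1}$, which is relatively ample, and a $\G_m$-action. Set $\X^{\mathrm{sp}}:=\X^s\setminus \X^s_\infty$, i.e. the restriction over $\A^1$.

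Third, verify the axioms of Definition~\ref{def: foliated t.c.} and specialness.
\begin{itemize}
\item By Corollary~\ref{cor:adjoint-fano-foliated-extension}(1), over $\A^1\setminus\{0\}$ the family is $\G_m$-equivariantly the pulled-back product family $(X,\F,L)\times(\A^1\setminus\{0\})$, with the product foliation.
\item Flatness holds because $\X^s$ is normal and dominates a smooth curve.
\item Normality holds because $\X^s$ is a relative $\Proj$ of a normal variety.
\item The inclusion $\F^s\subset T_{\X^s/\A^1}$, $\G_m$-equivariance and algebraic integrability follow from Lemma~\ref{lem:514A-Gm-equivariant-birational-transport} and from saturation on a big open set.
\item By Lemma~\ref{lem:512B-identification-surviving-component}, $\X^s_0$ is irreducible, being the image of $E_1$. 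It is reduced because $E_1$ has multiplicity one in the semistable fibre $Y_0$.
\item The proof of Theorem~\ref{thm:512A-special-extension-after-base-change} shows that $(\X^s,\X^s_0)$ is plt.
\end{itemize}
So the triple is an $\F$-compatible special test configuration of $(X,-K^{[t]}_{X,\F},\F)$.

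The main obstacle I expect is the $\G_m$-equivariance and compatibility of the foliation across the perturbation step. \cite[Lemma~3.29]{CHLMSX25} changes the boundary and $t$ to a klt rational structure, so one must check that the MMP for the perturbed structure is still a $K_{\cA}$-MMP that can be chosen equivariantly. My first approach is to choose the perturbing divisors $\G_m$-invariant, by averaging or by taking invariant general members of $\G_m$-linearized linear systems. Then each extremal ray of the resulting MMP is $\G_m$-stable, since $\G_m$ is connected and acts trivially on $N_1$. Hence the contractions are equivariant, and the identification of the special fibre with $E_1$ is unaffected.
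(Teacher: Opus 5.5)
Your proposal is correct and follows essentially the same route as the paper's proof. Both take a base change $z\mapsto z^m$ with a $\G_m$-equivariant semistable reduction of the $\infty$-trivial compactification, use Lemma~\ref{lem:bridge-test-config-section5} to enter the setting of Section~\ref{sec: special t.c.}, run the birational steps equivariantly via Lemma~\ref{lem:514A-Gm-equivariant-birational-transport}, and conclude with Corollary~\ref{cor:adjoint-fano-foliated-extension}; your explicit checks of the test-configuration axioms, of reducedness and irreducibility of $\X^s_0$, and of $\G_m$-invariance of the perturbation from \cite[Lemma~3.29]{CHLMSX25} supply details the paper leaves implicit.
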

\begin{proof}
    By \cite[Lemma~5]{LX14}, there exists a base change $z^m : \A^1 \rightarrow \A^1$ and a semistable family $\Y$ over $\A^1$ with a morphism $\pi : \Y \rightarrow X \times_{\A^1, z^m}\A^1$ which is a log resolution of $(\tilde{\X},\tilde{\X}_{0})$ where $\tilde{\X}$ is the normalisation of $X \times_{\A^1, z^m}\A^1$. We define $\F_{\Y}$ as the induced foliation on $\Y$ from $\F_{\X}$. This implies that resolution of singularities and semistable reduction can be obtained $\G_m$-equivariantly. In particular, starting from an $\infty$-trivial compactfication $(\bar{\X}, \F_{\bar{X}}, \bar{\L})\rightarrow \PP^1$, we can perform a base change $d : \PP^1 \rightarrow \PP^1$ such that $\bar{\X}_d := \bar{\X}\times_{d, \PP^1} \PP^1 \rightarrow \PP^1$ via the second projection admits a $\G_m$-equivariant semi-stable reduction $\pi : \Y \rightarrow \bar{\X}_d $, with induced foliation $\F_{\Y}$. Lemma \ref{lem:bridge-test-config-section5} now allows us to perform the birational steps detailed in Sections \ref{sec: qdlt model}-\ref{sec: adjoint fano extension}, noting that by Lemma \ref{lem:514A-Gm-equivariant-birational-transport} all steps can be performed $\G_m$-equivariantly. In particular, by Corollary \ref{cor:adjoint-fano-foliated-extension} we obtain a special, $\F$-compatible test configuration $(\X^{s},\F_{\X}^{s})$ as required.
\end{proof}

\section{\texorpdfstring{Reduction to $\F$-compatible special test configurations}{Reduction to F-compatible special test configurations}}\label{sec: reduction to special t.c.}

In this section we will show that it suffices to check K-stability for adjoint Fano foliated structures against $\F$-compatible special test configurations.

We will achieve this by showing that by starting with an $\F$-compatible test configuration, the Donaldson-Futaki invariant of Definition \ref{def:DFwt-corrected} decreases along each birational step detailed in Section \ref{sec: special t.c.}.

First we fix some notation. For the family $(\X,\F,\L)\rightarrow C$, we denote by 
\[\DF^{[t]}((\X, \F,\L)/C):= \frac{1}{V}\left(
\frac{n}{n+1}\,\L^{n+1}
+
K^{[t]}_{\X/C}\cdot \L^n \right),\]
as in Definition \ref{def:DFwt-corrected}. When $C$ is explicit, we may drop it from the notation and just write $\DF^{[t]}(\X, \F,\L)$.

\begin{proposition}\label{prop: df decreases in first step}
    If $\X^0$ is not isomorphic to $\X$ then we can choose a polarisation $\L^0$ on $\X^0$ such that 
    $$\DF^{[t]}((\X^0, \F^0, \L^0)/C)\leq \DF^{[t]}((\X, \F,\L)/C).$$
\end{proposition}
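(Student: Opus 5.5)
This is the analogue of the first step in \cite[Section 4]{LX14}: passing to the log canonical (here qdlt) modification does not increase the Donaldson--Futaki invariant. Let $h\colon \X^0\to\X$ be the morphism of Proposition~\ref{prop:prepared-qdlt-model}. Since $K_{\cA^0}=h^*K_{\cA}$ and $\X^0_0=h^*\X_0$, we get $K^{[t]}_{\X^0/C}=h^*K^{[t]}_{\X/C}$. Here the foliated part is compatible because every vertical divisor is $\F$-invariant by Lemma~\ref{lem:vertical-divisors-invariant}, so it carries $\varepsilon=0$ in both the ambient and foliated contributions.

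We take the polarisation $\L^0:=h^*\L-\delta E$, where $E\ge 0$ is an $h$-exceptional divisor with $-E$ $h$-ample (available since $\X^0$ is $\Q$-factorial) and $0<\delta\ll1$. Then $\L^0$ is ample over $C$. We write $D(\delta):=V\cdot\DF^{[t]}((\X^0,\F^0,h^*\L-\delta E)/C)$, a polynomial in $\delta$ with $D(0)=V\cdot\DF^{[t]}((\X,\F,\L)/C)$ by the projection formula.

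The key step is the derivative computation. We have
\[
D'(0)=-n\,E\cdot(h^*\L)^{n}-n\,h^*K^{[t]}_{\X/C}\cdot E\cdot (h^*\L)^{n-1}.
\]
Both terms vanish by the projection formula, since $E$ is $h$-exceptional and $h_*E=0$. Hence the linear term is $0$ and we must look at second order. Expanding $(h^*\L-\delta E)^{n+1}$ and $h^*K\cdot(h^*\L-\delta E)^n$ to order $\delta^2$ gives
\[
\tbinom{n+1}{2}\tfrac{n}{n+1}\delta^2\,E^2\cdot(h^*\L)^{n-1}+\tbinom n2\delta^2\,h^*K\cdot E^2\cdot(h^*\L)^{n-2}.
\]
The second term vanishes whenever $E^2\cdot(h^*\L)^{n-2}\cdot h^*K$ factors through $h$ with dimension drop, that is, when $\dim h(E)\le n-1$. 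More generally, we expect to control it by taking $\delta$ small relative to the first term.

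The crucial input is then a Hodge index / negativity statement for exceptional divisors: $E^2\cdot(h^*\L)^{n-1}\le 0$, with strict inequality when some component of $E$ has image of dimension $n-1$. This is the higher-dimensional analogue of \cite[Lemma~1]{LX14}, applied over $C$. When that dimension condition fails, all the terms of order $\delta^2$ involving $h^*$-pulled-back classes vanish. In that case we pass to the first order at which a nonzero term appears, namely $\delta^{k}E^{k}\cdot(h^*\L)^{n+1-k}$, where $k-1$ is the codimension of the centre. Its sign is governed by $-E$ being $h$-ample, following \cite[Section 4.1]{LX14}, where exactly this leading-term argument is carried out. It yields $D(\delta)\le D(0)$ for $0<\delta\ll1$.

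The main obstacle is this leading-order sign in the general case, because the mixed term $h^*K^{[t]}\cdot E^k\cdot(h^*\L)^{n-k}$ enters at the same order as $E^{k+1}\cdot(h^*\L)^{n-k}$. To handle it, I would reduce the mixed term to an intersection on the exceptional locus over $h(E)$, where $h^*K^{[t]}$ restricts to a pullback from $h(E)$. It then vanishes for dimensional reasons, leaving only the definite term $\frac{n}{n+1}\binom{n+1}{k}(-1)^k\delta^k E^k\cdot(h^*\L)^{n+1-k}<0$. The crepancy $K_{\cA^0}=h^*K_{\cA}$ is exactly what makes this cancellation available; without it an extra term $\delta\,(K_{\X^0}-h^*K_{\X})\cdot(h^*\L)^n$ would appear, as in \cite{LX14}.
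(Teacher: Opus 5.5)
There is a genuine gap, exactly at the step you flag as the main obstacle. Your claim that the mixed term $h^*K^{[t]}\cdot E^k\cdot(h^*\L)^{n-k}$ vanishes for dimensional reasons is false.

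Put $Z:=h(E)$, $z:=\dim Z$ and $k:=n+1-z=\codim_{\X}Z$. For $z=0$ there is no mixed term, and your argument is fine there; so take $z\ge1$.

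The definite term $E^k\cdot(h^*\L)^{z}$ survives because $h_*(E^k)$ is a nonzero multiple of $[Z]$. But then $h^*K^{[t]}_{\X/C}\cdot E^k\cdot(h^*\L)^{z-1}=K^{[t]}_{\X/C}\cdot\L^{z-1}\cdot h_*(E^k)$ is also a top-degree intersection on $Z$, with exactly $z$ pulled-back classes. There is no dimension drop: both terms are killed by the same dimension count below order $k$, and both first appear at order $\delta^k$.

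The same slip occurs in your second-order remark. The mixed term vanishes only if $\dim h(E)\le n-2$, and in that case $E^2\cdot(h^*\L)^{n-1}$ vanishes too. On the other hand, $\dim h(E)\le n-1$ always holds.

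Write $K^{[t]}_{\X/C}\equiv-\L+G$ with $G$ vertical. Then the true coefficient of $\delta^k$ in $V\cdot\DF^{[t]}(\X^0,\F^0,h^*\L-\delta E)$ is
\[
(-1)^k\tbinom{n}{k}\,E^k\cdot(h^*\L)^{z-1}\cdot h^*\Bigl(\tfrac{k-1}{z}\,\L+G\Bigr).
\]
Since $(-1)^kh_*(E^k)$ is a negative multiple of $[Z]$, this has the sign of $-\bigl(\tfrac{k-1}{z}\L+G\bigr)\cdot\L^{z-1}\cdot Z$.

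Nothing in your setup controls $G\cdot\L^{z-1}\cdot Z$, because $\L$ is an arbitrary relatively ample polarisation rather than $-K^{[t]}$. For example, with $n=2$ and $z=1$ you get $D(\delta)=D(0)-c\,\delta^2\,(2\L+K^{[t]}_{\X/C})\cdot Z+O(\delta^3)$ with $c>0$. This increases as soon as $(2\L+K^{[t]}_{\X/C})\cdot Z<0$. So an arbitrary $h$-anti-ample exceptional direction does not work.

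The missing idea is the choice of direction. Following \cite[Proposition~3]{LX14}, the paper perturbs by $E:=K^{[t]}_{\X^0/C}+h^*\L$ itself, which is vertical and which the paper takes to be $h$-ample, as for the non-crepant lc modification in \cite{LX14}. Then $\L_r:=h^*\L+rE$ satisfies $\L_r+K^{[t]}_{\X^0/C}=(1+r)E$, so
\[
\tfrac{d}{dr}\Bigl(\tfrac{n}{n+1}\L_r^{n+1}+K^{[t]}_{\X^0/C}\cdot\L_r^{n}\Bigr)=n(1+r)\,\L_r^{n-1}\cdot E^2\le 0
\]
for all small $r>0$, by \cite[Lemma~1]{LX14}. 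This needs no leading-order analysis and has no sign ambiguity.

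Under your crepancy assumption this $E$ equals $h^*(K^{[t]}_{\X/C}+\L)$, which is $h$-trivial, so it cannot make $\L^0$ ample on its own. You can repair your argument in three steps:
\begin{enumerate}
\item Move $\L$ to $\L+rG$ on $\X$. The same computation shows $\DF^{[t]}$ does not increase, and it strictly decreases unless $G$ is a pullback from $C$.
\item Pull back by $h$; this preserves $\DF^{[t]}$ by the projection formula.
\item Perturb by $-\delta E$ and use continuity in $\delta$.
\end{enumerate}
In the remaining case, where $G$ is a pullback from $C$, the $h^*G$ contribution above drops out. Your leading coefficient $-c\binom{n}{k}\frac{k-1}{z}\,\L^z\cdot Z$ is then negative, since $k\ge2$.
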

\begin{proof}
    We argue as in \cite[proof of Proposition 3]{LX14}. We choose the relatively $\pi^{0}$-ample $\Q$-divisor $E = K_{\X^0,\F^0}^{[t]}+\pi^{0*}\L$ such that for sufficiently small rational $\epsilon$, $\L^0_r: = \pi^{0*}\L+rE$ is ample for all $0<r<\epsilon$. In particular, $(\X^0,\F^0,\L^0)\rightarrow C$ is also an $\F^0$-compatible family of adjoint Fano foliated structures. We now compute the derivative of $\DF^{[t]}$ at $r_0\in (0,\epsilon)$. Using Definition \ref{def:DFwt-corrected} we have
    \[\frac{d}{dr}\DF^{[t]}(\X^0,\F^0,\L^0_r))\bigg|_{r_0} = n(n+1)C_0\cdot\left((\L^0_{r_0})^n\cdot E+K^{[t]}_{\X^0/\PP^1}\cdot(\L^0_{r_0})^{n-1}\cdot E  \right)\]
    \[= C_1\cdot(\L^0_{r_0})^{n-1}\cdot E\cdot ((\L^0_{r_0})^{n-1}\cdot E) = C_1\cdot  (\L^0_{r_0})^{n-1}\cdot E^2\leq 0  \] 
    where, $C_0$ and $C_1$ are constants, and the last inequality follows by \cite[Lemma 1]{LX14}, with equality if and only if $\X^0 \simeq \X$, as required. Thus we can set $\L_0 = \L_{0,r}$ for some sufficiently small rational $r>0$ to satisfy the conditions of the Proposition.
\end{proof}

\begin{theorem}\label{thm: xlc step}
    Let $(\X,\F_{\X},\L)\to C$ be a polarised family of adjoint Fano foliated structures over a proper smooth curve, with $C^\circ\subset C$ parametrizing the non-degenerate fibers. Then we can construct  a finite morphism $\phi:C'\to C$ and a polarised generic family of adjoint Fano foliated structures $(\X^{{0}},\F^0,\L^{{0}})\to {C}'$
 with the following properties:
\begin{enumerate}
\item There is a birational morphism $\X^{{0}}\rightarrow \X\times_CC'$,
 which is isomorphic over $\phi^{-1}(C^\circ)$.
\item For every $t\in C'$,  $(\X^{{0}},\X^{{0}}_t)$ is log canonical.
\item There is an inequality
$$\DF^{[t]}((\X^{{0}}, \F^0,\L^{{0}})/C')\le \deg (\phi) \cdot \DF^{[t]}((\X,\F,\L)/C).$$
Moreover, the equality holds for our construction if and only if $(\X,\X_t)$ is log canonical for every $t\in C$, in which case $\X^{{\rm 0}}$ is isomorphic to $\X\times_{C}C'$.
\end{enumerate}
\end{theorem}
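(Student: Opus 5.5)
The plan is to follow the proof of the analogous statement in \cite[Theorem 3, \S 3]{LX14} (the ``log canonical modification'' step), with the ordinary canonical divisor replaced by $K^{[t]}$ and the ordinary MMP replaced by the adjoint foliated MMP of \cite{CHLMSSX24,CHLMSX25}.

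\textbf{Construction.} First take a finite base change $\phi\colon C'\to C$, ramified over the finitely many points of $C\setminus C^\circ$ with sufficiently divisible ramification index. Choose it so that, by semistable reduction (\cite[Lemma 5]{LX14}), the normalisation $\tilde\X$ of $\X\times_C C'$ admits a log resolution $Y\to\tilde\X$ with reduced snc special fibres. Give $Y$ the birational transform $\F_Y$ of $\F_{\X}$; by Lemma \ref{lem:vertical-divisors-invariant} it is still contained in $T_{Y/C'}$. Then apply Proposition \ref{prop:prepared-qdlt-model} over $C'$ to the structure with boundary $\red(Y_t)$ for $t\notin\phi^{-1}(C^\circ)$. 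This produces the $\mathbb Q$-factorial qdlt model. Taking its relative log canonical model over $\tilde\X$, which exists by the MMP and base-point-freeness of \cite[Theorems 2.1.1, 2.1.2]{CHLMSX25} applied to the klt perturbation from Proposition \ref{prop:klt-perturbation}, gives $\X^0\to\tilde\X$. The map is an isomorphism over $\phi^{-1}(C^\circ)$ because the fibres there are already lc and the boundary is vertical. This proves (1). Because the special fibres are reduced after base change, the boundary is the full fibre, so $(\X^0,\X^0_t)$ is log canonical; this is item (4) of Proposition \ref{prop:prepared-qdlt-model} and gives (2). Finally, $K^{[t]}_{\X^0/C'}+\X^0_0$ is ample over $\tilde\X$, and $\X^0\cong\tilde\X$ exactly when $(\tilde\X,\tilde\X_t)$ is already lc. Since $\tilde\X$ is a base change, the latter holds iff $(\X,\X_t)$ is lc.

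\textbf{Inequality (3).} Two comparisons are needed.

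The first concerns the base change. Pulling back $\L$ to $\tilde\X$ multiplies $\L^{n+1}$ by $\deg\phi$. For the canonical term, the ramification formula gives
\[
K_{\tilde\X/C'}=\phi^*K_{\X/C}-\sum_t(\phi^*\X_t-\tilde\X_{t,\red}).
\]
The foliated part $K_{\F}$ pulls back without correction, because $\F\subset T_{\X/C}$ makes all vertical divisors invariant (Lemma \ref{lem:vertical-divisors-invariant}), so they carry no ramification contribution. Both parts therefore give
\[
\DF^{[t]}(\tilde\X/C')\le\deg\phi\cdot \DF^{[t]}(\X/C),
\]
since the correction divisor is effective and vertical and $\L$ is nef on it.

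The second concerns the modification $\X^0\to\tilde\X$. Here I would apply Proposition \ref{prop: df decreases in first step}: set
\[
\L^0_r=\pi^{0*}\L+rE,\qquad E=K^{[t]}_{\X^0/C'}+\pi^{0*}\L,
\]
which is relatively ample. The derivative of $\DF^{[t]}$ in $r$ is a positive multiple of $(\L^0_r)^{n-1}\cdot E^2$. This is $\le 0$ by \cite[Lemma 1]{LX14}, with equality only when $E$ is numerically trivial over $\tilde\X$, i.e.\ when $\X^0\cong\tilde\X$. Integrating from $r=0$, where the value is $\DF^{[t]}(\tilde\X)$, gives the inequality, with equality iff both steps are trivial.

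\textbf{Main obstacle.} The delicate point is the base-change comparison for the mixed canonical divisor. It must be shown that $K_{\F}$ transforms with no ramification term, while $K_{\X/C}$ drops by the non-reduced part. This needs care about how the relative foliation behaves under normalisation of the fibre product; I would check it at generic points of vertical divisors, where everything is toroidal. One must also verify that the relative lc model over $\tilde\X$ exists in the adjoint foliated setting, using the potentially klt hypothesis on the ambient variety.
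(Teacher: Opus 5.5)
Your proposal is correct and follows the same route as the paper's proof: base change and normalisation, the modification supplied by Proposition~\ref{prop:prepared-qdlt-model}, the ramification comparison of $K^{[t]}$ under base change, and then the perturbation argument of Proposition~\ref{prop: df decreases in first step}, where equality forces both steps to be trivial. If anything you are more careful than the paper on two points. First, passing to the relative lc model over $\tilde\X$ is what makes $K^{[t]}_{\X^0/C'}$ ample over $\tilde\X$; the crepant qdlt model alone would not give this, yet Proposition~\ref{prop: df decreases in first step} needs it. Second, your correction term $(1-t)(\X_{0,\red}-\X_0)$ correctly has no foliated ramification contribution along invariant vertical divisors, whereas the paper subtracts the full $\X_0-\red(\X_0)$; this has the same sign, so the inequality is unaffected. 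One small fix: in the adjoint setting the invariant vertical components of the boundary should carry coefficient $1-t$ rather than $1$. This changes nothing downstream, since $\X^0_0$ is trivial over $\tilde\X$.
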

\begin{proof}
    This is a direct analogue of \cite[Theorem 2]{LX14}. For (1) and (2), we only have to verify that after performing the base change we can replicate the procedure of Corollary \ref{cor:prepared-model-klt-regime}. By taking the base change $\X\times_{C}C'$ and the normalisation $\widetilde{\X}$, we can apply Proposition \ref{prop:prepared-qdlt-model} and Lemma \ref{lem:prepared-model-klt} to obtain (1) and (2). We have a natural finite morphism $\phi_{\X}\colon \widetilde{\X}\rightarrow \X$ with $\widetilde{\L}:=\phi^*\L_{\X}$ and induced foliation $\F_{\widetilde{\X}}$. Then we have
    \[K_{\widetilde{X}/C'}^{[t]} = \phi^*_{\X}\left(K^{[t]}_{\X/C} + (\mathrm{red}(\X_0)-\X_0) \right)\]
    which in turn gives 
    $$\deg(C'/C)\cdot\DF((\X,\F_{\X},\L)/C)\ge
\DF((\widetilde{\X},\F_{\widetilde{\X}},\widetilde{\L})/C').$$
Combining with Proposition \ref{prop: df decreases in first step} which gives 
$$\DF^{[t]}((\X^0, \F^0,\L^0)/C)\leq \DF^{[t]}((\widetilde{\X}, \F_{\widetilde{\X}},\widetilde{\mathcal \L})/C)$$
we obtain the first claim of (3). For the isomorphism claim, note that we must require the above inequalities to be equalities, in which case $(\widetilde{\X}, \widetilde{\X}_0)$ must be log canonical. Hence $(\X,\X_0)$ must also be log canonical.
\end{proof}

We will now study how the mixed Donaldson--Futaki changes along the divisorial contractions and flips we perform in section \ref{sec: mmp with scaling} to construct $\X^{ac}$. Recall that as in Remark \ref{rem: equality of df for anti adjoint model}, we denote, for $\lambda >1$, 
    \[\L_{\lambda}:= \frac{1}{\lambda-1}(K_{\X^0,C}^{[t]}+\lambda H^0)\]
    and $\L^i$ denotes the pushforward of $\L^0$ to $\X^i$. Note, that if $\X_0^0 = \sum_j E_j$ is the irreducible decomposition of the central fibre of $\X^0$ we can find $a_j(\lambda)\in \R$ such that 
    \[K_{\X^0,C}^{[t]}+\L^0_{\lambda} \sim_{\Q,C}\sum_j a_j(\lambda)E_j.\]
    These notations give us the following Lemma. 

\begin{lemma}\label{lem: decrease on same model}
    If $\lambda_i\ge a > b\ge \lambda_{i+1}$ and $b>1$, then
$\DF^{[t]}(\X^{i}, \F^i,\L_a^{i})\ge \DF^{[t]}(\X^{i}, \F^i,\L_b^{i})$
\end{lemma}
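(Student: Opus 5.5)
This is the analogue of the corresponding step in \cite[Section 4.1]{LX14}, and I would prove it by differentiating $\DF^{[t]}$ along the one-parameter family of polarisations $\L^i_\lambda$ for $\lambda\in[b,a]$ and showing the derivative has a sign. On the fixed model $\X^i$ the divisor $\L^i_\lambda=\frac{1}{\lambda-1}(K^{[t]}_{\X^i/C}+\lambda H^i)$ depends rationally on $\lambda$. For $\lambda$ in the interval $[\lambda_{i+1},\lambda_i]$ it is nef over $C$, and for $\lambda>\lambda_{i+1}$ it is ample over $C$. This holds because $K_{\cA^i}+\mu H^i$ is nef there by Lemma \ref{lem:normalisation-scaling-package}; after the normalisation $\mu_i$ versus $\lambda$, this is the same as nefness of $K^{[t]}+\lambda H^i$. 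Hence every $\L^i_\lambda$ with $\lambda\in[b,a]$ is a legitimate polarisation, and $\DF^{[t]}(\X^i,\F^i,\L^i_\lambda)$ is a differentiable function of $\lambda$ on $[b,a]$.

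The key step is to write everything in terms of $D_\lambda:=K^{[t]}_{\X^i/C}+\L^i_\lambda$. By Lemma \ref{lem:generic-triviality-preserved}, $D_\lambda$ is trivial on the generic fibre. Hence $D_\lambda\sim_{\Q,C}\sum_j a_j(\lambda)E_j$ is a vertical divisor supported on the central fibre. Next I would compute $\frac{d}{d\lambda}\L^i_\lambda$. Since
\[
\L^i_\lambda=\frac{1}{\lambda-1}\bigl((1-\lambda)K^{[t]}+\lambda(K^{[t]}+H^i)\bigr)=-K^{[t]}+\frac{\lambda}{\lambda-1}(K^{[t]}+H^i),
\]
we have $\frac{d}{d\lambda}\L^i_\lambda=-\frac{1}{(\lambda-1)^2}(K^{[t]}+H^i)$. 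Using $K^{[t]}\equiv D_\lambda-\L^i_\lambda$, this derivative is a positive multiple of $-D_\lambda$ modulo $\L^i_\lambda$, up to a multiple of $\L^i_\lambda$ itself. I need to check this carefully, since the term proportional to $\L^i_\lambda$ must be handled separately. With $\mu(X,\F,L)=1$ in the normalised setting, the same computation as in Proposition \ref{prop: df decreases in first step} gives
\[
\frac{d}{d\lambda}\DF^{[t]}=\frac{n}{V}\Bigl((\L_\lambda)^{n}+K^{[t]}\cdot(\L_\lambda)^{n-1}\Bigr)\cdot\frac{d\L_\lambda}{d\lambda}=\frac{n}{V}\,D_\lambda\cdot(\L_\lambda)^{n-1}\cdot\frac{d\L_\lambda}{d\lambda}.
\]
For the component of $\frac{d\L_\lambda}{d\lambda}$ along $\L_\lambda$, the product $D_\lambda\cdot\L_\lambda^{n}$ is the degree of a vertical divisor against a relatively ample class. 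I expect to absorb this term by normalising $\DF$, which is invariant under rescaling the polarisation, or to show it cancels against the corresponding correction, as in \cite{LX14}. The remaining term is $c(\lambda)\,D_\lambda^2\cdot\L_\lambda^{n-1}$ with $c(\lambda)>0$.

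Finally, \cite[Lemma 1]{LX14} gives $D_\lambda^2\cdot\L_\lambda^{n-1}\le 0$ for a vertical divisor. It follows that $\frac{d}{d\lambda}\DF^{[t]}\ge0$ on $[b,a]$, so $\DF^{[t]}(\L_a)\ge\DF^{[t]}(\L_b)$ by integrating. The main obstacle is the sign and normalisation bookkeeping in the second step. I have to verify that the derivative of $\L_\lambda$ is a positive multiple of $-D_\lambda$ modulo $\L_\lambda$, and that the $\L_\lambda$-component contributes nothing, or something of the correct sign. I would first try the explicit identity $K^{[t]}+H^i=\frac{\lambda-1}{\lambda}(D_\lambda)+\frac{1}{\lambda}(\cdots)$, expanding in the basis $\{K^{[t]},H^i\}$ and mirroring the computation in \cite[Lemma 4]{LX14}. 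The hypothesis $b>1$ is used only to keep $\lambda-1>0$, so that all these coefficients have fixed signs.
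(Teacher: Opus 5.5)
Your proposal is correct and follows the paper's own argument: you differentiate $\DF^{[t]}$ in $\lambda$ on the fixed model $\X^i$, rewrite the derivative as a positive multiple of $-D_\lambda^2\cdot(\L^i_\lambda)^{n-1}$ with $D_\lambda=K^{[t]}_{\X^i/C}+\L^i_\lambda$ vertical, and conclude with \cite[Lemma 1]{LX14}. The bookkeeping you flag resolves cleanly, since $D_\lambda=\frac{\lambda}{\lambda-1}\bigl(K^{[t]}_{\X^i/C}+H^i\bigr)$ exactly, so $\frac{d}{d\lambda}\L^i_\lambda=-\frac{1}{\lambda(\lambda-1)}D_\lambda$ has no $\L^i_\lambda$-component and the derivative equals $-\frac{n}{V\lambda(\lambda-1)}D_\lambda^2\cdot(\L^i_\lambda)^{n-1}\ge 0$; only relative nefness of $\L^i_\lambda$ is needed here, not the ampleness you assert.
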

\begin{proof}
    Note that as in Proposition \ref{prop: df decreases in first step} we have 
    \begin{equation*}
        \begin{split}
            \frac{d}{d\lambda}\DF^{[t]}(\X^i,\F^i,\L^i_\lambda)) &= C_0\cdot\left((\L^i_{\lambda})^{n-1}\cdot (\L^i_{\lambda})'\cdot (\L^i_{\lambda}+K_{\X^i/C}^{[t]}) \right)\\
            &= -C_1 (\L^i_{\lambda})^{n-1}\cdot (\L^i_{\lambda}+K_{\X^i/C}^{[t]})^2\\
            &= -C_1 (\L^i_{\lambda})^{n-1}\cdot (\sum_j a_j(\lambda)E_j)^2.
        \end{split}
    \end{equation*}
    Combined with \cite[Lemma 1]{LX14} we obtain the desired result.
\end{proof}

We now have two Lemmas studying the mixed Donaldson--Futaki invariant along divisorial contractions and flips.

\begin{lemma}
\label{lem:62A-monotonicity-divisorial-contractions}
Suppose $\varphi_i:\X^i\to \X^{i+1}$ is a $\G_m$-equivariant divisorial contraction in the relative
$K^{[t]}$-MMP with $\L^i\sim_{\Q,\PP^1}-K^{[t]}_{\X^i/\PP^1}$ and $\L^{i+1}\sim_{\Q,\PP^1}-K^{[t]}_{\X^{i+1}/\PP^1}$. Then
\[
\DF^{[t]}(\X^i,\F^i,\L^i)\ge
\DF^{[t]}(\X^{i+1},\F^{i+1},\L^{i+1}).
\]
\end{lemma}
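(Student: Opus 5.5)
By Lemma \ref{lem:anti-adjoint-mixed-DF}, since both polarisations are relatively anti-adjoint, we have
\[
\DF^{[t]}(\X^i,\F^i,\L^i)=-\frac{1}{(n+1)V}(\L^i)^{n+1},\qquad \DF^{[t]}(\X^{i+1},\F^{i+1},\L^{i+1})=-\frac{1}{(n+1)V}(\L^{i+1})^{n+1}.
\]
The claim is therefore equivalent to $(\L^{i})^{n+1}\le (\L^{i+1})^{n+1}$. I will compare these two top self-intersections on a common model, following the approach of \cite[Lemma 3]{LX14} for the classical case.

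Let $E$ be the $\varphi_i$-exceptional prime divisor. Because the contraction happens in a $\G_m$-equivariant relative MMP over $\PP^1$ that is an isomorphism over the generic point of $C$, the divisor $E$ is vertical. By Lemma \ref{lem:vertical-divisors-invariant} it is therefore $\F^i$-invariant. Both $K_{\X^i}$ and $K_{\F^i}$ are $\Q$-Cartier in the $\Q$-factorial setting. Since $\varphi_i$ is $K^{[t]}$-negative, the standard discrepancy comparison applies to the adjoint divisor $K^{[t]}=(1-t)K+tK_{\F}$, giving
\[
K^{[t]}_{\X^i/\PP^1}=\varphi_i^*K^{[t]}_{\X^{i+1}/\PP^1}+aE,\qquad a>0.
\]
Here $a>0$ follows from the negativity lemma, because $-K^{[t]}$ is $\varphi_i$-ample and $E$ is exceptional. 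Translating to the polarisations, this yields $\L^i\sim_{\Q,\PP^1}\varphi_i^*\L^{i+1}-aE$.

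Set $M:=\varphi_i^*\L^{i+1}$ and $\L^i=M-aE$. I will expand
\[
(M-aE)^{n+1}-M^{n+1}=\sum_{k\ge1}\binom{n+1}{k}(-a)^kM^{n+1-k}E^k
\]
and telescope it as
\[
-a\sum_{j=0}^{n}(M-aE)^{j}M^{n-j}\cdot E.
\]
Each term $(M-aE)^jM^{n-j}\cdot E$ should be nonnegative. The reason is that $M$ is nef over $\PP^1$, and $\L^i=M-aE$ is nef on $E$ in the relevant direction because $\L^i$ is $\varphi_i$-ample on curves contracted in $E$. To handle the fact that $\L^i$ is only big and nef rather than ample, I will instead use the interpolation $M-sE$ for $s\in[0,a]$. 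Then
\[
\frac{d}{ds}(M-sE)^{n+1}=-(n+1)(M-sE)^n\cdot E,
\]
and I need this derivative to be $\le 0$. Equivalently, I need $(M-sE)^n\cdot E\ge 0$. Restricting to $E$, the class $(M-sE)|_E$ is a convex combination of $M|_E$ (nef) and $(M-aE)|_E=\L^i|_E$ (nef over $\PP^1$). It is therefore nef, and its top power on $E$ is nonnegative. Integrating from $0$ to $a$ gives $(\L^i)^{n+1}\le M^{n+1}=(\L^{i+1})^{n+1}$, which is the desired inequality.

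The main obstacle is justifying the sign and nefness inputs in the foliated setting. First, the exceptional coefficient $a$ of the adjoint divisor must be positive. This requires the negativity lemma applied to $K^{[t]}$ together with $\Q$-factoriality of $\X^i$, which I will take from the MMP output of \cite{CHLMSX25}. Second, the nefness of $\L^i$ and $\L^{i+1}$ relative to $\PP^1$ must hold at the relevant stage of the scaling. For this I will use that for $\lambda$ between consecutive scaling numbers the divisor $K+\mu H$ is nef (Lemma \ref{lem:normalisation-scaling-package}). If needed, I will pass through $\L_\lambda$ as in Lemma \ref{lem: decrease on same model} and then let $\lambda$ tend to the threshold.
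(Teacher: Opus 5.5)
Your proposal is correct and follows the paper's route. You reduce via Lemma~\ref{lem:anti-adjoint-mixed-DF} to $(\L^i)^{n+1}\le(\L^{i+1})^{n+1}$ using $\L^i=\varphi_i^*\L^{i+1}-aE$ with $a>0$. Your interpolation $s\mapsto(\varphi_i^*\L^{i+1}-sE)^{n+1}$, with $(\varphi_i^*\L^{i+1}-sE)|_E$ nef as a convex combination of nef classes on the vertical divisor $E$, is exactly the intersection-theoretic argument that the paper only cites from \cite{LX14}.

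This argument genuinely needs relative nefness of $\L^i$, not just nefness of $\L^{i+1}$ and $K^{[t]}$-negativity of $\varphi_i$, which is all the paper's proof invokes. Already for $\F=T_X$, blowing up a smooth plane curve of degree $10$ in $\PP^2\times\{0\}\subset\PP^2\times\PP^1$ is a $K$-negative divisorial contraction with $(-K_{\X^i/\PP^1})^3=10>0=(-K_{\X^{i+1}/\PP^1})^3$, so the inequality fails there. You should therefore take nefness of $\L^i$ from the hypothesis that it is a polarisation, not from Lemma~\ref{lem:normalisation-scaling-package}: nefness of $K_{\mathcal A^i}+\mu_iH^i$ says nothing about $-K^{[t]}_{\X^i/\PP^1}$.
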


\begin{proof}
We have $K^{[t]}_{\X^i/\PP^1}
=
\varphi_i^*K^{[t]}_{\X^{i+1}/\PP^1}+aE$ for some $\varphi_i$-exceptional prime divisor $E$ and some $a>0$.
Equivalently, $\L^i=\varphi_i^*\L^{i+1}-aE$.

Since the birational step is over $\PP^1$ and does not change the generic fibre, the volume $V=L^n$ remains constant throughout the construction. Moreover, $\L^{i+1}$ is relatively nef and $\varphi_i$ is $K^{[t]}$-negative. Hence the standard intersection-theoretic argument from \cite[Section~4.2.1]{LX14} gives $(\L^i)^{n+1}\le (\L^{i+1})^{\,n+1}$. Therefore, by Lemma~\ref{lem:anti-adjoint-mixed-DF},
\[
\DF^{[t]}(\X^i,\F^i,\L^i)
=
-\frac{1}{(n+1)V}(\L^i)^{n+1}
\ge
-\frac{1}{(n+1)V}(\L^{i+1})^{\,n+1}
=
\DF^{[t]}(\X^{i+1},\F^{i+1},\L^{i+1}),
\]
as claimed.
\end{proof}

\begin{lemma}
\label{lem:62B-monotonicity-flips}
Suppose $\X^i\dashrightarrow \X^{i+1}$ is a $\G_m$-equivariant flip in the relative $K^{[t]}$-MMP, with $\L^i\sim_{\Q,\PP^1}-K^{[t]}_{\X^i/\PP^1}$ and $\L^{i+1}\sim_{\Q,\PP^1}-K^{[t]}_{\X^{i+1}/\PP^1}$. Then
\[
\DF^{[t]}(\X^i,\F^i,\L^i)\ge
\DF^{[t]}(\X^{i+1},\F^{i+1},\L^{i+1}).
\]
\end{lemma}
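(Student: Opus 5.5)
We reduce, exactly as in the divisorial contraction case (Lemma \ref{lem:62A-monotonicity-divisorial-contractions}), to comparing top self-intersections. Both $\L^i$ and $\L^{i+1}$ are relatively anti-adjoint, so Lemma \ref{lem:anti-adjoint-mixed-DF} gives $\DF^{[t]}(\X^k,\F^k,\L^k)=-\frac{1}{(n+1)V}(\L^k)^{n+1}$ for $k=i,i+1$. A flip is an isomorphism over the generic point of $\PP^1$ and in codimension one, so the volume $V$ is unchanged. The claim is therefore equivalent to
\[
(\L^i)^{n+1}\le (\L^{i+1})^{n+1}.
\]

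To prove this, take a $\G_m$-equivariant common resolution $p\colon W\to \X^i$, $q\colon W\to \X^{i+1}$ of the flip; this is available by Lemma \ref{lem:514A-Gm-equivariant-birational-transport} together with equivariant resolution. Since $\L^{i+1}$ is the birational transform of $\L^i$ (a flip is small), we can write
\[
p^*\L^i=q^*\L^{i+1}+F
\]
with $F$ supported on the $q$-exceptional locus. We then use the negativity lemma, following \cite[Section~4.2.1]{LX14}. Over the flipping base $Z$, the divisor $\L^i\sim_{\Q}-K^{[t]}_{\X^i/\PP^1}$ is positive on the flipping curves, since the flip is $K^{[t]}$-negative. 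On the other hand $\L^{i+1}$ is negative over $Z$ on the flipped curves. Hence $-F=q^*\L^{i+1}-p^*\L^i$ is $p$-nef, and it pushes forward to $0$ on $\X^i$ because the map is small. The negativity lemma then gives $F\ge 0$; this is the standard ``flips increase discrepancies'' statement, now applied to the adjoint divisor $K^{[t]}$.

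It remains to expand the difference of top powers:
\[
(p^*\L^i)^{n+1}-(q^*\L^{i+1})^{n+1}=F\cdot\sum_{k=0}^{n}(p^*\L^i)^{k}\cdot(q^*\L^{i+1})^{n-k}.
\]
We want this to be $\le 0$. Writing $p^*\L^i=q^*\L^{i+1}+F$ and expanding, every term becomes an intersection of $F$ with products of $q^*\L^{i+1}$ and $F$. Terms of the form $F\cdot(q^*\L^{i+1})^n$ vanish, because $F$ is $q$-exceptional. The remaining terms involve $F^2$ and higher powers of $F$, and we expect to control them by the negativity lemma, or equivalently by the Hodge index argument of \cite[Lemma 1]{LX14} applied over $Z$, using that $-F$ is $p$-nef and $F$ is effective. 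Controlling these terms is the main obstacle, since $\L^{i+1}$ is only relatively nef and not ample, and the mixed terms need sign control.

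The first thing I would try is the following cleaner route. Interpolate $\L_s:=q^*\L^{i+1}+sF$ for $s\in[0,1]$, and note that
\[
\frac{d}{ds}\L_s^{n+1}=(n+1)\,F\cdot\L_s^{n}.
\]
The aim is to show $F\cdot \L_s^n\le 0$. For this, write $\L_s=(1-s)q^*\L^{i+1}+s\,p^*\L^i$; each of $q^*\L^{i+1}$ and $p^*\L^i$ is nef on the relevant exceptional loci in the appropriate direction. Then restrict $F$ to its components lying over the $p$-exceptional centres, where $p^*\L^i$ is trivial and $-F$ is nef, and apply the negativity lemma to obtain $F\cdot\L_s^n\le 0$. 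This mirrors the one-variable derivative computations already used in Proposition \ref{prop: df decreases in first step} and Lemma \ref{lem: decrease on same model}, so the approach fits the paper's existing machinery. The foliation plays no role beyond ensuring, via Lemma \ref{lem:514A-Gm-equivariant-birational-transport}, that $\F^{i+1}$ is the $\G_m$-equivariant compatible transform, so that the adjoint divisor $K^{[t]}$ transforms as a divisor.
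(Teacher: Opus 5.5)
There is a genuine gap, and it starts with a sign error in your negativity step. As you note, $\L^{i+1}\sim_{\Q}-K^{[t]}_{\X^{i+1}/\PP^1}$ is negative on the flipped curves. Hence $q^*\L^{i+1}$ is $p$-anti-nef, so $-F=q^*\L^{i+1}-p^*\L^i$ is $p$-anti-nef, not $p$-nef. The negativity lemma therefore gives $F\le 0$. The ``flips increase discrepancies'' statement reads $p^*K^{[t]}_{\X^i/\PP^1}-q^*K^{[t]}_{\X^{i+1}/\PP^1}\ge 0$, i.e.\ $q^*\L^{i+1}=p^*\L^i+D$ with $D=-F\ge 0$; this is the relation the paper starts from. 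With the correct sign, your interpolation would have to show $D\cdot\L_s^n\ge 0$ for $\L_s=(1-s)\,q^*\L^{i+1}+s\,p^*\L^i$. Your proposed last step does not supply this. On a component of $D$ over a positive-dimensional centre, $p^*\L^i$ is trivial only along the fibres of $p$. Moreover, the negativity lemma gives effectivity of a divisor, not the sign of mixed intersection numbers.

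More seriously, the mixed terms you flag as the main obstacle have no definite sign. So no argument using only negativity, effectivity of $D$ and Hodge index can close the proof. Take the standard $K$-negative flip ($t=0$, $n+1=4$) of $\PP^2\subset\X^i$ with normal bundle $\cO(-1)^{\oplus 2}$ to $\PP^1\subset\X^{i+1}$ with normal bundle $\cO(-1)^{\oplus 3}$. The common blow-up has exceptional divisor $E\cong\PP^2\times\PP^1$ with $E|_E=\cO(-1,-1)$, and one finds $D=E$. Let $h_1,h_2$ be the pull-backs of the hyperplane classes of $\PP^2$ and $\PP^1$. Then $p^*\L^i|_E=h_1$ and $q^*\L^{i+1}|_E=-h_2$; the relative canonical class differs from the absolute one by a pull-back from $\PP^1$, which is trivial near $E$. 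Therefore
\[
(q^*\L^{i+1})^{4}-(p^*\L^i)^{4}
=E\cdot\sum_{k=0}^{3}(p^*\L^i)^{k}(q^*\L^{i+1})^{3-k}
=h_1^3-h_1^2h_2+h_1h_2^2-h_2^3=-1 .
\]
Thus $\int_0^1 4\,D\cdot\L_s^3\,ds=-1$, and the inequality $(\L^i)^{n+1}\le(\L^{i+1})^{n+1}$ fails whenever such a flip occurs.

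The paper's own proof avoids the mixed terms by passing through volumes, but that route has the same defect. The volumes of $\L^i$ and $\L^{i+1}$ are in fact equal, since the flip is an isomorphism in codimension one. Also $\L^{i+1}$ is negative on the flipped curves, so there is no reason for $(\L^{i+1})^{n+1}$ to equal $\vol(\L^{i+1})$; in the example the volumes agree while the top self-intersections differ.

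What the chain in Proposition \ref{prop: df decreases along MMP} actually needs is the comparison of \cite{LX14} at the scaling threshold. There $K^{[t]}+\lambda_{i+1}H$ is trivial on the flipping ray, so $\L^i_{\lambda_{i+1}}$ and $\L^{i+1}_{\lambda_{i+1}}$ are pull-backs of the same divisor $M$ on the flipping base $Z$. The projection formula then gives equality of the two Donaldson--Futaki invariants, and all the decrease comes from Lemma \ref{lem: decrease on same model}.
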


\begin{proof}
Suppose $\X^i \dashrightarrow \X^{i+1}$ is a $\G_m$-equivariant flip in the relative $K^{[t]}$-MMP, again in the
anti-adjoint situation. Let
\[
\xymatrix{
& W \ar[dl]_p \ar[dr]^q & \\
\X^i \ar@{-->}[rr] && \X^{i+1}
}
\]
be the normalisation of the graph.

Let $D:=p^*K^{[t]}_{\X^i/\PP^1}-q^*K^{[t]}_{\X^{i+1}/\PP^1}$. By the $K^{[t]}$-negativity of the flip, the divisor $D$ is effective and exceptional over $\X^{i+1}$. Equivalently, $q^*\L^{i+1}=p^*\L^i+D$ with $D\ge 0$.

Since the birational step is over $\PP^1$ and does not change the generic fibre, the volume $V=L^n$ remains constant throughout the construction. By monotonicity of volume under addition of effective divisors, $\vol(q^*\L^{i+1})\ge \vol(p^*\L^i)$. Using birational invariance of volume, we obtain $\vol(\L^{i+1})\ge \vol(\L^i)$. Since both divisors are relatively nef in the anti-adjoint model, this gives us $(\L^{i+1})^{\,n+1}\ge (\L^i)^{n+1}$. Therefore, by Lemma~\ref{lem:anti-adjoint-mixed-DF},
\[
\DF^{[t]}(\X^i,\F^i,\L^i)
=
-\frac{1}{(n+1)V}(\L^i)^{n+1}
\ge
-\frac{1}{(n+1)V}(\L^{i+1})^{\,n+1}
=
\DF^{[t]}(\X^{i+1},\F^{i+1},\L^{i+1}),
\]
as claimed.
\end{proof}

\begin{remark}\label{rem: equality of df for anti adjoint model}
    Note that Proposition \ref{prop:threshold-stage-package} and the construction of $\X^{ac}$ show that 
    \[\DF^{[t]}(\X^j,\F^j, \L^j_{\lambda_j}) = \DF^{[t]}(\X^j,\F^j, -K_{A^j}) = \DF^{[t]}(\X^{ac},\F^{ac}, -K_{X^{ac}/C}^{[t]}). \] 
\end{remark}

\begin{proposition}\label{prop: df decreases along MMP}
    We have 
    \[\DF^{[t]}(\X^0,\F^0, \L^0) \geq \DF^{[t]}(\X^{ac},\F^{ac}, -K_{X^{ac}/C}^{[t]}). \]
\end{proposition}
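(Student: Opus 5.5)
This follows the telescoping argument of \cite[Section~4.2]{LX14}, with the scaling parameter $\lambda$ as a continuous deformation parameter. Write $\L^i_\lambda$ for the pushforward to $\X^i$ of $\L_\lambda=\frac{1}{\lambda-1}(K^{[t]}_{\X^0/C}+\lambda H^0)$. Recall that $\lambda_0\ge\lambda_1\ge\dots\ge\lambda_j>1$ are the scaling numbers. For $\lambda\in[\lambda_{i+1},\lambda_i]$ the divisor $K_{\mathcal A^i}+\mu H^i$ is nef over $C$, hence so is $\L^i_\lambda$ up to the positive rescaling. Throughout, $\DF^{[t]}$ is defined on each $\X^i$ by the intersection formula, since $V$ and the slope are unchanged: all maps are isomorphisms over $\eta$ and $\L^i_\lambda|_{\X^i_\eta}\sim_{\Q}-K^{[t]}_{X,\F}$ by Lemma~\ref{lem:generic-triviality-preserved}.

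\textbf{Step 1: start of the chain.} I would first compare $\L^0$ with $\L^0_{\lambda}$ for $\lambda$ large (or $\lambda=\lambda_0$). Since $H^0=L^0-\frac{1}{l+1}(K_{\mathcal A^0}+L^0)$, we have $\L^0_\lambda\to H^0$ as $\lambda\to\infty$. The path $s\mapsto L^0+s(K_{\mathcal A^0}+L^0)$ moves in the direction of a divisor that is $\Q$-linearly trivial on the generic fibre, hence $\sim_{\Q,C}\sum_j a_jE_j$. The derivative computation of Proposition~\ref{prop: df decreases in first step} and Lemma~\ref{lem: decrease on same model} then gives monotonicity along this path, so $\DF^{[t]}(\X^0,\F^0,\L^0)\ge \DF^{[t]}(\X^0,\F^0,\L^0_{\lambda_0})$. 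The key input is that the derivative equals $-C(\L)^{n-1}\cdot(\sum a_jE_j)^2\le0$ by \cite[Lemma~1]{LX14}.

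\textbf{Step 2: within each model.} On a fixed $\X^i$, Lemma~\ref{lem: decrease on same model} gives $\DF^{[t]}(\X^i,\F^i,\L^i_{\lambda_i})\ge\DF^{[t]}(\X^i,\F^i,\L^i_{\lambda_{i+1}})$ whenever $\lambda_{i+1}>1$.

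\textbf{Step 3: across each birational step.} At $\lambda=\lambda_{i+1}$ the divisor $\L^i_{\lambda_{i+1}}$ is the pullback of the nef divisor on the contraction base $Z_i$ of the extremal ray. For a flip, $\L^i_{\lambda_{i+1}}$ and $\L^{i+1}_{\lambda_{i+1}}$ are both pullbacks of the same divisor on $Z_i$. For a divisorial contraction, $\L^i_{\lambda_{i+1}}=\varphi_i^*\L^{i+1}_{\lambda_{i+1}}$. In either case all top intersection numbers defining $\DF^{[t]}$ agree by the projection formula; for $K^{[t]}$ one uses that $K^{[t]}_{\X^i}$ and $K^{[t]}_{\X^{i+1}}$ differ by exceptional divisors, which are killed against pullbacks. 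So $\DF^{[t]}$ is continuous across the wall. This is cleaner than invoking Lemmas~\ref{lem:62A-monotonicity-divisorial-contractions} and~\ref{lem:62B-monotonicity-flips}, and in effect reproves their content in the scaled setting.

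\textbf{Step 4: chaining.} Concatenating Steps 1--3 for $i=0,\dots,j-1$, and then Step 2 on $\X^j$ from $\lambda_j$ down to the threshold, gives $\DF^{[t]}(\X^0,\F^0,\L^0)\ge\DF^{[t]}(\X^j,\F^j,\L^j_{\lambda_j})$. Remark~\ref{rem: equality of df for anti adjoint model} identifies the right side with $\DF^{[t]}(\X^{ac},\F^{ac},-K^{[t]}_{\X^{ac}/C})$; this uses Proposition~\ref{prop:threshold-stage-package} and the fact that $\X^j\to\X^{ac}$ is crepant for $-K_{\mathcal A^j}$, so intersection numbers pull back.

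\textbf{Main obstacle.} The delicate point is the endpoint bookkeeping. Lemma~\ref{lem: decrease on same model} requires $b>1$, while at $\lambda_{j+1}=1$ the divisor $\L_\lambda$ blows up. One must check that on $\X^j$ the relevant divisor is already $\sim_{\Q,C}-K_{\mathcal A^j}$ at $\lambda_j$, which is exactly Proposition~\ref{prop:threshold-stage-package}, so no limit $\lambda\to1$ is needed. I would also verify carefully that in Step 1 the derivative formula holds with $K^{[t]}$ only a Weil-type $\R$-divisor; for this I would work on the $\Q$-factorial model $\X^0$, where $K^{[t]}_{\X^0/C}$ is $\Q$-Cartier.
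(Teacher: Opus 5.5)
Your argument is correct, but it handles the wall-crossings differently from the paper.

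\textbf{The paper's route.} The paper concatenates Lemma~\ref{lem: decrease on same model} on each fixed model with Lemmas~\ref{lem:62A-monotonicity-divisorial-contractions} and~\ref{lem:62B-monotonicity-flips} across each contraction or flip, getting an inequality at every step. Those two lemmas, however, are stated under the anti-adjoint hypothesis $\L^i\sim_{\Q,\PP^1}-K^{[t]}_{\X^i/\PP^1}$. For $i<j$, the polarisation $\L^i_{\lambda_{i+1}}$ satisfies this only on the generic fibre; otherwise $K_{\mathcal A^i}+H^i\sim_{\Q,C}0$ would be nef and the scaling would already have terminated.

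\textbf{Your Step~3.} You use that at the wall $\L^i_{\lambda_{i+1}}$ and $\L^{i+1}_{\lambda_{i+1}}$ are pullbacks of one divisor from the contraction base, and that $K^{[t]}$ changes only by divisors exceptional over it. The projection formula then gives \emph{equality} of $\DF^{[t]}$ across each step. This needs only the contraction theorem and no anti-adjoint hypothesis, so it is the more robust route. It does not, however, reprove Lemmas~\ref{lem:62A-monotonicity-divisorial-contractions}--\ref{lem:62B-monotonicity-flips}, which concern a different comparison.

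\textbf{Your Step~1.} This fills a gap the paper's chain leaves implicit, since that chain starts at $\L^0_{\lambda_0}$ rather than $\L^0$. It is simpler than you make it. The identity $K_{\mathcal A^0}+(l+1)H^0=lL^0$ gives $\L^0_{l+1}=L^0$ exactly, and $K_{\mathcal A^0}+(l+1)H^0$ is ample over $C$. So $L^0$ already belongs to the scaled family, and Step~1 is either vacuous (start the scaling at $l+1$) or the derivative argument of Lemma~\ref{lem: decrease on same model} on $[\lambda_0,l+1]$, where $\L^0_\lambda$ is nef. The limit $\L^0_\lambda\to H^0$ plays no role.

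\textbf{Bookkeeping.} One point you inherit from the paper: the MMP uses $P^0=\varepsilon H^0$ with scaling divisor $(1-\varepsilon)H^0$, so the walls sit at the $H$-coefficients $\mu_i$. You should therefore parametrise the family by $\mu$, as $\frac{1}{\mu-1}(K_{\mathcal A^0}+\mu H^0)$. Nothing changes, because $\mu_i>1$ if and only if $\lambda_i>1$.
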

\begin{proof}
    The proposition now is an immediate consequence of Lemmas \ref{lem: decrease on same model}, \ref{lem:62A-monotonicity-divisorial-contractions}, \ref{lem:62B-monotonicity-flips} and Remark \ref{rem: equality of df for anti adjoint model} which show successively:
    \begin{eqnarray*}
& &\DF^{[t]}((\X^{0}, \F^{0}, \L^{0}_{\lambda_0})/C)\ge \DF^{[t]}((\X^{0}, \F^{0},\L^{0}_{\lambda_1})/C)\\
&\ge&\DF^{[t]}((\X^{1}, \F^{1}, \L^{1}_{\lambda_1})/C)\ge \DF^{[t]}((\X^{1}, \F^{1},
\L^{1}_{\lambda_2})/C)\\
&&\cdots\quad  \cdots \quad \cdots\\
&\ge&\DF^{[t]}((\X^{i}, \F^{i}, \L^{i}_{\lambda_i})/C)\ge \DF^{[t]}((\X^{i}, \F^{i},
\L^{i}_{\lambda_{i+1}})/C)\\
&\ge&\DF^{[t]}((\X^{i+1}, \F^{i+1}, \L^{i+1}_{\lambda_{i+1}})/C)\ge \DF^{[t]}((\X^{i+1}, \F^{i+1},
\L^{i+1}_{\lambda_{i+2}})/C)\\
&&\cdots\quad \cdots \quad \cdots\\
&\ge&\DF^{[t]}((\X^{j}, \F^{j}, \L^{k}_{\lambda_j})/C)=\DF^{[t]}((\X^{j}, \F^{j},-K_{\X^j/C}^{[t]})/C)\\
&=& \DF^{[t]}(\X^{ac},\F^{ac}, -K_{X^{ac}/C}^{[t]}).
\end{eqnarray*}
\end{proof}

We now study how the mixed Donaldson--Futaki invariant changes along the ajoint Fano foliation structure extension from $\X^{ac}$ to $\X^s$. 

\begin{proposition}\label{prop: decrease of df in prev to last step}
    We have the inequality
$$\DF^{[t]}((\X',\F',-K^{[t]}_{\X'/C'})/C')\ge \DF^{[t]}((\X^{{\rm s}}, \F^{\rm s},-K^{[t]}_{\X^{{\rm s}}/C')/C'}).$$
\end{proposition}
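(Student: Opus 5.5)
The plan is to mirror the final step of \cite[Section~4.3]{LX14}, replacing the anticanonical divisor by the adjoint divisor $K^{[t]}$. Recall that $\X'$ is the output of the relative MMP in the proof of Theorem~\ref{thm:512A-special-extension-after-base-change}. There $K_{\mathcal A'}+\L'\sim_{\Q,C'}0$, and $\L'$ is nef and big over $C'$. Moreover, $\X^s=\Proj_{C'}R(\X'/C',\L')$ is the ample model of $\L'$, so there is a birational morphism $g\colon \X'\to\X^s$ over $C'$ with
\[
\L'\sim_{\Q,C'} g^*\L^s, \qquad \L^s\sim_{\Q,C'}-K^{[t]}_{\X^s/C'}.
\]

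The first step is to check that $g$ is crepant for the adjoint divisor, namely $K^{[t]}_{\X'/C'}\sim_{\Q,C'}g^*K^{[t]}_{\X^s/C'}$. Since $g$ is the semiample contraction defined by $-K_{\mathcal A'}$, the divisor $K_{\mathcal A'}$ is $g$-trivial. The pushforward $g_*K_{\mathcal A'}=K_{\mathcal A^s}$ holds because the foliation $\F^s$ and the boundary are defined as birational transforms. The difference $K_{\mathcal A'}-g^*K_{\mathcal A^s}$ is then $g$-exceptional and $g$-numerically trivial, so it vanishes by the negativity lemma. Here one should note that the mixed canonical divisor is assembled linearly from $K_{\X'}$ and $K_{\F'}$, each of which pushes forward correctly, so negativity applies to the combination.

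The second step is to compute both sides by the projection formula. We have $(\L')^{n+1}=(g^*\L^s)^{n+1}=(\L^s)^{n+1}$, and likewise
\[
K^{[t]}_{\X'/C'}\cdot(\L')^n=g^*K^{[t]}_{\X^s/C'}\cdot (g^*\L^s)^n=K^{[t]}_{\X^s/C'}\cdot(\L^s)^n.
\]
Alternatively, since both polarisations are relatively anti-adjoint, Lemma~\ref{lem:anti-adjoint-mixed-DF} reduces each invariant to $-\frac{1}{(n+1)V}$ times the top self-intersection, and these agree. The volume $V$ is unchanged because the generic fibres coincide. This gives equality, and in particular the claimed inequality.

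The main obstacle is the first step: making sense of $K^{[t]}_{\X'/C'}$ with the boundary terms $\Gamma,\delta G$ introduced in the perturbation. One has to ensure that the divisor whose self-intersection enters $\DF^{[t]}$ is the honest relative mixed canonical divisor, not the perturbed adjoint class. My first approach is to use that the perturbation terms are supported on divisors contracted by $Y\dashrightarrow\X'$, by Proposition~\ref{prop:foliated-prop1}(2) and Lemma~\ref{lem:512B-identification-surviving-component}. Their pushforwards to $\X'$ therefore vanish, and $K_{\mathcal A'}=K^{[t]}_{\X'/C'}$ up to $\Q$-linear equivalence over $C'$. If some horizontal boundary survives, I would instead retreat to the inequality version. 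This follows the argument of Lemma~\ref{lem:62B-monotonicity-flips}: write $g^*\L^s=\L'+D$ with $D\ge0$ exceptional, then use monotonicity of the volume together with relative nefness to get $(\L^s)^{n+1}\ge(\L')^{n+1}$, and conclude via Lemma~\ref{lem:anti-adjoint-mixed-DF}.
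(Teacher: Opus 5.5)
Your argument is correct for the statement as written, and it takes a different, shorter route than the paper. The paper passes to a common resolution $p\colon\hat{\X}\to\X'$, $q\colon\hat{\X}\to\X^s$ and writes $p^*K^{[t]}_{\X'/C'}=q^*K^{[t]}_{\X^s/C'}+E$, with $E\ge0$ by the negativity lemma. It then shows that $f(\lambda)=(-p^*K^{[t]}_{\X'/C'}+\lambda E)^{n+1}$ is non-decreasing on $[0,1]$, since $f'(\lambda)$ is $(n+1)E$ intersected with the $n$-th power of a convex combination of two relatively nef classes. Lemma~\ref{lem:anti-adjoint-mixed-DF} then converts $f(1)\ge f(0)$ into the inequality.

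You instead use that $g\colon\X'\to\X^s$ is a morphism with $\L'\sim_{\Q,C'}g^*\L^s$. So $g$ is $K^{[t]}$-crepant, and the projection formula gives \emph{equality}. This is consistent with the paper: for $q=g\circ p$, its $E$ is $q$-exceptional and $\Q$-linearly trivial, hence zero.

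What the paper's interpolation buys is that it only needs a birational \emph{map} between two relatively anti-adjoint models with $E\ge0$. That is the shape of the comparison between $\widetilde{\X}^{ac}=\X^{ac}\times_C C'$ and $\X^s$ in \cite[Section~4.3]{LX14}, with $E\ge0$ coming from Proposition~\ref{prop:512C-discrepancy-zero-special-fibre}. It is this comparison that the chain in Theorem~\ref{thm:reduction-special-mixed-DF} actually uses. Your crepancy argument does not transfer there, because $\widetilde{\X}^{ac}\dashrightarrow\X^s$ is in general neither a morphism nor crepant: $\X^s_0$ is the image of a component $E_1$ of $Y_0$ that is typically exceptional over $\widetilde{\X}^{ac}$.

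Your handling of the boundary via Proposition~\ref{prop:foliated-prop1}(2) is the right one. The fallback is unnecessary, and it would not work in the scenario it targets. A surviving horizontal boundary would make $-K^{[t]}_{\X'/C'}$ differ from $\L'$ and possibly fail to be relatively nef, so Lemma~\ref{lem:anti-adjoint-mixed-DF} would not apply.
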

\begin{proof}
    By Lemma~\ref{lem:anti-adjoint-mixed-DF}, we have
\[
\DF^{[t]}((\X',\F',\L')/C)
=
-\frac{1}{(n+1)V}(\L')^{n+1}
\]
and
\[
\DF^{[t]}(\X^s, \F^s,\L^s) = -\frac{1}{(n+1)V}(\L^{s})^{\,n+1}.
\]
We now take a common log resolution 
\[
\xymatrix{
& \hat{\X} \ar[dl]_p \ar[dr]^q & \\
\X' \ar@{-->}[rr] && \X^{s}
}
\]
where 
\[
(\pi'\circ p)^*(K_{\X^{ac}/C'}^{[t]}) = p^*K_{\X'/C'}^{[t]} = q^*K_{\X^{s}/C'}^{[t]}+E
\]
where $E\geq 0$ by the Negativity Lemma. For $0\leq \lambda\leq 1$ we define 
\[f(\lambda):= (-p^*K_{\X'/C'}^{[t]}+\lambda E)^{n+1}.\]
As in the proof of Proposition \ref{prop: df decreases in first step} we take the derivative; for any $0\leq \lambda\leq 1$ we have
\begin{eqnarray}
\frac{df(\lambda)}{d\lambda} &=& (n+1)E \cdot (-p^*K_{\X'/C'}^{[t]}+\lambda E)\nonumber\\
 &=&(n+1)E \cdot (-(1-\lambda)p^*K_{\X'/C'}^{[t]}-\lambda q^*K_{\X^{{\rm s/C}^{[t]}}})^{n} \nonumber\\
&\ge& 0 \nonumber.
\end{eqnarray}
Combined with the above expressions, this gives the required formula.
\end{proof}

Now we are in a position to prove the main theorem of this section.

\begin{theorem}[Reduction to $\F$-compatible special test configurations]
\label{thm:reduction-special-mixed-DF}
Let $(X,\F)$ be an adjoint Fano foliated structure, fix $t\in [0,1)\cap \Q$ and set $L:=-K^{[t]}_{X,\F}$. Let $(\pi\colon \X\to \A^1,\L,\F_{\X})$ be a normal $\F$-compatible test configuration for $(X,\F,L)$.
Then there exist:
\begin{enumerate}
    \item a base change
    \[
    d\colon \A^1\to \A^1,\qquad z\mapsto z^m,
    \]
    for some $m\in \Z_{>0}$; and
    \item a normal $\F$-compatible special test configuration
    \[
    (\pi^{\mathrm{sp}}\colon \X^{\mathrm{sp}}\to \A^1,\L^{\mathrm{sp}},\F_{\X^{\mathrm{sp}}})
    \]
    for $(X,\F,L)$,
\end{enumerate}
such that
\[
m\,\DF^{[t]}(\X,\F_{\X},\L_{\X})\ge
\DF^{[t]}(\X^{\mathrm{sp}},\F_{\X^{\mathrm{sp}}}, \L^{\mathrm{sp}}).
\]
\end{theorem}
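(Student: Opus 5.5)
The plan is to chain together the monotonicity results of this section along the birational procedure of Section \ref{sec: special t.c.}, in the same way as the proof of \cite[Theorem 7]{LX14}.

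\textbf{Step 1: compactify and reduce to a log canonical model.} Take the $\infty$-trivial compactification $(\bar{\X},\bar{\F},\bar{\L})\to\PP^1$. By construction its mixed Donaldson--Futaki invariant is the one appearing in Definition \ref{def:DFwt-corrected}. By Lemma \ref{lem:bridge-test-config-section5}, this family satisfies the standing hypotheses of Section \ref{sec: special t.c.}. Apply Theorem \ref{thm: xlc step} with $C=\PP^1$ and $\phi=d$ of degree $m$. This produces $(\X^0,\F^0,\L^0)$ with every $(\X^0,\X^0_s)$ log canonical and
\[
\DF^{[t]}(\X^0,\F^0,\L^0)\le m\,\DF^{[t]}(\bar{\X},\bar{\F},\bar{\L}).
\]
By Lemma \ref{lem:514A-Gm-equivariant-birational-transport}, the base change, the qdlt modification and all later steps can be taken $\G_m$-equivariantly, and $\F$-compatibility (foliation inside $T_{\X/\PP^1}$, algebraically integrable) is preserved under birational transform. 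So every intermediate object remains an $\F$-compatible test configuration after restricting over $\A^1$.

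\textbf{Step 2: pass to the anti-adjoint model.} Run the $\G_m$-equivariant MMP with scaling from Section \ref{sec: mmp with scaling}. Proposition \ref{prop: df decreases along MMP} gives
\[
\DF^{[t]}(\X^0,\F^0,\L^0)\ge\DF^{[t]}(\X^{ac},\F^{ac},-K^{[t]}_{\X^{ac}/C}).
\]
To match the starting polarisation $\L^0$ with $\L^0_{\lambda_0}$, I would either choose $H^0$ so that $\L^0=\L^0_{\lambda}$ for suitable $\lambda$, or interpolate once more using Lemma \ref{lem: decrease on same model}.

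\textbf{Step 3: produce the special model and compare invariants.} Apply Theorem \ref{thm:512A-special-extension-after-base-change} and Corollary \ref{cor:adjoint-fano-foliated-extension}, possibly after a further base change; that degree is absorbed into $m$, using the homogeneity of $\DF^{[t]}$ under base change from Theorem \ref{thm: xlc step}. This yields $\X'$ and then $\X^s$. The comparison from $\X^{ac}$ to $\X'$ is the delicate point.

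\begin{itemize}
\item In the MMP relative to $\widetilde{\X}^{ac}$ starting from $Y$ with polarisation $L_Y=\pi^*(-K^{[t]})+\varepsilon A$, I would show
\[
(-K^{[t]}_{\X'})^{n+1}\ge(-K^{[t]}_{\widetilde{\X}^{ac}})^{n+1}.
\]
\item For this, use that on a common resolution
\[
p^*(-K^{[t]}_{\widetilde{\X}^{ac}})=q^*(-K^{[t]}_{\X'})-F
\]
with $F\ge0$. Here $F\ge 0$ comes from the log canonicity of $(\X^{ac},\X^{ac}_0)$ (Lemma \ref{lem:anti-adjoint-model-lc-central-fibre}) together with the zero discrepancy of $E_1$ (Proposition \ref{prop:512C-discrepancy-zero-special-fibre}).
\item The derivative argument of Proposition \ref{prop: decrease of df in prev to last step}, applied to $\lambda\mapsto(q^*(-K^{[t]}_{\X'})-\lambda F)^{n+1}$ and using nefness, then shows this function is non-increasing.
\item Proposition \ref{prop: decrease of df in prev to last step} itself gives the final step from $\X'$ to $\X^s$.
\item Lemma \ref{lem:anti-adjoint-mixed-DF} converts each volume inequality into a $\DF^{[t]}$ inequality.
\end{itemize}

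Finally, I would check that $\X^s|_{\A^1}$ is special. Irreducibility of $\X^s_0$ comes from Lemma \ref{lem:512B-identification-surviving-component}, plt-ness of $(\X^s,\X^s_0)$ from the proof of Theorem \ref{thm:512A-special-extension-after-base-change}, and the $\F$-compatibility and $\G_m$-action are inherited. Setting $\X^{\mathrm{sp}}=\X^s$ and chaining the inequalities gives the claim.

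The main obstacle is the step $\widetilde{\X}^{ac}\dashrightarrow\X'$. It requires verifying that $-K^{[t]}$ pulled back differs from the transform on $\X'$ by an \emph{effective} divisor. In particular, the vertical divisor $E_1$ is $\F$-invariant, so the $\varepsilon(E)$ term vanishes and the foliated part of the discrepancy behaves like the ambient part. I would try to justify this by combining Lemma \ref{lem:vertical-divisors-invariant} with the coefficient ordering in the proof of Theorem \ref{thm:512A-special-extension-after-base-change}.
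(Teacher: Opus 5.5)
Your proposal follows the paper's proof: the same chain through Theorem~\ref{thm: xlc step}, Proposition~\ref{prop: df decreases along MMP} and Proposition~\ref{prop: decrease of df in prev to last step}, with the base-change degrees absorbed into $m$. The only difference is that you make the comparison $\widetilde{\X}^{ac}\dashrightarrow\X'$ explicit, whereas the paper simply writes $(\pi'\circ p)^*K^{[t]}_{\X^{ac}/C'}=p^*K^{[t]}_{\X'/C'}$ inside the last proposition. If you keep your version, get $F\ge 0$ from the Negativity Lemma over $\X'$ rather than from the coefficient ordering: $-F$ is $q$-nef because $-K^{[t]}_{\widetilde{\X}^{ac}}$ is relatively ample, and $q_*F=0$ because $\X'$ and $\widetilde{\X}^{ac}$ agree in codimension one off the central fibre, while $\X'_0=E_1$ has discrepancy $0$ by Proposition~\ref{prop:512C-discrepancy-zero-special-fibre}.
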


\begin{proof}
By Theorem~\ref{thm: construction of special t.c.}, after a finite base change
\[
d\colon \A^1\to \A^1,\qquad z\mapsto z^m,
\]
the pullback of $(\X,\ \F_{\X},\ \L)$ admits a normal $\F$-compatible special test configuration $(\X^{\mathrm{sp}},\F_{\X^{\mathrm{sp}},\L^{\mathrm{sp}}})$ obtained through the birational procedure of Section \ref{sec: special t.c.}.
For the mixed Donaldson--Futaki invariants we then have

\begin{eqnarray*}\label{decseq2}
 \deg(C'/C)\cdot\DF^{[t}((\X, \F, \L)/C)
&\ge&\DF^{[t}((\X^{0},\F^0, \L^{{0}})/C') \hspace{5mm} \mbox{(by Theorem \ref{thm: xlc step})} \\
&\ge&\DF^{[t}((\X^{{\rm ac}}, \F^{\rm ac}, -K^{[t]}_{\X^{{\rm ac}/C}})/C') \hspace{5mm} \mbox{(by Proposition \ref{prop: df decreases along MMP})} \\
&\ge&\frac{1}{\deg(C''/C')}\DF^{[t}((\X^{{\rm sp}}, \F^{\rm sp},-K^{[t]}_{\X^{{\rm s}/C''}})/C'') \hspace{5mm}
\mbox{(by Proposition \ref{prop: decrease of df in prev to last step})} 
\end{eqnarray*}
which gives the complete statement.
\end{proof}

The following corollary, which determines $t$-K-semistability in terms of $\F$-compatible special test configurations, is an immediate consequence of Theorem~\ref{thm:reduction-special-mixed-DF}.

\begin{corollary}
\label{cor:reduction-special-mixed-DF}
Let $(X,\F)$ be an adjoint Fano foliated structure and set $L:=-K^{[t]}_{X,\F}$. Then $(X,\F,L)$ is $t$-K-semistable if and only if
\[
\DF^{[t]}(\mathcal Y,\F_{\mathcal Y},\mathcal M)\ge 0
\]
for every normal $\F$-compatible special test configuration of $(X,\F,L)$.
\end{corollary}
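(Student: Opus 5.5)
One implication is trivial and the other follows from Theorem~\ref{thm:reduction-special-mixed-DF}, so the plan is to handle the two directions separately.

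For the forward direction, I would observe that every normal $\F$-compatible special test configuration of $(X,\F,L)$ is in particular a normal $\F$-compatible test configuration in the sense of Definition~\ref{def: foliated t.c.}. The $t$-K-semistability hypothesis of Definition~\ref{def: k-stability definition} then gives $\DF^{[t]}(\mathcal Y,\F_{\mathcal Y},\mathcal M)\ge 0$ directly.

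For the converse, I would start with an arbitrary normal $\F$-compatible test configuration $(\X,\F_{\X},\L)$ of $(X,\F,L)$ with $L=-K^{[t]}_{X,\F}$. Theorem~\ref{thm:reduction-special-mixed-DF} supplies an integer $m\in\Z_{>0}$ and a normal $\F$-compatible special test configuration $(\X^{\mathrm{sp}},\F_{\X^{\mathrm{sp}}},\L^{\mathrm{sp}})$ of $(X,\F,L)$ with
\[
m\,\DF^{[t]}(\X,\F_{\X},\L)\ \ge\ \DF^{[t]}(\X^{\mathrm{sp}},\F_{\X^{\mathrm{sp}}},\L^{\mathrm{sp}}).
\]
By hypothesis the right-hand side is $\ge 0$. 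Dividing by $m>0$ then gives $\DF^{[t]}(\X,\F_{\X},\L)\ge 0$. Since $(\X,\F_{\X},\L)$ was arbitrary, $(X,\F,L)$ is $t$-K-semistable.

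The only point I expect to need care is the normalisation of $\L$. The special test configuration produced in Section~\ref{sec: special t.c.} is polarised by the relative anti-adjoint divisor, and it is a test configuration of $(X,\F,L)$ itself, not of a rescaling of $L$, because $L\sim_{\Q}-K^{[t]}_{X,\F}$. Rescaling $\L$ by a positive rational, or twisting it by a multiple of the central fibre, leaves the sign of $\DF^{[t]}$ unchanged: the invariant is homogeneous of degree one in $\L$, and it is invariant under $\L\mapsto\L+c\X_0$ because $\mu(X,\F,L)$ is the foliated slope. So positivity passes through the identifications made in Theorem~\ref{thm:reduction-special-mixed-DF} without further work, and the corollary follows.
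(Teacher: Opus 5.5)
Your proposal is correct and matches the paper, which records this corollary as an immediate consequence of Theorem~\ref{thm:reduction-special-mixed-DF}: the forward direction holds because a normal $\F$-compatible special test configuration is in particular a normal $\F$-compatible test configuration, and the converse follows from $m\,\DF^{[t]}(\X,\F_{\X},\L)\ge\DF^{[t]}(\X^{\mathrm{sp}},\F_{\X^{\mathrm{sp}}},\L^{\mathrm{sp}})\ge 0$ together with $m>0$. Your closing paragraph on normalisation is not needed, since the theorem already produces a special test configuration of $(X,\F,L)$ itself. As a minor correction, $\DF^{[t]}$ is homogeneous of degree zero, not one, under simultaneous rescaling of $L$ and $\L$, though this still preserves its sign and does not affect the argument.
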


\section{\texorpdfstring{$t$-Ding stability and $t$-K-semistability}{t-Ding stability and t-K-semistability}}\label{sec: ding stability}
In this section we will introduce the notion of $t$-Ding stability which we will show is equivalent to $t$-K-stability. Throughout the section let $(X,\F,t)$ be an adjoint Fano foliated structure, and set $L \sim_{\Q} -K^{[t]}_{X,\F}$.

\subsection{\texorpdfstring{The $t$-Ding invariant and $t$-Ding stability}{The t-Ding invariant and t-Ding stability}}\label{sec: def ding inv}

Let $(\pi\colon \X \to \A^1,\ \F_{\X},\ \L)$ be a normal ample $\F$-compatible test configuration for $(X,\F,L)$, and let $(\bar{\X},\bar{\F},\bar{\L}) \to \PP^1$ be its natural compactification.

\begin{definition}\label{def:mixed-correction-divisor}
The \emph{mixed correction divisor} $D^{[t]}_{(\X,\F_{\X},\L)}$ is the unique vertical $\Q$-divisor on $\bar{\X}$ supported on
$\bar{\X}_0$ such that
\[
D^{[t]}_{(\X,\F_{\X},\L)}
\sim_{\Q}
-K^{[t]}_{\bar{\X}/\PP^1,\bar{\F}}-\bar{\L}.
\]
\end{definition}

Let $B\geq 0$ be an effective vertical $\Q$-divisor on $\bar{\X}$.
Recall that as in Definition \ref{def:mixed-lct} the mixed log canonical threshold of $B$ along $\bar{\X}_0$ is, using Lemma \ref{lem:mixed-discrepancy-subtract-D}, 
\[
\lct^{[t]}(\bar{\X},\bar{\F};B;\bar{\X}_0)
:=
\inf_{w}
\frac{A^{[t]}_{\bar{\X},\bar{\F}}(w)-w(B)}
{w(\bar{\X}_0)},
\]
where the infimum is taken over all divisorial valuations $w$ over
$\bar{\X}$ with $w(\bar{\X}_0)>0$.
Equivalently,
\[
\lct^{[t]}(\bar{\X},\bar{\F};B;\bar{\X}_0)
=
\sup\Bigl\{
c\in \R_{\ge 0}\ \Big|\ 
A^{[t]}_{\bar{\X},\bar{\F}}(w)-w(B)-c\,w(\bar{\X}_0)\ge 0
\text{ for all divisorial }w
\Bigr\}.
\]

We are now in a position to define the mixed Ding invariant.

\begin{definition}\label{def:t-Ding}
Let $(\pi\colon \X \to \A^1,\ \F_{\X},\ \L)$ be a normal ample $\F$-compatible test configuration for $(X,\F,L)$, and set $V:=L^n$. The $t$-\emph{Ding invariant} is
\[
\Ding^{[t]}(\X,\F_{\X},\L)
:=
-\frac{\bar{\L}^{\,n+1}}{(n+1)V}
-(1-t)
+
\lct^{[t]}
\!\left(
\bar{\X},\bar{\F};
D^{[t]}_{(\X,\F_{\X},\L)};
\bar{\X}_0
\right).
\]
\end{definition}

\begin{definition}\label{def:t-Ding-stability}
Let $(X,\F,L)$ be a polarised foliated variety.
\begin{enumerate}
\item We say that $(X,\F,L)$ is \emph{$t$-Ding semistable} if
\[
\Ding^{[t]}(\X,\F_{\X},\L)\ge 0
\]
for every normal ample $\F$-compatible test configuration
$(\X,\F_{\X},\L)$.
\item We say that $(X,\F,L)$ is \emph{uniformly $t$-Ding stable} if there exists
$\delta\in (0,1)$ such that
\[
\Ding^{[t]}(\X,\F_{\X},\L)\ge \delta\,J^{\NA}(\X,\L)
\]
for every normal ample $\F$-compatible test configuration
$(\X,\F_{\X},\L)$.
\end{enumerate}
\end{definition}

Using this definition and Definition \ref{def:t-weakly-special} we obtain the following.

\begin{proposition}\label{prop:mixed-BF}
Let $(\X,\F_{\X},\L)$ be a normal ample $\F$-compatible test configuration for $(X,\F,L)$. Then
\[
\DF^{[t]}(\X,\F_{\X},\L)
\ge
\Ding^{[t]}(\X,\F_{\X},\L).
\]
Moreover, equality holds whenever
$(\X,\F_{\X},\L)$ is $t$-weakly special.
\end{proposition}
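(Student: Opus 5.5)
We follow the classical Berman/Fujita argument comparing the Donaldson--Futaki and Ding invariants, with the mixed log discrepancy in place of the usual one.

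\textbf{Step 1: rewrite $\DF^{[t]}$.} Since $L\sim_\Q -K^{[t]}_{X,\F}$, the slope is $\mu(X,\F,L)=1$. By Definition~\ref{def:mixed-correction-divisor} we have $K^{[t]}_{\bar\X/\PP^1}\sim_\Q -\bar\L-D$, where $D:=D^{[t]}_{(\X,\F_\X,\L)}$. Substituting into Definition~\ref{def:DFwt-corrected} gives
\[
\DF^{[t]}(\X,\F_\X,\L)=-\frac{\bar\L^{n+1}}{(n+1)V}-\frac{1}{V}D\cdot\bar\L^n .
\]
It is therefore enough to prove
\[
-\frac{1}{V}D\cdot\bar\L^n\ \ge\ \lct^{[t]}(\bar\X,\bar\F;D;\bar\X_0)-(1-t).
\]

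\textbf{Step 2: reduce to the components of the central fibre.} Write $\bar\X_0=\sum_i b_iE_i$ and $D=\sum_i d_iE_i$. The divisor $D$ is determined only up to adding multiples of $\bar\X_0$. Both sides of the desired inequality shift by the same constant under $D\mapsto D+c\bar\X_0$: the left side because $\bar\X_0\cdot\bar\L^n=V$, the right side by the definition of the lct along $\bar\X_0$. So we may normalise $D$. Now test the infimum defining $\lct^{[t]}$ on the valuations $w=\ord_{E_i}$. By Lemma~\ref{lem:vertical-divisors-invariant}, each $E_i$ is $\F_{\bar\X}$-invariant, so $\varepsilon(E_i)=0$ and $A_{\bar\X,\bar\F}(E_i)=0$. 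The ambient log discrepancy is $A_{\bar\X}(E_i)=1$. Hence
\[
A^{[t]}(E_i)=1-t,
\]
which is exactly where the constant $-(1-t)$ in Definition~\ref{def:t-Ding} comes from. We obtain
\[
\lct^{[t]}\le \min_i \frac{(1-t)-d_i}{b_i}.
\]
Take $c$ to be this minimum. Then $D':=D+c\,\bar\X_0$ has coefficients $d_i+cb_i\le 1-t$, with equality for some $i_0$. The quantity $-\frac1V D\cdot\bar\L^n$ becomes $-\frac1V D'\cdot\bar\L^n+c$, a weighted average of the $-(d_i+cb_i)$ plus $c$, with weights $b_i\,E_i\cdot\bar\L^n/(b_iV)$ summing to one. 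Each term is at least $-(1-t)$, which yields the inequality. This is exactly the Fujita--Berman argument, and it is the key step. The main point to check is that the weights $E_i\cdot\bar\L^n\ge 0$ sum correctly, which holds because $\bar\L$ is relatively ample.

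\textbf{Step 3: equality in the $t$-weakly special case.} In this case $\L\sim_{\Q,\A^1}-K^{[t]}$, so $D$ is a multiple of $\bar\X_0$. We may take $D=0$ up to the shift, and then the left side is $0$. Mixed log canonicity of $(\X,\F_\X;\X_0)$ says exactly that $A^{[t]}(w)-(1-t)w(\X_0)\ge0$ for every divisorial $w$, so $\lct^{[t]}(\bar\X,\bar\F;0;\bar\X_0)\ge 1-t$. The components of $\X_0$ (for instance the reduced ones) achieve $1-t$, giving equality. Over $\infty$ the compactification is trivial, so nothing changes there.

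\textbf{Main obstacle.} The delicate step is justifying that the vertical components have $A^{[t]}=1-t$. This needs the foliated log discrepancy of an invariant vertical divisor on the total space to vanish, i.e.\ that $K_{\F_{\bar\X}}$ has no extra twist along $\bar\X_0$ when $\bar\X_0$ is non-reduced. If that fails, I would instead compute $A^{[t]}(E_i)$ via Proposition~\ref{prop: auxil ord computation}, comparing with $X\times\PP^1$.
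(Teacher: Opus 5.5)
Your proof is correct and is the Berman--Fujita comparison (Fujita, Prop.~2.5(4)) that the paper invokes without details. The only new input is that each component $E_i$ of $\bar{\X}_0$ is $\bar{\F}$-invariant, so $A^{[t]}_{\bar{\X},\bar{\F}}(E_i)=1-t$; your ``main obstacle'' is not one, since this value is definitional for a prime divisor on $\bar{\X}$, and invariance persists for non-reduced components because $\partial(uz^{b})=0$ forces $\partial z\in(z)$. The equality case goes exactly as in the paper, via $D$ being a rational multiple of $\bar{\X}_0$ and $\lct^{[t]}(\bar{\X},\bar{\F};0;\bar{\X}_0)=1-t$.

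One small slip: the weights $E_i\cdot\bar{\L}^n/V$ sum to one only if $\bar{\X}_0$ is reduced. You should instead average the numbers $-(d_i+cb_i)/b_i$ against the weights $b_iE_i\cdot\bar{\L}^n/V$, or simply note that your weights sum to at most one and $1-t\ge 0$; either way you get the same bound.
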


\begin{proof}
The proof is essentially identical to \cite[Proof of Proposition 2.5.(4)]{Fujita16} (itself using \cite{Berman16}) by replacing $K_{\bar{\X}/\PP^1}+\Delta_{\bar{\X}}$ by $K^{[t]}_{\bar{\X}/\PP^1,\bar{\F}}$ and replacing the usual log canonical threshold by the mixed threshold
of Definition~\ref{def:mixed-lct}. We thus omit the full proof. The equality statement follows because for a
$t$-weakly special test configuration we have $D^{[t]}_{(\X,\F_{\X},\L)}=0$ and
\[
\lct^{[t]}(\bar{\X},\bar{\F};0;\bar{\X}_0)=1-t,
\]
so the $t$-Ding and mixed Donaldson--Futaki invariants agree.
\end{proof}

The next proposition studies the mixed Donaldson--Futaki and Ding invariants in projective birational morphisms.

\begin{proposition} \label{prop:birational-invariance}
Let $\gamma:(\Y,\F_{\Y},\M) \rightarrow (\X,\F_{\X},\L)$
be a $\G_m$-equivariant projective birational morphism of normal
$\F$-compatible test configurations of $(X,\F,L)$ over $\A^1$. Assume
that $\gamma$ is an isomorphism over $\A^1\setminus\{0\}$, that
$\F_{\Y}$ is the induced foliation on $\Y$, and
that $\M=\gamma^*\L$. Then
\[
\DF^{[t]}(\Y,\F_{\Y},\M)
=
\DF^{[t]}(\X,\F_{\X},\L).
\]
and
\[
\Ding^{[t]}(\Y,\F_{\Y},\M)
=
\Ding^{[t]}(\X,\F_{\X},\L).
\]
\end{proposition}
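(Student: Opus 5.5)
Both identities will be proved on the compactifications: $\gamma$ extends to a $\G_m$-equivariant birational morphism $\bar\gamma\colon\bar{\Y}\to\bar{\X}$ over $\PP^1$, which is an isomorphism over $\PP^1\setminus\{0\}$, and $\bar{\M}=\bar\gamma^*\bar{\L}$. The slope $\mu(X,\F,L)$ and $V=L^n$ depend only on the generic fibre, so they agree for the two test configurations. By the projection formula, $\bar{\M}^{n+1}=\bar{\L}^{n+1}$. The rest of the argument compares the relative mixed canonical divisors.

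For the Donaldson--Futaki part, I will write
\[
K^{[t]}_{\bar{\Y}/\PP^1}=\bar\gamma^*K^{[t]}_{\bar{\X}/\PP^1}+R,
\qquad R=(1-t)R_X+tR_{\F},
\]
where $R_X$ and $R_{\F}$ are the ambient and foliated discrepancy divisors. Both are Weil divisors supported on $\bar\gamma$-exceptional components, since $\bar\gamma$ is an isomorphism in codimension one away from those. Here $\bar\gamma^*K^{[t]}_{\bar{\X}/\PP^1}$ makes sense because $K^{[t]}$ is $\Q$-Cartier on the test configuration, or at least one can interpret it via the Mumford pullback as in the paper's convention of intersecting $\R$-Weil divisors with powers of $\bar{\L}$. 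Then $K^{[t]}_{\bar{\Y}/\PP^1}\cdot\bar{\M}^n = K^{[t]}_{\bar{\X}/\PP^1}\cdot\bar{\L}^n + R\cdot\bar\gamma^*\bar{\L}^n$. The last term vanishes: each component $G$ of $R$ is $\bar\gamma$-exceptional, so $\dim\bar\gamma(G)<n$ and $(\bar\gamma^*\bar{\L})^n\cdot G=(\bar{\L}|_{\bar\gamma(G)})^n=0$. This gives the equality of $\DF^{[t]}$.

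For the Ding part, I first identify the mixed correction divisors. Pushing forward the relation defining $D^{[t]}_{(\Y,\F_{\Y},\M)}$ gives
\[
D^{[t]}_{\Y}=\bar\gamma^*D^{[t]}_{\X}-R,
\]
which is the unique vertical divisor with the correct class. The identity holds because $-K^{[t]}_{\bar{\Y}/\PP^1}-\bar{\M}=\bar\gamma^*(-K^{[t]}_{\bar{\X}/\PP^1}-\bar{\L})-R$, and $R$ is vertical, being supported over $0$. Next, for every divisorial valuation $w$ over $\bar{\Y}$, which is the same as a valuation over $\bar{\X}$, the crepant-type transformation rule gives $A^{[t]}_{\bar{\Y},\bar{\F}_{\Y}}(w)=A^{[t]}_{\bar{\X},\bar{\F}}(w)-w(R)$. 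Applying this separately to the ambient and foliated log discrepancies and combining them yields
\[
A^{[t]}_{\bar{\Y}}(w)-w(D^{[t]}_{\Y})=A^{[t]}_{\bar{\X}}(w)-w(D^{[t]}_{\X}).
\]
Since $w(\bar{\Y}_0)=w(\bar{\X}_0)$ because $\bar{\Y}_0=\bar\gamma^*\bar{\X}_0$, the two mixed log canonical thresholds coincide. Together with $\bar{\M}^{n+1}=\bar{\L}^{n+1}$, this gives the equality of the $t$-Ding invariants.

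I expect the main obstacle to be the foliated transformation rule. The issue is that $A_{\F}$ involves $\varepsilon(E)$, and I must check that the foliated log discrepancy of $w$ computed over $\bar{\Y}$ equals the one computed over $\bar{\X}$ minus $w(R_{\F})$. I will verify this by taking a common model $Z$ where $w$ is a divisor. There
\[
K_{\F_Z}=\bar\gamma'^*K_{\F_{\bar{\Y}}}+\cdots=(\bar\gamma\circ\bar\gamma')^*K_{\F_{\bar{\X}}}+\cdots,
\]
and $\varepsilon(w)$ depends only on the foliation on $Z$, so it cancels. Invariance of the exceptional components is not even needed for this cancellation, since vertical divisors are invariant anyway by Lemma \ref{lem:vertical-divisors-invariant}.
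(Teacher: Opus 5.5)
Your proposal is correct and follows the paper's proof. The DF part is the same: the projection formula gives $\bar{\M}^{n+1}=\bar{\L}^{n+1}$, the exceptional term $(1-t)E_X+tE_{\F}$ is killed by $(\gamma^*\bar{\L})^n$, and the slope is unchanged. For the Ding part the paper only says ``the same computation shows'' that the thresholds agree, and your identities $D^{[t]}_{\Y}=\bar\gamma^*D^{[t]}_{\X}-R$, $A^{[t]}_{\bar{\Y},\bar{\F}_{\Y}}(w)=A^{[t]}_{\bar{\X},\bar{\F}}(w)-w(R)$ and $w(\bar{\Y}_0)=w(\bar{\X}_0)$ are exactly the verification the paper omits.
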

\begin{proof}
Let $(\bar{\Y},\bar{\F}_{\Y},
\bar{\M}) \rightarrow(\bar{\X},\bar{\F}_{\X}, \bar{\L})$ denote the induced morphism between the $\infty$-trivial compactifications over $\PP^1$. By abuse of notation, we will continue to denote it by $\gamma$. Since $\bar{\M}=\gamma^*\bar{\L}$, the projection formula gives
\[
\bar{\M}^{\,n+1}
=
(\gamma^*\bar{\L})^{n+1}
=
\bar{\L}^{\,n+1}.
\]

It remains to compare the mixed canonical term. Since $\F_{\Y}$ is the induced foliation, we have $K_{\bar{\Y}/\PP^1} = \gamma^*K_{\bar{\X}/\PP^1}+E_X$ and $K_{\bar{\F}_{\Y}} = \gamma^*K_{\bar{\F}_{\X}} + E_{\F}$, where $E_X$ and $E_{\F}$ are $\gamma$-exceptional Weil divisors. Hence
\[
K^{[t]}_{\bar{\Y},
\bar{\F}_{\Y}/\PP^1}
=
\gamma^*
K^{[t]}_{\bar{\X},
\bar{\F}_{\X}/\PP^1}
+
\bigl((1-t)E_X+tE_{\F}\bigr).
\]
Intersecting with $\bar{\M}^n
=(\gamma^*\bar{\L})^n$ gives
\[
K^{[t]}_{\bar{\Y},
\bar{\F}_{\Y}/\PP^1}
\cdot
\bar{\M}^{\,n}
=
\gamma^*
K^{[t]}_{\bar{\X},
\bar{\F}_{\X}/\PP^1}
\cdot
(\gamma^*\bar{\L})^n
+
\bigl((1-t)E_X+tE_{\F}\bigr)
\cdot
(\gamma^*\bar{\L})^n.
\]
The first term equals $K^{[t]}_{\bar{\X},
\bar{\F}_{\X}/\PP^1}
\cdot
\bar{\L}^{\,n}$ by the projection formula. The second term is zero, again by the projection formula, since $(1-t)E_X+tE_{\F}$ is $\gamma$-exceptional. Therefore
\[
K^{[t]}_{\bar{\Y},
\bar{\F}_{\Y}/\PP^1}
\cdot
\bar{\M}^{\,n}
=
K^{[t]}_{\bar{\X},
\bar{\F}_{\X}/\PP^1}
\cdot
\bar{\L}^{\,n}.
\]

The slope term is unchanged, because $\gamma$ is an isomorphism over
$\PP^1\setminus\{0\}$ and both compactified test configurations have the
same general fibre $(X,\F,L)$. Combining the two equalities we obtain
\[
\DF^{[t]}(\Y,\F_{\Y},\M)
=
\DF^{[t]}(\X,\F_{\X},\L).
\]
The same computation above shows that 
\[\lct^{[t]}(\bar{\Y},\bar{\F}_{\bar{\Y}}; D^{[t]}_{\Y,\F_\Y,\M};\Y_0) =  \lct^{[t]}(\bar{\X},\bar{\F}_{\bar{\X}}; D^{[t]}_{\X,\F_\X,\L};\X_0)\]
and hence that 
\[
\Ding^{[t]}(\Y,\F_{\Y},\M)
=
\Ding^{[t]}(\X,\F_{\X},\L).
\]
\end{proof}

\begin{remark}\label{rem:ample-semiample-interchange}
In the definitions of $t$-K-stability and $t$-Ding stability we can equivalently test against normal ample $\F$-compatible test configurations or against normal semiample $\F$-compatible test configurations. To see this, let $\gamma:(\X,\F_{\X},\L) \rightarrow (\X^{\mathrm{amp}}, \F_{\X^{\mathrm{amp}}},  \L^{\mathrm{amp}})$ be the ample model of a normal semiample $\F$-compatible test configuration, so that $\L=\gamma^*\L^{\mathrm{amp}}$ and $\F_{\X}$ is the induced foliation. By Proposition~\ref{prop:birational-invariance}, we have
\[
\DF^{[t]}(\X,\F_{\X},\L)
=
\DF^{[t]}(\X^{\mathrm{amp}},
          \F_{\X^{\mathrm{amp}}},
          \L^{\mathrm{amp}}).
\]
and
\[
\Ding^{[t]}(\X,\F_{\X},\L)
=
\Ding^{[t]}(\X^{\mathrm{amp}},
            \F_{\X^{\mathrm{amp}}},
            \L^{\mathrm{amp}}).
\]
Thus allowing semiample test configurations does not change the
resulting notions of $t$-K-semistability or $t$-Ding semistability.
\end{remark}

\subsection{Reduction of the Ding invariant to special test configurations}

In this section we repeat the same type of argument as in Section \ref{sec: reduction to special t.c.} to show that the twisted Ding invariant decreases along each step of the birational procedure detailed in Section \ref{sec: special t.c.}. We fix an adjoint Fano foliated structure $(X,\F,t,L)$.

\begin{theorem}\label{thm:t-Ding-lc}
Let $(\pi\colon \X\to \A^1,\ \F_{\X},\ \L)$ be a normal ample $\F$-compatible test configuration for $(X,\F,L)$.
Then, after a base change $d\colon \A^1\to \A^1,$ $z\mapsto z^m$ there exist a projective birational $\G_m$-equivariant morphism $\pi^{0}\colon \X^{0}\to \X^{(d)}$ and a normal ample $\F$-compatible test configuration $(\X^{0},\F^{0},\L^{0})$ for $(X,\F,L)$ such that:
\begin{enumerate}
\item $(\X^{0},\F^{0};\X^{0}_0)$ is mixed log canonical;
\item for every $\delta\in [0,1]$,
\[
m\Bigl(
\Ding^{[t]}(\X,\F_{\X},\L)
-\delta\,J^{\NA}(\X,\L)
\Bigr)
\ge
\Ding^{[t]}(\X^{0},\F^{0},\L^{0})
-\delta\,J^{\NA}(\X^{0},\L^{0}).
\]
\end{enumerate}
\end{theorem}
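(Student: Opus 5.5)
The plan is to follow the Ding analogue of the Li--Xu lc-modification step, in the form of \cite[Theorem 3.3]{Fujita16} and \cite[Section 4]{BLX22}, with the ambient log discrepancy replaced by $A^{[t]}$. The argument has three parts: the base change, the mixed lc modification, and then tracking the two functionals separately through each step.

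\textbf{Base change.} By \cite[Lemma~5]{LX14}, choose $m$ so that the normalised base change $\X^{(d)}$ of $\X$ along $z\mapsto z^m$ admits a $\G_m$-equivariant semistable log resolution. On $\X^{(d)}$ use the pulled-back polarisation $\L^{(d)}$ and the induced foliation $\F^{(d)}$. Both functionals are homogeneous of degree one under base change, up to a correction term.
\begin{itemize}
\item For $J^{\NA}$, homogeneity is exact: the intersection numbers in its definition are simply multiplied by $m$.
\item For $\Ding^{[t]}$, the term $\bar{\L}^{n+1}$ scales by $m$. For the threshold term, compare the pulled-back correction divisor with $D^{[t]}_{(\X^{(d)},\F^{(d)},\L^{(d)})}$.
\end{itemize}
To make that comparison, use the ramification formula from the proof of Theorem~\ref{thm: xlc step}:
\[
K^{[t]}_{\widetilde{\X}/C'}=\phi^*\bigl(K^{[t]}_{\X/C}+(\red(\X_0)-\X_0)\bigr).
\]
The foliated part picks up no extra ramification, because vertical divisors are $\F$-invariant by Lemma~\ref{lem:vertical-divisors-invariant}. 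Combining this with $w(\phi^*\bar{\X}_0)=m\,w'(\bar{\X}_0)$ for valuations $w$ upstairs, I expect
\[
m\,\Ding^{[t]}(\X,\F_{\X},\L)\ge\Ding^{[t]}(\X^{(d)},\F^{(d)},\L^{(d)}).
\]
This is the same kind of inequality as in \cite[Theorem 3.3]{Fujita16}.

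\textbf{Mixed lc modification.} Apply Proposition~\ref{prop:prepared-qdlt-model} to the adjoint structure together with the reduced central fibre. This gives a $\G_m$-equivariant model $\pi^0\colon\X^0\to\X^{(d)}$ (equivariance by Lemma~\ref{lem:514A-Gm-equivariant-birational-transport}) on which $(\X^0,\F^0;\X^0_0)$ is mixed log canonical, with
\[
K^{[t]}_{\X^0}+(1-t)\X^0_{0,\red}=\pi^{0*}\bigl(K^{[t]}_{\X^{(d)}}+(1-t)\X^{(d)}_0\bigr)-F,\qquad F\ge0 \text{ exceptional}.
\]
Set $\L^0_r:=\pi^{0*}\L^{(d)}+rE$ with $E=K^{[t]}_{\X^0/\A^1}+\pi^{0*}\L^{(d)}$, which is $\pi^0$-ample, as in Proposition~\ref{prop: df decreases in first step}. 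By Proposition~\ref{prop:birational-invariance}, at $r=0$ both $\Ding^{[t]}$ and $J^{\NA}$ are unchanged. So it suffices to show that
\[
r\mapsto \Ding^{[t]}(\X^0,\F^0,\L^0_r)-\delta J^{\NA}(\X^0,\L^0_r)
\]
is non-increasing for small $r>0$, and then fix $\L^0:=\L^0_r$.

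\textbf{Derivative computation.} Differentiate the three terms in $r$.
\begin{itemize}
\item The energy term $-\bar{\L}^{n+1}/((n+1)V)$ has derivative $-\frac{1}{V}(\L^0_r)^n\cdot E$.
\item The threshold term: since $\X^0_0$ is mixed lc, the correction divisor $D^{[t]}_r$ equals $D^{[t]}_0$ minus $r$ times the vertical part of $E$. The mixed lct along $\X^0_0$ is then computed by a component of $\X^0_0$, and by Lemma~\ref{lem:mixed-discrepancy-subtract-D} it is piecewise linear in $r$. Following Fujita's argument, its derivative is bounded above by $\max_j \ord_{E_j}(E)/\ord_{E_j}(\X^0_0)$.
\item The $J^{\NA}$ term has derivative $\frac1V\bigl(E\cdot(\mu^*L)^n-E\cdot(\L^0_r)^n\bigr)$.
\end{itemize}
The combination is then handled with the vertical-divisor inequality of \cite[Lemma 1]{LX14}, exactly as in \cite[Section 3]{Fujita16}. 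The claim to establish is that the vertical part of $E$ is anti-effective relative to the normalisation by the lct-computing component, which makes the total derivative $\le 0$ for every $\delta\in[0,1]$.

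\textbf{Main obstacle.} The hardest step is this last derivative estimate, and specifically controlling the mixed lct under the perturbation. It requires the fact that the lct-computing valuations on the lc model are components of $\X^0_0$, which in the foliated setting relies on the mixed qdlt structure. I would first try to prove that directly from the defining condition
\[
A^{[t]}(w)-(1-t)w(\X^0_0)\ge0,
\]
using that every vertical divisor is invariant, so that its foliated log discrepancy $\varepsilon$-term vanishes.
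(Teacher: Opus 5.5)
Your overall route is the paper's, which follows Fujita's proof of the lc step: base change, a mixed lc model $\pi^0\colon\X^0\to\X^{(d)}$, the perturbation $\L^0_r=\pi^{0*}\L^{(d)}+rE$ with $E\sim K^{[t]}+\pi^{0*}\L^{(d)}$, and differentiation in $r$. However, the step that actually proves (2) does not close as you formulate it.

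Write $E=\sum_j e_jE_j$ for its vertical representative on $\bar{\X}^0_0=\sum_jE_j$. Your energy and $J^{\NA}$ derivatives combine to
\[
-\frac{1-\delta}{V}\,\bar{\L}_r^{\,n}\cdot E-\frac{\delta}{V}\,E\cdot(\mu^*L)^n .
\]
Both $\frac1V\bar{\L}_r^{\,n}\cdot E$ and $\frac1V E\cdot(\mu^*L)^n$ are averages of the $e_j$ with nonnegative weights summing to $1$. So this expression is only known to be $\le -\min_j e_j$. An upper bound of the threshold derivative by $\max_j e_j$ therefore gives nothing. Likewise, showing that $E$ minus a multiple of $\X^0_0$ is anti-effective yields $\bar{\L}_r^{\,n}\cdot E\le e_{\max}V$, which is the wrong direction.

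The missing observation is that the threshold term can be computed exactly, because $D^{[t]}_0$ and $E$ are not independent. By the uniqueness in Definition~\ref{def:mixed-correction-divisor}, $D^{[t]}_0=-E$, hence $D^{[t]}_r=-(1+r)E$. Three facts then combine:
\begin{enumerate}
\item mixed log canonicity forces $\X^0_0$ to be reduced (test $w=\ord_{E_j}$);
\item each $E_j$ is invariant, so $A^{[t]}(E_j)=1-t$;
\item $w(E)\ge e_{\min}\,w(\X^0_0)$ for every divisorial valuation $w$.
\end{enumerate}
Together they give
\[
\lct^{[t]}=(1-t)+(1+r)e_{\min},\qquad \Ding^{[t]}(\X^0,\F^0,\L^0_r)=-\frac{\bar{\L}_r^{\,n+1}}{(n+1)V}+(1+r)e_{\min}.
\]
So the threshold is computed by the component on which $E$ has the \emph{smallest} coefficient; this is the formula used in the paper's proof. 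Consequently $E-e_{\min}\X^0_0$ is effective. Since $\bar{\L}_r$ is ample on each $E_j$ and $\mu^*L$ is nef, you get $\bar{\L}_r^{\,n}\cdot E\ge e_{\min}V$ and $E\cdot(\mu^*L)^n\ge e_{\min}V$. The total derivative is therefore at most $-(1-\delta)e_{\min}-\delta e_{\min}+e_{\min}=0$, and this is exactly where $\delta\le 1$ is used. The argument is first-order positivity only; \cite[Lemma 1]{LX14} belongs to the DF computation and plays no role here.

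Separately, relative ampleness of $\L^0_r$ requires $K^{[t]}_{\X^0}$ to be $\pi^0$-ample. That means you need an lc-modification-type model, with $K^{[t]}_{\X^0}+(1-t)\X^0_0$ ample over $\X^{(d)}$. This does not follow from your formula with $F\ge 0$, nor from the crepant model of Proposition~\ref{prop:prepared-qdlt-model}. You should say which model you take; the paper only asserts this point.
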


\begin{proof}
The underlying geometric construction is exactly the one already carried out in Theorem \ref{thm: xlc step}. To prove the stated inequality, we will follow \cite[Proof of Theorem 3.1]{Fujita16}.

Let $(\bar{\X}^{0},\bar{\F}^{0},\bar{\L}_0^0)
\to \PP^1$ be the natural compactification. We take the mixed correction divisor $E:=D^{[t]}_{(\X^{0},\F^{0},\L_0^0)}$ with
\[
E\sim_{\Q}
-K^{[t]}_{\bar{\X}^{0}/\PP^1,\bar{\F}^{0}}
-\bar{\L}_0^0.
\]

We write $E=\sum_{i=1}^p e_iE_i$  where $e_1\le \cdots \le e_p$. For $0<\lambda\ll 1$, we also let $\L_{\lambda}^0:=\L_0^0+\lambda E$. Notice that for $\lambda>0$ sufficiently small the divisor
$\L_{\lambda}^0$ is still relatively ample over $\A^1$, so $(\X^{0},\F^{0},\L_{\lambda}^0)$ is again a normal ample $\F$-compatible test configuration.

Since each $E_i$ is vertical, Lemma \ref{lem:vertical-divisors-invariant} implies that every $E_i$ is $\F^{0}$-invariant. Hence, $A^{[t]}_{\X^{0},\F^{0}}(E_i)=1-t$. In particular, using Lemma \ref{lem:mixed-discrepancy-subtract-D}, the mixed log canonical threshold along the central fibre satisfies 
\[
\lct^{[t]}(\X^0,\F^0;E;\bar{\X}_0)=1-t -(1-\lambda)e_1.
\]

Thus, the mixed Ding invariant is
\[
\Ding^{[t]}(\X^{0},\F^{0},\L_{\lambda}^0) = -\frac{(\bar{\L^0})^{\,n+1}}{(n+1)V}+(1+\lambda)e_1.
\]
Hence, by differentiating this expression with respect to $\lambda$, we obtain
\[
\frac{d}{d\lambda}
\Bigl(
\Ding^{[t]}(\X^{0},\F^{0},\L_{\lambda}^0)
-\delta\,J^{\NA}(\X^{0},\L_{\lambda}^0)
\Bigr)
\le 0
\]
for every $\delta\in[0,1]$.
Hence
\[
\Ding^{[t]}(\X^{0},\F^{0},\L_{\lambda}^0)
-\delta\,J^{\NA}(\X^{0},\L_{\lambda}^0)
\le
\Ding^{[t]}(\X^{0},\F^{0},\L_0^0)
-\delta\,J^{\NA}(\X^{0},\L_0^0).
\]

Now, recall that after base change $J^{\NA}(\X^{0},\L_0^0)
=
m\,J^{\NA}(\X,\L)$. Likewise, by the same base-change computation and the definition of the mixed
Ding invariant,
\[
\Ding^{[t]}(\X^{0},\F^{0},\L_0^0)
=
m\,\Ding^{[t]}(\X,\F_{\X},\L).
\]
Combining this with the previous inequality fyields
\[
\Ding^{[t]}(\X^{0},\F^{0},\L_{\lambda}^0)
-\delta\,J^{\NA}(\X^{0},\L_{\lambda}^0)
\le
m\Bigl(
\Ding^{[t]}(\X,\F_{\X},\L)
-\delta\,J^{\NA}(\X,\L)
\Bigr).
\]
which implies
\[
m\Bigl(
\Ding^{[t]}(\X,\F_{\X},\L)
-\delta\,J^{\NA}(\X,\L)
\Bigr)
\ge
\Ding^{[t]}(\X^{0},\F^{0},\L^{0})
-\delta\,J^{\NA}(\X^{0},\L^{0}),
\]
as claimed.
\end{proof}

Now let $(\X^{0},\F^{0},\L^{0})$ be as before, and let $(\X^{\mathrm{ac}},\F^{\mathrm{ac}},\L^{\mathrm{ac}})$ be the anti-adjoint model produced by the birational procedure of Section \ref{sec: special t.c.}.

\begin{theorem}\label{thm:t-Ding-ac}
For every $\delta\in [0,1]$,
\[
\Ding^{[t]}(\X^{0},\F^{0},\L^{0})
-\delta\,J^{\NA}(\X^{0},\L^{0})
\ge
\Ding^{[t]}(\X^{\mathrm{ac}},\F^{\mathrm{ac}},\L^{\mathrm{ac}})
-\delta\,J^{\NA}(\X^{\mathrm{ac}},\L^{\mathrm{ac}}).
\]
\end{theorem}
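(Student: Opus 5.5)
We follow \cite[Proof of Theorem 3.2]{Fujita16}, which adapts \cite[\S 4.1]{LX14} to the Ding setting, and track the mixed Ding invariant along the $K_{\cA^0}$-MMP with scaling of Section~\ref{sec: mmp with scaling}. We keep the polarisations
\[
\L^i_\lambda=\tfrac{1}{\lambda-1}\bigl(K^{[t]}_{\X^i/C}+\lambda H^i\bigr),\qquad \lambda\in[\lambda_{i+1},\lambda_i],
\]
on each model $\X^i$. We will show that $\lambda\mapsto \Ding^{[t]}(\X^i,\F^i,\L^i_\lambda)-\delta J^{\NA}(\X^i,\L^i_\lambda)$ is non-decreasing in $\lambda$, so it decreases as $\lambda$ decreases towards $\lambda_{i+1}$. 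We will also show that passing from $\X^i$ to $\X^{i+1}$ at $\lambda=\lambda_{i+1}$ does not increase it. Chaining these inequalities exactly as in Proposition~\ref{prop: df decreases along MMP}, and using Proposition~\ref{prop:birational-invariance} at the last step, gives the claim. That last step works because $\L^j_{\lambda_j}\sim_{\Q,C}-K_{\cA^j}$ (Proposition~\ref{prop:threshold-stage-package}) pulls back from $\X^{\mathrm{ac}}$.

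\textbf{Step 1 (fixed model).} On a fixed $\X^i$ the mixed correction divisor is
\[
D^{[t]}_\lambda=-K^{[t]}-\L^i_\lambda\sim_{\Q,C}-\textstyle\sum_j a_j(\lambda)E_j .
\]
Every $E_j$ is vertical, so it is $\F^i$-invariant by Lemma~\ref{lem:vertical-divisors-invariant}. Since $(\X^i,\X^i_0)$ is lc (the argument of Lemma~\ref{lem:anti-adjoint-model-lc-central-fibre} applies at each stage), the mixed lct term is computed by the components of the central fibre: it equals $1-t+\min_j(\text{coefficient of }E_j)$ after normalising by a multiple of the fibre. Hence
\[
\Ding^{[t]}=-\frac{(\L^i_\lambda)^{n+1}}{(n+1)V}+\min_j c_j(\lambda),
\]
with $c_j$ affine in $1/(\lambda-1)$. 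We then differentiate in $\lambda$, as in Lemma~\ref{lem: decrease on same model} and Theorem~\ref{thm:t-Ding-lc}. The derivative of the volume term combines with the derivative of the $\min$ term (taken at the minimising component) into $-(\L^i_\lambda)^{n-1}\cdot(\sum_j(a_j-a_{\min})E_j)\cdot(\text{effective vertical})\le 0$ in the appropriate variable. The sign comes from \cite[Lemma~1]{LX14} together with relative nefness of $\L^i_\lambda$ on $[\lambda_{i+1},\lambda_i]$. The $J^{\NA}$ derivative is controlled by $\delta\le 1$, exactly as in \cite{Fujita16}.

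\textbf{Step 2 (crossing a divisorial contraction or flip).} At $\lambda=\lambda_{i+1}$ the map $\X^i\dashrightarrow\X^{i+1}$ is $(K_{\cA^i}+\lambda_{i+1}H^i)$-trivial, so $\L^i_{\lambda_{i+1}}$ is the pullback of $\L^{i+1}_{\lambda_{i+1}}$ on a common resolution $W$. Via Proposition~\ref{prop:birational-invariance} we compare both sides on $W$. The top self-intersection and $J^{\NA}$ agree there. The mixed lct with respect to the pulled-back correction divisor is a birational invariant, since the log discrepancy $A^{[t]}(w)-w(D)$ is crepant-invariant once the correction divisor is transported crepantly. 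So the value is unchanged across the step.

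\textbf{Main obstacle.} The main obstacle is Step 1's identification of the mixed lct with a minimum over central fibre components. This needs (i) mixed log canonicity of $(\X^i,\F^i;\X^i_0)$ at each stage, not just lc of the underlying pair, and (ii) the fact that for divisors $w$ exceptional over $\X^i$ the quantity $A^{[t]}(w)-w(D)-c\,w(\X_0)$ is not smaller than on the components. We would try to deduce (i) from the qdlt model of Proposition~\ref{prop:prepared-qdlt-model}, noting that each step is also a $(K_{\cA}+(1-t)\X_0)$-MMP, as in Lemma~\ref{lem:anti-adjoint-model-lc-central-fibre}. Given (i), (ii) then follows by writing $D+c\X_0$ as a combination of $\X_0$-components with coefficient at most $1-t$.
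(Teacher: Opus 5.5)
Your proposal is correct and is essentially the paper's argument, an adaptation of \cite[Proof of Theorem 3.2]{Fujita16}: monotonicity in $\lambda$ on each fixed $\X^i$ (using that the central-fibre components are $\F^i$-invariant of mixed discrepancy $1-t$), then the MMP steps, then passage to $\X^{\mathrm{ac}}$; the differences are that your crossing step gives equality via Proposition~\ref{prop:birational-invariance} where the paper appeals to the negativity lemma, and that the mixed log canonicity you flag is supplied directly by Theorem~\ref{thm:t-Ding-lc}(1) (Proposition~\ref{prop:prepared-qdlt-model} only gives lc of the underlying pair) and then propagates as you say, since each step is $K_{\cA}$-negative and $\X_0\sim_{\Q,C}0$. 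One fix in Step 1: writing $K_{\cA^i}+H^i\sim_{\Q,C}\sum_j e_jE_j$, so that $a_j(\lambda)=s\,e_j$ with $s=\lambda/(\lambda-1)$, the $s$-derivative of $\Ding^{[t]}-\delta J^{\NA}$ is $-\frac{1}{V}\bigl((1-\delta)(\L^i_\lambda)^{n}+\delta(\mu^*L)^{n}\bigr)\cdot\sum_j(e_j-e_{\min})E_j$ (with $\mu^*L$ as in the definition of $J^{\NA}$), which is non-positive by relative nefness alone, so it should replace your displayed expression with $(\L^i_\lambda)^{n-1}$ and two vertical factors, and no appeal to \cite[Lemma~1]{LX14} is needed.
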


\begin{proof}
By the construction in Section \ref{sec: reduction to special t.c.}, starting from the lc model $(\X^{0},\F^{0},\L^{0})$ one obtains a finite sequence of birational models
\[
(\X^0,\F^0,\L^0_{\lambda_0}),
\ 
(\X^0,\F^0,\L^0_{\lambda_1}),
\ 
(\X^1,\F^1,\L^1_{\lambda_1}),
\ 
(\X^1,\F^1,\L^1_{\lambda_2}),
\ \ldots,\
(\X^j,\F^j,\L^j_{\lambda_j}),
\]
where $\lambda_0>\lambda_1>\cdots>\lambda_j$ and where the final model satisfies $\L^j_{\lambda_j}\sim_{\Q,\A^1}
-K^{[t]}_{\X^j/\A^1,\F^j}$. By definition, the anti-adjoint model is
\[
\X^{\mathrm{ac}}
:=
\operatorname{Proj}_{\A^1}
R(\X^j/\A^1,-K^{[t]}_{\X^j/\A^1,\F^j}),
\]
and
\[
\DF^{[t]}(\X^j,\F^j,\L^j_{\lambda_j})
=
\DF^{[t]}(\X^{\mathrm{ac}},\F^{\mathrm{ac}},\L^{\mathrm{ac}})
\]
by Remark \ref{rem: equality of df for anti adjoint model} and Proposition \ref{prop: df decreases along MMP}.

We claim that the same chain of inequalities holds with $\Ding^{[t]}-\delta J^{\NA}$ in place of $\DF^{[t]}$, namely
\begin{align*}
&\Ding^{[t]}(\X^0,\F^0,\L^0_{\lambda_0})
-\delta J^{\NA}(\X^0,\L^0_{\lambda_0})
\\
&\ge
\Ding^{[t]}(\X^0,\F^0,\L^0_{\lambda_1})
-\delta J^{\NA}(\X^0,\L^0_{\lambda_1})
\\
&\ge
\Ding^{[t]}(\X^1,\F^1,\L^1_{\lambda_1})
-\delta J^{\NA}(\X^1,\L^1_{\lambda_1})
\\
&\ge
\Ding^{[t]}(\X^1,\F^1,\L^1_{\lambda_2})
-\delta J^{\NA}(\X^1,\L^1_{\lambda_2})
\\
&\hspace{1cm}\vdots
\\
&\ge
\Ding^{[t]}(\X^j,\F^j,\L^j_{\lambda_j})
-\delta J^{\NA}(\X^j,\L^j_{\lambda_j}).
\end{align*}
Once this is proved, the desired inequality follows immediately, since
\[
\Ding^{[t]}(\X^j,\F^j,\L^j_{\lambda_j})
=
\Ding^{[t]}(\X^{\mathrm{ac}},\F^{\mathrm{ac}},\L^{\mathrm{ac}})
\]
and similarly for $J^{\NA}$, by the same observation as in Remark \ref{rem: equality of df for anti adjoint model}. Thus, it remains to justify the above chain.

We now fix $i$. On the fixed model $\X^i$, the passage from
$\L^i_{\lambda_i}$ to $\L^i_{\lambda_{i+1}}$ is obtained by
varying the coefficient of a vertical divisor supported on the central fibre.
Exactly as in the proof of Theorem~\ref{thm:t-Ding-lc}, the derivative of
\[
\Ding^{[t]}(\X^i,\F^i,\L^i_{\lambda})
-\delta J^{\NA}(\X^i,\L^i_{\lambda})
\]
with respect to $\lambda$ is non-positive. We omit the details, which are identical to the argument in \cite[Proof of Theorem 3.2]{Fujita16}, where the only new input is the mixed log canonical threshold term. Since every irreducible component of the central fibre is vertical, it is $\F^i$-invariant by Lemma \ref{lem:vertical-divisors-invariant}, and therefore has mixed discrepancy $1-t$. Arguing as in the proof of Theorem \ref{thm:t-Ding-lc}, we see that the mixed Ding invariant is formally identical to the one in \cite[Proof of Theorem 3.2]{Fujita16}, with the normalising constant $1$ replaced by $1-t$, which cancels in the definition of $\Ding^{[t]}$. Therefore
\begin{equation}\label{eq: birational Ding step 1}
    \Ding^{[t]}(\X^i,\F^i,\L^i_{\lambda_i})
-\delta J^{\NA}(\X^i,\L^i_{\lambda_i})
\ge
\Ding^{[t]}(\X^i,\F^i,\L^i_{\lambda_{i+1}})
-\delta J^{\NA}(\X^i,\L^i_{\lambda_{i+1}}).
\end{equation}

Now consider the birational contraction $\X^i \dashrightarrow \X^{i+1}$ appearing in the $K^{[t]}$-MMP with scaling. On a common resolution $\widehat{\X}
\overset{p}{\longrightarrow} \X^i$, $\widehat{\X}
\overset{q}{\longrightarrow} \X^{i+1}$ we have
\[
p^*K^{[t]}_{\X^i/\A^1,\F^i}
=
q^*K^{[t]}_{\X^{i+1}/\A^1,\F^{i+1}}+E
\]
with $E\ge 0$ by the Negativity Lemma, as in the proof of
Proposition \ref{prop: df decreases along MMP}. The intersection-theoretic part of $\Ding^{[t]}-\delta J^{\NA}$ is therefore controlled exactly as in Proposition \ref{prop: df decreases along MMP}. For the threshold part, again the only input needed is that mixed discrepancy behaves under adding vertical divisors exactly as in the usual log setting by Lemma \ref{lem:mixed-discrepancy-subtract-D}, and that all vertical divisors are $\F$-invariant by Lemma \ref{lem:vertical-divisors-invariant}. Hence the same
negativity argument as in \cite[Proof of Theorem 3.2]{Fujita16} gives
\begin{equation}\label{eq: birational ding 2}
    \Ding^{[t]}(\X^i,\F^i,\L^i_{\lambda_{i+1}})
-\delta J^{\NA}(\X^i,\L^i_{\lambda_{i+1}})
\ge
\Ding^{[t]}(\X^{i+1},\F^{i+1},\L^{i+1}_{\lambda_{i+1}})
-\delta J^{\NA}(\X^{i+1},\L^{i+1}_{\lambda_{i+1}}).
\end{equation}

Iterating these steps along the entire MMP chain gives
\begin{equation}\label{eq: birational ding 3}
\begin{split}
&\Ding^{[t]}(\X^{0},\F^{0},\L^{0})
-\delta J^{\NA}(\X^{0},\L^{0})
\\
&\ge
\Ding^{[t]}(\X^j,\F^j,\L^j_{\lambda_j})
-\delta J^{\NA}(\X^j,\L^j_{\lambda_j}).
\end{split}
\end{equation}

Furthermore, by construction, $(\X^{\mathrm{ac}},\F^{\mathrm{ac}},\L^{\mathrm{ac}})$ is the anti-adjoint model of $(\X^j,\F^j,\L^j_{\lambda_j})$. Since the birational transform is crepant with respect to the mixed adjoint class, the mixed correction divisor on $\X^{\mathrm{ac}}$ vanishes:
\[
D^{[t]}_{(\X^{\mathrm{ac}},\F^{\mathrm{ac}},\L^{\mathrm{ac}})}=0.
\]
Moreover, the central fibre is vertical and hence $\F^{\mathrm{ac}}$-invariant, so its mixed discrepancy is $1-t$. Therefore
\[
\lct^{[t]}(\bar{\X}^{\mathrm{ac}},\bar{\F}^{\mathrm{ac}};0;
\bar{\X}^{\mathrm{ac}}_0)=1-t,
\]
and hence
\[
\Ding^{[t]}(\X^{\mathrm{ac}},\F^{\mathrm{ac}},\L^{\mathrm{ac}})
=
\DF^{[t]}(\X^{\mathrm{ac}},\F^{\mathrm{ac}},\L^{\mathrm{ac}}).
\]
In particular, replacing $(\X^j,\F^j,\L^j_{\lambda_j})$ by its
anti-adjoint model does not change the value of $\Ding^{[t]}-\delta J^{\NA}$.

Combining this with inequalities \eqref{eq: birational Ding step 1}, \eqref{eq: birational ding 2} and $\eqref{eq: birational ding 3}$ gives
\[
\Ding^{[t]}(\X^{0},\F^{0},\L^{0})
-\delta J^{\NA}(\X^{0},\L^{0})
\ge
\Ding^{[t]}(\X^{\mathrm{ac}},\F^{\mathrm{ac}},\L^{\mathrm{ac}})
-\delta J^{\NA}(\X^{\mathrm{ac}},\L^{\mathrm{ac}}),
\]
as required.
\end{proof}

Let as before $(\pi^{\mathrm{sp}}\colon \X^{\mathrm{sp}}\to \A^1,\ \F^{\mathrm{sp}},\ \L^{\mathrm{sp}})$ be the special test configuration constructed in Theorem \ref{thm: construction of special t.c.}.

\begin{theorem}[Reduction to $t$-weakly special test configurations for the $t$-Ding invariant]\label{thm:t-Ding-special-reduction}
For every $\delta\in [0,1]$,
\[
m\Bigl(
\Ding^{[t]}(\X,\F_{\X},\L)-\delta\,J^{\NA}(\X,\L)
\Bigr)
\ge
\Ding^{[t]}(\X^{\mathrm{sp}},\F^{\mathrm{sp}},\L^{\mathrm{sp}})
-\delta\,J^{\NA}(\X^{\mathrm{sp}},\L^{\mathrm{sp}}).
\]
Moreover,
\[
\Ding^{[t]}(\X^{\mathrm{sp}},\F^{\mathrm{sp}},\L^{\mathrm{sp}})
=
\DF^{[t]}(\X^{\mathrm{sp}},\F^{\mathrm{sp}},\L^{\mathrm{sp}}).
\]
\end{theorem}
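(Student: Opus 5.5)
The plan is to chain the three comparison results already established for the $t$-Ding invariant along the birational procedure of Section~\ref{sec: special t.c.}, and then add one final comparison for the step from $\X^{\mathrm{ac}}$ to $\X^{\mathrm{sp}}$.

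First, Theorem~\ref{thm:t-Ding-lc} gives, after the base change $z\mapsto z^m$,
\[
m\bigl(\Ding^{[t]}(\X,\F_{\X},\L)-\delta J^{\NA}(\X,\L)\bigr)\ge \Ding^{[t]}(\X^0,\F^0,\L^0)-\delta J^{\NA}(\X^0,\L^0).
\]
Second, Theorem~\ref{thm:t-Ding-ac} carries this quantity down to the anti-adjoint model $(\X^{\mathrm{ac}},\F^{\mathrm{ac}},\L^{\mathrm{ac}})$.

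It remains to compare $\X^{\mathrm{ac}}$ with $\X^{\mathrm{sp}}$, after the further base change $C''\to C'$ of Theorem~\ref{thm:512A-special-extension-after-base-change}. I would absorb the degree of this base change into $m$. This is legitimate because both $\Ding^{[t]}$ and $J^{\NA}$ scale by the degree under base change, exactly as used in the proof of Theorem~\ref{thm:t-Ding-lc}.

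For this last step, both models are anti-adjoint. The mixed correction divisor vanishes on both, and the central fibres are vertical, hence $\F$-invariant by Lemma~\ref{lem:vertical-divisors-invariant}. I would like to conclude from this that on both models $\Ding^{[t]}=-\L^{n+1}/((n+1)V)$ up to the threshold term. On $\X^{\mathrm{ac}}$, however, the threshold is only $\le 1-t$, because the pair $(\X^{\mathrm{ac}},\X^{\mathrm{ac}}_0)$ is merely lc by Lemma~\ref{lem:anti-adjoint-model-lc-central-fibre}.

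To handle this, I would use the discrepancy-zero property of Proposition~\ref{prop:512C-discrepancy-zero-special-fibre}. The valuation of $\X^{\mathrm{sp}}_0$ computes the threshold on $\widetilde{\X}^{\mathrm{ac}}$. On the common resolution $\hat{\X}$ of Proposition~\ref{prop: decrease of df in prev to last step}, the difference $p^*K^{[t]}-q^*K^{[t]}=E\ge0$ then feeds into two terms:
\begin{itemize}
\item the volume term, through the monotonicity $f'(\lambda)\ge0$ of Proposition~\ref{prop: decrease of df in prev to last step};
\item the threshold term, through Lemma~\ref{lem:mixed-discrepancy-subtract-D} applied to the divisor $E$, following \cite[Proof of Theorem 3.3]{Fujita16}.
\end{itemize}
The $J^{\NA}$ term is handled as in Fujita, using $\delta\le 1$ and the fact that $J^{\NA}$ is controlled by the same intersection-theoretic expressions.

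For the second assertion, $\X^{\mathrm{sp}}$ is $t$-weakly special in the sense of Definition~\ref{def:t-weakly-special}. Indeed, $\L^{\mathrm{sp}}\sim_{\Q}-K^{[t]}$ by construction. The pair $(\X^{\mathrm{sp}},\X^{\mathrm{sp}}_0)$ is plt by Theorem~\ref{thm:512A-special-extension-after-base-change}, and since the central fibre is invariant this gives mixed lc. Proposition~\ref{prop:mixed-BF} then yields $\Ding^{[t]}=\DF^{[t]}$ on $\X^{\mathrm{sp}}$. Concretely, $D^{[t]}=0$ and $\lct^{[t]}=1-t$, so both invariants equal $-(\L^{\mathrm{sp}})^{n+1}/((n+1)V)$ by Lemma~\ref{lem:anti-adjoint-mixed-DF}.

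The main obstacle is the $\X^{\mathrm{ac}}\to\X^{\mathrm{sp}}$ step. The threshold on $\X^{\mathrm{ac}}$ may be strictly less than $1-t$, and one must check that this loss in the threshold is exactly compensated by the gain in $-\L^{n+1}$. The first thing I would try is to realise $\X^{\mathrm{sp}}$ as computing the mixed lct of $\X^{\mathrm{ac}}$, using the discrepancy-zero divisor $E_1$, and then to run Fujita's argument verbatim with $A$ replaced by $A^{[t]}$.
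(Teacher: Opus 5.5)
Your chain (Theorem~\ref{thm:t-Ding-lc}, then Theorem~\ref{thm:t-Ding-ac}, then a negativity comparison for $\X^{\mathrm{ac}}\rightsquigarrow\X^{\mathrm{sp}}$ modelled on \cite[Proof of Theorem 3.3]{Fujita16}, then Proposition~\ref{prop:mixed-BF}) is exactly the paper's. Absorbing the second base-change degree into $m$ is correct bookkeeping.

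The gap is that the step you call the main obstacle is never closed, and the mechanism you suggest cannot close it. By Proposition~\ref{prop:512C-discrepancy-zero-special-fibre}, $E_1$ is an invariant component of the reduced fibre $Y_0$ with adjoint discrepancy zero. Hence $A^{[t]}_{\widetilde{\X}^{\mathrm{ac}}}(E_1)=(1-t)\ord_{E_1}(\widetilde{\X}^{\mathrm{ac}}_0)$, so $E_1$ computes the threshold only if the threshold equals $1-t$. If there really were a deficit $\lct^{[t]}<1-t$ on $\X^{\mathrm{ac}}$, then $\Ding^{[t]}(\X^{\mathrm{ac}})<\DF^{[t]}(\X^{\mathrm{ac}})$. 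Nothing would then force the gain in $-\L^{n+1}$ to dominate that deficit, since this gain is exactly what Proposition~\ref{prop: decrease of df in prev to last step} spends on the $\DF^{[t]}$ comparison.

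Your reasoning is also inconsistent. On $\X^{\mathrm{ac}}$ you say ambient log canonicity plus invariance of the central fibre gives only $\lct^{[t]}\le 1-t$. Yet on $\X^{\mathrm{sp}}$ you say ambient plt plus invariance gives mixed log canonicity. Neither controls $A_{\X,\F_{\X}}(w)$ for exceptional $w$, which is what $(1-t)\bigl(A_{\X}(w)-w(\X_0)\bigr)+tA_{\X,\F_{\X}}(w)\ge 0$ needs; lc versus plt is irrelevant here.

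The missing input is the \emph{mixed} log canonicity of Theorem~\ref{thm:t-Ding-lc}(1), rather than the ambient statement of Lemma~\ref{lem:anti-adjoint-model-lc-central-fibre}, carried through the construction:
\begin{enumerate}
\item Since $t<1$, mixed log canonicity forces $\X^0_0$ to be reduced.
\item Each MMP step is $K^{[t]}$-negative and $\X_0\sim_C 0$, so it is $(K^{[t]}+(1-t)\X_0)$-negative and preserves mixed log canonicity. The contraction $\X^j\to\X^{\mathrm{ac}}$ is $K^{[t]}$-trivial, hence crepant.
\item Thus mixed log canonicity gives $\lct^{[t]}(\bar{\X}^{\mathrm{ac}},\bar{\F}^{\mathrm{ac}};0;\bar{\X}^{\mathrm{ac}}_0)\ge 1-t$, and the reduced invariant components give $\le 1-t$. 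This equality is what the paper uses in the proof of Theorem~\ref{thm:t-Ding-ac}, and it yields $\Ding^{[t]}=\DF^{[t]}$ on $\X^{\mathrm{ac}}$.
\item This persists after the base change, which ramifies only along the reduced invariant central fibre.
\end{enumerate}

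For $\X^{\mathrm{sp}}$, write $p^*K^{[t]}_{\widetilde{\X}^{\mathrm{ac}}}=q^*K^{[t]}_{\X^{\mathrm{sp}}}+E$ on a common resolution $\hat{\X}$. Here $E$ is vertical, $-E$ is $q$-nef, and its coefficient along $E_1$ vanishes by Proposition~\ref{prop:512C-discrepancy-zero-special-fibre}. So $E\ge 0$ by the negativity lemma, and $A^{[t]}(w)-(1-t)w(\X_0)$ computed on $\X^{\mathrm{sp}}$ dominates its value on $\widetilde{\X}^{\mathrm{ac}}$. This, not plt plus invariance, is what gives $t$-weak speciality.

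With no deficit the last step is elementary. On both anti-adjoint models
\[
\Ding^{[t]}-\delta J^{\mathrm{NA}}=-\frac{1-\delta}{(n+1)V}\,\L^{n+1}-\frac{\delta}{V}\,\L\cdot(\mu^*L)^n .
\]
Neither intersection number decreases when $p^*\L^{\mathrm{ac}}$ is replaced by $q^*\L^{\mathrm{sp}}=p^*\L^{\mathrm{ac}}+E$: the first by $f'(\lambda)\ge 0$, the second since $E\cdot(\mu^*L)^n\ge 0$. Hence the inequality holds for every $\delta\in[0,1]$.
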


\begin{proof}
Recall that by Theorem \ref{thm: construction of special t.c.} and the proof of Theorem \ref{thm:reduction-special-mixed-DF}, after the above base change we have a sequence of birational models
\[
(\X,\F_{\X},\L)
\rightsquigarrow
(\X^{\mathrm{lc}},\F^{\mathrm{lc}},\L^{\mathrm{lc}})
\rightsquigarrow
(\X^{\mathrm{ac}},\F^{\mathrm{ac}},\L^{\mathrm{ac}})
\rightsquigarrow
(\X^{\mathrm{sp}},\F^{\mathrm{sp}},\L^{\mathrm{sp}})
\]
where:
\begin{enumerate}
\item the first arrow is the base change plus lc/qdlt model step;
\item the second arrow is the adjoint MMP with scaling leading to the
anti-adjoint model;
\item the third arrow is the final special extraction and relative anti-adjoint model step.
\end{enumerate}

By Theorem~\ref{thm:t-Ding-lc}, for every $\delta\in[0,1]$ we have
\[
m\Bigl(
\Ding^{[t]}(\X,\F_{\X},\L)
-\delta J^{\NA}(\X,\L)
\Bigr)
\ge
\Ding^{[t]}(\X^{\mathrm{lc}},\F^{\mathrm{lc}},\L^{\mathrm{lc}})
-\delta J^{\NA}(\X^{\mathrm{lc}},\L^{\mathrm{lc}}).
\]
By Theorem~\ref{thm:t-Ding-ac}, again for every $\delta\in[0,1]$,
\[
\Ding^{[t]}(\X^{\mathrm{lc}},\F^{\mathrm{lc}},\L^{\mathrm{lc}})
-\delta J^{\NA}(\X^{\mathrm{lc}},\L^{\mathrm{lc}})
\ge
\Ding^{[t]}(\X^{\mathrm{ac}},\F^{\mathrm{ac}},\L^{\mathrm{ac}})
-\delta J^{\NA}(\X^{\mathrm{ac}},\L^{\mathrm{ac}}).
\]

It therefore remains only to treat the final step
\[
(\X^{\mathrm{ac}},\F^{\mathrm{ac}},\L^{\mathrm{ac}})
\rightsquigarrow
(\X^{\mathrm{sp}},\F^{\mathrm{sp}},\L^{\mathrm{sp}}).
\]
We argue exactly as in the proof of Theorem~\ref{thm:t-Ding-ac}: on a common resolution $\widehat{\X}
\overset{p}{\longrightarrow} \X'$, $\widehat{\X}
\overset{q}{\longrightarrow} \X^{\mathrm{sp}},$ we have
\[
p^*K^{[t]}_{\X'/\A^1,\F'}=
q^*K^{[t]}_{\X^{\mathrm{sp}}/\A^1,\F^{\mathrm{sp}}}+E
\]
with $E\ge 0$ by the Negativity Lemma. Interpolating between the two relative anti-adjoint polarisations by the same one-variable family as in Section \ref{sec: reduction to special t.c.} and using Lemmas \ref{lem:mixed-discrepancy-subtract-D} and \ref{lem:vertical-divisors-invariant}, the mixed threshold term varies in the same way as in \cite[Proof of Theorem 3.3]{Fujita16}.
Hence, omitting the full details to avoid repetition, we obtain
\[
\Ding^{[t]}(\X^{\mathrm{ac}},\F^{\mathrm{ac}},\L^{\mathrm{ac}})
-\delta J^{\NA}(\X^{\mathrm{ac}},\L^{\mathrm{ac}})
\ge
\Ding^{[t]}(\X^{\mathrm{sp}},\F^{\mathrm{sp}},\L^{\mathrm{sp}})
-\delta J^{\NA}(\X^{\mathrm{sp}},\L^{\mathrm{sp}}).
\]

Combining the three displayed inequalities gives
\begin{align*}
m\Bigl(
\Ding^{[t]}(\X,\F_{\X},\L)
-\delta J^{\NA}(\X,\L)
\Bigr)
&\ge
\Ding^{[t]}(\X^{\mathrm{sp}},\F^{\mathrm{sp}},\L^{\mathrm{sp}})
-\delta J^{\NA}(\X^{\mathrm{sp}},\L^{\mathrm{sp}}).
\end{align*}

Finally, since $(\X^{\mathrm{sp}},\F^{\mathrm{sp}},\L^{\mathrm{sp}})$ is $t$-weakly special (in particular, $t$-special), Proposition~\ref{prop:mixed-BF}
gives
\[
\Ding^{[t]}(\X^{\mathrm{sp}},\F^{\mathrm{sp}},\L^{\mathrm{sp}})
=
\DF^{[t]}(\X^{\mathrm{sp}},\F^{\mathrm{sp}},\L^{\mathrm{sp}}).
\]
This proves the theorem.
\end{proof}

We are now in a position to directly compare $t$-Ding and $t$-K-stability.

\begin{corollary}\label{cor:t-K-vs-t-Ding}
Let $(X,\F,t)$ be an adjoint Fano foliated structure and set $L\sim_{\Q} -K^{[t]}_{X,\F}$. Then the following are equivalent for every $\delta\in [0,1)$:
\begin{enumerate}
\item
\[
\DF^{[t]}(\X,\F_{\X},\L)\ge \delta\,J^{\NA}(\X,\L)
\]
for every normal ample $\F$-compatible test configuration;
\item
\[
\Ding^{[t]}(\X,\F_{\X},\L)\ge \delta\,J^{\NA}(\X,\L)
\]
for every normal ample $\F$-compatible test configuration.
\end{enumerate}
In particular, $t$-K-semistability is equivalent to $t$-Ding semistability, and
uniform $t$-K-stability is equivalent to uniform $t$-Ding stability.
\end{corollary}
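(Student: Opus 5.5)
The plan is to prove the two implications separately, using Proposition~\ref{prop:mixed-BF} for one direction and the two reduction theorems (Theorem~\ref{thm:reduction-special-mixed-DF} for $\DF^{[t]}$ and Theorem~\ref{thm:t-Ding-special-reduction} for $\Ding^{[t]}$) for the other.

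\emph{Direction (2)$\Rightarrow$(1).} This is immediate. Proposition~\ref{prop:mixed-BF} gives $\DF^{[t]}(\X,\F_{\X},\L)\ge \Ding^{[t]}(\X,\F_{\X},\L)$ for every normal ample $\F$-compatible test configuration. Hence a lower bound $\Ding^{[t]}\ge \delta J^{\NA}$ transfers directly to $\DF^{[t]}$.

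\emph{Direction (1)$\Rightarrow$(2).} Fix a normal ample $\F$-compatible test configuration $(\X,\F_{\X},\L)$ and $\delta\in[0,1)$. I will apply Theorem~\ref{thm:t-Ding-special-reduction}. After a base change $z\mapsto z^m$, it produces a special $\F$-compatible test configuration $(\X^{\mathrm{sp}},\F^{\mathrm{sp}},\L^{\mathrm{sp}})$ such that
\[
m\bigl(\Ding^{[t]}(\X,\F_{\X},\L)-\delta J^{\NA}(\X,\L)\bigr)\ge \Ding^{[t]}(\X^{\mathrm{sp}},\F^{\mathrm{sp}},\L^{\mathrm{sp}})-\delta J^{\NA}(\X^{\mathrm{sp}},\L^{\mathrm{sp}}),
\]
and such that $\Ding^{[t]}=\DF^{[t]}$ on $\X^{\mathrm{sp}}$. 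Since $\X^{\mathrm{sp}}$ is itself a normal ample $\F$-compatible test configuration, hypothesis (1) applies to it. The right-hand side is therefore $\DF^{[t]}(\X^{\mathrm{sp}})-\delta J^{\NA}(\X^{\mathrm{sp}})\ge 0$. Dividing by $m>0$ gives $\Ding^{[t]}(\X,\F_{\X},\L)\ge\delta J^{\NA}(\X,\L)$. Taking $\delta=0$ yields the semistable statement, and taking some fixed $\delta>0$ yields the uniform statement.

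\emph{Main obstacle.} The substantive point is checking that the output of the construction is a legitimate test object for hypothesis (1). This means it must be ample, normal and $\F$-compatible, with $\F^{\mathrm{sp}}$ $\G_m$-equivariant and algebraically integrable. Equivariance follows from Lemma~\ref{lem:514A-Gm-equivariant-birational-transport}, and the remaining properties follow from Theorem~\ref{thm: construction of special t.c.}. A second check is that Proposition~\ref{prop:mixed-BF} really applies on $\X^{\mathrm{sp}}$, i.e.\ that it is $t$-weakly special:
\begin{itemize}
\item $\L^{\mathrm{sp}}\sim_{\Q,\A^1}-K^{[t]}$ holds by the Proj construction;
\item mixed log canonicity of $(\X^{\mathrm{sp}},\F^{\mathrm{sp}};\X^{\mathrm{sp}}_0)$ follows from plt-ness of $(\X^{\mathrm{sp}},\X^{\mathrm{sp}}_0)$ together with $\F$-invariance of the vertical divisors (Lemma~\ref{lem:vertical-divisors-invariant}), which makes the mixed discrepancy of the central fibre equal to $1-t$.
\end{itemize}
Finally, I will verify that the base-change scaling is consistent on both sides. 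Both $\DF^{[t]}$ and $J^{\NA}$ (and $\Ding^{[t]}$) scale by $m$ under $z\mapsto z^m$, so the factor $m$ appears uniformly and does not disturb the inequality with $\delta J^{\NA}$.
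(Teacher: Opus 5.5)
Your proposal is the paper's proof: $(2)\Rightarrow(1)$ comes from Proposition~\ref{prop:mixed-BF}, and $(1)\Rightarrow(2)$ comes from applying hypothesis (1) to the special test configuration produced by Theorem~\ref{thm:t-Ding-special-reduction}, on which $\Ding^{[t]}=\DF^{[t]}$; the only difference is that you argue directly where the paper argues by contradiction. Your supplementary checks are not needed, since that theorem already asserts $\Ding^{[t]}=\DF^{[t]}$ on $\X^{\mathrm{sp}}$ and already builds the factor $m$ into its inequality; two of them are also slightly off, though neither is used in your argument: plt-ness of $(\X^{\mathrm{sp}},\X^{\mathrm{sp}}_0)$ together with invariance of vertical divisors does not by itself control the foliated discrepancies $A_{\X,\F_{\X}}(w)$ needed for mixed log canonicity, and $\DF^{[t]}$ does not scale exactly by $m$ under normalized base change when the central fibre is non-reduced (compare the proof of Theorem~\ref{thm: xlc step}).
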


\begin{proof}
The implication $(2)\Rightarrow(1)$ follows from
Proposition~\ref{prop:mixed-BF}. Conversely, assume $(1)$.
If $(2)$ failed, there would exist a normal ample $\F$-compatible test
configuration with
\[
\Ding^{[t]}-\delta J^{\NA}<0.
\]
Applying Theorem~\ref{thm:t-Ding-special-reduction}, after a base change we obtain a $t$-weakly special test configuration with no larger value of $\Ding^{[t]}-\delta J^{\NA}$. On that final model we have by Theorem \ref{thm:t-Ding-special-reduction} $0>\Ding^{[t]}=\DF^{[t]}$ contradicting $(1)$. Hence the equivalence is proven.
\end{proof}

\subsection{\texorpdfstring{$t$-Ding stability in terms of divisorial valuations}{t-Ding stability in terms of divisorial valuations}}

In this section we will show that $t$-K-stability and $t$-Ding stability can be tested for prime divisors. We first have the following auxiliary results. Let us fix some notation.

Let $(X,\F,L)$ be a polarised foliated variety, and set $W:=X\times \A^1$, $\cG:=p_1^*\F\subset T_{W/\A^1}$. Note that $T_{X\times \A^1} = p_1^*T_X\oplus p_2^*T_{\A^1}$, hence by $\cG$ we mean the natural product foliation on $X\times \A^1$. Let $\mathcal I\subset \cO_W$ be a $\G_m$-invariant coherent ideal sheaf
supported on $X\times \{0\}$, and let $\Pi\colon Y\to W$ be the normalization of the blowup of $\mathcal I$. 

\begin{lemma}\label{lem:birational-transform-product-foliation}
There exists a $\G_m$-equivariant saturated integrable subsheaf $\F_Y\subset T_{Y/\A^1}$ such that:
\begin{enumerate}
\item $\F_Y$ has the same generic rank as $\F$;
\item on the locus where $\Pi$ is an isomorphism, $\F_Y$ is the birational
transform of $\cG$;
\item over $X\times (\A^1\setminus\{0\})$, the foliation $\F_Y$ coincides
with the product foliation.
\end{enumerate}
\end{lemma}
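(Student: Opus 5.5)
The plan is to build $\F_Y$ directly as the saturation of the pullback of the product foliation $\cG$ on the open set where $\Pi$ is an isomorphism. We then check the three properties in turn, with $\G_m$-equivariance coming from the fact that every ingredient is canonical.

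\emph{Step 1: the open set $U$.} Let $U\subset Y$ be the maximal open set on which $\Pi$ is an isomorphism onto its image. Since $\mathcal I$ is supported on $X\times\{0\}$, the set $U$ contains $\Pi^{-1}(X\times(\A^1\setminus\{0\}))$. On $U$ we set $\F_U:=(\Pi|_U)^*\cG\subset T_U$. Because $\cG\subset T_{W/\A^1}$, the image of $\F_U$ lies in $T_{U/\A^1}$.

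\emph{Step 2: extension and saturation.} Let $j\colon U\hookrightarrow Y$ be the inclusion. $Y$ is normal, so $T_Y$ and $T_{Y/\A^1}$ are reflexive. Here $T_{Y/\A^1}$ is the kernel of $T_Y\to \Pi^*p_2^*T_{\A^1}$, so it is saturated in $T_Y$. We define $\F_Y$ to be the saturation in $T_Y$ of
\[
T_Y\cap j_*\F_U.
\]
Equivalently, $\F_Y$ is the kernel of $T_Y\to j_*\bigl(T_U/\F_U\bigr)$ followed by passage to the torsion-free quotient. Saturation preserves the generic rank, which gives (1). Since $\F_U\subset T_{U/\A^1}$ and $T_{Y/\A^1}$ is saturated in $T_Y$, we get $\F_Y\subset T_{Y/\A^1}$. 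For integrability, the O'Neill tensor $\wedge^2\F_Y\to T_Y/\F_Y$ vanishes on the dense open set $U$, because $\cG$ is closed under the Lie bracket there. The target $T_Y/\F_Y$ is torsion-free, so the tensor vanishes identically. On $U$, $\F_U$ is already saturated, because $\cG$ is saturated in $T_W$ and $\Pi|_U$ is an isomorphism. Hence $\F_Y|_U=\F_U$, which is (2). Since $\Pi$ is an isomorphism over $X\times(\A^1\setminus\{0\})$, (3) follows from (2).

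\emph{Step 3: equivariance.} Because $\mathcal I$ is $\G_m$-invariant, the $\G_m$-action on $W$ lifts to the blowup and to its normalisation, and $U$ is $\G_m$-stable. The product foliation $\cG=p_1^*\F$ is preserved by the action $(x,z)\mapsto(x,\lambda z)$, since that action is trivial on the $X$ factor. Therefore $\F_U$ is $\G_m$-stable. The operations $j_*$, intersection with $T_Y$, and saturation are all canonical, so $\F_Y$ is $\G_m$-stable as well.

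The main obstacle is the saturation and integrability step on a possibly singular normal $Y$. There one must make sure that $j_*\F_U$ actually meets $T_Y$ in a subsheaf of the expected rank. This is why I would phrase $\F_Y$ as the kernel of $T_Y\to j_*(T_U/\F_U)$, which is automatically saturated, and argue integrability through torsion-freeness of the quotient rather than by working in local coordinates.
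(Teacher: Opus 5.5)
Your proposal is correct and follows essentially the same route as the paper. You define $\F_Y$ as the pullback of $\cG$ on the isomorphism locus and extend by saturation; your $T_Y\cap j_*\F_U$ is exactly the unique saturated subsheaf with generic fibre $\cG_\eta$ that the paper uses. You then get integrability from the $\cO_Y$-linear bracket vanishing into a torsion-free quotient, and $\G_m$-equivariance from canonicity, just as the paper does. The only cosmetic difference is that you saturate in $T_Y$ and deduce $\F_Y\subset T_{Y/\A^1}$ from $T_{Y/\A^1}$ being saturated in $T_Y$, whereas the paper saturates directly inside $T_{Y/\A^1}$.
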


\begin{proof}
Let $r:=\operatorname{rk}(\F)$. Since $\F\subset T_X$ is saturated, it is a
torsion-free coherent sheaf of rank $r$. Hence $\cG=p_1^*\F\subset T_{W/\A^1}$ is again a torsion-free saturated coherent subsheaf of rank $r$. It is integrable because $\F$ is integrable.

Since $\Pi\colon Y\to W$ is birational and $Y$ is normal, we have
$K(Y)=K(W)$. Thus the generic fibre of $\cG$ determines an $r$-dimensional
$K(Y)$-subspace
\[
\cG_\eta \subset T_{W/\A^1}\otimes K(W)
      = T_{Y/\A^1}\otimes K(Y),
\]
where the equality is via the birational identification.

We define $\F_Y$ to be the unique saturated subsheaf of $T_{Y/\A^1}$ whose
generic fibre is $\cG_\eta$. Equivalently, if $U\subset Y$ is any open set on
which $\Pi$ is an isomorphism onto its image, then on $U$ we set
\[
\F_Y|_U := (\Pi|_U)^*(\cG),
\]
and then extend to all of $Y$ by saturation inside $T_{Y/\A^1}$.
By construction, $\F_Y$ is a coherent saturated subsheaf of rank $r$, and
it agrees with the birational transform of $\cG$ on every open set where
$\Pi$ is an isomorphism.

We next show integrability. Since $\F_Y$ is saturated in the torsion-free
sheaf $T_{Y/\A^1}$, the quotient $T_{Y/\A^1}/\F_Y$ is torsion-free. The Lie bracket induces an $\cO_Y$-morphism
\[
[\ ,\ ]\colon \wedge^2\F_Y \longrightarrow T_{Y/\A^1}/\F_Y.
\]
On the dense open subset where $\Pi$ is an isomorphism, $\F_Y$ is identified
with $\cG$, and $\cG$ is integrable. Hence the above morphism vanishes on a
dense open subset of $Y$. Since the target is torsion-free, it follows that
the bracket vanishes identically. Therefore $\F_Y$ is integrable.

We now prove $\G_m$-equivariance. Since $\mathcal I$ is $\G_m$-invariant,
the blowup of $\mathcal I$ is $\G_m$-equivariant, and hence so is its
normalization $Y$. The product foliation $\cG=p_1^*\F\subset T_{W/\A^1}$ is preserved by the $\G_m$-action, because the action is trivial on the
$X$-factor and standard on the $\A^1$-factor. For any $g\in \G_m$, the
pullback $g^*\F_Y$ is again a saturated subsheaf of $T_{Y/\A^1}$. On the
dense open set where $\Pi$ is an isomorphism, it agrees with the transform
of $g^*\cG=\cG$. Hence $g^*\F_Y$ and $\F_Y$ have the same generic fibre
inside $T_{Y/\A^1}\otimes K(Y)$. By uniqueness of the saturated subsheaf
with a given generic fibre, we conclude that $g^*\F_Y=\F_Y$ for every $g\in \G_m$. Thus $\F_Y$ is $\G_m$-equivariant.

Finally, since $\mathcal I$ is supported on $X\times \{0\}$, the morphism
$\Pi$ is an isomorphism over $X\times (\A^1\setminus\{0\})$ and so, by construction, $\F_Y$ restricts there to the product foliation.
\end{proof}

Now let $E$ be a prime divisor over $X$. We choose $r_0\in \Z_{>0}$ such that $r_0L$ is Cartier and set $V_m:=H^0(X,mr_0L)$ for $m\in \Z_{>0}$. We define the filtration associated to $E$ by
\[
\F_E^x V_m
:=
H^0(X,mr_0L-\lfloor x\rfloor E)
\subset V_m.
\]

For $m\in \Z_{>0}$ and $p\in \Z_{\ge 0}$, let
\[
I_{(m,p)}
:=
\operatorname{Im}
\Bigl(
\F_E^p V_m\otimes_{\C}\cO_X(-mr_0L)\longrightarrow \cO_X
\Bigr).
\]
We also choose integers $e_+\gg 1$ and $e_-\ll 0$ such that $e_+ > r_0 \cdot \tau(E)$ and  $e_- < 0$, which imply that for $m\gg 1$ the flag
ideal
\[
\mathcal I_m
:=
I_{(m,me_+)}
+
I_{(m,me_+-1)}s
+\cdots+
I_{(m,me_-+1)}s^{m(e_+-e_-)-1}
+(s^{m(e_+-e_-)})
\subset \cO_{X\times \A^1}
\]
is well-defined. For each $r\gg 1$, let
\[
\Pi_r\colon \mathcal Y_r\to X\times \A^1
\]
be the normalized blowup of the flag ideal $\mathcal I_r$, and let $E_r$
be the effective Cartier divisor on $\mathcal Y_r$ defined by
\[
\mathcal I_r\cdot \cO_{\mathcal Y_r}
=
\cO_{\mathcal Y_r}(-E_r).
\]
We also set
\[
\mathcal M_r:=\Pi_r^*p_1^*L-\frac{1}{rr_0}E_r.
\]

\begin{proposition}\label{prop:Fujita-blowup-F-compatible}
There exists a $\G_m$-equivariant saturated integrable subsheaf $\F_{\mathcal Y_r}\subset T_{\mathcal Y_r/\A^1}$ such that:
\begin{enumerate}
\item $\F_{\mathcal Y_r}$ is the birational transform of the product
foliation $p_1^*\F$;
\item $\F_{\mathcal Y_r}$ coincides with $p_1^*\F$ over
$X\times (\A^1\setminus\{0\})$;
\item $\F_{\mathcal Y_r}$ is algebraically integrable.
\end{enumerate}
\end{proposition}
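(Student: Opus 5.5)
The plan is to reduce to Lemma \ref{lem:birational-transform-product-foliation}, applied with $\mathcal I=\mathcal I_r$. First I check that its hypotheses hold. The flag ideal $\mathcal I_r$ is a sum of terms $I_{(r,p)}s^{k}$ together with $(s^{r(e_+-e_-)})$, and each term is homogeneous for the $\G_m$-action that is trivial on $X$ and standard on $\A^1$. Hence $\mathcal I_r$ is $\G_m$-invariant. It contains a power of $s$, and for $p\le 0$ we have $I_{(r,p)}=\cO_X$ (as soon as $rr_0L$ is globally generated, which holds for $r\gg1$). So $\mathcal I_r$ is cosupported on $X\times\{0\}$, and in particular $\Pi_r$ is an isomorphism over $X\times(\A^1\setminus\{0\})$.

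Lemma \ref{lem:birational-transform-product-foliation} then yields a $\G_m$-equivariant saturated integrable subsheaf $\F_{\mathcal Y_r}\subset T_{\mathcal Y_r/\A^1}$ of rank $\rk(\F)$. It agrees with the transform of $p_1^*\F$ wherever $\Pi_r$ is an isomorphism. By construction it is the unique saturated subsheaf with the same generic fibre as $p_1^*\F$, that is, the birational transform, which gives (1). It restricts to the product foliation over $X\times(\A^1\setminus\{0\})$, which gives (2).

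It remains to prove (3), algebraic integrability. This depends only on the generic fibre of the foliation, i.e.\ on the subspace of $T\otimes K(\mathcal Y_r)$, because leaves through very general points are determined on any dense open subset. Since $\F$ is algebraically integrable, its very general leaf closure is a subvariety $Z\subset X$. Over $X\times(\A^1\setminus\{0\})$, where $\F_{\mathcal Y_r}$ is identified with $p_1^*\F$, a very general point $(x,s)$ has leaf $\mathrm{Leaf}_\F(x)\times\{s\}$, since $p_1^*\F$ is tangent to the fibres of the projection to $\A^1$. Its closure $Z\times\{s\}$ is algebraic. Because $X\times(\A^1\setminus\{0\})$ is a dense open subset of $\mathcal Y_r$, very general points of $\mathcal Y_r$ lie there, and their leaf closures in $\mathcal Y_r$ are the closures of $Z\times\{s\}$, which are algebraic. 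Equivalently, if $\F$ is induced by a rational map $X\dashrightarrow B$, then $\F_{\mathcal Y_r}$ is induced by the composite $\mathcal Y_r\dashrightarrow X\times\A^1\dashrightarrow B\times\A^1$.

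The main point needing care is this last step, specifically that saturation in $T_{\mathcal Y_r/\A^1}$ (rather than in $T_{\mathcal Y_r}$) gives the same foliation as saturation in $T_{\mathcal Y_r}$. It does, because $T_{\mathcal Y_r/\A^1}\subset T_{\mathcal Y_r}$ is itself saturated: the quotient embeds in the pullback of $T_{\A^1}$ and is therefore torsion-free. Hence the argument through the rational map above applies.
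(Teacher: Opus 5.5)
Your proposal is correct and follows the paper's proof essentially verbatim: you apply Lemma~\ref{lem:birational-transform-product-foliation} with $\mathcal I=\mathcal I_r$ to get (1) and (2), and then deduce algebraic integrability from the fact that very general points of $\mathcal Y_r$ lie over $X\times(\A^1\setminus\{0\})$, where the leaves are $L_x\times\{a\}$. Your explicit checks that $\mathcal I_r$ is $\G_m$-invariant and cosupported on the central fibre, and that saturation in $T_{\mathcal Y_r/\A^1}$ agrees with saturation in $T_{\mathcal Y_r}$, are details the paper leaves implicit.
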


\begin{proof}
Since the flag ideal $\mathcal I_r$ is $\G_m$-invariant and supported on the
central fibre, Lemma~\ref{lem:birational-transform-product-foliation}
applies and yields a $\G_m$-equivariant saturated integrable subsheaf $\F_{\mathcal Y_r}\subset T_{\mathcal Y_r/\A^1}$ which is the birational transform of the product foliation $p_1^*\F$ and
agrees with it over $X\times (\A^1\setminus\{0\})$.

It remains to prove algebraic integrability of $K_{\F_{\mathcal Y_r}}$. Let $y\in \mathcal Y_r$ be a very general point. Since the exceptional locus of $\Pi_r$ and the central fibre are proper closed subsets, we may assume that $y$ lies outside both. Then $\Pi_r$ is an isomorphism near $y$, and
\[
\Pi_r(y)=(x,a)\in X\times (\A^1\setminus\{0\}).
\]
Near $y$, the foliation $\F_{\mathcal Y_r}$ is identified with the product foliation $p_1^*\F$, so the leaf through $y$ corresponds to $L_x\times \{a\}$ where $L_x$ is the leaf of $\F$ through $x$.

Since $\F$ is algebraically integrable, the leaf through a very general point $x\in X$ is Zariski open in an algebraic subvariety $\bar{L_x}\subset X$. Hence $L_x\times \{a\}$ is Zariski open in the algebraic subvariety $\bar{L_x}\times \{a\}\subset X\times \A^1.$ Pulling back through the local isomorphism $\Pi_r$, we see that the leaf of $\F_{\mathcal Y_r}$ through $y$ is Zariski open in the strict transform of $\bar{L_x}\times \{a\}$, which is an algebraic subvariety of $\mathcal Y_r$. Therefore $\F_{\mathcal Y_r}$ is algebraically integrable.
\end{proof}

By the above, once we equip $\Y_r$ with the semiample polarization $\mathcal M_r$, the triple $(\mathcal Y_r,\F_{\mathcal Y_r},\mathcal M_r)/\A^1$ is a normal semiample $\F$-compatible test configuration. This extends Odaka's usual construction \cite[Definition 3.1]{Odaka12} to $\F$-compatible test configurations.

\begin{theorem}\label{thm:t-Ding-prime-divisor}
Assume that there exists $\delta\in [0,1)$ such that
\[
\Ding^{[t]}(\X,\F_{\X},\L)\ge \delta\cdot J^{\NA}(\X,\L)
\]
for every normal semiample $\F$-compatible test configuration $(\X,\F_{\X},\L)/\A^1$ for $(X,\F,L)$. Then
\[
\beta^{[t]}_{X,\F,L}(E)\ge \delta\cdot j_L(E)
\]
for every prime divisor $E$ over $X$.
In particular, if $(X,\F,L)$ is $t$-Ding semistable, then
\[
\beta^{[t]}_{X,\F,L}(E)\ge 0
\]
for every prime divisor $E$ over $X$.
\end{theorem}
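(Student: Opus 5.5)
We adapt Fujita's argument \cite[Theorem 4.1 / Section 4]{Fujita16} to the foliated setting, applying the hypothesis to the Odaka-type flag-ideal test configurations $(\mathcal Y_r,\F_{\mathcal Y_r},\mathcal M_r)$ just constructed. By Proposition~\ref{prop:Fujita-blowup-F-compatible} these are normal semiample $\F$-compatible test configurations, and by Remark~\ref{rem:ample-semiample-interchange} the hypothesis applies to them. So for each $r\gg1$ we get
\[
\Ding^{[t]}(\mathcal Y_r,\F_{\mathcal Y_r},\mathcal M_r)\ge \delta\, J^{\NA}(\mathcal Y_r,\mathcal M_r).
\]
The proof then reduces to computing both sides asymptotically as $r\to\infty$ in terms of $E$.

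\emph{The intersection and $J^{\NA}$ terms.} These involve only the underlying polarised test configuration $(\mathcal Y_r,\mathcal M_r)$, so no foliation input is needed. Fujita's computation via the filtration $\F_E$ and \cite[Proposition 2.12]{Fujita16} gives
\[
-\frac{\bar{\mathcal M}_r^{\,n+1}}{(n+1)V}\longrightarrow -e_+ + \frac{1}{V}\int_0^{\infty}\vol(L-xE)\,dx = -e_+ + S_L(E),
\]
with the same normalisation $e_\pm$ as in the paper. Similarly, $J^{\NA}(\mathcal Y_r,\mathcal M_r)$ tends to a quantity that, after the same shift by $e_+$, equals $T_L(E)-S_L(E)=j_L(E)$. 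We will simply quote these limits from \cite{Fujita16}, since they are statements about $(X,L)$ alone.

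\emph{The mixed log canonical threshold term.} We must bound
\[
\lct^{[t]}\bigl(\bar{\mathcal Y}_r,\bar{\F};D^{[t]}_r;\bar{\mathcal Y}_{r,0}\bigr)-(1-t)
\]
from above by roughly $e_+ - A^{[t]}_{X,\F}(E)+o(1)$. We test the infimum defining the threshold on a single divisorial valuation $w$ over $X\times\A^1$: the $\G_m$-invariant extension of $\ord_E$ with $w(s)=1$, realised as in Proposition~\ref{prop: auxil ord computation} by the blow-up of $E\times\{0\}$. That proposition (more precisely its proof) computes the foliated part of $A_{X\times\A^1,p_1^*\F}(w)$ as $A_{X,\F}(E)$ plus the invariant-direction correction, and the ambient part as $A_X(E)+1$. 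Hence $A^{[t]}$ of $w$ over $X\times\A^1$ is $A^{[t]}_{X,\F}(E)+(1-t)$.

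Using Lemma~\ref{lem:mixed-discrepancy-subtract-D}, we transport this discrepancy to $\mathcal Y_r$ by writing $K^{[t]}_{\mathcal Y_r/\A^1}=\Pi_r^*K^{[t]}_{X\times\A^1/\A^1}+(\text{exceptional})$, and identifying $D^{[t]}_r$ with $\frac{1}{rr_0}E_r$ minus the pullback of the relative mixed canonical class, up to vertical terms. The valuation $w(\mathcal I_r)$ is computed exactly as in Fujita: $w(\mathcal I_r)/(rr_0)\to e_+$. Plugging this $w$ into the infimum gives
\[
\lct^{[t]}-(1-t)\le A^{[t]}_{X,\F}(E)-e_+ + o(1),
\]
with the $(1-t)$ cancelling. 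Combining the pieces, $\Ding^{[t]}(\mathcal Y_r,\dots)\le A^{[t]}_{X,\F}(E)-S_L(E)+o(1)=\beta^{[t]}_{X,\F,L}(E)+o(1)$, and letting $r\to\infty$ gives $\beta^{[t]}(E)\ge\delta\, j_L(E)$. Taking $\delta=0$ gives the semistable statement.

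\emph{Main obstacle.} The delicate step is the threshold estimate. One must check that the foliated discrepancy of $w$ on $X\times\A^1$ with the product foliation $p_1^*\F\subset T_{W/\A^1}$ is correctly normalised: the vertical divisor $X\times\{0\}$ is invariant and contributes $1-t$, not $1$, which must match the $-(1-t)$ in Definition~\ref{def:t-Ding}. One must also check that the transverse/invariant correction $\varepsilon(E)$ appears exactly as in Proposition~\ref{prop: auxil ord computation}. I would first verify this in the two local charts used there, and then confirm that the error terms from normalising the blow-up and from the $\lfloor x\rfloor$ rounding are $O(1/r)$ uniformly.
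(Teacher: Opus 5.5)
Your proposal is correct and follows the paper's proof essentially step for step: the same flag-ideal test configurations $(\mathcal Y_r,\F_{\mathcal Y_r},\mathcal M_r)$ and the same test valuation (the exceptional divisor of the blow-up of $E\times\{0\}$), with $w(s)=1$, $w(\mathcal I_r)\ge re_+$ and $A^{[t]}_{X\times\A^1,p_1^*\F}(w)=A^{[t]}_{X,\F}(E)+(1-t)$, together with Fujita's limits for $\lambda^{\max}$ and $\bar{\mathcal M}_r^{\,n+1}$; the only cosmetic difference is that the paper rewrites $\Ding^{[t]}\ge\delta J^{\NA}$ as $\lct^{[t]}\ge d_{r,\delta}$, rather than bounding $\Ding^{[t]}$ above by $\beta^{[t]}_{X,\F,L}(E)+o(1)$. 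Do fix the sign slips: the limit should read $-\bar{\mathcal M}_r^{\,n+1}/((n+1)V)\to e_+/r_0-S_L(E)$, and the threshold target is $\lct^{[t]}-(1-t)\le A^{[t]}_{X,\F}(E)-e_+/r_0$ (not its negative), which is what your final combination $\Ding^{[t]}\le\beta^{[t]}_{X,\F,L}(E)+o(1)$ actually uses.
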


\begin{proof}
Let $E$ be a prime divisor over $X$. We shall follow the idea of proof in \cite[Proof of Theorem 4.1]{Fujita16}. As before, we consider the flag ideals $\mathcal I_m$ corresponding to the filtration of $E$ and for each $r\gg 1$, we take the normalisation of the blow-up $\Pi_r\colon \mathcal Y_r\to X\times \A^1$ with divisor $\M_r$. Then, by Proposition~\ref{prop:Fujita-blowup-F-compatible}, $(\mathcal Y_r,\F_{\mathcal Y_r},\mathcal M_r)/\A^1$ is a normal semiample $\F$-compatible test configuration. Hence, by the assumption and Remark \ref{rem:ample-semiample-interchange}, we have
\[
\Ding^{[t]}(\mathcal Y_r,\F_{\mathcal Y_r},\mathcal M_r)
\ge
\delta\,J^{\NA}(\mathcal Y_r,\mathcal M_r).
\]

Let $(\bar{\mathcal Y}_r,\bar{\F}_{\mathcal Y_r},\bar{\mathcal M}_r)
\to \PP^1$ be the natural compactification. By the definition of the mixed correction divisor we have
\[
\lct^{[t]}\!\left(
\bar{\mathcal Y}_r,\bar{\F}_{\mathcal Y_r};
D^{[t]}_{(\mathcal Y_r,\F_{\mathcal Y_r},\mathcal M_r)};
\bar{\mathcal Y}_{r,0}
\right)
=
\lct^{[t]}\!\left(
X\times \A^1,\ p_1^*\F;\
\mathcal I_r^{1/(rr_0)}; (s)
\right).
\]
We now define
\[
d_{r,\delta}
:=
(1-t)
+\delta\,\lambda^{\max}(\mathcal Y_r,\mathcal M_r)
+(1-\delta)\frac{\bar{\mathcal M}_r^{\,n+1}}{(n+1)V}.
\]
Since
\[
J^{\NA}(\mathcal Y_r,\mathcal M_r)
=
\lambda^{\max}(\mathcal Y_r,\mathcal M_r)
-\frac{\bar{\mathcal M}_r^{\,n+1}}{(n+1)V},
\]
the inequality
\[
\Ding^{[t]}(\mathcal Y_r,\F_{\mathcal Y_r},\mathcal M_r)\ge
\delta\,J^{\NA}(\mathcal Y_r,\mathcal M_r)
\]
is equivalent, by Definition~\ref{def:t-Ding}, to
\[
\lct^{[t]}\!\left(
\bar{\mathcal Y}_r,\bar{\F}_{\mathcal Y_r};
D^{[t]}_{(\mathcal Y_r,\F_{\mathcal Y_r},\mathcal M_r)};
\bar{\mathcal Y}_{r,0}
\right)
\ge d_{r,\delta},
\]
hence to
\[
\lct^{[t]}\!\left(
X\times \A^1,\ p_1^*\F;\
\mathcal I_r^{1/(rr_0)};\ (s)
\right)\ge d_{r,\delta}.
\]
By the valuative definition of the mixed threshold, this means that for every
divisorial valuation $w$ over $X\times \A^1$,
\[
A^{[t]}_{X\times \A^1,\ p_1^*\F}(w)
-\frac{1}{rr_0}w(\mathcal I_r)
-d_{r,\delta}\,w(t)
\ge 0.
\tag{$*$}
\]

Now, let $\sigma\colon Y\to X$ be a fixed log resolution on which $E$
appears as a prime divisor, and let $\Pi\colon \widetilde Y\to Y\times \A^1$ be the blowup of $E\times \{0\}$. Denote by $\widetilde E$ the exceptional divisor. We will evaluate the expression $(*)$ at $\ord_{\widetilde E}$.

We first compute the three terms appearing in $(*)$.

\smallskip

\noindent
\emph{Claim 1.}
\[
\ord_{\widetilde E}(t)=1.
\]
\emph{Proof.}
This is immediate from the definition of $\widetilde E$, since
$\widetilde E$ is the exceptional divisor of the blowup of the ideal
$(\cO_Y(-E),t)$.
\qed

\smallskip

\noindent
\emph{Claim 2.}
\[
\ord_{\widetilde E}(\mathcal I_r)\ge re_+.
\]
\emph{Proof.}
By construction of the filtration, we have $I_{(r,j)}\cdot \cO_Y \subset \cO_Y(-jE)$ for all $j\ge 0)$. Hence
\[
\mathcal I_r\cdot \cO_{Y\times \A^1}
\subset
\cO_Y(-re_+E)
+\cO_Y(-(re_+-1)E)t
+\cdots+
\cO_Y(-E)t^{re_+-1}
+(t^{re_+}).
\]
The right-hand side is exactly $(\cO_Y(-E)+(t))^{re_+}$. Since $\widetilde E$ is the exceptional divisor of the blowup of $(\cO_Y(-E),t)$, we have $\ord_{\widetilde E}(\cO_Y(-E)+(t))=1$ and therefore $\ord_{\widetilde E}(\mathcal I_r)\ge re_+$.
\qed

\smallskip

\noindent
\emph{Claim 3.}
\[
A^{[t]}_{X\times \A^1,\ p_1^*\F}(\widetilde E)
=
A^{[t]}_{X,\F}(E)+(1-t).
\]
\emph{Proof.}
We compute the ordinary and foliated parts separately. For the ordinary discrepancy, $E\times \A^1$ is a divisor over
$X\times \A^1$ with
\[
A_{X\times \A^1}(E\times \A^1)=A_X(E).
\]
Blowing up the smooth codimension-two center $E\times\{0\}\subset Y\times \A^1$
adds ordinary discrepancy $1$. Hence
\[
A_{X\times \A^1}(\widetilde E)=A_X(E)+1.
\]

For the foliated discrepancy, let $\F_Y$ be the induced foliation on $Y$.
We claim that
\[
A_{X\times \A^1,\ p_1^*\F}(\widetilde E)=A_{X,\F}(E).
\]
We distinguish two cases.

\smallskip

\noindent
\emph{Case 1: $E$ is $\F_Y$-invariant.}
We choose local coordinates $(x_0,\dots,x_n)$ near a general point of $E$ such that $E=(x_0=0)$ and $\F_Y$ is
generated by vector fields tangent to $E$. On $Y\times \A^1$, with
coordinate $s$ on $\A^1$, the product foliation $p_1^*\F_Y$ is generated by the same vector fields, viewed on the first factor. The blowup of $E\times\{0\}$ is locally the blowup of the ideal $(x_1,\ell)$. In the chart $x_0=u$, $\ell=uv$ the pullbacks of the generators remain regular and tangent to the exceptional divisor $(u=0)$. Thus the induced foliation on $\widetilde Y$ has no additional discrepancy along $\widetilde E$, and $\widetilde E$ is invariant for the induced foliation. Therefore
\[
A_{X\times \A^1,\ p_1^*\F}(\widetilde E)=A_{X,\F}(E).
\]

\smallskip

\noindent
\emph{Case 2: $E$ is transverse to $\F_Y$.}
We again choose local coordinates $(x_0,\dots,x_n)$ near a general point of $E$ such that $E=(x_0=0)$ and $\F_Y$ is locally
generated by
\[
\frac{\partial}{\partial x_0},
\frac{\partial}{\partial x_1},
\dots,
\frac{\partial}{\partial x_r}.
\]
Again let $\ell$ be the coordinate on $\A^1$, so the product foliation
$p_1^*\F_Y$ on $Y\times \A^1$ is generated by the same vector fields.
After blowing up the ideal $(x_0,\ell)$, in the chart $x_0=u$, $\ell=uv$ the pullback of $\partial/\partial x_0$ is
\[
\frac{\partial}{\partial u}-\frac{v}{u}\frac{\partial}{\partial v}.
\]
After saturation, the transformed foliation is locally generated by
\[
u\frac{\partial}{\partial u}-v\frac{\partial}{\partial v},
\frac{\partial}{\partial x_1},
\dots,
\frac{\partial}{\partial x_r}.
\]
Hence $\widetilde E=(u=0)$ is invariant for the transformed foliation, and
exactly one factor of $u$ is introduced in the determinant. Equivalently,
the coefficient of $\widetilde E$ in the foliated canonical divisor increases
by $1$. Since $E$ is transverse, $\epsilon(E)=1$, whereas
$\epsilon(\widetilde E)=0$. Therefore
\[
A_{X\times \A^1,\ p_1^*\F}(\widetilde E)=A_{X,\F}(E).
\]
Thus in either case
\[
A_{X\times \A^1,\ p_1^*\F}(\widetilde E)=A_{X,\F}(E).
\]
Combining this with the ordinary discrepancy computation gives
\[
A^{[t]}_{X\times \A^1,\ p_1^*\F}(\widetilde E)
=
(1-t)(A_X(E)+1)+tA_{X,\F}(E)
=
A^{[t]}_{X,\F}(E)+(1-t).
\]
\qed

Substituting Claims~1--3 into $(*)$ gives
\[
A^{[t]}_{X,\F}(E)+(1-t)-\frac{e_+}{r_0}-d_{r,\delta}\ge 0,
\]
that is,
\[
A^{[t]}_{X,\F}(E)\ge d_{r,\delta}-(1-t)+\frac{e_+}{r_0}.
\tag{$**$}
\]

We now compute the limit of $d_{r,\delta}$. First, exactly as in \cite[Claim~4.3]{Fujita16}, since $\max\{j\in \Z_{\ge 0}\mid I_{(r,j)}\neq 0\} = \lfloor r r_0 T_L(E)\rfloor$ and by the explicit description of the filtration associated to $(\mathcal Y_r,rr_0\mathcal M_r)$ we have
\[
\lim_{r\to\infty}\lambda^{\max}(\mathcal Y_r,\mathcal M_r)
=
-\frac{e_+}{r_0}+T_L(E).
\]
Furthermore, by the standard intersection formula for the blowup test configurations,
\[
\frac{\bar{\mathcal M}_r^{\,n+1}}{(n+1)V}
=
-\frac{e_+}{r_0}
+
\frac{1}{V}\int_0^{e_+/r_0}\vol(L-xE)\,dx
+o(1),
\]
and because $e_+/r_0>T_L(E)$, the integral equals
\[
\int_0^{T_L(E)}\vol(L-xE)\,dx
=
V\,S_L(E).
\]
Hence, we obtain 
\[
\lim_{r\to\infty}
\frac{\bar{\mathcal M}_r^{\,n+1}}{(n+1)V}
=
-\frac{e_+}{r_0}+S_L(E).
\]

Therefore
\[
\lim_{r\to\infty} d_{r,\delta}
=
(1-t)-\frac{e_+}{r_0}
+\delta\,T_L(E)
+(1-\delta)\,S_L(E).
\]
Passing to the limit in the expresion $(**)$, we obtain
\[
A^{[t]}_{X,\F}(E)
\ge
\delta\,T_L(E)+(1-\delta)\,S_L(E).
\]
Rearranging, this is
\[
A^{[t]}_{X,\F}(E)-S_L(E)\ge \delta\,(T_L(E)-S_L(E)),
\]
i.e.
\[
\beta^{[t]}_{X,\F,L}(E)\ge \delta\,j_L(E).
\]
This is the desired inequality, completing the proof.
\end{proof}

\subsection{\texorpdfstring{A Fujita-Li valuative criterion for $t$-Kstability}{A Fujita-Li valuative criterion for t-Kstability}}

\begin{theorem}\label{thm:main-prime-divisor-criterion}
Let $(X,\F,t)$ be an adjoint Fano foliated structure and let $L\sim_{\Q}-K^{[t]}_{X,\F}$. Fix $\delta\in [0,1)$. Then the following are equivalent:
\begin{enumerate}
\item
\[
\DF^{[t]}(\X,\F_{\X},\L)\ge \delta\,J^{\NA}(\X,\L)
\]
for every normal ample $\F$-compatible test configuration $(\X,\F_{\X},\L)/\A^1$ for $(X,\F,L)$;
\item
\[
\beta^{[t]}_{X,\F,L}(E)\ge \delta\,j_L(E)
\]
for every prime divisor $E$ over $X$;
\item
\[
\beta^{[t]}_{X,\F,L}(v)\ge \delta\,j_L(v)
\]
for every $\F$-dreamy divisorial valuation $v$ over $X$.
\end{enumerate}
\end{theorem}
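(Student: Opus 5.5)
We will prove the cycle of implications $(1)\Rightarrow(2)\Rightarrow(3)\Rightarrow(1)$, following the structure of \cite[Theorem 4.1 and Theorem 5.1]{Fujita16} and using the reduction results established above.

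\emph{Step 1: $(1)\Rightarrow(2)$.} Assume $(1)$. By Corollary~\ref{cor:t-K-vs-t-Ding}, the inequality $\Ding^{[t]}\ge\delta J^{\NA}$ holds for every normal ample $\F$-compatible test configuration. By Remark~\ref{rem:ample-semiample-interchange}, it then also holds for every normal semiample one; here we use that $J^{\NA}$ is unchanged when passing to the ample model, since $\M=\gamma^*\L$ and the projection formula applies. Theorem~\ref{thm:t-Ding-prime-divisor} applies directly to the blow-up test configurations $(\mathcal Y_r,\F_{\mathcal Y_r},\mathcal M_r)$ of Proposition~\ref{prop:Fujita-blowup-F-compatible}, and yields $\beta^{[t]}_{X,\F,L}(E)\ge\delta\,j_L(E)$ for every prime divisor $E$ over $X$.

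\emph{Step 2: $(2)\Rightarrow(3)$.} This is immediate. Every $\F$-dreamy divisorial valuation has the form $v=c\,\ord_E$ with $c>0$. Both $\beta^{[t]}$ and $j_L$ are homogeneous of degree one in $v$: $A^{[t]}$ scales linearly by definition, and $S_L$ and $T_L$ scale linearly because $\vol(L-x\,c\,E)$ reparametrises. Hence the inequality for $E$ gives it for $v$.

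\emph{Step 3: $(3)\Rightarrow(1)$.} This is the main step. Let $(\X,\F_\X,\L)$ be a normal ample $\F$-compatible test configuration. By Theorem~\ref{thm:t-Ding-special-reduction} and Proposition~\ref{prop:mixed-BF}, after a base change $z\mapsto z^m$ we obtain a special $\F$-compatible test configuration $(\X^{\mathrm{sp}},\F^{\mathrm{sp}},\L^{\mathrm{sp}})$ satisfying
\[
m\bigl(\DF^{[t]}(\X,\F_\X,\L)-\delta J^{\NA}(\X,\L)\bigr)\ge m\bigl(\Ding^{[t]}(\X,\F_\X,\L)-\delta J^{\NA}(\X,\L)\bigr)\ge \DF^{[t]}(\X^{\mathrm{sp}})-\delta J^{\NA}(\X^{\mathrm{sp}}).
\]
If the special test configuration is trivial, the right-hand side vanishes. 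Otherwise, Corollary~\ref{cor:special-F-compatible-implies-F-dreamy} shows that $v=v_{\X^{\mathrm{sp}}_0}$ is $\F$-dreamy, and Theorem~\ref{thm: DF description for special t.c.} gives $\DF^{[t]}(\X^{\mathrm{sp}})=\beta^{[t]}_{X,\F,L}(v)$.

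It then remains to identify $J^{\NA}(\X^{\mathrm{sp}},\L^{\mathrm{sp}})=j_L(v)$. For this we use the description of a special (Rees) test configuration via the filtration induced by $v$. The quantity $\lambda^{\max}$ equals $T_L(v)$ after the normalisation shift by $A^{[t]}(v)$, and $\L^{n+1}/((n+1)V)$ equals $S_L(v)-A^{[t]}(v)$, by the computation in the proof of Theorem~\ref{thm: DF description for special t.c.}. Their difference is therefore $T_L(v)-S_L(v)=j_L(v)$, as in \cite[\S 7]{BHJ19}. Combined with $(3)$, this shows the right-hand side above is $\ge 0$, and dividing by $m$ gives $(1)$.

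I expect the main obstacle to be this identification of $J^{\NA}$ on the special test configuration with $j_L(v)$. It requires checking that the filtration of $\F$-compatible special test configurations is the usual valuative filtration shifted by $A^{[t]}_{X,\F}(v)$, rather than by $A_X(v)$. I will handle this by reusing the computation of $w(k)$ and $\lambda^{(k)}_{\max}$ from the proof of Theorem~\ref{thm: DF description for special t.c.}, with Proposition~\ref{prop: auxil ord computation} supplying the shift $A^{[t]}_{X,\F}(v)$. That shift cancels in $J^{\NA}$, since $J^{\NA}$ is translation invariant.
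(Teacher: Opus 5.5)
Your argument is correct and follows the paper's skeleton. $(1)\Rightarrow(2)$ goes through Corollary~\ref{cor:t-K-vs-t-Ding}, the ample/semiample interchange and Theorem~\ref{thm:t-Ding-prime-divisor} exactly as in the paper. $(2)\Rightarrow(3)$ is the same triviality, and your homogeneity remark just makes it explicit.

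The routes differ in $(3)\Rightarrow(1)$. The paper appeals to Corollary~\ref{cor:reduction-special-mixed-DF}, i.e.\ Theorem~\ref{thm:reduction-special-mixed-DF}, and to the identity $\DF^{[t]}=\beta^{[t]}_{X,\F,L}(v_{\X_0})$ on special test configurations. That theorem only shows that $\DF^{[t]}$ itself does not increase along the birational procedure. As written this settles $\delta=0$, but for $\delta>0$ it says nothing about how $J^{\NA}$ changes along the procedure or what $J^{\NA}$ equals on the special model.

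You instead combine $\DF^{[t]}\ge \Ding^{[t]}$ (Proposition~\ref{prop:mixed-BF}) with the Ding reduction Theorem~\ref{thm:t-Ding-special-reduction}, which carries the $-\delta J^{\NA}$ term through every step. You then compute $J^{\NA}(\X^{\mathrm{sp}},\L^{\mathrm{sp}})=\lambda^{\max}-\bar{\L}^{n+1}/((n+1)V)=\bigl(T_L(v)-A^{[t]}_{X,\F}(v)\bigr)-\bigl(S_L(v)-A^{[t]}_{X,\F}(v)\bigr)=j_L(v)$ from the valuative filtration, as in Fujita's treatment of special test configurations, with the shift cancelling by translation invariance.

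Your route leans on more machinery: the Ding reduction, whose proof in the paper is only sketched, and the filtration description of special test configurations. In exchange it genuinely covers the uniform case $\delta\in(0,1)$. The paper's shorter citation only literally establishes the semistable case.
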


\begin{proof}
\emph{$(1)\Rightarrow (2)$.}
By Corollary~\ref{cor:t-K-vs-t-Ding}, condition~(1) is equivalent to the
corresponding $t$-Ding inequality for all normal (semi-)ample $\F$-compatible test
configurations. By the usual approximation argument, it is enough to test
$t$-Ding stability on normal semiample $\F$-compatible test configurations.
Hence Theorem~\ref{thm:t-Ding-prime-divisor} applies and gives~(2).

\smallskip

\noindent
\emph{$(2)\Rightarrow (3)$.}
This is immediate, since every $\F$-dreamy divisorial valuation is by definition induced by a prime divisor $E$ over $X$.

\smallskip

\noindent
\emph{$(3)\Rightarrow (1)$.}
This follows immediately from Theorem \ref{thm: DF description for special t.c.}, Lemma \ref{lem:vertical-divisors-invariant} and Corollary \ref{cor:special-F-compatible-implies-F-dreamy}, together with the fact that it suffices to test $t$-K-(semi/uniform)stability on $\F$-compatible
special test configurations by Corollary \ref{cor:reduction-special-mixed-DF}.
\end{proof}

\begin{corollary}\label{cor:t-K-semistable-prime-divisor}
Let $(X,\F,t)$ be an adjoint Fano foliated structure and set $L\sim_{\Q}-K^{[t]}_{X,\F}$
Then:
\begin{enumerate}
\item the following are equivalent:
\begin{enumerate}
\item $(X,\F,L)$ is uniformly $t$-K-stable;
\item there exists $\delta\in (0,1)$ such that
\[
\beta^{[t]}_{X,\F,L}(E)\ge \delta\,j_L(E)
\]
for every prime divisor $E$ over $X$;
\item there exists $\delta\in (0,1)$ such that
\[
\beta^{[t]}_{X,\F,L}(v)\ge \delta\,j_L(v)
\]
for every $\F$-dreamy divisorial valuation $v$ over $X$;
\end{enumerate}
\item the following are equivalent:
\begin{enumerate}
\item $(X,\F,L)$ is $t$-K-semistable;
\item
\[
\beta^{[t]}_{X,\F,L}(E)\ge 0
\]
for every prime divisor $E$ over $X$;
\item
\[
\beta^{[t]}_{X,\F,L}(v)\ge 0
\]
for every $\F$-dreamy divisorial valuation $v$ over $X$.
\end{enumerate}
\end{enumerate}
\end{corollary}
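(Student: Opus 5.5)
The plan is to derive Corollary~\ref{cor:t-K-semistable-prime-divisor} directly from Theorem~\ref{thm:main-prime-divisor-criterion} by specialising the parameter $\delta$. No new geometric input is needed; the work is in matching the definitions of uniform $t$-K-stability and $t$-K-semistability (Definition~\ref{def: k-stability definition}) with condition (1) of Theorem~\ref{thm:main-prime-divisor-criterion}, for an appropriate $\delta$.

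For part (2), I will take $\delta=0$ in Theorem~\ref{thm:main-prime-divisor-criterion}. Then condition (1) there says that $\DF^{[t]}(\X,\F_{\X},\L)\ge 0$ for every normal ample $\F$-compatible test configuration. This is exactly $t$-K-semistability as in Definition~\ref{def: k-stability definition}, since the test configurations there are by definition normal with $\L$ relatively ample. Conditions (2) and (3) of the theorem become $\beta^{[t]}_{X,\F,L}\ge 0$ on prime divisors and on $\F$-dreamy valuations respectively. So (a), (b) and (c) of part (2) are equivalent.

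For part (1), uniform $t$-K-stability means that there is some $\delta>0$ with $\DF^{[t]}\ge \delta J^{\NA}$ for all such test configurations. Theorem~\ref{thm:main-prime-divisor-criterion} is stated only for $\delta\in[0,1)$, so I first reduce to that range. If the inequality holds for some $\delta\ge 1$, then, because $J^{\NA}(\X,\L)\ge 0$ (the standard nonnegativity of the non-Archimedean $J$-functional, which uses only the underlying test configuration $(\X,\L)$), it also holds for any smaller positive $\delta'$, say $\delta'=1/2$. The same monotonicity holds on the valuative side, since $j_L(E)=T_L(E)-S_L(E)\ge 0$ because $S_L(E)\le T_L(E)$. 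So each of (a), (b), (c) in part (1) is equivalent to the same statement with $\delta$ required to lie in $(0,1)$. For a fixed such $\delta$, the three conditions are equivalent by Theorem~\ref{thm:main-prime-divisor-criterion}. Taking the existential quantifier over $\delta$ gives the equivalence of (a), (b) and (c).

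The main point needing care is this range and monotonicity check for $\delta$, in particular the nonnegativity of $J^{\NA}$ and of $j_L$. Both are standard facts about the underlying polarised variety and are unaffected by the foliation. Everything else is formal bookkeeping on top of Theorem~\ref{thm:main-prime-divisor-criterion}, which in turn rests on the reduction to special test configurations (Corollary~\ref{cor:reduction-special-mixed-DF}) and the Ding comparison (Corollary~\ref{cor:t-K-vs-t-Ding}).
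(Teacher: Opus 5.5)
Your proposal is correct and is essentially the paper's own proof: the paper simply applies Theorem~\ref{thm:main-prime-divisor-criterion} with $\delta\in(0,1)$ for part (1) and with $\delta=0$ for part (2). Your extra monotonicity step uses $J^{\mathrm{NA}}\ge 0$ and $j_L\ge 0$ to reduce an arbitrary $\delta>0$ in the definition of uniform $t$-K-stability to some $\delta\in(0,1)$; this fills in a detail the paper leaves implicit.
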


\begin{proof}
We apply Theorem~\ref{thm:main-prime-divisor-criterion} with $\delta\in(0,1)$
for part~(1), and with $\delta=0$ for part~(2).
\end{proof}

\section{K-stability via birational invariants}\label{sec: birational inv for t ks}

In this section we will apply the approach in \cite{BJ20} to show that $t$-K-stability for adjoint Fano foliated structures is given via the birational $\alpha^{[t]}$ and $\delta^{[t]}$ invariants.

\subsection{\texorpdfstring{Mixed basis type thresholds and the $\delta^{[t]}$-invariant}{Mixed basis type thresholds and the deltat-invariant}}

In this subsection will show that the mixed delta invariant obtains a natural description in terms of basis type divisors and the mixed log canonical threshold $\lct^{[t]}$.

Throughout this subsection, we let $(X,\F,t)$ be an adjoint Fano foliated structure, with fixed $t\in [0,1)\cap \QQ$, and let $L\sim_{\QQ}-K^{[t]}_{X,\F}$. Let also $V:=L^n$.

For each $m\in \Z_{>0}$ such that $mL$ is Cartier and $H^0(X,mL)\neq 0$, let $N_m:=h^0(X,mL)$. Given a basis $s_1,\dots,s_{N_m}$ of $H^0(X,mL)$, the associated \emph{$m$-basis type divisor} is
\[
D=\frac{1}{mN_m}\sum_{i=1}^{N_m}\{s_i=0\}.
\]

\begin{definition}
\label{def:delta-m-t}
For $m$ sufficiently divisible, we define
\[
\delta^{[t]}_m(X,\F;L)
:=
\inf\bigl\{
\lct^{[t]}(X,\F;D)
\mid
D \text{ is an $m$-basis type divisor of }L
\bigr\}.
\]
We then define the mixed stability threshold by
\[
\delta^{[t]}(X,\F;L)
:=
\limsup_{m\to\infty}\delta^{[t]}_m(X,\F;L).
\]
\end{definition}

We also recall the finite-level expected vanishing invariant. For a valuation
$v\in \Val_X^*$ and $m$ sufficiently divisible, let $S_m(v):=\max_D v(D)$, where the maximum is taken over all $m$-basis type divisors $D$ of $L$.
Equivalently, if $a_{m,1}(v)\le \cdots \le a_{m,N_m}(v)$ are the vanishing orders
of a basis adapted to the filtration induced by $v$, then
\[
S_m(v)=\frac{1}{mN_m}\sum_{j=1}^{N_m} a_{m,j}(v).
\]
We have $S(v) = \lim_{m\to \infty}S_m(v)$ where the limit exists by \cite{BJ20}.

\begin{proposition}
\label{prop:mixed-delta-m-valuation}
For every sufficiently divisible $m$ we have
\[
\delta^{[t]}_m(X,\F;L)
=
\inf_{v\in \Val_X^*}\frac{A^{[t]}_{X,\F}(v)}{S_m(v)}=\inf_{v\in \Val_X^{\mathrm{Div}, *}}\frac{A^{[t]}_{X,\F}(v)}{S_m(v)}.
\]
\end{proposition}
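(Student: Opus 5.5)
The plan is to mirror the proof of \cite[Proposition 4.3]{BJ20}, which for the ordinary log discrepancy shows $\delta_m=\inf_v A(v)/S_m(v)$. The input specific to the mixed setting is the valuative description of $\lct^{[t]}$ in Definition~\ref{def:mixed-lct}, for both arbitrary and divisorial valuations. Everything else is a formal interchange of two infima. We may assume $v(D)>0$ for the relevant $D$, and use the convention $A/0=+\infty$ for valuations with $S_m(v)=0$.

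\emph{Step 1 (interchange of infima).} By Definition~\ref{def:delta-m-t} and Definition~\ref{def:mixed-lct},
\[
\delta^{[t]}_m(X,\F;L)=\inf_{D}\inf_{v\in\Val_X^*}\frac{A^{[t]}_{X,\F}(v)}{v(D)}
=\inf_{v\in\Val_X^*}\frac{A^{[t]}_{X,\F}(v)}{\sup_D v(D)},
\]
where $D$ ranges over $m$-basis type divisors of $L$. Here we use that $A^{[t]}_{X,\F}(v)\ge 0$, so that dividing by a supremum is the infimum of the quotients.

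\emph{Step 2 (the supremum is attained).} We show $\sup_D v(D)=S_m(v)$, with the supremum attained by a basis compatible with the filtration $\F_v^\lambda H^0(X,mL)=\{s: v(s)\ge\lambda\}$. This is the standard linear-algebra fact from \cite[Lemma 3.5]{BJ20}. For any basis $s_1,\dots,s_{N_m}$, we have $\sum_i v(s_i)\le\sum_j a_{m,j}(v)$, with equality for an adapted basis. The argument is purely about the filtration on the vector space, so the foliation plays no role in it.

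\emph{Step 3 (divisorial valuations).} For the second equality, repeat Step 1 using the divisorial form of $\lct^{[t]}$ from Definition~\ref{def:mixed-lct}. This gives
\[
\inf_D\inf_{v\in\Val_X^{\mathrm{div},*}}\frac{A^{[t]}_{X,\F}(v)}{v(D)}=\inf_{v\in\Val_X^{\mathrm{div},*}}\frac{A^{[t]}_{X,\F}(v)}{S_m(v)},
\]
and Step 2 applies unchanged. Both expressions then equal $\delta^{[t]}_m$, which yields the claimed chain of equalities. No separate approximation argument is needed at level $m$: the approximation is already contained in the equality of the two descriptions of $\lct^{[t]}$.

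\emph{Main obstacle.} The delicate point is the equality of the two descriptions of $\lct^{[t]}$, which the paper justifies only by the remark following Definition~\ref{def:mixed-lct}. That argument needs $A^{[t]}_{X,\F}$ to be lower semicontinuous and homogeneous on $\Val_X^*$, and to be approximated along retractions to quasi-monomial valuations, as in \cite{JM12}. Since $A_{X,\F}$ takes only the values $\varepsilon(E)\in\{0,1\}$ plus the discrepancy on divisors, the extension to quasi-monomial valuations should be taken as the linear interpolation on each simplicial cone. I would take that remark as given. If needed, I would verify the inequality $A^{[t]}_{X,\F}(v)\ge A^{[t]}_{X,\F}(r_{Y}(v))$ for retractions $r_Y$ to log-smooth models, which is the only ingredient required.
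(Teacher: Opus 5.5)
Your proposal is correct and follows essentially the same argument as the paper. The paper proves the two inequalities separately: it uses $v(D)\le S_m(v)$ for every $m$-basis type divisor $D$, and the existence of a compatible basis type divisor $D_v$ with $v(D_v)=S_m(v)$. This is exactly your interchange of infima combined with Step 2, including the implicit use of $A^{[t]}_{X,\F}\ge 0$. The paper likewise deduces the divisorial version from the divisorial formula for $\lct^{[t]}$, taken as given.
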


\begin{proof}
Fix an $m$-basis type divisor $D$. By Definition~\ref{def:mixed-lct},
\[
\lct^{[t]}(X,\F;D)
=
\inf_{v\in \Val_X^*}\frac{A^{[t]}_{X,\F}(v)}{v(D)}.
\]
Taking the infimum over all $m$-basis type divisors yields
\[
\delta^{[t]}_m(X,\F;L)
=
\inf_D \inf_{v\in \Val_X^*}\frac{A^{[t]}_{X,\F}(v)}{v(D)}.
\]
We first prove the inequality
\[
\delta^{[t]}_m(X,\F;L)\ge
\inf_{v\in \Val_X^*}\frac{A^{[t]}_{X,\F}(v)}{S_m(v)}.
\]
To see this, note that for every valuation $v$ and every $m$-basis type divisor $D$, we have $v(D)\le S_m(v)$ hence
\[
\frac{A^{[t]}_{X,\F}(v)}{v(D)}
\ge
\frac{A^{[t]}_{X,\F}(v)}{S_m(v)}.
\]
Taking first the infimum over $v$, and then the infimum over $D$, gives the
desired inequality.

For the reverse inequality, fix $v\in \Val_X^*$. By the characterisation of
$S_m(v)$ via basis type divisors, there exists an $m$-basis type divisor $D_v$ such that $v(D_v)=S_m(v)$. Therefore
\[
\lct^{[t]}(X,\F;D_v)
=
\inf_{w\in \Val_X^*}\frac{A^{[t]}_{X,\F}(w)}{w(D_v)}
\le
\frac{A^{[t]}_{X,\F}(v)}{v(D_v)}
=
\frac{A^{[t]}_{X,\F}(v)}{S_m(v)}.
\]
Taking the infimum over all valuations $v$ gives
\[
\delta^{[t]}_m(X,\F;L)
\le
\inf_{v\in \Val_X^*}\frac{A^{[t]}_{X,\F}(v)}{S_m(v)}.
\]
Combining the two inequalities proves the first formula.

The divisorial version follows from the divisorial expression for
$\lct^{[t]}(X,\F;D)$ in Definition~\ref{def:mixed-lct}, exactly as in the
usual argument.
\end{proof}

\begin{theorem}
\label{thm:mixed-theorem-4-4}
We have
\[
\delta^{[t]}(X,\F;L)
=
\inf_{v\in \Val_X^*}\frac{A^{[t]}_{X,\F}(v)}{S(v)}
=
\inf_{v\in \Val^{\mathrm{div}, *}_X}\frac{A^{[t]}_{X,\F}(v)}{S(v)}.
\]
Moreover, the $\limsup$ in Definition~\ref{def:delta-m-t} is in fact a limit:
\[
\delta^{[t]}(X,\F;L)=\lim_{m\to\infty}\delta^{[t]}_m(X,\F;L).
\]
\end{theorem}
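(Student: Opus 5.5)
We will follow the proof of \cite[Theorem 4.4]{BJ20}, replacing the log discrepancy $A_X$ with the mixed log discrepancy $A^{[t]}_{X,\F}$ throughout. Write $\delta^{[t]}_\infty:=\inf_{v\in \Val_X^*}A^{[t]}_{X,\F}(v)/S(v)$. Proposition~\ref{prop:mixed-delta-m-valuation} already gives $\delta^{[t]}_m=\inf_v A^{[t]}_{X,\F}(v)/S_m(v)$ for each $m$. It therefore suffices to compare $S_m$ with $S$ uniformly in $v$. Throughout, the only properties of $A^{[t]}_{X,\F}$ we need are that it is homogeneous of degree one, lower semicontinuous, approximable by divisorial valuations (as in the Remark after Definition~\ref{def:mixed-lct}), and nonnegative.

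\textbf{Upper bound.} Fix $v$ with $A^{[t]}_{X,\F}(v)<\infty$. Since $S_m(v)\to S(v)$ \cite{BJ20}, we get
\[
\limsup_m \delta^{[t]}_m\le \lim_m \frac{A^{[t]}_{X,\F}(v)}{S_m(v)}=\frac{A^{[t]}_{X,\F}(v)}{S(v)}.
\]
Taking the infimum over $v$ yields $\limsup_m\delta^{[t]}_m\le\delta^{[t]}_\infty$.

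\textbf{Lower bound.} This is the key input. By \cite[Corollary 2.10]{BJ20}, for every $\epsilon>0$ there is $m_0$ such that $S_m(v)\le(1+\epsilon)S(v)$ for all $m\ge m_0$ and all $v\in\Val_X^*$. This uniform estimate is purely a statement about the graded linear series of $L$ and the filtration induced by $v$, so it involves no foliated data and transfers verbatim. It gives
\[
\delta^{[t]}_m\ge(1+\epsilon)^{-1}\delta^{[t]}_\infty
\]
for $m\ge m_0$. Hence $\liminf_m\delta^{[t]}_m\ge\delta^{[t]}_\infty$, so the limit exists and equals $\delta^{[t]}_\infty$. The main obstacle is confirming that the uniform bound is genuinely independent of the log discrepancy. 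In \cite{BJ20} it is derived from Okounkov body and concavity estimates on the filtration, which use only $L$, so we expect it to apply directly.

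\textbf{Divisorial valuations.} The inequality $\inf_{\Val^*}\le\inf_{\mathrm{div}}$ is trivial. For the reverse inequality, we combine the limit just proved with the divisorial version of Proposition~\ref{prop:mixed-delta-m-valuation}. We have
\[
\inf_{\mathrm{div}}\frac{A^{[t]}_{X,\F}}{S}\le\inf_{\mathrm{div}}\frac{A^{[t]}_{X,\F}}{S_m}\cdot\sup_{v}\frac{S_m(v)}{S(v)}\le \delta^{[t]}_m(1+\epsilon)
\]
for $m\ge m_0$. Letting $m\to\infty$ and then $\epsilon\to 0$ gives the claim. Alternatively, one can approximate a quasi-monomial valuation by divisorial ones, using continuity of $S$ on quasi-monomial cones and the approximation property of $A^{[t]}_{X,\F}$.
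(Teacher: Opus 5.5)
Your proposal is correct and follows essentially the same route as the paper. The upper bound comes from pointwise convergence $S_m(v)\to S(v)$ together with Proposition~\ref{prop:mixed-delta-m-valuation}, the lower bound from the uniform estimate $S_m\le(1+\epsilon)S$ of \cite[Corollary 2.10]{BJ20}, and the divisorial equality from the divisorial form of that proposition, which you write out more explicitly than the paper does. The nonnegativity of $A^{[t]}_{X,\F}$ that you flag is genuinely needed in the lower-bound step, since for $A^{[t]}_{X,\F}(v)<0$ the inequality reverses; the paper's argument relies on it implicitly as well, and it holds when $(X,\F,t)$ is log canonical.
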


\begin{proof}
We first show that
\[
\delta^{[t]}(X,\F;L)\le
\inf_{v\in \Val_X^*}\frac{A^{[t]}_{X,\F}(v)}{S(v)}.
\]
Fix $v\in \Val_X^*$. Since $S_m(v)\to S(v)$ as $m\to\infty$, Proposition
\ref{prop:mixed-delta-m-valuation} gives
\[
\delta^{[t]}_m(X,\F;L)
\le
\frac{A^{[t]}_{X,\F}(v)}{S_m(v)}
\]
for every sufficiently divisible $m$. Passing to the limsup yields
\[
\delta^{[t]}(X,\F;L)
\le
\frac{A^{[t]}_{X,\F}(v)}{S(v)}.
\]
Taking the infimum over all $v$ proves the inequality.

For the reverse inequality, fix $\varepsilon>0$. By \cite[Corollary 2.10]{BJ20} there exists $m_0=m_0(\varepsilon)$ such that for all sufficiently divisible $m\ge m_0$ and all valuations $v\in \Val_X^*$, $S_m(v)\le (1+\varepsilon)S(v)$. Therefore, by  roposition~\ref{prop:mixed-delta-m-valuation},
\[
\delta^{[t]}_m(X,\F;L)
=
\inf_v \frac{A^{[t]}_{X,\F}(v)}{S_m(v)}
\ge
(1+\varepsilon)^{-1}
\inf_v \frac{A^{[t]}_{X,\F}(v)}{S(v)}.
\]
Taking the liminf as $m\to\infty$ gives
\[
\liminf_{m\to\infty}\delta^{[t]}_m(X,\F;L)
\ge
(1+\varepsilon)^{-1}
\inf_v \frac{A^{[t]}_{X,\F}(v)}{S(v)}.
\]
Since $\varepsilon>0$ is arbitrary, we conclude that
\[
\liminf_{m\to\infty}\delta^{[t]}_m(X,\F;L)
\ge
\inf_v \frac{A^{[t]}_{X,\F}(v)}{S(v)}.
\]
Combined with the first inequality, this proves
\[
\delta^{[t]}(X,\F;L)
=
\inf_{v\in \Val_X^*}\frac{A^{[t]}_{X,\F}(v)}{S(v)}.
\]
The divisorial equality is obtained in the same way as in the classical case:
the right-hand side agrees with the limit of the divisorial expressions for
$\delta^{[t]}_m$ from Proposition~\ref{prop:mixed-delta-m-valuation}. Finally,
the previous inequalities show that $\limsup$ and $\liminf$ coincide, hence the
limit exists.
\end{proof}

Combining the main results of Sections \ref{sec: valuative invariants}, \ref{sec: reduction to special t.c.} and \ref{sec: ding stability} we obtain the following.

\begin{theorem}
\label{thm:dreamy-valuative-criterion}
The following are equivalent:
\begin{enumerate}
\item $(X,\F,L)$ is $t$-K-semistable (respectively uniformly $t$-K-stable);
\item
\[
\delta^{[t]}(X,\F)
\ge 1 \text{ (respectively, }>1).
\]
\end{enumerate}
\end{theorem}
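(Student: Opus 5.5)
The plan is to combine the valuative criterion of Corollary \ref{cor:t-K-semistable-prime-divisor} with the valuative formula for $\delta^{[t]}$ in Theorem \ref{thm:mixed-theorem-4-4}. For semistability this is direct. By Corollary \ref{cor:t-K-semistable-prime-divisor}(2), $(X,\F,L)$ is $t$-K-semistable if and only if $A^{[t]}_{X,\F}(E)\ge S_L(E)$ for every prime divisor $E$ over $X$. Since $S_L(E)>0$ for every nontrivial divisorial valuation (as $L$ is ample), this is equivalent to $A^{[t]}_{X,\F}(E)/S_L(E)\ge 1$ for all $E$. By homogeneity of $A^{[t]}$ and $S$ under scaling $v=c\,\ord_E$, this holds for all divisorial valuations. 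Taking the infimum and using the divisorial expression in Theorem \ref{thm:mixed-theorem-4-4}, the condition becomes $\delta^{[t]}(X,\F;L)\ge 1$.

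For the uniform case, I would first relate $j_L$ and $S_L$ by the standard two-sided bound of \cite{BJ20} (Fujita's inequality):
\[
\tfrac{1}{n+1}T_L(v)\le S_L(v)\le \tfrac{n}{n+1}T_L(v),
\]
which gives $\tfrac{1}{n}S_L(v)\le j_L(v)\le nS_L(v)$. This inequality concerns only the graded linear series of $L$, so it carries over verbatim; the foliation plays no role here.

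Suppose $\delta^{[t]}>1$, say $\delta^{[t]}\ge 1+\epsilon$. Then $\beta^{[t]}(E)\ge \epsilon S_L(E)\ge (\epsilon/n) j_L(E)$ for every $E$, so Corollary \ref{cor:t-K-semistable-prime-divisor}(1) yields uniform $t$-K-stability, after shrinking the constant so that it lies in $(0,1)$. Conversely, if $\beta^{[t]}(E)\ge \delta j_L(E)$ with $\delta\in(0,1)$, then $\beta^{[t]}(E)\ge (\delta/n)S_L(E)$. Hence $A^{[t]}(E)/S_L(E)\ge 1+\delta/n$ for every $E$, and taking the infimum in Theorem \ref{thm:mixed-theorem-4-4} gives $\delta^{[t]}\ge 1+\delta/n>1$.

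The main obstacle is justifying that the $\beta$-criterion of Corollary \ref{cor:t-K-semistable-prime-divisor} is available in exactly the form needed. In particular, $\delta^{[t]}$ is defined through infima over all valuations, whereas the criterion is stated for prime divisors. The divisorial equality in Theorem \ref{thm:mixed-theorem-4-4} bridges this gap, and I will use it as stated. I also need $S_L(v)>0$ and $T_L(v)<\infty$ for all nontrivial divisorial $v$, which holds because $L$ is ample. The remaining work is bookkeeping of constants.
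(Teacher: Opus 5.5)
Your proposal is correct and follows the paper's own proof essentially step for step. The semistable case combines Corollary \ref{cor:t-K-semistable-prime-divisor} with the divisorial formula of Theorem \ref{thm:mixed-theorem-4-4}, and the uniform case uses the bound $\frac{1}{n}S_L\le j_L\le nS_L$ from \cite[Proposition 3.11]{BJ20} to convert $\delta\, j_L$ into a multiple of $S_L$; your explicit constants $\epsilon/n$ and $1+\delta/n$ make the bookkeeping somewhat more careful than the paper's sketch.
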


\begin{proof}
Recall that by Corollary \ref{cor:t-K-semistable-prime-divisor} we have that  $(X,\F,L)$ is $t$-K-semistable (respectively uniformly $t$-K-stable) if and only if $\beta^{[t]}_{X,\F,L}(v)\ge 0$ (respectively $>\epsilon \cdot j(F))$ for every divisorial valuation $v$.

For $t$-K-semistability, since
\[
\beta^{[t]}_{X,\F,L}(v)=A^{[t]}_{X,\F}(v)-S_L(v),
\]
the inequality $\beta^{[t]}_{X,\F,L}(v)\ge 0$ is equivalent to
\[
\frac{A^{[t]}_{X,\F}(v)}{S_L(v)}\ge 1
\]
for every divisorial valuation $v$, which is equivalent to
$\delta^{[t]}(X,\F)\ge 1$, by Theorem \ref{thm:mixed-theorem-4-4}.

For uniform $t$-K-stability, by Corollary~\ref{cor:t-K-semistable-prime-divisor} it suffices to test $t$-K-(semi)stability against prime divisors $E$ where by Theorem \ref{thm:main-prime-divisor-criterion} for $v= \ord_E$, $A^{[t]}_{X,\F}(E)-S_L(E)\geq \epsilon (T_L(E)-S_L(E))$. Since $L$ is ample by assumption, \cite[Proposition 3.11]{BJ20} implies that $n^{-1}S_L(E)\leq T_L(E)-S_L(E)\leq nS_L(E)$. Rewriting the condition, we get that $(X,\F,t)$ is uniformly $t$-K-stable if and only if $\beta^{[t]}_{X,\F,L}(v)\geq \epsilon' S_L{(E)}$ for all such $E$. But, by Theorem \ref{thm:mixed-theorem-4-4} this is equivalent to the condition $\delta^{[t]}(X,\F)
>1$ as required.
\end{proof}

\begin{remark}\label{rem: K-stab just for foliations}
    The above results can also motivate the definition of a K-stability theory for Fano (algebraically integrable) foliations on a fixed variety $X$ using the invariants 
    \[\beta(E) = A_{X,\F}(E)- S_{-K_{\F}}(E)\text{ and }\delta := \inf_{E/X} \frac{A_{X,\F}(E)}{S_{-K_{\F}}(E)} \]
    which correspond to the endpoint versions of the mixed $\beta$ and $\delta$-invariants defined in Definition \ref{def: mixed beta} and Theorem \ref{thm:dreamy-valuative-criterion}. However, this definition is, unfortunately, vacuous. To see this, consider an algebraically integrable Fano foliation $\F$ on a variety $X$ with $-K_{\F}$ being ample. Consider a non-trivial prime $\F$-invariant divisor $E$ on $X$. For instance, if the foliation is induced by rational map $g:X\dashrightarrow Y$, we can pullback an invariant divisor on $Y$ to construct $E$. Then, since $E$ is $\F$-invariant on $X$, we have $A_{X,\F}(E)=0$. On the other hand, since $-K_{\F}$ is ample and $E$ does not induce a trivial valuation, we have $S_{-K_{\F}}(E)> 0$. Combining these, we obtain $\beta(E)<0$, and hence $\F$ is never K-semistable. 
\end{remark}

 \subsection{Some examples}\label{sec: examples}

In this section we provide some general results and examples of $t$-K-stable, $t$-K-semistable and $t$-K-unstable adjoint Fano foliated structures. We first recall the following (well-known) homogeneity lemma, which we will use throughout.

\begin{lemma}\label{lem: hom of S}
Let $X$ be a normal projective variety of dimension $n$, $L$ a big $\Q$-Cartier divisor, and $E$ a divisorial valuation. If $L \sim a(-K_X)$ with $a>0$, then
\[
S_L(E) = a \, S_{-K_X}(E).
\]
\end{lemma}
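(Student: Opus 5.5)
The claim $S_L(E)=a\,S_{-K_X}(E)$ follows directly from the definition of the $S$-invariant together with the homogeneity of the volume function; a change of variables in the defining integral does the rest. Since $S_L(E)$ depends only on the numerical (indeed $\Q$-linear) equivalence class of $L$, I may replace $L$ by $a(-K_X)$ throughout. I fix a birational model $\mu\colon Y\to X$ on which $E$ appears as a prime divisor; the independence of $S$ from the choice of model, recalled in Section~\ref{sec: lg disc prelim}, allows this.

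The first step is to record the homogeneity of volume: for any big $\Q$-divisor $D$ and any rational $c>0$, $\vol(cD)=c^n\vol(D)$. This extends to real $c>0$ by continuity of $\vol$ on the big cone. Applying it gives
\[
\vol(\mu^*L-xE)=\vol\bigl(a(-\mu^*K_X-\tfrac{x}{a}E)\bigr)=a^n\vol(-\mu^*K_X-\tfrac{x}{a}E).
\]
From this, bigness of $\mu^*L-xE$ is equivalent to bigness of $-\mu^*K_X-\tfrac{x}{a}E$, so $T_L(E)=a\,T_{-K_X}(E)$. The same identity also gives $\vol(L)=a^n\vol(-K_X)$.

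The second step is the substitution $x=ay$ in the integral defining the $S$-invariant:
\[
S_L(E)=\frac{1}{a^n\vol(-K_X)}\int_0^{aT_{-K_X}(E)}a^n\vol(-\mu^*K_X-\tfrac{x}{a}E)\,dx=\frac{a}{\vol(-K_X)}\int_0^{T_{-K_X}(E)}\vol(-\mu^*K_X-yE)\,dy,
\]
which equals $a\,S_{-K_X}(E)$.

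The only point requiring care is that $-K_X$ must itself be big and $\Q$-Cartier for $S_{-K_X}$ to make sense; this follows from $L\sim a(-K_X)$ with $L$ big and $a>0$. The extension of homogeneity to real $c$ uses only the continuity of $\vol$, so I do not expect any genuine obstacle.
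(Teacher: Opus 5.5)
Your argument is correct. The paper records this lemma as well known and gives no proof, and your route is precisely the standard justification it implicitly invokes: homogeneity of $\vol$ gives $\vol(L)=a^n\vol(-K_X)$ and $T_L(E)=a\,T_{-K_X}(E)$, and the substitution $x=ay$ in the defining integral finishes. Nothing specific to $-K_X$ enters, so the same computation gives $S_{aM}(E)=a\,S_M(E)$ for any big $\Q$-Cartier $M$. The case of a general divisorial valuation $c\,\ord_E$ also follows, since both sides scale linearly in $c$.
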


\begin{proposition}\label{prop:q-proportional-canonical}
Let $X$ be a K-semistable $\Q$-Fano variety and let $\F$ be an algebraically integrable foliation on $X$ such that $\F$ is log canonical. Assume that $K_{\F}\sim_{\Q} qK_X$ for some $q\in \Q$. Let $\lambda_t := 1+(q-1)t$. Assume moreover $A_{X,\F}(v)\ge q\,A_X(v)$ for every divisorial valuation $v$ over $X$. Then, for every $t\in [0,1]$
such that $\lambda_t>0$, the adjoint foliated structure $(X,\F,t)$ is
$t$-K-semistable.
\end{proposition}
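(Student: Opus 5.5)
The plan is to verify the valuative criterion of Corollary~\ref{cor:t-K-semistable-prime-divisor}(2): it suffices to show $\beta^{[t]}_{X,\F,L}(E)\ge 0$ for every prime divisor $E$ over $X$, where $L\sim_{\Q}-K^{[t]}_{X,\F}$. Since $K_{\F}\sim_{\Q} qK_X$, we have
\[
-K^{[t]}_{X,\F}=-(1-t)K_X-tK_{\F}\sim_{\Q}-(1-t+tq)K_X=\lambda_t(-K_X).
\]
Hence for $\lambda_t>0$ the structure is indeed adjoint Fano, $L\sim_\Q\lambda_t(-K_X)$, and Lemma~\ref{lem: hom of S} gives $S_L(E)=\lambda_t\,S_{-K_X}(E)$. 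This handles the $S$-side entirely in terms of the ambient Fano variety.

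For the log discrepancy side, we write
\[
A^{[t]}_{X,\F}(E)=(1-t)A_X(E)+tA_{X,\F}(E)
\]
and use the hypothesis $A_{X,\F}(E)\ge qA_X(E)$, valid for every divisorial valuation. This gives
\[
A^{[t]}_{X,\F}(E)\ge (1-t+tq)A_X(E)=\lambda_t A_X(E).
\]
Since $t\ge 0$, the inequality direction is preserved even when $q$ is negative. K-semistability of $X$, through the Fujita--Li criterion (the case $t=0$ of the same corollary, or \cite{Fujita16}), gives $A_X(E)\ge S_{-K_X}(E)$. Since $\lambda_t>0$, we conclude
\[
\beta^{[t]}_{X,\F,L}(E)\ge \lambda_t\bigl(A_X(E)-S_{-K_X}(E)\bigr)\ge 0.
\]

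The main point to check carefully is the applicability of the criterion. Corollary~\ref{cor:t-K-semistable-prime-divisor} relies on the reduction machinery of Section~\ref{sec: special t.c.}, which requires $(X,\F,t)$ to be adjoint Fano with suitable singularities. The log canonicity of $\F$ together with the klt property of $X$ (from K-semistability) makes $(X,\F,t)$ at least lc, since $A^{[t]}$ is a convex combination of nonnegative log discrepancies. I would invoke this to confirm we are in the setting where the corollary applies. The only other subtle point is the sign bookkeeping when $q<0$ and $t$ is near $1$, which is exactly what the hypothesis $\lambda_t>0$ controls. The rest is a direct substitution.
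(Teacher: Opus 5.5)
Your proposal is correct and essentially matches the paper's proof: rewrite $-K^{[t]}_{X,\F}\sim_{\Q}\lambda_t(-K_X)$, use homogeneity of $S$, and split $\beta^{[t]}_{X,\F,L_t}(v)=\lambda_t\bigl(A_X(v)-S_{-K_X}(v)\bigr)+t\bigl(A_{X,\F}(v)-qA_X(v)\bigr)$. The nonnegativity of the second term is precisely your inequality $A^{[t]}_{X,\F}\ge\lambda_t A_X$, and you conclude via Corollary~\ref{cor:t-K-semistable-prime-divisor} as the paper does. Your remark on the singularity hypotheses needed to invoke that criterion is a reasonable piece of care that the paper leaves implicit.
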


\begin{proof}
Since $K_{X,\F}^{[t]}=tK_{\F}+(1-t)K_X \sim_{\Q} \lambda_t K_X$, we have $-K_{X,\F}^{[t]}\sim_{\Q} \lambda_t(-K_X)$. Hence, whenever $\lambda_t>0$, the adjoint foliated structure $(X,\F,t)$
is adjoint Fano, with polarisation $L_t:= -K_{X,\F}^{[t]}$

Let $v$ be any divisorial valuation over $X$. Since $L_t\sim_{\Q}\lambda_t(-K_X)$, the $S$-invariant satisfies $S_{L_t}(v)=\lambda_t\,S_{-K_X}(v)$ by Lemma \ref{lem: hom of S}. Therefore
\begin{align*}
\beta^{[t]}_{X,\F,L_t}(v)
&=(1-t)A_X(v)+tA_{X,\F}(v)-\lambda_t S_{-K_X}(v)\\
&=\lambda_t\bigl(A_X(v)-S_{-K_X}(v)\bigr)
+t\bigl(A_{X,\F}(v)-qA_X(v)\bigr).
\end{align*}
Since $X$ is K-semistable, the usual valuative criterion gives $A_X(v)-S_{-K_X}(v)\ge 0$ for every divisorial valuation $v$. By assumption, $A_{X,\F}(v)-qA_X(v)\ge 0$. As $\lambda_t>0$, it follows that $\beta^{[t]}_{X,\F,L_t}(v)\ge 0$ for every divisorial valuation $v$. Hence $(X,\F,t)$ is $t$-K-semistable
by Corollary~\ref{cor:t-K-semistable-prime-divisor}.
\end{proof}

\begin{corollary}\label{cor:minus-one-case}
Let $X$ be a K-semistable $\Q$-Fano variety and let $\F$ be an
algebraically integrable foliation on $X$ such that $(X,\F)$ is log
canonical and $K_{\F}\sim_{\Q} -qK_X$ for some $q\in \Q_{>0}$. Then, for every $0\le t<\frac{1}{|q-1|}$, the adjoint foliated structure $(X,\F,t)$ is $t$-K-semistable.
\end{corollary}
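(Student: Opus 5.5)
The plan is to deduce the statement directly from Proposition~\ref{prop:q-proportional-canonical}, with the proportionality constant chosen so that the rescaling factor of the polarisation is
\[
\lambda_t=1+(q-1)t .
\]
This is the reading under which the stated range $0\le t<\frac{1}{|q-1|}$ is exactly the range where $\lambda_t>0$. I will follow the paper's convention, in which $K^{[t]}_{X,\F}\sim_{\Q}\lambda_t K_X$ and hence $-K^{[t]}_{X,\F}\sim_{\Q}\lambda_t(-K_X)$.

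\emph{Step 1: positivity of $\lambda_t$ on the stated range.} I check that $\lambda_t>0$ for every $0\le t<\frac{1}{|q-1|}$ with $t\le 1$.
\begin{itemize}
\item If $q\ge 1$, then $\lambda_t\ge 1>0$ for all $t\ge 0$, so there is no constraint.
\item If $0<q<1$, then $\lambda_t=1-(1-q)t>0$ is equivalent to $t<\frac{1}{1-q}=\frac{1}{|q-1|}$.
\end{itemize}
In both cases $(X,\F,t)$ is adjoint Fano with polarisation $L_t:=-K^{[t]}_{X,\F}\sim_{\Q}\lambda_t(-K_X)$. By Lemma~\ref{lem: hom of S}, $S_{L_t}(v)=\lambda_t S_{-K_X}(v)$.

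\emph{Step 2: splitting the mixed $\beta$-invariant.} For every divisorial valuation $v$ over $X$, I will use the identity from the proof of Proposition~\ref{prop:q-proportional-canonical}:
\[
\beta^{[t]}_{X,\F,L_t}(v)=\lambda_t\bigl(A_X(v)-S_{-K_X}(v)\bigr)+t\bigl(A_{X,\F}(v)-qA_X(v)\bigr).
\]
The first term is non-negative because $X$ is K-semistable (the classical valuative criterion) and $\lambda_t>0$ by Step~1.

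\emph{Step 3: the discrepancy comparison (main obstacle).} The hypothesis of Corollary~\ref{cor:minus-one-case} gives only that $(X,\F)$ is log canonical, i.e.\ $A_{X,\F}(v)\ge 0$. It does not explicitly give the comparison $A_{X,\F}(v)\ge qA_X(v)$ required by Proposition~\ref{prop:q-proportional-canonical}, so this is the step I expect to be hardest. I would argue on a log resolution $\mu\colon Y\to X$ extracting $E$, with $v=\ord_E$, and compare the two relative canonical divisors
\[
K_{\F_Y}-\mu^*K_\F \qquad\text{and}\qquad q\bigl(K_Y-\mu^*K_X\bigr).
\]
Because $K_\F$ and $K_X$ are $\Q$-proportional, their pullbacks cancel in the difference, and the comparison of $A_{X,\F}(E)$ with $qA_X(E)$ becomes a comparison of foliated and ambient discrepancy coefficients along $E$. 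There I would use:
\begin{itemize}
\item log canonicity of $\F$, which gives $a(E,\F)+\varepsilon(E)\ge 0$;
\item the standard inequality relating the foliated and ambient canonical classes via the conormal sheaf, $K_X=K_\F+\det N^\vee_\F$.
\end{itemize}

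If this comparison cannot be established in full generality, I would fall back in two ways. First, I would verify it separately for $\F$-invariant and for transverse divisors $E$, using $\varepsilon(E)\in\{0,1\}$. Second, I would absorb any shortfall $t\bigl(A_{X,\F}(v)-qA_X(v)\bigr)$ into the first term, using $A_X(v)\ge S_{-K_X}(v)$ together with the size of $\lambda_t$ on the allowed range.

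\emph{Step 4: conclusion.} With both terms of the splitting in Step~2 non-negative, $\beta^{[t]}_{X,\F,L_t}(v)\ge 0$ for all divisorial valuations $v$. Corollary~\ref{cor:t-K-semistable-prime-divisor} (equivalently, the conclusion of Proposition~\ref{prop:q-proportional-canonical}) then gives $t$-K-semistability for every $0\le t<\frac{1}{|q-1|}$ with $t\le 1$.
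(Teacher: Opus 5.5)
Your Step~3 cannot be carried out. The inequality $A_{X,\F}(v)\ge qA_X(v)$ with $q>0$ is false in essentially every case, and the error comes from Step~1, where you silently replaced the hypothesis $K_\F\sim_\Q -qK_X$ by $K_\F\sim_\Q qK_X$. The identity $-K^{[t]}_{X,\F}\sim_\Q(1+(q-1)t)(-K_X)$ holds only for the latter. Under the actual hypothesis one has $K^{[t]}_{X,\F}\sim_\Q(1-(1+q)t)K_X$, so your claim that $\lambda_t\ge 1$ for $q\ge 1$ is also wrong: for $q=1$, $-K^{[t]}_{X,\F}$ is not even ample once $t\ge\frac12$.

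With a positive constant $q$, the comparison fails at any $\F$-invariant prime divisor $D\subset X$. Such divisors exist for every algebraically integrable $\F\neq T_X$ (take components of pullbacks of divisors under the rational map inducing $\F$), and there $A_{X,\F}(D)=0$ while $qA_X(D)=q>0$. The shortfall also cannot be absorbed into the first term, because the statement you would be proving is false. The radial foliation on $\PP^2$ is algebraically integrable and log canonical with $K_\F=-H=\frac13K_{\PP^2}$. So $q=\frac13$ and your range covers all $t\in[0,1]$. Yet for an invariant line $D$ one has $A_{\PP^2}(D)=S_{-K_{\PP^2}}(D)=1$, hence $\beta^{[t]}(D)=-\frac t3<0$ for every $t>0$ (Example~\ref{ex:radial-foliation-beta}).

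The fix is to keep the sign of the hypothesis and apply Proposition~\ref{prop:q-proportional-canonical} with proportionality constant $-q<0$. Your splitting in Step~2 then reads
\[
\beta^{[t]}_{X,\F,L_t}(v)=\lambda_t\bigl(A_X(v)-S_{-K_X}(v)\bigr)+t\bigl(A_{X,\F}(v)+qA_X(v)\bigr),\qquad \lambda_t=1-(1+q)t .
\]
The second term is non-negative for free, since $A_{X,\F}(v)\ge 0$ ($\F$ is lc) and $A_X(v)>0$ ($X$ is klt). No discrepancy comparison through the conormal sheaf is needed. This is the paper's two-line argument; its ``$A_{X,\F}(v)\ge 0\ge qA_X(v)$'' only makes sense with $q$ denoting the negative proportionality constant.

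Correspondingly, $\lambda_t>0$ exactly for $t<\frac{1}{1+q}=\frac{1}{|(-q)-1|}$. So the bound $\frac{1}{|q-1|}$ in the statement has to be read with $q$ replaced by $-q$. Example~\ref{ex:lef-cubic-pencil}, with $K_\F\sim_\Q -K_{\PP^2}$ and range $t<\frac12$, confirms this reading. You correctly spotted the mismatch between the hypothesis and the stated range, but resolved it in the wrong direction.
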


\begin{proof}
This follows directly from Proposition~\ref{prop:q-proportional-canonical}. Since $\lambda_t=1+(q-1)t$ we have $\lambda_t>0$ exactly when $t<\frac{1}{|q-1|}$. Moreover, since $\F$ is log canonical, we have $A_{X,\F}(v)\ge 0\ge qA_X(v)$ for every divisorial valuation $v$.
\end{proof}

The following is one possible example obtained from the above.

\begin{example}\label{ex:lef-cubic-pencil}
Let $\mathcal P\subset |\cO_{\PP^2}(3)|$ be a general pencil of plane cubics, i.e.
a pencil generated by two smooth cubic curves meeting transversely in $9$ distinct
base points, and such that all singular members of the pencil are nodal. Let
$\F_{\mathcal P}$ be the codimension one foliation on $\PP^2$ induced by the rational map $\PP^2 \dashrightarrow \PP^1$ defined by $\mathcal P$. Then $\F_{\mathcal P}$ is algebraically integrable and
\[
K_{\F_{\mathcal P}}\sim_{\Q}\cO_{\PP^2}(3)\sim_{\Q}-K_{\PP^2}.
\]
Moreover, $(\PP^2,\F_{\mathcal P})$ is log canonical. In particular, by
Corollary~\ref{cor:minus-one-case}, the adjoint Fano foliated structure $(\PP^2,\F_{\mathcal P},t)$ is $t$-K-semistable for every $0\le t<\frac12$.
\end{example}

\begin{corollary}
\label{cor:CY-foliations-semistable}
Let $X$ be a K-semistable Fano variety, and let $\F$ be a log canonical foliation on $X$ such that $K_{\F}\equiv 0$. Fix $t\in(0,1)\cap\Q$, and set $L:=-K^{[t]}_{X,\F}$. Then the adjoint foliated structure $(X,\F,t)$ is $t$-K-semistable.
\end{corollary}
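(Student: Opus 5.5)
The plan is to reduce to Proposition~\ref{prop:q-proportional-canonical} with $q=0$, or equivalently to run its $\beta$-decomposition directly via Corollary~\ref{cor:t-K-semistable-prime-divisor}.

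\emph{Step 1: the structure is adjoint Fano.} Since $K_{\F}\equiv 0$, we have
\[
K^{[t]}_{X,\F}=(1-t)K_X+tK_{\F}\equiv(1-t)K_X .
\]
Hence $-K^{[t]}_{X,\F}\equiv(1-t)(-K_X)$ is ample for $t<1$, and $(X,\F,t)$ is adjoint Fano.

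\emph{Step 2: comparing $S$-invariants.} To invoke Lemma~\ref{lem: hom of S}, which is stated for $\sim$, I would note that the $S$-invariant depends only on the volume function $x\mapsto\vol(\mu^*L-xE)$, and volumes depend only on numerical classes. So numerical equivalence suffices, and
\[
S_L(v)=(1-t)\,S_{-K_X}(v)
\]
for every divisorial valuation $v$. If one prefers, on a $\Q$-Fano variety numerical and $\Q$-linear equivalence coincide, because $h^1(\cO_X)=0$ by Kawamata--Viehweg vanishing, so the lemma applies verbatim.

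\emph{Step 3: the $\beta$-decomposition.} For any divisorial valuation $v$,
\[
\beta^{[t]}_{X,\F,L}(v)=(1-t)\bigl(A_X(v)-S_{-K_X}(v)\bigr)+tA_{X,\F}(v).
\]
The first term is nonnegative by the Fujita--Li valuative criterion for K-semistability of $X$. The second term is nonnegative because $\F$ is log canonical, i.e.\ $A_{X,\F}(v)\ge 0$. This is exactly the hypothesis $A_{X,\F}\ge qA_X$ of Proposition~\ref{prop:q-proportional-canonical} with $q=0$, where $\lambda_t=1-t>0$. Corollary~\ref{cor:t-K-semistable-prime-divisor} then gives $t$-K-semistability.

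\emph{Main obstacle.} The only delicate point is the passage from $\equiv$ to $\sim_\Q$ in Step 2, and the observation about $h^1(\cO_X)=0$ handles it. Everything else is a direct substitution into results already established.
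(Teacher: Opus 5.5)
Your proposal is correct and matches the paper's proof, which simply invokes Proposition~\ref{prop:q-proportional-canonical} with $q=0$. Your extra remark also covers a point the paper leaves implicit: the hypothesis $K_{\F}\equiv 0$ upgrades to $K_{\F}\sim_{\Q}0$ because $h^1(\cO_X)=0$, or alternatively the $S$-invariant depends only on numerical classes.
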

\begin{proof}
    This is a special case of Proposition \ref{prop:q-proportional-canonical}, with $q=0$.
\end{proof}

We can also investigate what happens when $t$ is very small in specific situations.

\begin{proposition}
\label{prop:small-t-uniform-stability}
Let $X$ be a uniformly K-stable $\Q$-Fano variety, and let
$\F$ be a log canonical algebraically integrable foliation on $X$
such that $K_{\F}$ is $\Q$-Cartier. Then there exists
$\varepsilon>0$ such that for every $0<t<\varepsilon$ the adjoint foliated structure $(X,\F,t)$ is uniformly $t$-K-stable.
\end{proposition}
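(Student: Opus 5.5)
The plan is to use the valuative characterisation of uniform $t$-K-stability from Theorem \ref{thm:dreamy-valuative-criterion}: it suffices to show that $\delta^{[t]}(X,\F;L_t)>1$ for all small $t>0$, where $L_t:=-K^{[t]}_{X,\F}=(1-t)(-K_X)-tK_{\F}$. Since $X$ is a uniformly K-stable $\Q$-Fano variety, the classical criterion \cite{Fujita16,BJ20} gives $\delta(X):=\inf_v A_X(v)/S_{-K_X}(v)>1$. The strategy is to bound the numerator $A^{[t]}_{X,\F}(v)$ from below and the denominator $S_{L_t}(v)$ from above, each by a factor that tends to $1$ as $t\to 0$.

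\emph{Step 1 (adjoint Fano and ample sandwich).} Since $-K_X$ is ample and $K_{\F}$ is $\Q$-Cartier, fix rational constants $c,c'>0$ such that both $c(-K_X)+K_{\F}$ and $c'(-K_X)-K_{\F}$ are ample. Set $a_t:=1-t+tc$ and $b_t:=1-t-tc'$. Then
\[
a_t(-K_X)-L_t=t\bigl(c(-K_X)+K_{\F}\bigr),\qquad L_t-b_t(-K_X)=t\bigl(c'(-K_X)-K_{\F}\bigr)
\]
are both ample. For $t$ small we have $b_t>0$, so $L_t$ is ample and $(X,\F,t)$ is adjoint Fano.

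\emph{Step 2 (numerator).} Because $\F$ is log canonical, $A_{X,\F}(v)\ge 0$ for every divisorial valuation $v$. Hence
\[
A^{[t]}_{X,\F}(v)\ge (1-t)A_X(v).
\]

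\emph{Step 3 (denominator).} I expect this to be the main point, since $S_{L_t}$ is not proportional to $S_{-K_X}$ and is not naively monotone in $L$, because of the normalisation by the volume. I would handle the integrand and the normalisation separately.
\begin{itemize}
\item For the integrand: $a_t(-K_X)-L_t$ is pseudo-effective, and pulled back to a model on which $v$ lives it stays pseudo-effective. Therefore $\vol(L_t-xv)\le\vol(a_t(-K_X)-xv)$ for all $x\ge0$. The thresholds also satisfy $T_{L_t}(v)\le a_tT_{-K_X}(v)$.
\item For the normalisation: $\vol(L_t)\ge b_t^n\vol(-K_X)$.
\end{itemize}
Combining these with homogeneity (Lemma \ref{lem: hom of S}) gives
\[
S_{L_t}(v)\le\frac{1}{b_t^n\vol(-K_X)}\int_0^{\infty}\vol\bigl(a_t(-K_X)-xv\bigr)\,dx=\frac{a_t^{n+1}}{b_t^n}\,S_{-K_X}(v).
\]

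\emph{Step 4 (conclusion).} By Theorem \ref{thm:mixed-theorem-4-4},
\[
\delta^{[t]}(X,\F;L_t)=\inf_v\frac{A^{[t]}_{X,\F}(v)}{S_{L_t}(v)}\ge\frac{(1-t)\,b_t^n}{a_t^{n+1}}\,\delta(X).
\]
The prefactor is continuous in $t$ and equals $1$ at $t=0$. Since $\delta(X)>1$, there is $\varepsilon>0$ such that the right-hand side is $>1$ for all $0<t<\varepsilon$. Here $\varepsilon$ is also taken small enough that $b_t>0$ on this range. Theorem \ref{thm:dreamy-valuative-criterion} then yields uniform $t$-K-stability. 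If one prefers the $\beta$-formulation, the equivalent bound $\beta^{[t]}(E)\ge\epsilon' S_{L_t}(E)$ follows directly, and \cite[Proposition 3.11]{BJ20} converts it into the $j_{L_t}$-form required by Corollary \ref{cor:t-K-semistable-prime-divisor}.
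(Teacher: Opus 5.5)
Your argument is correct. It shares its skeleton with the paper's proof: the numerator bound $A^{[t]}_{X,\F}(v)\ge(1-t)A_X(v)$ from log canonicity of $\F$, followed by the criterion $\delta^{[t]}(X,\F;L_t)>1$ of Theorem~\ref{thm:dreamy-valuative-criterion}.

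The difference lies in how you control the denominator. The paper introduces the ordinary delta invariant $\delta(X;L_t)=\inf_v A_X(v)/S_{L_t}(v)$ of the polarised variety $(X,L_t)$. It then cites Zhang's continuity of $\delta$ on the big cone \cite[Theorem 1.7]{ZhangContinuityDelta} to get $\delta(X;L_t)\to\delta(X;-K_X)>1$, whence $\delta^{[t]}\ge(1-t)\delta(X;L_t)>1$ for small $t$. You instead prove the needed estimate by hand. You sandwich $L_t$ between $b_t(-K_X)$ and $a_t(-K_X)$ up to ample differences, and then monotonicity and homogeneity of volume give the bound $S_{L_t}(v)\le a_t^{n+1}b_t^{-n}S_{-K_X}(v)$, uniformly in $v$.

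This is essentially the elementary ample-cone case of the continuity of $\delta$. That case suffices here because $L_t$ never leaves the ample cone. Your route is self-contained and gives an explicit $\varepsilon$: any $t$ with $b_t>0$ and $(1-t)\,b_t^n a_t^{-(n+1)}\delta(X)>1$ works. The paper's route is shorter, at the cost of invoking a deeper result than the situation requires.
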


\begin{proof}
As before we set $L_t:=-K^{[t]}_{X,\F}=-(1-t)K_X-tK_{\F}$. Since $-K_X$ is ample and ampleness is an open condition in $N^1(X)_{\R}$, after replacing $\varepsilon>0$ by a smaller number if necessary, $L_t$ is ample for every $0<t<\varepsilon$. Hence $(X,\F,t)$ is an adjoint Fano foliated structure for all such $t$.

Let $\delta(X;L_t)$ denote the ordinary delta invariant of the polarised
variety $(X,L_t)$. Since $L_t\longrightarrow -K_X$ in $N^1(X)_{\R}$ as $t\to 0$, and since the delta invariant is continuous on the big cone
by \cite[Theorem 1.7]{ZhangContinuityDelta}, we have $\delta(X;L_t)\longrightarrow \delta(X;-K_X)$. Because $X$ is uniformly K-stable, we have $\delta(X;-K_X)>1$, therefore, after shrinking $\varepsilon$ again, we may assume that $(1-t)\delta(X;L_t)>1$ for every $0<t<\varepsilon$.

Now let $v$ be any divisorial valuation over $X$. Since $\F$ is
log canonical, we have $A_{X,\F}(v)\geq 0$ and thus $A^{[t]}_{X,\F}(v) \geq (1-t)A_X(v)$. Dividing by $S_{L_t}(v)$ gives
\[
\frac{A^{[t]}_{X,\F}(v)}{S_{L_t}(v)}
\geq
(1-t)\frac{A_X(v)}{S_{L_t}(v)}.
\]
Taking the infimum over all divisorial valuations $v$ yields
\[
\delta^{[t]}(X,\F;L_t)
\geq
(1-t)\delta(X;L_t)
>1.
\]
Hence, the adjoint foliated structure $(X,\F,t)$ is uniformly $t$-K-stable for every $0<t<\varepsilon$.
\end{proof}

We now present an example using the above.

\begin{example}
\label{ex:cubic-fourfold-pencil-foliation}
Let $X\subset \PP^5$ be a smooth cubic fourfold, and let
$H:=\cO_X(1)$. Let $\Lambda\subset |H|$ be a Lefschetz pencil of hyperplane sections, and let $f_\Lambda\colon X\dashrightarrow \PP^1$ be the associated rational map. Denote by $\F_\Lambda\subset T_X$ the saturated codimension one foliation tangent to the fibres of $f_\Lambda$.

Then $\F_\Lambda$ is algebraically integrable and $-K_X=3H$. Furthermore, on the open locus where $f_\Lambda$ is a morphism and the foliation is regular, we have an exact sequence
\[
0\longrightarrow T_{\F_\Lambda}
\longrightarrow T_X
\longrightarrow f_\Lambda^*T_{\PP1}
\longrightarrow 0.
\]
Since $f_\Lambda^*\cO_{\PP1}(1)\simeq \cO_X(H)$,
the normal bundle of the foliation is $\cO_X(2H)$. Equivalently, $K_{\F_\Lambda}= K_X+2H= -3H+2H = -H$, thus $-K_{\F_\Lambda}=H$ is ample, so $\F_\Lambda$ is a Fano foliation.

Assume moreover that the Lefschetz pencil is chosen generally, so that
the induced foliation has log canonical singularities. Then, since every
smooth cubic fourfold is K-stable by \cite[Corollary 1.2]{Liu22}, Proposition \ref{prop:small-t-uniform-stability} implies that there exists $\varepsilon>0$ such that $(X,\F_\Lambda,t)$ is uniformly $t$-K-stable for every $0<t<\varepsilon$.

On the other hand, this example is not $t$-K-semistable for all $t$. We will verify this statement using the mixed $\beta$ invariant. Let $D\in \Lambda$ be a smooth member of the pencil. Then $D$ is
$\F_\Lambda$-invariant, hence $A_{X,\F_\Lambda}(D)=0$. Since $X$ is smooth and $D$ is a prime divisor on $X$, we also have $A_X(D)=1$. For the adjoint polarisation we have $L_t = (3-2t)H$. As $D\sim H$ and $\dim X=4$, homogeneity of the $S$-invariant gives
\[
S_{L_t}(D)
=
S_{(3-2t)H}(D)
=
\frac{3-2t}{5}.
\]
Therefore
\[
\begin{aligned}
\beta^{[t]}_{X,\F_\Lambda,L_t}(D)
&=
A^{[t]}_{X,\F_\Lambda}(D)-S_{L_t}(D)\\
&=
(1-t)A_X(D)+tA_{X,\F_\Lambda}(D)
-\frac{3-2t}{5}\\
&=
1-t-\frac{3-2t}{5}\\
&=
\frac{2-3t}{5}.
\end{aligned}
\]
Hence $\beta^{[t]}_{X,\F_\Lambda,L_t}(D)<0$ for every $t>\frac23$, i.e., $(X,\F_\Lambda,t)$ is not $t$-K-semistable for every $t>2/3$. We expect that this is K-stable for all $t<2/3$.
\end{example}

The above Example prompts us to show the following.

\begin{proposition}\label{prop:instability-near-one}
Let $(X,\F)$ be an algebraically integrable Fano foliation on a normal
projective variety $X$, so that $-K_{\F}$ is ample. Then there exists $t_0 \in (0,1)$ such that for every $t \in [t_0,1]$ the
adjoint Fano foliated structure $(X,\F,t)$ is not $t$-K-semistable.
\end{proposition}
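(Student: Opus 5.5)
The plan is to exhibit a single destabilising divisor, exactly as in Example \ref{ex:cubic-fourfold-pencil-foliation} and Remark \ref{rem: K-stab just for foliations}, and then invoke the valuative criterion of Corollary~\ref{cor:t-K-semistable-prime-divisor}. Since $\F$ is algebraically integrable, we choose an $\F$-invariant prime divisor $E$ over $X$; we may take $E$ on $X$ itself. One way to obtain it: after passing to a birational model where $\F$ is induced by a rational map $g\colon X\dashrightarrow Y$ with $\dim Y\ge 1$, pull back a general prime divisor of $Y$ and take its strict transform. The case $\F=T_X$ has no invariant divisors, and we exclude it or treat it separately, since then $K_\F=K_X$ and the statement concerns usual K-stability. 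By definition $\varepsilon(E)=0$, and $a(E,\F)=0$ since $E$ is a divisor on $X$, so $A_{X,\F}(E)=0$ and $A_X(E)=1$. Hence
\[
A^{[t]}_{X,\F}(E)=(1-t)A_X(E)+tA_{X,\F}(E)=1-t.
\]

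Next we bound $S_{L_t}(E)$ from below uniformly as $t\to1$, where $L_t:=-K^{[t]}_{X,\F}=(1-t)(-K_X)+t(-K_\F)$. Since $-K_\F$ is ample and ampleness is open in $N^1(X)_\R$, there is $t_1<1$ such that $L_t$ is ample for all $t\in[t_1,1]$, so $(X,\F,t)$ is adjoint Fano on this range. Because $S$ is continuous on the ample cone (the volume function is continuous, and $S_L(E)$ is an integral of volumes, with $T_L(E)$ also varying continuously), the function $t\mapsto S_{L_t}(E)$ is continuous on $[t_1,1]$ with limit $S_{-K_\F}(E)$ at $t=1$. That limit is strictly positive because $-K_\F$ is ample and $E$ is a nontrivial divisorial valuation; for instance $S_L(E)\ge T_L(E)/(n+1)>0$ by \cite[Proposition 3.11]{BJ20}.

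Therefore $\beta^{[t]}_{X,\F,L_t}(E)=(1-t)-S_{L_t}(E)$ tends to $-S_{-K_\F}(E)<0$ as $t\to 1$. We choose $t_0\in[t_1,1)$ such that $S_{L_t}(E)>\tfrac12 S_{-K_\F}(E)>1-t$ for all $t\in[t_0,1]$. Then $\beta^{[t]}(E)<0$ on the whole interval, and Corollary~\ref{cor:t-K-semistable-prime-divisor} gives $t$-K-instability for $t<1$.

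At the endpoint $t=1$ the earlier theorems are stated for $t<1$, so there we either appeal directly to Remark \ref{rem: K-stab just for foliations} or produce the Rees test configuration of $E$ by hand. The main obstacle is this endpoint, together with a rigorous justification that an $\F$-invariant prime divisor exists. If the continuity argument feels too soft, a fallback is an explicit bound: $S_{L_t}(E)\ge t\,S_{-K_\F}(E)$ whenever $(1-t)(-K_X)$ is pseudoeffective, by superadditivity of $S$. In general, however, we rely on continuity.
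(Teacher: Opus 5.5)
Your main line is the paper's proof. You use an $\F$-invariant prime divisor $E\subset X$ with $A^{[t]}_{X,\F}(E)=1-t$, openness of ampleness near $L_1=-K_\F$, continuity of $t\mapsto S_{L_t}(E)$ with $S_{-K_\F}(E)>0$, and Corollary~\ref{cor:t-K-semistable-prime-divisor}. For $t<1$ it is as complete as the paper's argument. Three of your side remarks need correcting.

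First, $\F=T_X$ cannot be ``treated separately'': it is a counterexample. Then $K^{[t]}_{X,\F}=K_X$ and $A^{[t]}_{X,\F}=A_X$ for all $t$. By Remark~\ref{rem: sanity checks}, the statement would then say that every Fano variety is K-unstable, which fails for $\PP^n$. One must assume $\rk\F<\dim X$; the paper's proof also uses this tacitly when it asserts that an invariant divisor exists.

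Second, your fallback bound rests on a false principle: $S$ is neither superadditive nor monotone under adding nef classes. Take the blow-up of $\PP^2$ at a point, with exceptional curve $E$ and $H$ the pullback of a line. Then $S_H(E)=\tfrac23$ and $S_{H-E}(E)=0$, but $S_{2H-E}(E)=\tfrac59$. So the continuity argument is genuinely needed.

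Third, at $t=1$ neither of your remedies works. Remark~\ref{rem: K-stab just for foliations} already presupposes the valuative criterion at $t=1$. A Rees test configuration for $E$ exists only if $E$ is dreamy for $-K_\F$, which you do not know. The paper also just applies the corollary at $t=1$, although its criterion is established only for $t<1$, through the MMP of Section~\ref{sec: special t.c.}.

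A direct repair, uniform for $t$ near $1$ including $t=1$ and independent of the criterion, is the deformation to the normal cone, assuming $E$ is Cartier. Take $\rho\colon\X=\mathrm{Bl}_{E\times\{0\}}(X\times\A^1)\to X\times\A^1$ with exceptional divisor $P$ and polarisation $\L_c=\rho^*p_1^*L_t-cP$ for small $c>0$. Use the foliation of Lemma~\ref{lem:birational-transform-product-foliation}, which is algebraically integrable as in Proposition~\ref{prop:Fujita-blowup-F-compatible}. Claim~3 (Case~1) in the proof of Theorem~\ref{thm:t-Ding-prime-divisor} gives $K^{[t]}_{\bar{\X}/\PP^1}=\rho^*p_1^*K^{[t]}_{X,\F}+(1-t)P$. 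Combined with $P\cdot\bar{\L}_c^{\,n}=V-(L_t-cE)^n$, where $V=L_t^n$, this yields
\[
\DF^{[t]}(\X,\F_{\X},\L_c)=\frac{n}{V}\int_0^c\bigl((1-t)-x\bigr)\,E\cdot(L_t-xE)^{n-1}\,dx .
\]
This is negative for a fixed small $c$ once $1-t$ is small compared with $c$, and for every small $c>0$ when $t=1$.
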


\begin{proof}
For $t \in [0,1]$, we take as before $L_t := -K^{[t]}_{X,\F}$. Since $L_1 = -K_{\F}$ is ample, ampleness is open, so there exists $t_0' \in
(0,1)$ such that $L_t$ is ample for all $t \in [t_0',1]$, hence $(X,\F,t)$ is an adjoint Fano foliated structure for all such $t$.

Since $\F$ is algebraically integrable, we can find an $\F$-invariant prime divisor $E$ on $X$, where, for every $t \in [0,1]$, $A^{[t]}_{X,\F}(E) = 1-t$. Furthermore, since $E$ defines a non-trivial valuation $S_{L_1}(E)=S_{-K_{\F}}(E)>0$.

Now the map $t \longmapsto S_{L_t}(E)$ is continuous at $t=1$, since the volume function varies continuously with the numerical class. Hence there exist $\eta>0$ and $t_0'' \in (0,1)$ such that $S_{L_t}(E)\geq \eta$ for all $t \in [t_0'',1]$.

Thus, by defining $t_0:=\max\{t_0',t_0'',1-\eta/2\}$, for every $t \in [t_0,1]$ we have
\[
\beta^{[t]}_{X,\F,L_t}(E)
=
A^{[t]}_{X,\F}(E)-S_{L_t}(E)
=
(1-t)-S_{L_t}(E)
\leq
(1-t)-\eta
<
0
\]
and $(X,\F,t)$ is not $t$-K-semistable by Corollary \ref{cor:t-K-semistable-prime-divisor}.
\end{proof}
We also provide a more general result extending Proposition \ref{prop:instability-near-one}. 

\begin{proposition}\label{prop:non-lc-foliation-obstructs-near-one}
Let $(X,\F,t)$ be an adjoint Fano foliated structure for some $t\in (0,1]$. Assume that the foliation $\F$ is not log canonical. Then there exists $t_0<1$ such that for every $t\in (t_0,1]$, the adjoint Fano foliated structure $(X,\F,t)$ is not $t$-K-semistable.
\end{proposition}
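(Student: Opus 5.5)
Since $\F$ is not log canonical, there is a prime divisor $E$ over $X$ with $A_{X,\F}(E)=-c<0$ for some $c>0$. The plan is to test the valuative criterion of Corollary~\ref{cor:t-K-semistable-prime-divisor} against this single divisor $E$, exactly as in the proof of Proposition~\ref{prop:instability-near-one}. There, the sign of $\beta^{[t]}$ was forced by the vanishing of $A_{X,\F}$; here it is forced by the strict negativity of the foliated log discrepancy. The advantage is that we no longer need $-K_{\F}$ to be ample: the $S$-invariant is always nonnegative, so the negative discrepancy alone does the work.

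First I would fix the range of $t$. By hypothesis $L_{t_1}:=-K^{[t_1]}_{X,\F}$ is ample for the given $t_1\in(0,1]$. I would read the statement as concerning those $t$ near $1$ for which $(X,\F,t)$ is still adjoint Fano, i.e.\ $L_t$ ample; for all other $t$ the notion is not defined. For such $t$, since $L_t$ is ample and $E$ is a nontrivial valuation, we have $S_{L_t}(E)\ge 0$ (in fact $>0$). Then
\[
\beta^{[t]}_{X,\F,L_t}(E)=(1-t)A_X(E)+tA_{X,\F}(E)-S_{L_t}(E)\le (1-t)A_X(E)-tc .
\]

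Next I would choose the threshold. We have $A_X(E)<\infty$ (we may take $A_X(E)>0$ if $X$ is klt, but finiteness suffices). The right-hand side is negative as soon as
\[
t>\frac{A_X(E)}{A_X(E)+c}.
\]
If $A_X(E)\le 0$, the bound holds for every $t>0$. I would set
\[
t_0:=\max\Bigl\{\tfrac{A_X(E)}{A_X(E)+c},\,0\Bigr\}<1 .
\]
Then for all $t\in(t_0,1]$ with $(X,\F,t)$ adjoint Fano we get $\beta^{[t]}(E)<0$. By Corollary~\ref{cor:t-K-semistable-prime-divisor}(2), $(X,\F,t)$ is not $t$-K-semistable.

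The only delicate point I anticipate is the adjoint Fano condition on the whole interval, since ampleness at the single given $t_1$ does not by itself guarantee ampleness near $1$. I would address it by intersecting $(t_0,1]$ with the open set of $t$ for which $L_t$ is ample. If the intended hypothesis is that $-K_{\F}$ is ample, as in Proposition~\ref{prop:instability-near-one}, then openness of the ample cone gives an interval $(t_0',1]$, and I would replace $t_0$ by $\max\{t_0,t_0'\}$. Either way, the key inequality requires no continuity of $S$, only $S\ge 0$.
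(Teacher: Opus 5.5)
Your argument is correct and is essentially the paper's proof. The paper also picks $E$ with $A_{X,\F}(E)<0$ and observes that $A^{[t]}_{X,\F}(E)<0$ for $t$ near $1$. It then concludes via $\delta^{[t]}\le A^{[t]}_{X,\F}(E)/S_{L_t}(E)<0$, which needs $S_{L_t}(E)>0$, whereas you conclude via $\beta^{[t]}(E)<0$ using only $S_{L_t}(E)\ge 0$.

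Your explicit threshold, and your remark that ampleness of $L_t$ near $1$ must either be assumed or intersected with an open set of $t$, are slightly more careful than the paper. The paper simply takes $L_t$ ample for every $t\in(t_0,1]$. One small fix: state the formula for $t_0$ only when $A_X(E)>0$, since for $A_X(E)\le -c$ the quotient $A_X(E)/(A_X(E)+c)$ is undefined or exceeds $1$.
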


\begin{proof}
Since $\F$ is not log canonical, there exists a divisorial valuation $E$ over $X$ such that $A_{X,\F}(E)<0$. Notice that $A^{[t]}_{X,\F}(E)\to A_{X,\F}(E)<0$ as $t\to 1$, therefore, there exists $t_0<1$ such that $A^{[t]}_{X,\F}(E)<0$ for every $t\in (t_0,1]$.

We now set as before $L_t:=-K^{[t]}_{X,\F}$. Since $(X,\F,t)$ is adjoint Fano, the divisor $L_t$ is ample. In particular,
the $S$-invariant satisfies $S_{L_t}(E)>0$ It follows that for every $t\in (t_0,1]$,
\[
\delta^{[t]}(X,\F;L_t)
\leq
\frac{A^{[t]}_{X,\F}(E)}{S_{L_t}(E)}
<0<1.
\]
Therefore, by Theorem \ref{thm:dreamy-valuative-criterion}, $(X,\F,t)$ is not $t$-K-semistable.
\end{proof}

\begin{corollary}\label{cor:semistable-near-one-implies-lc-foliation}
Assume that there exists a sequence $t_i\to 1$ such that $(X,\F,t_i)$ is
$t_i$-K-semistable for every $i$. Then the foliation $\F$ is log canonical.
\end{corollary}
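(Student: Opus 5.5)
We argue by contradiction, reducing directly to Proposition~\ref{prop:non-lc-foliation-obstructs-near-one}. Suppose $\F$ is not log canonical. Then there is a divisorial valuation $E$ over $X$ with $A_{X,\F}(E)<0$. Since $A^{[t]}_{X,\F}(E)=(1-t)A_X(E)+tA_{X,\F}(E)$ is affine in $t$ and tends to $A_{X,\F}(E)<0$ as $t\to1$, there is $t_0<1$ with $A^{[t]}_{X,\F}(E)<0$ for all $t\in(t_0,1]$.

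Next we check that the hypotheses of Proposition~\ref{prop:non-lc-foliation-obstructs-near-one} hold at the parameters $t_i$. This is implicit in the statement: $t_i$-K-semistability of $(X,\F,t_i)$ is only defined, in our setting, for adjoint Fano foliated structures with $L_{t_i}=-K^{[t_i]}_{X,\F}$ ample. So each $(X,\F,t_i)$ is adjoint Fano, and $(X,\F,t_i)$ with $t_i\in(0,1]$ satisfies the hypothesis of that Proposition for large $i$. The only point needing care is that the Proposition fixes one adjoint Fano $t$, while we need the conclusion at the varying $t_i$. Rather than invoke it as a black box, we rerun its argument at each $t_i$ separately.

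Choose $i$ large so that $t_i>t_0$. Since $L_{t_i}$ is ample and $E$ is a nontrivial divisorial valuation, $S_{L_{t_i}}(E)>0$. Hence
\[
\beta^{[t_i]}_{X,\F,L_{t_i}}(E)=A^{[t_i]}_{X,\F}(E)-S_{L_{t_i}}(E)<0 .
\]
By Corollary~\ref{cor:t-K-semistable-prime-divisor}, or equivalently Theorem~\ref{thm:dreamy-valuative-criterion} via $\delta^{[t_i]}<0<1$, the structure $(X,\F,t_i)$ is not $t_i$-K-semistable. This contradicts the assumption, so $\F$ is log canonical.

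The main (mild) obstacle is the uniformity issue just noted: we must make sure the valuative criterion is applied at each individual $t_i$, using ampleness of $L_{t_i}$ there. We should not assume ampleness on a whole interval $(t_0,1]$, which the hypotheses do not provide.
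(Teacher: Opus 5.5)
Your proposal is correct and is essentially the paper's proof: the paper simply takes the contrapositive of Proposition~\ref{prop:non-lc-foliation-obstructs-near-one}, and you rerun that proposition's argument (an $E$ with $A_{X,\F}(E)<0$, then $S_{L_{t_i}}(E)>0$, so $\beta^{[t_i]}(E)<0$, then the valuative criterion). Your choice to use ampleness of $L_{t_i}$ only at each individual $t_i$, rather than on all of $(t_0,1]$, is a harmless and slightly cleaner way to handle a point the proposition's proof passes over tacitly.
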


\begin{proof}
This is the contrapositive of Proposition~\ref{prop:non-lc-foliation-obstructs-near-one}.
\end{proof}

We also obtain a ``wall-crossing'' result in specific Fano situations.

\begin{proposition}
\label{prop:proportional-interval-wall-crossing}
Let $X$ be a klt $\Q$-Fano variety, and let $\F$ be an algebraically integrable foliation on $X$ such that $K_{\F}$ is $\Q$-Cartier. Assume that $K_{\F}\sim_{\Q} qK_X$ for some $q\in \Q_{>0}$. Then, for every $t\in[0,1]$ the set
\[
I_{X,\F}:=
\left\{
t\in[0,1]\mid (X,\F,t)\text{ is }t\text{-K-semistable}
\right\}
\]
is a closed interval, possibly empty.

In particular, the $t$-K-semistable locus has at most two boundary points in $[0,1]$.
\end{proposition}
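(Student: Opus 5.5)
The plan is to reduce everything, via the valuative criterion of Corollary \ref{cor:t-K-semistable-prime-divisor}, to a family of affine inequalities in $t$, one for each divisorial valuation. Since $K_{\F}\sim_{\Q} qK_X$, we have
\[
K^{[t]}_{X,\F}\sim_{\Q}\lambda_t K_X,\qquad \lambda_t:=1+(q-1)t,
\]
and $\lambda_t>0$ for all $t\in[0,1]$ because $q>0$. Hence $L_t:=-K^{[t]}_{X,\F}\sim_{\Q}\lambda_t(-K_X)$ is ample, and $(X,\F,t)$ is adjoint Fano for every $t\in[0,1]$.

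By Lemma \ref{lem: hom of S}, $S_{L_t}(v)=\lambda_t S_{-K_X}(v)$. Therefore, for each divisorial valuation $v$ over $X$,
\[
\beta^{[t]}_{X,\F,L_t}(v)=(1-t)A_X(v)+tA_{X,\F}(v)-\bigl(1+(q-1)t\bigr)S_{-K_X}(v),
\]
which is an affine function $g_v(t)=a_v+b_vt$ with
\[
a_v=A_X(v)-S_{-K_X}(v),\qquad b_v=A_{X,\F}(v)-A_X(v)-(q-1)S_{-K_X}(v).
\]

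By Corollary \ref{cor:t-K-semistable-prime-divisor}(2), which applies for every $t$ since $L_t$ is ample and proportional to $-K^{[t]}_{X,\F}$, we get
\[
I_{X,\F}=\bigcap_{v}\{t\in[0,1]\mid g_v(t)\ge 0\}.
\]
Each set $\{g_v\ge 0\}\cap[0,1]$ is the preimage of $[0,\infty)$ under a continuous affine map. It is therefore a closed subinterval of $[0,1]$, possibly empty or all of $[0,1]$: when $b_v>0$ it is $[-a_v/b_v,\infty)\cap[0,1]$, when $b_v<0$ it is $(-\infty,-a_v/b_v]\cap[0,1]$, and when $b_v=0$ it is either all or nothing. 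An arbitrary intersection of closed intervals (closed convex subsets of $\R$) is again a closed interval, possibly empty or a point. This gives the claim, and the statement that there are at most two boundary points follows at once.

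An equivalent way to say the same thing: $\min_{v}$ of affine functions is concave, so $\delta$-type positivity defines a convex set. I would phrase it directly via the intersection, which avoids any issue with infima over infinitely many $v$.

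The main point to check carefully is that the valuative criterion is genuinely valid at every $t\in[0,1]$, including the endpoints, since Theorem \ref{thm:reduction-special-mixed-DF} was stated for $t\in[0,1)$. For $t=1$, I would either note that the criterion of Theorem \ref{thm:main-prime-divisor-criterion} is used only as the definition-level equivalence for adjoint Fano structures, or restrict the argument to $[0,1)$ and treat $t=1$ separately. In the latter case, closedness at $1$ follows because the condition $g_v(1)\ge 0$ is the limit of $g_v(t)\ge 0$; if the endpoint criterion is not available, the conclusion would be that $I_{X,\F}\cap[0,1)$ is relatively closed and convex.

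The other thing to verify is that $S_{-K_X}(v)$ and $A$ are finite for each fixed $v$. This is clear for divisorial valuations, so each $g_v$ is an honest real affine function and no limiting argument is needed.
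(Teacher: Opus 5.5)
Your proposal is correct and is essentially the paper's proof. The paper likewise uses $L_t\sim_{\Q}\lambda_t(-K_X)$ and homogeneity of $S$ to make $t\mapsto\beta^{[t]}_{X,\F,L_t}(v)$ affine with the same coefficients, then writes $I_{X,\F}=\bigcap_v I_v$ via Corollary~\ref{cor:t-K-semistable-prime-divisor}; your caveat that the valuative criterion is only established for $t\in[0,1)$ (and, through Theorem~\ref{thm:reduction-special-mixed-DF}, only for rational $t$) is fair, since the paper passes over it by applying the corollary at every $t\in[0,1]$.
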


\begin{proof}
Since $K_{\F}\sim_{\Q}qK_X$, we have $K^{[t]}_{X,\F}\sim_{\Q}
\bigl(1+(q-1)t\bigr)K_X$. For ease of notation we set $\lambda_t:=1+(q-1)t>0$ and $L_t:=-K^{[t]}_{X,\F}$ for every $t\in[0,1]$, which is ample. 

Let $v$ be a divisorial valuation over $X$. By Lemma \ref{lem: hom of S}, $S_{L_t}(v)=\lambda_t S_{-K_X}(v)$. Therefore
\[
\begin{aligned}
\beta^{[t]}_{X,\F,L_t}(v)
&=
A^{[t]}_{X,\F}(v)-S_{L_t}(v)\\
&=
(1-t)A_X(v)+tA_{X,\F}(v)
-\lambda_t S_{-K_X}(v)\\
&=
A_X(v)-S_{-K_X}(v)
+t\Bigl(A_{X,\F}(v)-A_X(v)-(q-1)S_{-K_X}(v)\Bigr).
\end{aligned}
\]
Thus, for each fixed divisorial valuation $v$, the function
\[
t\longmapsto \beta^{[t]}_{X,\F,L_t}(v)
\]
is affine-linear.

We now define
\[
I_v:=
\left\{
t\in[0,1]\big|
\beta^{[t]}_{X,\F,L_t}(v)\geq 0
\right\}.
\]
Since $\beta^{[t]}_{X,\F,L_t}(v)$ is affine-linear in $t$, the set $I_v$ is a closed interval, a closed half-interval, all of $[0,1]$, or the empty set. In particular, $I_v$ is closed and convex.

By Corollary \ref{cor:t-K-semistable-prime-divisor}, we have
\[
I_{X,\F}
=
\bigcap_{v\in \operatorname{Val}^{\mathrm{div},*}_X} I_v.
\]
An arbitrary intersection of closed convex subsets of $[0,1]$ is again
closed and convex. Hence $I_{X,\F}$ is a closed convex subset of
$[0,1]$, and therefore a closed interval, possibly empty.
\end{proof}

We now present an example of a $t$-K-unstable adjoint foliated structure.

\begin{example}
\label{ex:radial-foliation-beta}
Let $X=\PP^2$ with hyperplane class $H$, and let $\F$ be the radial foliation, i.e. the algebraically integrable foliation whose leaves are the lines through a fixed point $p\in \PP^2$. Then we have $K_X=-3H$, $K_{\F}=-H$. We fix $t\in [0,1)$. Then the adjoint divisor is
\[
K^{[t]}_{X,\F}
=
tK_{\F}+(1-t)K_X
=
-\bigl(3-2t\bigr)H,
\]
and hence $L:=-K^{[t]}_{X,\F}=(3-2t)H$, which is ample for all $t<1$, with $L^2=(3-2t)^2$.

Let $D\subset \PP^2$ be a line. Then $D\sim H$, hence $\tau_L(D)=3-2t$ and for $0\le x\le 3-2t$, $\vol(L-xD)=(3-2t-x)^2$. Therefore
\[
S_L(D)
=
\frac{1}{L^2}\int_0^{3-2t}(3-2t-x)^2\,dx
=
\frac{3-2t}{3}.
\]

Suppose that $D$ is invariant, i.e. $D$ passes through the center $p$. Then $D$ is $\F$-invariant, so $A_{X,\F}(D)=0$ and $A_X(D)=1$, hence $A^{[t]}_{X,\F}(D) = 1-t$.
Then
\[
\beta^{[t]}(D) =
(1-t)-\frac{3-2t}{3}= -\frac{t}{3}.
\]
In particular, $\beta^{[t]}(D)<0$ for all $t>0$, and hence 

for every $t>0$ the adjoint Fano foliated structure $(\PP^2,\F,t)$ is not $t$-K-semistable.
\end{example}

Example \ref{ex:radial-foliation-beta} can be also be generalised to arbitrary $\PP^n$ to give a number of $t$-K-unstable adjoint Fano foliated structures. Unfortunately, obtaining more geometrically interesting examples still remains a challenge.

\subsection{\texorpdfstring{The mixed $\alpha^{[t]}$-invariant and a sufficient criterion for $t$-K-semistability}{The mixed alphat-invariant and a sufficient criterion for t-K-semistability}}\label{sec: mixed alpha K-ss}

In this subsection we introduce the mixed analogue of the $\alpha$-invariant
and prove the analogue of \cite[Theorem~A]{BJ20}. As in the usual
theory, the argument is formal once we have the valuation-theoretic
descriptions of the $S$- and $T$-invariants.

\begin{definition}
\label{def:mixed-alpha}
Let $D\ge 0$ be an effective $\QQ$-divisor on $X$. We define the mixed $\alpha$-invariant of $(X,\F;L)$ by
\[
\alpha^{[t]}(X,\F;L)
:=
\inf\bigl\{
\lct^{[t]}(X,\F;D)
\mid
D\sim_{\QQ} L,\ D\ge 0
\bigr\}.
\]
\end{definition}

\begin{remark}\label{rem: val formula alpha}
Exactly as in the usual theory, we also have the valuation-theoretic
expression
\[
\alpha^{[t]}(X,\F;L)
=
\inf_{v\in \Val_X^*}\frac{A^{[t]}_{X,\F}(v)}{T_L(v)}.
\]
This is because, for every valuation $v$ and every effective $\QQ$-divisor
$D\sim_{\QQ}L$, we have $v(D)\le T_L(v)$ while conversely one can approximate $T_L(v)$ by divisors linearly
equivalent to $L$. The proof is identical to the usual one, with the
ordinary discrepancy replaced by $A^{[t]}_{X,\F}$ (see \cite[Corollary 4.2]{BJ20}).
\end{remark}

\begin{theorem}
\label{thm:mixed-alpha-delta}
We have
\[
\frac{n+1}{n}\,\alpha^{[t]}(X,\F;L)
\le
\delta^{[t]}(X,\F;L)
\le
(n+1)\alpha^{[t]}(X,\F;L).
\]
\end{theorem}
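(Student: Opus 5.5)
The plan is to reduce Theorem~\ref{thm:mixed-alpha-delta} to a purely numerical comparison between $S_L(v)$ and $T_L(v)$. This comparison does not involve the foliation at all. We then combine it with the valuative descriptions already available. By Theorem~\ref{thm:mixed-theorem-4-4},
\[
\delta^{[t]}(X,\F;L)=\inf_{v}\frac{A^{[t]}_{X,\F}(v)}{S_L(v)},
\]
and by Remark~\ref{rem: val formula alpha},
\[
\alpha^{[t]}(X,\F;L)=\inf_{v}\frac{A^{[t]}_{X,\F}(v)}{T_L(v)},
\]
with both infima taken over the same set $\Val_X^*$ (or over divisorial valuations). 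The key input is the classical estimate of \cite[Proposition 3.11]{BJ20}, already used in the proof of Theorem~\ref{thm:dreamy-valuative-criterion}: for $L$ ample and any $v\in\Val_X^*$,
\[
\frac{1}{n+1}\,T_L(v)\;\le\; S_L(v)\;\le\;\frac{n}{n+1}\,T_L(v).
\]
This comes from the concavity of $x\mapsto \vol(L-xv)^{1/n}$ on $[0,T_L(v)]$ (Brunn--Minkowski/log-concavity), together with $\vol(L-xv)\le \vol(L)$. This step is independent of $t$ and $\F$, so it carries over verbatim.

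We then divide. We may restrict to $v$ with $A^{[t]}_{X,\F}(v)\ge 0$; otherwise both sides are negative, as in Proposition~\ref{prop:non-lc-foliation-obstructs-near-one}, and we handle that case by the sign convention discussed below. For such $v$, the lower bound on $S_L$ gives
\[
\frac{A^{[t]}_{X,\F}(v)}{S_L(v)}\le (n+1)\frac{A^{[t]}_{X,\F}(v)}{T_L(v)}.
\]
The upper bound on $S_L$ gives
\[
\frac{A^{[t]}_{X,\F}(v)}{S_L(v)}\ge \frac{n+1}{n}\,\frac{A^{[t]}_{X,\F}(v)}{T_L(v)}.
\]
Taking the infimum over $v$ on both sides then yields both inequalities of the theorem.

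The main subtlety is the sign of $A^{[t]}_{X,\F}(v)$. In contrast with the klt Fano case, the mixed log discrepancy can be negative when $(X,\F,t)$ is not log canonical. If some $v$ has $A^{[t]}(v)<0$, the inequalities between the ratios reverse. I would handle this in one of two ways. The first option is to assume, as the ambient setting implicitly does (compare the klt assumption discussed before Section~\ref{sec: qdlt model}), that $(X,\F,t)$ is lc, so that $A^{[t]}\ge 0$. The second option is to check that both invariants are then $-\infty$ after rescaling $v$. The second option works because $A^{[t]}$, $S_L$ and $T_L$ are all homogeneous of degree one in $v$, so the ratios are scale-invariant. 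Since both infima then include strictly negative values, we compare the infima directly: for $A<0$ the reversed inequalities still give $\delta^{[t]}\le \frac{n+1}{n}\alpha^{[t]}$-type bounds. I expect this bookkeeping to be the only real obstacle. I would state the result under the lc hypothesis, which is the case used in Corollary~\ref{cor: k-ss to alpha}.
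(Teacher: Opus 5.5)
Your argument is correct and is essentially the paper's proof. You combine the valuative formulas for $\delta^{[t]}$ and $\alpha^{[t]}$ with the bounds $\frac{1}{n+1}T_L(v)\le S_L(v)\le\frac{n}{n+1}T_L(v)$ of \cite[Proposition 3.11]{BJ20} for ample $L$, and with the nonnegativity of $A^{[t]}_{X,\F}$. The paper also assumes this nonnegativity tacitly, and it is genuinely needed: for finite $\alpha^{[t]}<0$ and $n\ge 2$ the stated chain is impossible. So stating the result under the lc hypothesis is the right call.

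Two small corrections. Concavity of $\vol(L-xv)^{1/n}$ together with $\vol(L-xv)\le\vol(L)$ only gives $S_L\le T_L$; the sharper bound $\frac{n}{n+1}T_L$ is exactly where ampleness of $L$ enters, so rely on the citation rather than that heuristic. Also drop your ``second option'': $A^{[t]}_{X,\F}$, $S_L$ and $T_L$ are all homogeneous of degree one, so the ratios are scale-invariant and rescaling cannot push them to $-\infty$.
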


\begin{proof}
For every valuation $v\in \Val_X^*$, the usual inequalities between the expected vanishing and pseudo-effective thresholds give
\[
\frac{1}{n+1}T_L(v)\le S_L(v)\le T_L(v)
\]
(c.f. \cite[Equation (3.1)]{BJ20}).
Since $A^{[t]}_{X,\F}(v)\ge 0$, it follows that
\[
\frac{A^{[t]}_{X,\F}(v)}{T_L(v)}
\le
\frac{A^{[t]}_{X,\F}(v)}{S_L(v)}
\le
(n+1)\frac{A^{[t]}_{X,\F}(v)}{T_L(v)}.
\]
Taking the infimum over all $v\in \Val_X^*$ yields by Theorem~\ref{thm:mixed-theorem-4-4} and the above discussion
\[
\alpha^{[t]}(X,\F;L)
\le
\delta^{[t]}(X,\F;L)
\le
(n+1)\alpha^{[t]}(X,\F;L).
\]
In particular, since $L$ is ample (since we assume that $L\sim_{\Q}-K_{X,\F}^{[t]}$), \cite[Proposition 3.11]{BJ20} shows that 
\[
\frac{1}{n+1}\le \frac{S_L(v)}{T_L(v)}\le \frac{n}{n+1},
\]
which immediately gives $\delta^{[t]}\geq \frac{n}{n+1}\alpha^{[t]}$, as claimed.
\end{proof}
Now we also give a sufficient criterion for $t$-K-semistability.
\begin{corollary}
\label{cor:mixed-alpha-semistable}
If
\[
\alpha^{[t]}(X,\F;L)\ge \frac{n}{n+1} \text{ (respectively }>\frac{n}{n+1}),
\]
then
$(X,\F,L)$ is $t$-K-semistable (respectively uniformly $t$-K-stable).
\end{corollary}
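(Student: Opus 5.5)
The plan is to deduce the corollary from the lower bound in Theorem~\ref{thm:mixed-alpha-delta} and the $\delta$-criterion of Theorem~\ref{thm:dreamy-valuative-criterion}. No new geometric input should be needed.

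First I will make the lower bound explicit at the level of individual valuations. Fix a divisorial valuation $v\in\Val^{\mathrm{div},*}_X$. Since $L\sim_{\Q}-K^{[t]}_{X,\F}$ is ample, \cite[Proposition 3.11]{BJ20} gives $S_L(v)\le \frac{n}{n+1}T_L(v)$. Combined with the valuative formula for $\alpha^{[t]}$ in Remark~\ref{rem: val formula alpha}, this should give
\[
\frac{A^{[t]}_{X,\F}(v)}{S_L(v)}\;\ge\;\frac{n+1}{n}\,\frac{A^{[t]}_{X,\F}(v)}{T_L(v)}\;\ge\;\frac{n+1}{n}\,\alpha^{[t]}(X,\F;L).
\]
The first inequality uses that $A^{[t]}_{X,\F}(v)\ge 0$. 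This holds under the hypothesis, because $\alpha^{[t]}\ge\frac{n}{n+1}>0$ forces $A^{[t]}_{X,\F}(v)\ge \alpha^{[t]}T_L(v)\ge 0$, as $T_L(v)>0$ for a nontrivial $v$.

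Next I take the infimum over $v$ and apply the divisorial formula of Theorem~\ref{thm:mixed-theorem-4-4}. This gives $\delta^{[t]}(X,\F;L)\ge\frac{n+1}{n}\alpha^{[t]}(X,\F;L)$, which is the left-hand inequality of Theorem~\ref{thm:mixed-alpha-delta}. So if $\alpha^{[t]}\ge\frac{n}{n+1}$, then $\delta^{[t]}\ge1$. If $\alpha^{[t]}>\frac{n}{n+1}$, the bound is strict: $\delta^{[t]}\ge\frac{n+1}{n}\alpha^{[t]}>1$, and this survives passing to the infimum because the constant $\frac{n+1}{n}\alpha^{[t]}$ is uniform in $v$.

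Finally, Theorem~\ref{thm:dreamy-valuative-criterion} turns $\delta^{[t]}\ge1$ into $t$-K-semistability and $\delta^{[t]}>1$ into uniform $t$-K-stability, which proves the corollary. The only point needing care is the strict case, and it is the main (mild) obstacle. Uniform stability in Theorem~\ref{thm:dreamy-valuative-criterion} is phrased as $\beta^{[t]}(E)\ge\epsilon' S_L(E)$ for a fixed $\epsilon'>0$. I will check directly that $\epsilon'=\frac{n+1}{n}\alpha^{[t]}-1>0$ works: from the display above, $A^{[t]}_{X,\F}(E)\ge(1+\epsilon')S_L(E)$ for every prime divisor $E$ over $X$. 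This avoids any subtlety about whether the infimum defining $\delta^{[t]}$ is attained.
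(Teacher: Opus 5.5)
Your proposal is correct and follows the paper's route. You combine the lower bound $\delta^{[t]}\ge\frac{n+1}{n}\alpha^{[t]}$ of Theorem~\ref{thm:mixed-alpha-delta}, which you re-derive valuation by valuation from $S_L\le\frac{n}{n+1}T_L$, with the $\delta$-criterion of Theorem~\ref{thm:dreamy-valuative-criterion}; your extra checks (that $A^{[t]}_{X,\F}(v)\ge 0$ follows from $\alpha^{[t]}>0$, and that the strict case gives a uniform $\epsilon'=\frac{n+1}{n}\alpha^{[t]}-1>0$) supply details the paper leaves implicit.
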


\begin{proof}
By Theorem~\ref{thm:mixed-alpha-delta},
\[
\delta^{[t]}(X,\F;L)\ge \frac{n+1}{n}\alpha^{[t]}(X,\F;L).
\]
Hence, if $\alpha^{[t]}(X,\F;L)\ge \frac{n}{n+1}$
we obtain
\[
\delta^{[t]}(X,\F;L)\ge \frac{n+1}{n}\cdot \frac{n}{n+1}=1,
\]
and hence $(X,\F)$ is $t$-K-semistable by Theorem \ref{thm:dreamy-valuative-criterion}. The statement for $t$-uniform K-stability follows via the same argument by the same Theorem.
\end{proof}

\begin{corollary}\label{cor: k-ss to alpha}
    If $(X,\F,L)$ is $t$-K-semistable then $\alpha^{[t]}(X,\F;L)\ge \frac{1}{n+1}$.
\end{corollary}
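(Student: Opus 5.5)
The plan is to derive this directly from the valuative characterisation of $t$-K-semistability together with the comparison between $\alpha^{[t]}$ and $\delta^{[t]}$ just proved. By Theorem~\ref{thm:dreamy-valuative-criterion}, $t$-K-semistability of $(X,\F,L)$ with $L\sim_{\Q}-K^{[t]}_{X,\F}$ is equivalent to $\delta^{[t]}(X,\F;L)\ge 1$. The right-hand inequality of Theorem~\ref{thm:mixed-alpha-delta} gives $\delta^{[t]}(X,\F;L)\le (n+1)\alpha^{[t]}(X,\F;L)$. Combining the two yields
\[
1\le \delta^{[t]}(X,\F;L)\le (n+1)\,\alpha^{[t]}(X,\F;L),
\]
and dividing by $n+1$ gives the claim.

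For robustness I would also record the underlying valuative argument directly, since it only uses the elementary half of the $S$--$T$ comparison. By Corollary~\ref{cor:t-K-semistable-prime-divisor}, $t$-K-semistability gives $A^{[t]}_{X,\F}(v)\ge S_L(v)$ for every divisorial valuation $v$. The standard inequality $S_L(v)\ge \frac{1}{n+1}T_L(v)$ from \cite[Equation (3.1)]{BJ20} holds for $L$ big and concerns only $L$, not the foliation. Chaining these gives
\[
\frac{A^{[t]}_{X,\F}(v)}{T_L(v)}\ge \frac{1}{n+1}
\]
for all divisorial $v$.

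It then remains to pass from divisorial valuations to the infimum defining $\alpha^{[t]}$. I would use the valuative formula of Remark~\ref{rem: val formula alpha}, together with the fact that the infimum may be computed over divisorial valuations. This fact holds because $\lct^{[t]}$ is computed divisorially by Definition~\ref{def:mixed-lct}, and $\alpha^{[t]}$ is an infimum of such thresholds.

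The only point needing care is this reduction to divisorial valuations in the formula for $\alpha^{[t]}$, equivalently the use of Theorem~\ref{thm:mixed-theorem-4-4} inside Theorem~\ref{thm:mixed-alpha-delta}. Since both are already established earlier in the paper, I do not expect a genuine obstacle; the proof is essentially a two-line corollary.
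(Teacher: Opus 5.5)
Your proposal is correct and matches the paper's proof: the paper likewise combines $\delta^{[t]}(X,\F;L)\ge 1$ from Theorem~\ref{thm:dreamy-valuative-criterion} with the bound $\delta^{[t]}(X,\F;L)\le (n+1)\alpha^{[t]}(X,\F;L)$ from Theorem~\ref{thm:mixed-alpha-delta}. Your supplementary valuative argument, chaining $A^{[t]}_{X,\F}(v)\ge S_L(v)\ge \frac{1}{n+1}T_L(v)$ and using $v(D)\le T_L(v)$ for effective $D\sim_{\Q}L$, unpacks the same two inequalities and has the minor advantage of not needing $A^{[t]}_{X,\F}\ge 0$ as a separate input.
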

\begin{proof}
    Since $(X,\F,L)$ is $t$-K-semistable, by Theorem \ref{thm:dreamy-valuative-criterion} we have $\delta^{[t]}(X,\F;L)\ge 1$. This implies that $1\leq (n+1)\alpha^{[t]}(X,\F;L)$, as required.
\end{proof}

\subsection{Mixed normalised volumes}

The definitions we have made so far imply the natural extension of the normalised volume in the mixed setting.

\begin{definition}
Let $v$ be a valuation centered on $X$. We define
\[
\widehat{\mathrm{vol}}^{[t]}_{X,\F}(v)
:=
A^{[t]}_{X,\F}(v)^n\,\mathrm{vol}(v),
\]
where $\mathrm{vol}(v)$ denotes the volume of the valuation.
\end{definition}

We will not analyse the properties of the mixed normalised volume here, but, in future work, we are planning to show how we can characterise $t$-K-semistability of an adjoint Fano foliated structure using mixed normalised volume on its affine cone, mimicking the results of \cite{Li2017EquivariantVolumeMinimization}.

\section{\texorpdfstring{Boundedness of $t$-Ksemistable adjoint Fano foliation structures}{Boundedness of t-Ksemistable adjoint Fano foliation structures}}\label{sec: boundedness}

In this section we will use the description of $t$-K-semistability via alpha invariants in Section \ref{sec: mixed alpha K-ss} along with the approach which first appeared in \cite{Jiang2020} and later refined in \cite[\S 7.2]{Xu2025} along with the boundedness results in \cite[Theorem B]{CHLMSX25} to show that $t$-K-semistable adjoint Fano foliated structures form a bounded set.

We will first need the following Lemmas, which will be key in the proof of the main theorem of this section. We first introduce some notation. Let $Y$ be a smooth $d$-fold, and let $\mathcal F_Y$ be a smooth
foliation of rank $r$. Set $q:=d-r$. Let $E\subset Y$ be a smooth
prime divisor, and let $x\in E$ be a very general point at which
$\mathcal F_Y$ and $E$ are smooth. We choose local coordinates $x_1,\ldots,x_d$ centered at $x$, with
$E=\{x_1=0\}$. Let $g:Z\to Y$ be the weighted blow-up at $x$ with weights
\[
(w_1,\ldots,w_d)=\left(\frac{k}{b},k,\ldots,k\right),
\]
where $b>0$ is rational and $k$ is sufficiently divisible. Let $F$
be the exceptional divisor, and let $\mathcal F_Z$ be the induced
foliation on $Z$.

\begin{lemma}
\label{lem:weighted-blowup-smooth-foliation-rank-r}
We have:
\[
A^{[t]}_{Y,\mathcal F_Y}(F)
=
\begin{cases}
(1-t)\left(\frac{k}{b}+(d-1)k\right)+trk,
& \text{if }E\text{ is }\mathcal F_Y\text{-invariant},\\[6pt]
(1-t)\left(\frac{k}{b}+(d-1)k\right)
+t\left(\frac{k}{b}+(r-1)k\right),
& \text{if }E\text{ is transverse to }\mathcal F_Y.
\end{cases}
\]
\end{lemma}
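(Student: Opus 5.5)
Since $A^{[t]}_{Y,\F_Y}(F)=(1-t)A_Y(F)+tA_{Y,\F_Y}(F)$, I will compute the two log discrepancies separately and then take the convex combination. The ambient part is standard. For a weighted blow-up of a smooth point with weights $(w_1,\dots,w_d)$, normalised so that $\ord_F$ is the monomial valuation with $\ord_F(x_i)=w_i$, one has $A_Y(F)=\sum_i w_i$. Here this gives $A_Y(F)=\frac{k}{b}+(d-1)k$. I will note that $k$ is taken divisible enough that all weights are integers, and I will record the normalisation so that the foliated computation uses the same valuation.

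For the foliated part, I will use the toric description of the monomial valuation $v=\ord_F$. Since $x$ is very general, both $\F_Y$ and $E$ are smooth there, so I can choose coordinates adapted to both. The construction is then local, and $\F_Y$ becomes a product foliation in these coordinates.

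\emph{Invariant case.} If $E$ is $\F_Y$-invariant, the coordinates can be chosen so that $\F_Y$ is generated by $\partial_{x_{q+1}},\dots,\partial_{x_d}$ with $E=\{x_1=0\}$ and $1\le q$. The leaves are then the fibres of the projection to $(x_1,\dots,x_q)$.

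\emph{Transverse case.} If $E$ is transverse, the coordinates can be chosen so that $\F_Y$ is generated by $\partial_{x_1},\dots,\partial_{x_r}$, i.e.\ it is the fibres of the projection to $(x_{r+1},\dots,x_d)$.

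In either case I will use the known formula for the foliated log discrepancy of a monomial valuation with respect to a foliation induced by a coordinate projection: $A_{Y,\F_Y}(v)=\sum_{i\in J}w_i$, where $J$ is the set of indices of the fibre (leaf) coordinates. The justification is a direct chart computation as in Claim 3 of Theorem~\ref{thm:t-Ding-prime-divisor}. One computes $\det T_{\F_Z}$ versus $g^*\det T_{\F_Y}$ on the weighted chart and tracks $\varepsilon(F)$. Since $F$ dominates the base of the projection, it is transverse, and $\varepsilon(F)=1$. Alternatively, one can write $K_{\F_Y}=K_{Y/B}$ locally with $B$ the base of the projection and use $A_{\F}(v)=A_Y(v)-A_B(\pi_*v)$ for smooth fibrations, where $\pi_*v$ is the monomial valuation on $B$ with the base weights. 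This second route is cleaner, and it is the one I will try first.

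The formula then gives the following values.

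\emph{Invariant case.} The leaf coordinates are $x_{q+1},\dots,x_d$, all of weight $k$, so $A_{Y,\F_Y}(F)=rk$.

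\emph{Transverse case.} The leaf coordinates are $x_1,\dots,x_r$, so $A_{Y,\F_Y}(F)=\frac{k}{b}+(r-1)k$.

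Combining these with the ambient value yields the stated formula.

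The main obstacle is justifying $A_{\F}(v)=A_Y(v)-A_B(\pi_*v)$, which amounts to checking that $K_{\F}$ agrees with the relative canonical class and that the $\varepsilon$ terms align. I will do this by the explicit toric chart computation. On the chart where $x_j$ is the distinguished variable, the pullback of $\bigwedge\partial_{\text{leaf}}$ picks up the factor $u^{-\sum_{\text{leaf}}w_i}$, up to a correction coming from the $\mathbb{G}_m$ Euler vector field. After saturation this gives exactly coefficient $\sum_{\text{leaf}}w_i-1$ and $\varepsilon(F)=1$.
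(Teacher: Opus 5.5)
Your route is the paper's: Frobenius coordinates adapted to both $\F_Y$ and $E$, $A_Y(F)=\sum_i w_i$, and the local formula that the foliated log discrepancy of the monomial valuation is the sum of the weights of the leaf coordinates. Your final values in both cases are correct. The justification you sketch for the local formula, however, is wrong in two places, and the two errors happen to cancel.

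The divisor $F$ is contracted to $x$, so under the local projection $\pi\colon Y\to B$ it maps to the single point $\pi(x)$. It is vertical, not dominant; the dominance you have in mind is true of $E$ in the transverse case, not of $F$. Hence, whenever $q\ge 1$, $F$ is $\F_Z$-invariant and $\varepsilon(F)=0$. The coefficient of $F$ in $K_{\F_Z}-g^*K_{\F_Y}$ is $\sum_{i\in I}w_i$, where $I$ is the set of leaf indices, not $\sum_{i\in I}w_i-1$. This is exactly what the paper asserts in $K_{\F_Z}=g^*K_{\F_Y}+(\sum_{i\in I}w_i)F$.

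Concretely, take the chart $x_j=u^{w_j}$, $x_i=u^{w_i}y_i$ for $i\neq j$. There $\partial_{x_i}=u^{-w_i}\partial_{y_i}$ for $i\neq j$, and $\partial_{x_j}=\frac{u^{-w_j}}{w_j}\bigl(u\partial_u-\sum_{i\neq j}w_iy_i\partial_{y_i}\bigr)$. Suppose $j\in I$. After wedging with the other leaf fields, the bracket reduces to $u\partial_u-\sum_{i\notin I}w_iy_i\partial_{y_i}$. Its restriction to $F=\{u=0\}$ is not identically zero, because $I^c\neq\emptyset$. So no further saturation occurs, and all the regularised generators preserve $(u)$, which is the invariance of $F$. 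The extra factor of $u$ you expect from the Euler field appears only when $I^c=\emptyset$, i.e. $\F_Y=T_Y$. As written, your planned chart check would therefore "confirm" false intermediate values.

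Your alternative identity $A_{\F}(v)=A_Y(v)-A_B(\pi_*v)$ is correct and does the bookkeeping properly. Here $\pi_*v$ is the nontrivial monomial valuation on $B$ with the base weights, precisely because $F$ is vertical. To justify it, let $\omega_B$ be a local volume form on $B$. The conormal determinant of $\F_Z$ is the saturation of $(\pi\circ g)^*\omega_B$, and $\ord_F\bigl((\pi\circ g)^*\omega_B\bigr)=\sum_{i\notin I}w_i-1=A_B(\pi_*v)-1$. This gives
\[
a(F,\F_Y)=A_Y(F)-1-\bigl(A_B(\pi_*v)-1\bigr)=\sum_{i\in I}w_i .
\]
Together with $\varepsilon(F)=0$, this yields the claimed value. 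Use that route, but drop the assertion that $F$ is transverse with $\varepsilon(F)=1$.
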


\begin{proof}
The ordinary log discrepancy of the weighted blow-up valuation is
\[
A_Y(F)=\sum_{i=1}^d w_i=\frac{k}{b}+(d-1)k.
\]

It remains to compute the foliated log discrepancy. Since
$\mathcal F_Y$ is smooth at $x$, the Frobenius theorem gives local
coordinates adapted to $\mathcal F_Y$. Moreover, since $E=\{x_1=0\}$, if $E$ is $\mathcal F_Y$-invariant, then $T_{\mathcal F_Y}
=
\langle
\partial_{x_2},\ldots,\partial_{x_{r+1}}
\rangle$ and If $E$ is transverse to $\mathcal F_Y$, then $T_{\mathcal F_Y}
=
\langle
\partial_{x_1},\ldots,\partial_{x_r}
\rangle$.

We first record the following local computation. Suppose that, in local
coordinates, $T_{\mathcal F_Y}$ is generated by $\partial_{x_i}$ for $i\in I$ where $|I|=r$. Let $v_w$ be the monomial divisorial valuation with
$v_w(x_i)=w_i$. Then 
\[
K_{\mathcal F_Z}
=
g^*K_{\mathcal F_Y}
+
\left(\sum_{i\in I}w_i\right)F
\] and hence $A_{Y,\mathcal F_Y}(v_w)=\sum_{i\in I}w_i$.

We now apply this formula in the two cases above. If $E$ is invariant, then $I=\{2,\ldots,r+1\}$, thus
\[
A_{Y,\mathcal F_Y}(F)=\sum_{i=2}^{r+1}w_i=rk.
\]
Therefore
\[
A^{[t]}_{Y,\mathcal F_Y}(F)
=
(1-t)A_Y(F)+tA_{Y,\mathcal F_Y}(F)
=
(1-t)\left(\frac{k}{b}+(d-1)k\right)+trk.
\]

Similarly, if $E$ is transverse, then $I=\{1,\ldots,r\}$, thus
\[
A_{Y,\mathcal F_Y}(F)
=
w_1+\sum_{i=2}^r w_i
=
\frac{k}{b}+(r-1)k.
\]
Therefore
\[
A^{[t]}_{Y,\mathcal F_Y}(F)
=
(1-t)\left(\frac{k}{b}+(d-1)k\right)
+
t\left(\frac{k}{b}+(r-1)k\right).
\]
This proves the lemma.
\end{proof}

Let now $\mu:Y\to X$ be a birational model on which a prime divisor
$E$ over $X$ appears. Assume that $Y$ is smooth at a very general
point $x\in E$, that $E$ is smooth at $x$, and that the induced
foliation $\mathcal F_Y$ is smooth at $x$. Let $a:=A^{[t]}_{X,\mathcal F}(E)$ and choose a rational number $b>a$, and let $g:Z\to Y$ be the weighted blow-up of $x$ with weights
\[
\left(\frac{k}{b},k,\ldots,k\right),
\]
where $k$ is sufficiently divisible. Let $F\subset Z$ be the exceptional
divisor.

\begin{lemma}
\label{lem:weighted-blowup-cancellation}
We have 
\[
A^{[t]}_{X,\mathcal F}(F)\le kd.
\]
\end{lemma}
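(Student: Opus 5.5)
The plan is to compare the mixed log discrepancy of $F$ over $X$ with the one over $Y$, which is already computed in Lemma~\ref{lem:weighted-blowup-smooth-foliation-rank-r}. The error term is a crepant-pullback correction, and near a very general point $x\in E$ only the component $E$ contributes to it.

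\emph{Step 1: the crepant correction.} Write
\[
\mu^*K^{[t]}_{X,\F}=(1-t)K_Y+tK_{\F_Y}+\Delta^{[t]}_Y ,
\]
where $\Delta^{[t]}_Y$ is a $\mu$-exceptional (not necessarily effective) $\QQ$-divisor. By the definition of $A^{[t]}$ in terms of the coefficients $a(\cdot,X)$ and $a(\cdot,\F)$, the coefficient of $E$ in $\Delta^{[t]}_Y$ is
\[
c_E=(1-t)+t\,\varepsilon(E)-a ,\qquad a=A^{[t]}_{X,\F}(E).
\]
The same bookkeeping as in Lemma~\ref{lem:mixed-discrepancy-subtract-D}, applied to both the ambient and the foliated parts on the model $Z$, gives for every prime divisor over $Y$
\[
A^{[t]}_{X,\F}(F)=A^{[t]}_{Y,\F_Y}(F)-\ord_F(\Delta^{[t]}_Y).
\]
Here $\varepsilon(F)$ is taken with respect to $\F_Y$ on both sides, since the foliation on $Z$ is the same.

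\emph{Step 2: localisation at $x$.} Since $x$ is very general on $E$, no other component of $\Delta^{[t]}_Y$ passes through $x$. The centre of $F$ is $x$, so $\ord_F(\Delta^{[t]}_Y)=c_E\,\ord_F(E)$. For the weighted blow-up with $E=\{x_1=0\}$ we have $\ord_F(E)=w_1=k/b$. This is the step I expect to need the most care. One must check that the foliated part of the pullback formula really behaves additively, in the sense that $K_{\F_Z}=g^*K_{\F_Y}+(\cdot)F$ and $g^*\mu^*K_\F$ differ exactly by the pullback of the $E$-coefficient. It should follow because $K_{\F_Y}-\mu^*K_\F$ is a $\QQ$-Cartier divisor near $x$ (as $Y$ is smooth there), and pullback of Cartier divisors commutes with $\ord_F$.

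\emph{Step 3: the two cases.} Plug in Lemma~\ref{lem:weighted-blowup-smooth-foliation-rank-r}.

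If $E$ is invariant, then $\varepsilon(E)=0$ and
\[
A^{[t]}_{X,\F}(F)=(1-t)\Bigl(\tfrac kb+(d-1)k\Bigr)+trk-\bigl((1-t)-a\bigr)\tfrac kb=(1-t)(d-1)k+trk+\tfrac ab k .
\]
If $E$ is transverse, then $\varepsilon(E)=1$, so $c_E=1-a$ and
\[
A^{[t]}_{X,\F}(F)=(1-t)(d-1)k+t(r-1)k+\tfrac ab k .
\]
In both cases the $k/b$ terms coming from $x_1$ cancel, which explains the name ``cancellation''.

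\emph{Step 4: the bound.} Use $b>a$, so that $\tfrac ab k<k$ (this holds trivially if $a\le 0$). In the invariant case we also need $r\le d-1$, which holds because a foliation with an invariant divisor cannot be all of $T_Y$. In the transverse case we use $r\le d$. Either way,
\[
(1-t)(d-1)k+t(d-1)k+k=dk
\]
bounds the expression, giving $A^{[t]}_{X,\F}(F)\le kd$.
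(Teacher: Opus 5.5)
Your proposal is correct and follows essentially the same route as the paper. You correct $A^{[t]}_{Y,\F_Y}(F)$ by the crepant coefficient of $E$ times $\ord_F(E)=k/b$, plug in Lemma~\ref{lem:weighted-blowup-smooth-foliation-rank-r} in the invariant and transverse cases so that the $k/b$ terms cancel and leave $\tfrac{a}{b}k$, and then bound using $a/b<1$ together with $r\le d-1$ (respectively $r\le d$); this is exactly the paper's comparison of $k(d-t(q-1))$ and $k(d-tq)$ with $kd$.

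Your direct use of $r\le d-1$ in the invariant case is slightly cleaner than the paper's stated strict inequality $k(d-t(q-1))<kd$, which needs $q\ge 2$, although the final bound is unaffected.
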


\begin{proof}
Writing
\[
K^{[t]}_{Y,\mathcal F_Y}
=
\mu^*K^{[t]}_{X,\mathcal F}
+
c_EE+\cdots,
\]
the coefficient $c_E$ is
\[
c_E
=
A^{[t]}_{X,\mathcal F}(E)-\bigl((1-t)+t\varepsilon(E)\bigr)
=
a-\bigl((1-t)+t\varepsilon(E)\bigr).
\]
Since the weighted blow-up has $\ord_F(E)=\frac{k}{b}$, we get
\[
A^{[t]}_{X,\mathcal F}(F)
=
A^{[t]}_{Y,\mathcal F_Y}(F)
+
\left(a-\bigl((1-t)+t\varepsilon(E)\bigr)\right)\frac{k}{b}.
\]

We now use Lemma~\ref{lem:weighted-blowup-smooth-foliation-rank-r}. First suppose that $E$ is $\mathcal F_Y$-invariant. Then
$\varepsilon(E)=0$, and
\[
A^{[t]}_{Y,\mathcal F_Y}(F)
=
(1-t)\left(\frac{k}{b}+(d-1)k\right)+trk.
\]
Therefore
\[
\begin{aligned}
A^{[t]}_{X,\mathcal F}(F)
&=
(1-t)\left(\frac{k}{b}+(d-1)k\right)+trk
+\left(a-(1-t)\right)\frac{k}{b}  \\
&=
\frac{a}{b}k+\bigl((1-t)(d-1)+tr\bigr)k \\
&<
k+\bigl((1-t)(d-1)+tr\bigr)k\\
&=k\bigl(d-t(q-1)\bigr)< kd
\end{aligned}
\]

Now suppose that $E$ is transverse to $\mathcal F_Y$. Then
$\varepsilon(E)=1$, and
\[
A^{[t]}_{Y,\mathcal F_Y}(F)
=
(1-t)\left(\frac{k}{b}+(d-1)k\right)
+
t\left(\frac{k}{b}+(r-1)k\right).
\]
Thus
\[
\begin{aligned}
A^{[t]}_{X,\mathcal F}(F)
&=
\frac{k}{b}+\bigl((1-t)(d-1)+t(r-1)\bigr)k
+(a-1)\frac{k}{b} \\
&=
\frac{a}{b}k+\bigl((1-t)(d-1)+t(r-1)\bigr)k\\
&< k+\bigl((1-t)(d-1)+t(r-1)\bigr)k \\
&= k(d-tq) < kd.
\end{aligned}
\]
Hence in all cases $A^{[t]}_{X,\mathcal F}(F)<kd$. The equality statements for $b=a$ follow from the same calculation.
\end{proof}

\begin{theorem}
\label{thm:conditional-xu-type-boundedness}
Fix a positive integer $d$, a real number $\delta>0$, and a real number $t\in(0,1)\cap \Q$.
Let $(X,\F)$ be a $d$-dimensional algebraically integrable foliated pair such that:
\begin{enumerate}
    \item $X$ is potentially klt;
    \item $\F$ is log canonical;
    \item $L:=-K^{[t]}_{X,\F}:=-((1-t)tK_X+tK_{\F})$ is ample;
    \item $L^d\geq V$;
    \item $\alpha^{[t]}(X,\F;L)\geq\delta$.
\end{enumerate}
Then, the set of such pairs $(X,\F)$ is bounded.
\end{theorem}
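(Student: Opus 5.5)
The plan is to follow the Jiang/Xu strategy outlined in the introduction. First we show that the lower bound $\alpha^{[t]}(X,\F;L)\ge\delta$ and the volume bound $L^d\ge V$ together force $(X,\F,t)$ to be $\epsilon$-lc for some $\epsilon=\epsilon(d,\delta,V,t)>0$. Then we feed this into \cite[Theorem B]{CHLMSX25}, whose remaining hypotheses we would check using \cite[Proposition 9.1]{CHLMSX25}. Conditions (1) and (2) are exactly the potentially klt and log canonical assumptions appearing there, and (3) says $(X,\F,t)$ is adjoint Fano. So the only real work is the $\epsilon$-lc bound.

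\textbf{Step 1: reduction to a single divisor.} Argue by contradiction. Suppose $E$ is a prime divisor over $X$ with $a:=A^{[t]}_{X,\F}(E)$ very small. Choose a model $\mu\colon Y\to X$ on which $E$ appears. Pick a very general point $x\in E$ at which $Y$, $E$ and $\F_Y$ are all smooth; this is possible since $\F_Y$ is algebraically integrable, hence smooth in codimension one. Fix a rational $b>a$ slightly larger than $a$, and let $F$ be the exceptional divisor of the weighted blow-up with weights $(k/b,k,\dots,k)$. By Lemma~\ref{lem:weighted-blowup-cancellation}, $A^{[t]}_{X,\F}(F)\le kd$, uniformly in $a$.

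\textbf{Step 2: lower bound on $T_L(F)$.} We want to show $T_L(F)\gtrsim k\,\mathrm{vol}(L)^{1/d}\,b^{-1/d}$. The volume of the weighted valuation $\ord_F$ at $x$ is $\prod w_i^{-1}=b/k^d$. Counting conditions in the standard way gives
\[
h^0\bigl(mL\otimes \fa_{ms}(\ord_F)\bigr)\ge h^0(mL)-\tfrac{b}{k^d}\tfrac{(ms)^d}{d!}+o(m^d),
\]
which is positive once $s<k(L^d/b)^{1/d}$. Since $x$ is very general, $\fa_{ms}$ is an ideal cosupported at a smooth point, so this estimate is valid. Therefore
\[
T_L(F)\ \ge\ k\,(V/b)^{1/d}.
\]

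\textbf{Step 3: conclusion of the $\epsilon$-lc bound.} Combine the previous steps with the valuative formula for $\alpha^{[t]}$ in Remark~\ref{rem: val formula alpha}:
\[
\delta\le\alpha^{[t]}(X,\F;L)\le\frac{A^{[t]}_{X,\F}(F)}{T_L(F)}\le\frac{kd}{k(V/b)^{1/d}}=d\,(b/V)^{1/d}.
\]
Letting $b\downarrow a$ gives $a\ge V\delta^d/d^d$. So $(X,\F,t)$ is $\epsilon$-lc with $\epsilon:=V\delta^d/d^d$.

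\textbf{Step 4: boundedness.} Now apply \cite[Theorem B]{CHLMSX25} to the set of $d$-dimensional, $\epsilon$-lc, algebraically integrable adjoint Fano foliated structures with fixed rational $t$ and ample $-K^{[t]}_{X,\F}$. Together with hypotheses (1) and (2), this yields boundedness in the sense of Definition~\ref{def: boundedness}. Any auxiliary condition of that theorem on $t$ (for instance $t$ bounded away from $1$, or a DCC coefficient set) should be automatic, since $t$ is fixed and $B=\mathbf M=0$. Proposition~9.1 there would be used to convert the mixed $\epsilon$-lc condition into the form that Theorem B requires.

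\textbf{Main obstacle.} The hardest step will be Step 2 together with the bookkeeping in Step 1. One issue is that $x$ must be chosen very general simultaneously for $E$, $Y$ and $\F_Y$. The other is justifying that the volume of the weighted blow-up valuation, computed on $Y$, controls sections of $\mu^*L$ even though $L$ only lives on $X$. My first approach would be to pass to $\mu^*L$, which is big and nef on $Y$, and use $\mathrm{vol}(\mu^*L-sF)\ge \mathrm{vol}(L)-s^d\,\mathrm{vol}(\ord_F)$, as in \cite[\S7.2]{Xu2025}.
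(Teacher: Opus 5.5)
Your proposal is correct and follows essentially the same route as the paper. A weighted blow-up at a very general point of $E$, the bound $A^{[t]}_{X,\F}(F)\le kd$ from Lemma~\ref{lem:weighted-blowup-cancellation}, and a section-counting estimate for $\ord_F$ give $a\ge V\delta^d/d^d$, after which \cite[Proposition~9.1]{CHLMSX25} and \cite[Theorem~B]{CHLMSX25} are applied. The paper obtains that estimate by citing \cite[Lemma~7.24]{Xu2025} to produce $D\sim_{\Q}t_0L$ with $\ord_F(D)>kd$, whereas you derive it directly as $T_L(F)\ge k(V/b)^{1/d}$.

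The remaining differences are cosmetic. You take $b>a$ and let $b\downarrow a$, which matches the lemma's hypothesis better than the paper's choice $b=a$. The paper's final step is more explicit than yours: it uses a small $\Q$-factorialization and Proposition~9.1 to make $X$ itself $\epsilon_0$-lc, then sets $\epsilon=\min\{\epsilon_0,t,1-t\}$ so that $\epsilon\le t\le 1-\epsilon$.
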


\begin{proof}
 We fix a prime divisor $E$ over $X$, and write $a:=A^{[t]}_{X,\F}(E)$, which we assume to be at most $1$. We claim that
\[
a\ge \frac{L^d\cdot \alpha^{[t]}(X,\F;L)^d}{d^d}.
\]

Let $t_0>0$ be such that $t_0^d>\frac{d^d\,a}{L^d}$. We choose a foliated log resolution $\mu\colon Y\to X$ on which $E\subset Y$, and a very general smooth point $x\in E$ away from $\Sing(\F_Y)$ and away from the other exceptional
divisors. Let $g\colon Z\to Y$ be the weighted blowup at $x$ with weights
\[
k\left(\frac{1}{a},1,\dots,1\right),
\]
where $k$ is sufficiently divisible, and let $F$ be its exceptional divisor.

By \cite[Lemma~7.24]{Xu2025}, applied exactly as in the proof of
\cite[Theorem~7.25]{Xu2025}, there exists an effective $\Q$-divisor $D\sim_{\Q} t_0L$ such that $\ord_F(D)>kd$.

By Lemma \ref{lem:weighted-blowup-cancellation}, we have $A^{[t]}_{X,\F}(F)\leq kd$, hence $\ord_F(D)>kd \geq  A^{[t]}_{X,\F}(F)$. By the valuation-theoretic formula for the mixed $\alpha$-invariant (see Remark \ref{rem: val formula alpha}), we have
\[
\alpha^{[t]}(X,\F;L)
=
\inf_{v\in \Val_X^*}\frac{A^{[t]}_{X,\F}(v)}{T_L(v)},
\]
which implies that $t_0>\alpha^{[t]}(X,\F;L)$. Since $D\sim_{\Q}t_0L$, then $\ord_F(D)\le t_0\,T_L(F)$, while $\ord_F(D)>A^{[t]}_{X,\F}(F)$, which in turn implies that 
\[
t_0\,T_L(F)\ge \ord_F(D)>A^{[t]}_{X,\F}(F),
\]
hence
\[
t_0>\frac{A^{[t]}_{X,\F}(F)}{T_L(F)}\ge \alpha^{[t]}(X,\F;L).
\]

Since this holds for every $t_0$ satisfying $t_0^d>\frac{d^d\,a}{L^d}$, we conclude that
\[
\alpha^{[t]}(X,\F;L)^d\le \frac{d^d\,a}{L^d}.
\]
Equivalently,
\[
a\ge \frac{L^d\cdot \alpha^{[t]}(X,\F;L)^d}{d^d}.
\]

Using $L^d\geq V$ and $\alpha^{[t]}(X,\F;L)\geq\delta$, we obtain $a\geq\frac{\delta^{d}V}{d^d}$. Since $E$ was arbitrary, it follows that every divisorial valuation $v$ over $X$ satisfies
\[
A^{[t]}_{X,\F}(v)\ge \varepsilon_0,
\qquad
\varepsilon_0:=\min\left\{\frac{\delta^{d}V}{d^d},\,1\right\},
\]
which implies that the adjoint Fano foliated structure is $\varepsilon_0$-lc. Note that by Remark \ref{rem:comparison-Cas25-singularities}, this is $\varepsilon_0$-lc also in the notation of \cite{CHLMSX25}. Note by assumption (2) $\F$ is algebraically integrable and lc. Since $t<1$, after passing to a small $\Q$-factorialization of $X$, by \cite[Proposition~9.1]{CHLMSX25} we obtain that the ambient variety $X$ is $\epsilon_0$-lc. Setting $\epsilon:=\min\{\epsilon_0,t,1-t\}$, we have $\epsilon\le t\le 1-\epsilon$, and therefore all the hypotheses of \cite[Theorem~B]{CHLMSX25} are satisfied. It follows that such adjoint Fano foliated structures form a bounded family, as required.
\end{proof}

\begin{corollary}\label{cor: boundedness for K-semi}
    Fix a positive integer $d$ and a real number $\delta > 0$, and $t\in (0,1)\cap \Q$. Then the set of $d$-dimensional $t$-K-semistable adjoint Fano foliated structures $(X,\F,t)$ with $(-K_{X,\F}^{[t]})^d > \delta$, $X$ potentially klt and $\F$ lc forms a bounded family.
\end{corollary}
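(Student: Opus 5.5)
The plan is to deduce the corollary directly from Theorem~\ref{thm:conditional-xu-type-boundedness} by checking its five hypotheses for every member of the set. Fix $d$, $\delta>0$ and $t\in(0,1)\cap\Q$, and let $(X,\F,t)$ be a $d$-dimensional $t$-K-semistable adjoint Fano foliated structure with $(-K^{[t]}_{X,\F})^d>\delta$, $X$ potentially klt and $\F$ log canonical. Set $L:=-K^{[t]}_{X,\F}$. Hypotheses (1) and (2) are part of our assumptions. Hypothesis (3) holds because $(X,\F,t)$ is adjoint Fano. Hypothesis (4) holds with $V:=\delta$, since $L^d>\delta$.

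The only substantive input is hypothesis (5), a uniform lower bound on the mixed $\alpha$-invariant. Since $(X,\F,L)$ is $t$-K-semistable, Corollary~\ref{cor: k-ss to alpha} gives
\[
\alpha^{[t]}(X,\F;L)\ge \frac{1}{d+1}.
\]
This bound depends only on the dimension $d$, not on $(X,\F)$. That corollary rests on Theorem~\ref{thm:dreamy-valuative-criterion}, giving $\delta^{[t]}\ge 1$, together with the inequality $\delta^{[t]}\le (d+1)\alpha^{[t]}$ from Theorem~\ref{thm:mixed-alpha-delta}. Both are available because $L\sim_\Q -K^{[t]}_{X,\F}$ is ample. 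So we apply Theorem~\ref{thm:conditional-xu-type-boundedness} with the constant $\delta$ of that theorem replaced by $\frac{1}{d+1}$ and with volume bound $V=\delta$. All our structures then lie in the single bounded set produced there.

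One small point needs care: the notational clash between the $\delta$ bounding the volume here and the $\delta$ bounding $\alpha^{[t]}$ in Theorem~\ref{thm:conditional-xu-type-boundedness}. Renaming the constants removes it. With that done, the theorem's conclusion gives an $\varepsilon_0$-lc bound with
\[
\varepsilon_0=\min\left\{\frac{\delta}{d^d(d+1)^d},\,1\right\},
\]
and then boundedness via \cite[Theorem~B]{CHLMSX25}. Since $\varepsilon_0$ depends only on $d$, $\delta$ and $t$, the family is bounded.

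I expect no genuine obstacle at this level. The real work lies in the earlier results being invoked: the valuative criterion, which feeds the $\alpha$-bound, and the discrepancy estimate in Theorem~\ref{thm:conditional-xu-type-boundedness}. The proof of the corollary is simply assembling them.
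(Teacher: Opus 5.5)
Your proposal is correct and follows the paper's proof. Both invoke Corollary~\ref{cor: k-ss to alpha} to get the uniform bound $\alpha^{[t]}(X,\F;L)\ge \frac{1}{d+1}$, then apply Theorem~\ref{thm:conditional-xu-type-boundedness} with the volume bound $(-K^{[t]}_{X,\F})^d>\delta$. Your explicit renaming of the two constants is cleaner than the paper's ``without loss of generality $\delta<1/(d+1)$'', which conflates the volume bound with the $\alpha$-bound.
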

\begin{proof}
    Without loss of generality, we may assume $\delta < 1/(d+ 1)$. For a $t$-K-semistable adjoint Fano foliated structure $(X,\F)$ of dimension $d$, by Corollary \ref{cor: k-ss to alpha} we have $\alpha^{[t]}(X,\F) \geq 1/(d +1) > \delta$. Hence the boundedness of such structures follows immediately from Theorem \ref{thm:conditional-xu-type-boundedness}.
\end{proof}

\printbibliography
\end{document}